\numberwithin{equation}{section}
\theoremstyle{plain}
\newtheorem{defn}{Definition}[section]
\newtheorem{thm}{Theorem}[section]
\newtheorem{prop}{Proposition}[section]
\newtheorem{lemm}{Lemma}[section]
\newtheorem{rem}{Remark}[section]
\newtheorem{assu}{Assumption}[section]
\newtheorem{example}{Example}[section]
\newenvironment{prf}{{Proof. }}{\qed}
\newcommand\independent{\protect\mathpalette{\protect\independenT}{\perp}}
\def\independenT#1#2{\mathrel{\rlap{$#1#2$}\mkern2mu{#1#2}}}
\newcommand{\F}{F}
\newcommand{\G}{G}
\newcommand{\D}{{\mathrm{D}}}
\newcommand{\eps}{\varepsilon}
\newcommand{\EE}{\mathbb{E}}
\newcommand{\Var}{\mbox{Var}}
\newcommand{\PX}{\mathbb{P}}
\newcommand{\PXO}{\mathbb{P}}
\newcommand{\PXn}{\mathbb{P}_n}
\newcommand{\DPX}{\mathscr{D}(\PX)}
\newcommand{\DPXhat}{\widehat{\mathscr{D}}_{n,p}}
\newcommand{\DPXO}{\mathscr{D}(\PXO)}
\newcommand{\CDO}{\mathscr{C}(D^0)}
\newcommand{\CDPX}{\mathscr{C}(\DPX)}
\newcommand{\GDPX}{G_{\DPX}}
\newcommand{\Fi}{\mathcal{F}_i}
\newcommand{\Kinit}{\mathcal{K}^{\text{init}}}
\newcommand{\nrep}{n_{\text{rep}}}
\newcommand{\pl}{p_{\text{lin}}}
\newcommand{\pc}{p_{\text{c}}}
\newcommand{\mK}{\mathcal{K}}
\newcommand{\mKt}{\widetilde{\mathcal{K}}}
\newcommand{\argmin}{\mathrm{argmin}}
\newcommand{\argminn}{\operatornamewithlimits{argmin}}
\newcommand{\pa}{\mathrm{pa}}
\tikzstyle{vertex}=[circle, draw, fill, inner sep=0pt, minimum size=0.15cm]
\begin{document}

\begin{frontmatter}
\title{
	%Causal inference in partially linear structural equation models: identifiability and estimation \\ 
	Causal inference in partially linear structural equation models}
\runtitle{Causal inference in PLSEMs}
%\thankstext{T1}{Footnote to the title with the ``thankstext'' command.}
 
\begin{aug}
\author{\fnms{Dominik} \snm{Rothenh\"ausler}\thanksref{t1}\ead[label=e2]{rothenhaeusler@stat.math.ethz.ch}},
\author{\fnms{Jan} \snm{Ernest}\thanksref{t1}\thanksref{t2}\ead[label=e1]{jan.ernest@alumni.ethz.ch}}
\and
\author{\fnms{Peter} \snm{B\"uhlmann}%\thanksref{t3}
\ead[label=e3]{buehlmann@stat.math.ethz.ch}
\ead[label=u1,url]{http://stat.ethz.ch}}
%%}
\thankstext{t1}{These authors contributed equally to this work.}
\thankstext{t2}{Supported in part by the Max Planck ETH Center for Learning Systems and by the Swiss National Science Foundation grant no. 2-77991-14}
%\thankstext{t3}{Supported in part by ...}
\runauthor{D. Rothenh\"ausler, J. Ernest and P. B\"uhlmann}
\affiliation{ETH Z\"urich}
\address{Seminar for Statistics\\
ETH Z\"urich\\
R\"amistrasse 101\\
8092 Z\"urich\\
Switzerland\\
\printead{e2} \\
\phantom{E-mail:\ }\printead*{e1} \\
\phantom{E-mail:\ }\printead*{e3} \\
%\printead{e1}\\
%\phantom{E-mail:\ }\printead*{e2}}
%
%\address{Address of the Third author\\
%Usually a few lines long\\
%Usually a few lines long\\
%\printead{e3}\\
\printead{u1}}
\end{aug}

\begin{abstract}
We consider 
%causal inference in
 identifiability 
%and estimation 
of partially linear additive structural equation models with Gaussian noise (PLSEMs) and estimation of distributionally equivalent models to a given PLSEM. Thereby, we also include robustness results for errors in the neighborhood of Gaussian distributions.
%More precisely, given observational data, we study the characterization and estimation of the PLSEM distribution equivalence class (the set of all DAGs for which there exists a corresponding PLSEM generating an observed joint distribution). 
Existing identifiability results in the framework 
of additive SEMs with Gaussian noise
are limited to linear and nonlinear SEMs, 
%These two settings 
which can be considered as special cases of PLSEMs with vanishing nonparametric or parametric part, respectively.
%where the nonparametric or parametric part vanishes, respectively. 
We close the wide gap between these two special cases by providing a comprehensive theory of the identifiability of PLSEMs by means of (A) a graphical, (B) a transformational, (C) a functional and (D) a causal ordering characterization of 
%distributionally equivalent 
PLSEMs that generate a given distribution $\PX$.  
In particular, the characterizations (C) and (D) answer the fundamental question to which extent 
%a (single) nonlinear function in an additive SEM with Gaussian noise restricts the set of possible causal models and hence influences the identifiability. 
nonlinear functions 
in additive SEMs with Gaussian noise
restrict the set of potential causal models and hence influence the identifiability.

%Our main results comprise the two previously known results for linear and nonlinear additive Gaussian SEMs and can be seen as a generalization of these two frameworks. %Our results close the existing gap between two well-known special cases where either the parametric part (CAM model) or the additive nonparametric part (linear Gaussian model) vanishes. 
%Under the additional assumption of faithfulness we provide a transformational characterization of the distribution equivalence classes based on sequences of covered linear edge reversals. This complements an earlier result of \citet{chickering95} for linear SEMs. 
%On the basis of this transformational characterization 
On the basis of the transformational characterization (B) we provide a score-based estimation procedure that
%, given the observational distribution and one DAG, % in the distribution equivalence class
outputs
%we implement an algorithm to enumerate all DAGs 
the graphical representation (A) of the distribution equivalence class of a given PLSEM. We derive its (high-dimensional) consistency and demonstrate its performance on simulated datasets. 
\end{abstract}

\begin{keyword}[class=MSC]
\kwd[Primary ]{62G99}
\kwd{62H99}
\kwd[; secondary ]{68T99}
\end{keyword}
\begin{keyword}
\kwd{Causal inference}
\kwd{distribution equivalence class}
\kwd{graphical model} 
\kwd{high-dimensional consistency}
\kwd{partially linear structural equation model}
\end{keyword}
\end{frontmatter}

\section{Introduction} \label{M-sec:intro}
Causal inference is fundamental in many scientific disciplines. Examples include the identification of causal molecular mechanisms in genomics~\cite{statnikov2012, steketal12}, the investigation of causal relations among activity in brain regions from fMRI data~\cite{ramsey2010} or the search for causal associations in public health \cite{glass2013}. 
%or the modeling of meat prices at farm and retail levels~\cite{bessler1998}.
%Rather than to measure associations and make statements about the joint distribution of the variables, the general goal of causal inference is to characterize the underlying causal mechanisms. The understanding of these mechanisms allows to predict the response of the system to external manipulations of some of the variables, which is of particular importance when 
%%in the case where
%randomized experiments are not available. 
%The reasons for this unavailability
%%of appropriate experiments 
%can be manifold, including that appropriate experiments are infeasible, %(\Jan{Think of appropriate example}), 
%unethical, %(one must not force persons to smoke) 
%or simply too expensive. %(intervening on all subsets of variables in a large system). 
%Motivated by the above considerations, 

A major research topic in causal inference aims at establishing causal dependencies based on purely observational data. The notion ``observational'' commonly refers to the fact that one obtains the data from the system of variables under consideration without subjecting it to external manipulations. Typically, one then assumes that the observed data has been generated by an underlying causal model and tries to draw conclusions about its structure. 
%For example, this can be achieved by restricting the model to specific model classes that allow for ruling out candidate models that are not compatible with the observed data. 

Two main research tasks in this setting are identifiability and estimation of the underlying causal model. 
We consider  identifiability of partially linear additive structural equation models with Gaussian noise (PLSEMs) and estimation of distributionally equivalent models to a given PLSEM. Thereby, we also include robustness results for errors in the neighborhood of Gaussian distributions. 

%For a given model class, the area of identifiability answers the question to which extent the underlying causal structure is characterized by the observed joint distribution. 
%If it is not possible to fully recover the underlying model, the goal is to characterize the set of all possible models in a specific model class that all generate the observed distribution. 
%On the other hand, the area of estimation deals with the development of consistent estimation procedures for learning this underlying causal structure for specific model classes. 
%\Jan{
% In this paper we address both of them for
% partially linear additive structural equation models with Gaussian noise (PLSEMs).
So far, there exists a wide ``identifiability gap'' for PLSEMs, as
their identifiability has only been characterized for the two special cases where all the functions are linear or all the functions are nonlinear. We close this ``identifiability gap'' by providing comprehensive characterizations of the identifiability %and estimation 
of the general class of PLSEMs from various perspectives.
%}

Unlike in regression where partially linear models are mainly studied because of efficiency gains in estimation, the use of partially linear models has a deeper meaning in causal inference. In fact, as we will show, it is closely connected to identifiability.  
The functional form of an additive component directly influences the identifiability of the corresponding (and also other) causal relations. 
%For example, when the functional form of an additive component is assumed to be linear instead of nonlinear, the identifiability of the corresponding causal relation may be lost. 
For this reason we strongly believe that the 
%comprehensive characterization
understanding of the identifiability %and the understanding of the estimation 
of PLSEMs is important.
% in many applications: 
First and foremost, it raises the awareness of 
%brings attention to 
potentially limited (or increased) identifiability in the presence of linear (or nonlinear) relations in the data. Second, by not restricting the functions to be either all linear or all nonlinear, PLSEMs allow for a flexible modeling approach.
%on the one hand, it allows for a more flexible modeling approach, but on the other hand, it raises the awareness of potential limited identifiability properties in the presence of linear relations in the data.

%To that end we present a comprehensive theory of the identifiability in the partially linear additive model class via the characterization of the distribution equivalence classes~\eqref{eq:DEC} and provide a consistent estimation procedure for their estimation. So far, to the best of our knowledge, identifiability results for this model class were limited to linear and nonlinear additive SEMs, but the more general (partially linear) additive setting has not been characterized. Therefore, our main results generalize previous results in the area of additive SEMs to a large extent. 

We start by reviewing and introducing important concepts in Section~\ref{M-ssec:probdesc}. We then provide a brief overview of related work in Section~\ref{M-ssec:relwork} and explicitly state the main contributions of this paper in Section~\ref{M-ssec:contrib}.

\subsection{Problem description and important concepts} \label{M-ssec:probdesc}

We consider $p$ random variables $X=(X_1,...,X_p)$ with joint distribution $\PX$, which is assumed to be Markov with respect to an underlying directed acyclic graph (DAG). 
%We follow a widely used approach to formalize this and assume that the joint distribution $\PX$ has been generated by a structural equation model (SEM) with corresponding directed acyclic graph (DAG). 
%It can be proved that if $\PX$ is generated by a SEM with associated DAG $D$, $\PX$ is Markov with respect to $D$ \cite[Theorem 1.4.1]{pearl09}.
%\subsubsection{Directed acyclic graphs and structural equation models} \label{M-ssec:probdescDAGsSEMs}
A DAG $D=(V,E)$ is an ordered pair consisting of a set of vertices $V=\{1,...,p\}$ associated with the variables $\{X_1,...,X_p\}$, and a set of directed edges $E \subset V^2$ such that there are no directed cycles. A directed edge between the nodes $i$ and $j$ in $D$ is denoted by $i \rightarrow j$. % or by $(i,j) \in E$. 
Node $i$ is called a \emph{parent} of node $j$ and $j$ is called a \emph{child} of $i$. Moreover, the edge is said to be oriented \emph{out of} $i$ and \emph{into} $j$. If $i \rightarrow j$ or $i \leftarrow j$, $i$ and $j$ are called \emph{adjacent} and the edge is \emph{incident} to $i$ and $j$. The \emph{degree} of a node $i$, denoted by $\deg_D(i)$, counts the number of edges incident to node $i$ in DAG $D$. A node $k$ that can be reached from $i$ by following directed edges is called \emph{descendant} of~$i$. We use the convention that any node is a descendant of itself. The set $\pa_D(j) = \{i \ | \ i \rightarrow j \text{ in } D \}$ consists of all parents of node $j$. 
The multi-index notation $X_{\pa_D(j)}$ denotes the set of 
variables 
$\{X_i\}_{i \in \pa_D(j)}$.
An edge $i \rightarrow j$ is said to be \emph{covered} in $D$, if $\pa_D(i) = \pa_D(j) \setminus \{i\}$. In that case, $\pa_D(i)$ is a \emph{cover} for edge $i \rightarrow j$. The process of changing the orientation of a covered edge from $i \rightarrow j$ to $i \leftarrow j$ is referred to as a \emph{covered edge reversal}. A triple $(i,j,k)$ is called a \emph{$v$-structure}, if $\{i,j\} \subseteq \pa_D(k)$ and $i$ and~$j$ are not adjacent.
%neither $i \rightarrow j$ nor $i \leftarrow j$. 
%The multi-index notation $X_{\pa_D(j)}$ denotes the set of 
%variables 
%$\{X_i\}_{i \in \pa_D(j)}$.
The graph obtained by replacing all directed edges $i \rightarrow j$ by undirected edges $i \text{ --- } j$ is called \emph{skeleton} of $D$. The \emph{pattern} of a DAG $D$ is the graph 
%which has 
with the same skeleton as $D$ and $i \rightarrow j$ is directed if and only if it is part of a $v$-structure in $D$. 
A permutation $\sigma: V \rightarrow V$ is a \emph{causal ordering} of~$D$ if $\sigma(i) < \sigma(j)$ for all $i \rightarrow j$ in~$D$.
%We interpret a DAG causally by assuming that effects are propagated along the directed edges. In this respect, 
DAGs 
%represent (causal) orderings of the variables and as such 
may be used as underlying structures for structural equation models (SEMs). 
A SEM relates the distribution of every random variable $\{X_1,...,X_p\}$ to the distribution of its direct causes (the parents in the corresponding DAG~$D$) and random noise. In its most general form, 
%it is given as 
\begin{equation} \label{M-eq:SEMgeneral}
	X_j = f_j(X_{\pa_D(j)}, \eps_j), \qquad j=1,...,p ,
\end{equation}
where $\{f_j\}_{j=1,...,p}$ are functions from $\mathbb{R}^{|\pa_D(j)|+1} \rightarrow \mathbb{R}$ and $\{\eps_j\}_{j=1,...,p}$ are mutually independent noise variables. 
%, which are assumed to be mutually independent.
Lastly, for a function $\F:\mathbb{R}^{p} \rightarrow \mathbb{R}^{p}$, we write $\mathrm{D} \F$ for the Jacobian of $\F$.

\subsubsection{Main focus: PLSEMs} \label{M-ssec:probdescAddGaussSEMandDEC}

In this paper we study the restriction of the general SEM in equation~\eqref{M-eq:SEMgeneral} %assume that the joint distribution $\PX$ of the variables $\{X_1,...,X_p\}$ has been generated by an 
to \emph{partially linear additive SEMs with Gaussian noise (PLSEMs)} of the form: 
\begin{align} \label{M-eq:PLSEM}
	X_j & = \mu_j + \sum\limits_{i \in \pa_D(j)} f_{j,i}(X_i) + \eps_j, %\qquad j=1,...,p , \\
\end{align}
where $\mu_j \in \mathbb{R}$, $f_{j,i} \in C^2(\mathbb{R})$, $f_{j,i} \not\equiv 0$, with $\EE[f_{j,i}(X_i)]=0$, and $\eps_j \sim \mathcal{N}(0, \sigma_j^2)$ with $\sigma_j^2 > 0$ for $j=1,...,p$. Likewise, we may write
\begin{align*} 
	X_j & = \mu_j + \sum\limits_{i \in \pa^{\text{L}}_D(j)} \alpha_{j,i} X_i + \sum\limits_{i \in \pa^{\text{NL}}_D(j)} f_{j,i}(X_i) + \eps_j, 
\end{align*}
with $\alpha_{j,i} \in \mathbb{R}\setminus \{0\}$, $\mu_j$, $f_{j,i}$, $\eps_j$ as above, $\pa^{\text{L}}_D(j) \cup \pa^{\text{NL}}_D(j) = \pa_D(j)$ and $\pa^{\text{L}}_D(j) \cap \pa^{\text{NL}}_D(j) = \emptyset$. Note that we do not \emph{a priori} fix the sets $\pa^{\text{L}}_D(j)$ and $\pa^{\text{NL}}_D(j)$. 
For~$\PX$ generated by a PLSEM with DAG $D$, the PLSEM corresponding to $D$ is unique (cf. Lemma~\ref{S-le:uniquePLSEM} in the supplement).
%the generating PLSEM is unique (Lemma~\ref{le:uniquePLSEM}). We refer to it as the \emph{corresponding PLSEM}. 
%For this reason, 
Therefrom, we call an edge $i \rightarrow j$ in $D$ a \mbox{\emph{(non-)linear edge}}, if $f_{j,i}$ in the  PLSEM corresponding to $D$ is (non-)linear. 
%\Jan{
Note that the concept of \mbox{(non-)linearity} of an edge is defined with respect to a specific DAG $D$. 
Depending on the orientations of other edges, the status of an edge $i \rightarrow j$ may change from linear to nonlinear. An example is given in Figure~\ref{M-fig:edge-status-change}.
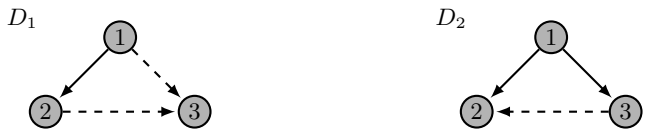
\begin{figure}[h] 
\begin{tikzpicture}[->,>=latex,shorten >=1pt,auto,node distance=1.4cm,
                    thick]
  \tikzstyle{every state}=[fill=black!30,draw=black,text=black, inner sep=0.4pt, minimum size=12pt]
  
  % full graph 1  
  \node[state] (X1a) {$1$};
  \node[state] (X2a) [below left of=X1a] {$2$};
  \node[state] (X3a) [below right of=X1a] {$3$};
  
  \path (X1a) edge [dashed] (X3a)
%  		(X1a) edge [dashed] (X2a)
  		(X1a) edge  (X2a)
  		(X2a) edge [dashed]  (X3a);

  % full graph 2
  \node[state] (X2b) [right=3.3cm of X3a] {$2$};
  \node[state] (X1b) [above right of=X2b] {$1$};
  \node[state] (X3b) [below right of=X1b] {$3$};

  \path (X1b) edge (X2b)
  		(X1b) edge (X3b)
  		(X3b) edge [dashed] (X2b);
   
      \coordinate [label=above:$D_1$] (C) at (-1.3,0.0);	
      \coordinate [label=above:$D_2$] (C) at (4.4,0);	
  
\end{tikzpicture}
\caption{Two DAGs $D_1$ and $D_2$ with linear edges (dashed) and nonlinear edges (solid). Let us give a brief outlook: let $\PX$ be generated by a PLSEM with DAG $D_1$. In this paper we prove that there exists a PLSEM with DAG $D_2$ that generates the same distribution~$\PX$. Moreover, 
we show that %for this example,
%our results allow to conclude that 
$D_1$ and $D_2$ are the only two DAGs with a corresponding PLSEM that generates~$\PX$. For now, simply note that $1 \rightarrow 3$ is linear in $D_1$, but nonlinear in $D_2$.}
\label{M-fig:edge-status-change}
\end{figure}
%}

The restriction to additive SEMs is interesting from both a statistical and computational perspective 
as the estimation of additive functions is well understood and one largely avoids the curse of dimensionality. 
%Still, by allowing for linear and nonlinear additive components, the model offers a great flexibility. 
The assumption of Gaussian noise is necessary for our characterization results in Section~\ref{M-sec:char}. %, but is also quite standard and not overly restrictive. 
In fact, identifiability properties may deteriorate in partially linear models with arbitrary noise distributions, see Section~\ref{M-ssec:relworkOther}. We therefore consider PLSEMs to be among the most general SEMs with reasonable estimation properties. For an extension to error distributions in the neighborhood of the Gaussian distribution,  see Section~\ref{M-sec:misspec}.

 %We will shortly discuss in Section~\ref{M-ssec:relworkOther} that identifiability is in fact decreased when allowing for arbitrary noise distributions. So 
%We define a \emph{model class of a SEM} as $\mathcal{M}=(\mathcal{F}, \mathcal{P})$, where $\mathcal{F}$ specifies the function class for the functions $\{f_j\}_{j=1,...,p}$ and $\mathcal{P}$ the distribution classes for the noise distributions $\{\PNj\}_{j=1,...,p}$. We refer to SEMs adhering to the model class $\mathcal{M}$ as \emph{$\mathcal{M}$-restricted SEMs}. 

\subsubsection{Main task: characterization of all PLSEMs that generate $\PX$}
\label{M-ssec:main-task}

The main task of this paper is the systematic characterization of all PLSEMs that generate a given distribution $\PX$ under very general assumptions. In particular: how do edge functions in different PLSEMs relate to each other? How does changing a single linear edge to a nonlinear edge affect the set of potential underlying PLSEMs? Do causal orderings of different DAGs corresponding to PLSEMs that generate $\PX$ share certain properties? 

%Can all these DAGs be graphically represented by a single PDAG? Is it possible to efficiently estimate this representation?

%This will first be done by rewriting PLSEMs as functions in $\mathbb{R}^p$ that map the variables $X$ to standard multivariate normal variables (cf. equation~\eqref{M-eq:4}) and studying properties of these functions. This functional characterization leads to the understanding of how single nonlinear additive components in PLSEMs restrict the set of potential causal orderings of the variables $X$ (and hence allow for making causal statements). All these results hold without assuming faithfulness.

%Instead of 
%%having a functional characterization of PLSEMs or a characterization of 
%characterizing potential functions or causal orderings of PLSEMs, 
Under faithfulness, it may be natural to characterize all PLSEMs that generate~$\PX$ by their corresponding DAGs as they are restricted to a subset of the Markov equivalence class (see Section~\ref{M-ssec:relworkGeneral}). %question raised in Section~\ref{M-ssec:probdescDAGsSEMs} 
%can be reformulated via the
%For that we introduce the concept of distribution equivalence of two DAGs:
%, which we define as follows:  
%To answer the (partial) identifiability questions raised in Section~\ref{M-ssec:probdescIdentif} for the additive Gaussian model class, we are interested in characterizing the sets
%For a 
%%joint 
%distribution $\PX$ 
%%that has been 
%generated by a PLSEM with DAG $D_1$ (and that is faithful to $D_1$), %the goal is to characterize all PLSEMs (and underlying DAGs) that generate the same distribution. 
%we call a DAG $D_2$ \emph{(PLSEM) distribution equivalent} to $D_1$, %if and only if the associated restricted SEMs $\mathcal{S}_1$ and $\mathcal{S}_2$ in model class $\mathcal{M}$ generate the same set of joint distributions.
%if $\PX$ is faithful to $D_2$ and there exists a PLSEM with DAG $D_2$ that generates the same joint distribution~$\PX$, and vice versa. Satisfying the reflexivity, symmetry and transitivity properties, the notion of distribution equivalence constitutes an equivalence relation on the space of DAGs. Correspondingly, we call 
For a distribution~$\PX$ that has been generated by a faithful PLSEM, we call the set of DAGs
\begin{align*} %\label{M-eq:DEC}
\begin{split}
	\DPX := \left\{ D \ \begin{array}{|l}  \PX \text{ is faithful to } D \text{ and there exists a} \\ \text{PLSEM with DAG } D \text{ that generates } \PX  \end{array} \right\} 
\end{split}
\end{align*}
the \emph{(PLSEM) distribution equivalence class}. %, and $D_1, D_2 \in \DPX$ are called \emph{distribution equivalent}. 
%By Lemma~\ref{M-le:uniquePLSEM}, $\DPX$ also fully characterizes all PLSEMs that generate $\PX$.
Can we build on characterizations of the Markov equivalence class to characterize $\DPX$? For example, can $\DPX$ 
also 
be graphically represented by a 
single 
PDAG? Is it possible to efficiently estimate~$\DPX$? Before we explain our approaches to answer these questions in Section~\ref{M-ssec:contrib}, let us briefly summarize related work.

\subsection{Related work} \label{M-ssec:relwork}

First, in Section~\ref{M-ssec:relworkGeneral}, we discuss the identifiability of general SEMs. 
%We want to briefly 
We then motivate why our theoretical results close a relevant ``gap'' by reviewing existing identifiability results for two special cases of PLSEMs where either all the functions $f_{j,i}$ are linear (Section~\ref{M-ssec:relworkLinGauss}) or 
%all the functions $f_{j,k}$ are 
nonlinear (Section~\ref{M-ssec:relworkNonlinGauss}). Finally, we briefly comment on the assumption of Gaussian noise in Section~\ref{M-ssec:relworkOther}.
 
%Many authors have elaborated on the identifiability (the characterization of the distribution equivalence classes) and estimation of restricted SEMs.
%elaborated on the characterization of distribution equivalence classes 
%for specific types of additive SEMs. 
%However, 
%To the best of our knowledge, identifiability results for PLSEMs only exist for the two special cases where either all the functions $f_{j,k}$ are linear (Section~\ref{ssec:relworkLinGauss}) or 
%%all the functions $f_{j,k}$ are 
%nonlinear (Section~\ref{ssec:relworkNonlinGauss}). 

\subsubsection{Identifiability of general SEMs} \label{M-ssec:relworkGeneral}

In the general SEM as defined in equation~\eqref{M-eq:SEMgeneral} one cannot draw any conclusions about $D$ given $\PX$ without making further assumptions.
% (cf.~\citep[Proposition~9]{petersetal13}). 
One such assumption commonly made is faithfulness (cf. Section~\ref{M-ssec:char_faith}). 
%\Jan{Faithfulness wird nirgendwo richtig definiert...}
%Only if one additonally assumes faithfulness
Under faithfulness, one can identify the Markov equivalence class of $D$ (a set of DAGs that all entail the same conditional independences), see, for example,~\citep{pearl09}. Markov equivalence classes %(and therefrom also the distribution equivalence classes in linear Gaussian SEMs under faithfulness)
 are well-characterized. In fact, the Markov equivalence class of a DAG $D$ consists of all DAGs with the same skeleton and v-structures as~$D$~\citep{vermapearl90} and can be graphically represented by a single partially directed graph (cf.~Section~\ref{M-ssec:char_faith}). Moreover, any two Markov equivalent DAGs can be transformed into each other by a sequence of distinct covered edge reversals~\citep{chickering95}. 
% Also, Markov equivalence classes can be graphically represented by single partially directed graphs (CPDAGs). 

The estimation of the general SEM is difficult due to the curse of dimensionality in fully nonparametric estimation. %\Jan{Gibt es estimation von der Markov equiv. class fuer general SEMs? PC?}
In combination with the unidentifiability,
% mentioned above, 
this motivates
%The unidentifiability and the difficulty of estimation motivate 
the use of restricted SEMs, 
%(such as PLSEMs, for example), 
which have better estimation properties and for which it is possible to achieve 
%good estimation properties and 
(partial) identifiability of the SEM (even without assuming faithfulness), see Section~\ref{M-ssec:char_unfaith} or~\citep{petersetal13} for an overview.

\subsubsection{Special case of PLSEM: Linear Gaussian SEM} \label{M-ssec:relworkLinGauss}

A widespread specification of PLSEMs are linear Gaussian SEMs,
% (all functions $f_{j,k}$ are linear), for which it has been shown that the identifiability properties heavily depend on the distributional assumptions on $\{\PNj\}_{j=1,...,p}$.
%If all $\PNj$ are Gaussian as in~\eqref{M-eq:PLSEM}, 
%In 
%%general, 
%fact, they 
which have the same identifiability properties as the general SEMs: without additional assumptions they are unidentifiable, whereas under faithfulness, %of causal dependencies 
%as general SEMs~. %Therefore, an assumption commonly made is that $\PX$ is faithful to the underlying causal DAG. In words, the faithfulness assumption requires that all conditional independences in $\PX$ are entailed by the Markov condition and can be read off from the DAG via d-separations. 
%%Under faithfulness, 
%the distribution equivalence class of linear Gaussian SEMs 
their distribution equivalence class
%%is equal to 
equals the Markov equivalence class, 
%and hence well-characterized, 
see, for example,~\citep{spirtes2016}.
%\Jan{Gibt's hierfuer eine Standardreferenz?}. % (a set of DAGs that all entail the same conditional independences.   

%Markov equivalence classes %(and therefrom also the distribution equivalence classes in linear Gaussian SEMs under faithfulness)
% and their estimation are well-understood. In fact, the Markov equivalence class of a DAG $D$ can be characterized as the set consisting of all DAGs with the same skeleton and v-strucutres as $D$~\citep{vermapearl90}. Moreover, any two Markov equivalent DAGs can be transformed into each other by a sequence of distinct covered edge reversals~\citep{chickering95}. In addition, Markov equivalence classes can be graphically represented by single graphs (CPDAGs). In a CPDAG, an edge is directed if and only if the edge is directed in every DAG in the Markov equivalence class, whereas an edge is undirected if and only if there exist two Markov equivalent DAGs $D_1, D_2$ such that $i \rightarrow j$ in $D_1$ and $j \rightarrow i$ in $D_2$. Undirected edges can only be oriented such that no additional v-structures or cycles are created. Then, the Markov equivalence class is equal to the set of all DAG extensions of the CPDAG. 
  %\citet{chickering95} provided a transformational characterization stating that two Markov equivalent DAGs can be transformed into each other by a sequence of covered edge  reversals. %These two results are connected, as intuitively, covered edges are precisely the edges that can be reversed without creating new v-structures or cycles. 
The estimation of the Markov equivalence class of linear Gaussian SEMs in the low-dimensional case has been addressed in 
e.g.~\citep{sgs00, chick02}, whereas the high-dimensional scenario (requiring sparsity of the true underlying DAG) is discussed in e.g.~\citep{kabu07, sarpet12, pbcausal13, nandy15}. 

An exception of identifiability of linear Gaussian SEMs occurs if all $\eps_j$ have equal variances $\sigma_j^2 = \sigma^2 > 0, \forall j$. Under this assumption, the true underlying DAG $D$ is identifiable~\cite{petbu13}. Yet, the assumption of equal noise variances seems to be overly restrictive in many scenarios. 
%These two special cases have been generalized by~\citet{hoy08PCLingam}, who prove that in linear SEMs with arbitrary noise distributions, assuming faithfulness, in addition to all edges that are oriented based on conditional independences in $\PX$ (in the Gaussian setting), all edges that are incident to a node with corresponding non-Gaussian noise variable can be oriented. 
%Despite the fact that both, identifiability and estimation of linear SEMs, are well-understood, 
%In general, the practical applicability of linear SEMs suffers from the requirement that all functions $f_{j,k}$ need to be linear. This clearly 
In general, the linearity assumption may be rather restrictive if not implausible in some cases.

\subsubsection{Special case of PLSEM: Nonlinear additive SEM with Gaussian noise}  \label{M-ssec:relworkNonlinGauss}
Interestingly, the assumption of exclusively nonlinear functions $f_{j,i}$ in equation~\eqref{M-eq:PLSEM} greatly improves the identifiability properties, see~\citep{hoy09} for the bivariate case and~\citep{petersetal13} for a general treatment. In fact, if 
%the functions 
%
all $f_{j,i}$ 
%in~\eqref{M-eq:PLSEM} 
are nonlinear and three times differentiable, $\DPX$ only consists of the single true underlying DAG $D$~\citep[Corollary 31 (ii)]{petersetal13}. 
%Inspired by an identifiability result for the bivariate case~\citep{hoy09}, it has recently been shown that if all functions $f_{j,k}$ in the additive Gaussian SEM~\eqref{M-eq:PLSEM} are nonlinear and three times differentiable, $\DPX$ only consists of the single true underlying DAG $D$~\citep[Corollary 31 (ii)]{petersetal13}. 
%Note that the main identifiability result in the above paper concerns general additive noise models and only requires the functions to be additive in the noise component. However, the assumption that the functions are nonlinear in all their arguments is crucial. 
The nonlinearity assumption is crucial, though. The authors provide an example where two DAGs are distribution equivalent if one of the nonlinear functions is replaced by a linear function~\citep[Example 26]{petersetal13}. 

Various estimation methods have been introduced for additive nonlinear SEMs to infer the underlying DAG~\citep{petersetal13,vdg13,nowzopb13}. In particular, a restricted maximum likelihood estimation method called CAM, which is consistent in the low- and high-dimensional setting (assuming a sparse underlying DAG), has been proposed specifically for nonlinear additive SEMs with Gaussian noise~\citep{pbjoja13}. 

%Despite the better identifiability properties, nonlinear additive SEMs face an analoguous inflexibility to linear SEMs by requiring that all relations between the variables have to be nonlinear. 

\subsubsection{Identifiability of PLSEMs with non-Gaussian or arbitrary noise} \label{M-ssec:relworkOther}
The identifiability
properties
of linear SEMs 
%with arbitrary noise distributions is comprehensively characterized. 
generally improve if one allows for non-Gaussian noise distributions. In fact, if
%If 
all but one $\eps_j$ are assumed to be non-Gaussian (commonly referred to as LiNGAM setting), %the LiNGAM distribution equivalence class only consists of the single 
the underlying DAG~$D$ is identifiable~\citep{shim06}. 
A general theory for linear SEMs with arbitrary noise distributions is presented in~\citep{hoy08PCLingam}. Both papers also propose estimation procedures for the respective model classes. 
 
Unfortunately, the situation is different 
%if one also does not restrict the functional form: 
for PLSEMs:
identifiability can be lost if one considers PLSEMs with non-Gaussian (or arbitrary) noise distributions. This can be seen from a specific example of a bivariate linear SEM with Gumbel-distributed noise, which is identifiable in the LiNGAM framework, but for which there exists a nonlinear additive backward model~\citep{hoy09}. Still, this example seems to be rather particular. In fact, for bivariate additive SEMs, all unidentifiable cases of additive models can be classified into five categories, see~\citep{zhang09, petersetal13}. Based on bivariate identifiability, it has been shown that one can conclude multivariate identifiability under an additional assumption referred to as IFMOC assumption~\citep{peters11}. For instance, this approach allows to conclude identifiability of the multivariate LiNGAM and CAM settings and as such covers settings with both, Gaussian or non-Gaussian noise and all  linear or all nonlinear functions. However, it is less explicit than the results presented in Section~\ref{M-sec:char}. In particular, it does not allow for a characterization of the distribution equivalence class of a PLSEM with Gaussian noise where some of the edge functions are linear and some are nonlinear.
	%Our results in Section~\ref{M-sec:char} provide a full characterization of the distribution equivalence classes of PLSEMs with Gaussian noise.
%This means that even though the causal DAG is identifiable in the LiNGAM setting, it is not identifiable in the PLSEM framework with non-Gaussian (or arbitrary) noise.

%In addition, a general theory for nonlinear SEMs with additive noise is presented in  \citep{petersetal13}. Surprisingly,  %present an identifiability result for general nonlinear additive noise models where only the noise component needs to be additive. However, estimation in these models is difficult due to the aforementioned problem with the curse of dimensionality.

\subsection{Our contribution} \label{M-ssec:contrib}

%identifiability results for partially linear SEMs with Gaussian noise~\eqref{M-eq:PLSEM} are limited to the two special cases of linear SEMs (cf. Section~\ref{M-ssec:relworkLinGauss}) and nonlinear additive SEMs  (cf. Section~\ref{M-ssec:relworkNonlinGauss}). For the former, $\PX$ is multivariate Gaussian and the distribution equivalence classes $\DPX$ correspond to the well-characterized Markov equivalence classes under the faithfulness assumption, whereas for the latter, $\DPX$ only consists of the single true underlying DAG $D$. Therefore, 
%T. %~\eqref{M-eq:PLSEM}. 
As discussed in Section~\ref{M-ssec:relwork}, there exists a wide ``identifiability gap'' for PLSEMs.
Their identifiability has only been studied for the two special cases of linear SEMs and entirely nonlinear additive SEMs. Moreover, to the best of our knowledge, it has not yet been understood to what extent (single) nonlinear functions in additive SEMs with Gaussian noise 
%influence the identifiability of the underlying causal model.  
restrict the 
underlying 
causal model.
%Yet we believe that this model class for SEMs is highly relevant for the analysis of many practical applications where one does not want to restrict the functional form \emph{a priori}. With the unrestricted functional form of the additive components, partially linear SEMs offer a great flexibility in modeling the dependencies between the variables. 
%If we allow for arbitrary error distributions in the additive SEM, identifiability properties may deteriorate as shown in Section~\ref{M-ssec:relworkOther}. We therefore believe that the partially linear SEM~\eqref{M-eq:PLSEM} can be considered as one of the most general SEMs with good estimation properties.
% However, there exists a wide identifiability gap for all partially linear SEMs, where the functional form of the functions $f_{j,k}$ is not exclusively restricted to be either of linear or nonlinear type. Notwithstanding, we believe that the characterization of the general (partially linear) setting is highly relevant for many practical examples where one does not want to restrict the functional form \textit{a priori}. 
We close the ``identifiability gap'' for PLSEMs and answer the questions raised in Section~\ref{M-ssec:main-task} with the following theoretical results:
%We answer all these points with the following theoretical results:
\begin{enumerate}
\item[(A)] A graphical representation of $\DPX$ with a single partially directed graph $\GDPX$ in Section~\ref{M-ssec:char_graphical} (analogous to the use of CPDAGs to represent Markov equivalence classes).
\item[(B)] A transformational characterization of $\DPX$ through sequences of covered \mbox{\emph{linear}} edge reversals in Section~\ref{M-ssec:char_trafo} (analogous to the %transformational 
characterization of Markov equivalence classes via sequences of covered edge reversals in~\citep{chickering95}).
\item[(C)] A functional characterization of PLSEMs in Section~\ref{M-ssec:funcchar}: 
%which states that 
all PLSEMs that generate the same distribution $\PX$ are constant rotations of each other.
\item[(D)] A causal orderings characterization of PLSEMs  
%A characterization of PLSEMs based on causal orderings 
in Section~\ref{M-ssec:orderchar}. In particular, it precisely specifies to what extent nonlinear functions in PLSEMs restrict the set of potential causal orderings.
\end{enumerate}
The first two characterizations hold only under faithfulness, the third and fourth are general. We will give details on the precise interplay between nonlinearity and faithfulness in Section~\ref{M-ssec:interplay}.
Building on the transformational characterization result in (B) we provide an efficient score-based estimation procedure that outputs the graphical representation $\GDPX$ in (A) given $\PX$ and one DAG $D \in \DPX$. The proposed algorithm only relies on sequences of local transformations and score computations and hence is feasible for large graphs with numbers of variables in the thousands (assuming reasonable sparsity). We demonstrate its performance on simulated data. Moreover, we provide some robustness results for identifiability in the neighborhood of Gaussian noise and we derive (high-dimensional) consistency based on the consistency proof of the CAM methodology in~\citep{pbjoja13}.

\section{Comprehensive characterization of PLSEMs} \label{M-sec:char}
%\Jan{NEU:Titel} \\
%\Jan{NEU:Chronologischer Aufbau der Einleitung wie von dir vorgeschlagen. } \\
In this section we present our main theoretical results. They consist of %comprehensive 
characterizations of PLSEMs that generate a given distribution $\PX$ from various perspectives.
%In this section we present comprehensive characterizations of all PLSEMs that generate a given distribution $\PX$. 
In Section~\ref{M-ssec:char_faith} we assume that $\PX$ is faithful to the underlying causal model and demonstrate that this leads to a transformational characterization and a graphical representation of $\DPX$ very similar to the well-known counterparts characterizing a Markov equivalence class. Our main theoretical contributions, which hold under very general assumptions and, in particular, do not rely on the faithfulness assumption, are presented in Section~\ref{M-ssec:char_unfaith}. 
They fully characterize all PLSEMs that generate a given distribution~$\PX$ on a functional level. Moreover, they explain how nonlinear functions impose very specific restrictions on the set of potential causal orderings.
%They consist of 
%proofs of 
%characterizations  %from the perspective of causal orderings and from a functional viewpoint 
%based on a functional viewpoint (on the level of PLSEMs) and based on restrictions of potential causal orderings 
% 
%These results are presented in Section~\ref{M-ssec:char_unfaith}. 
%First, we show that under faithfulness, these general characterizations give rise to a graphical and a transformational characterization of $\DPX$ very similar to well-known characterizations of Markov equivalence classes. %Both results follow from a more general framework that is presented in the second part where we prove characterizations of $\DPX$ from the perspective of causal orderings and from a functional viewpoint, not relying on the faithfulness assumption. 
Section~\ref{M-ssec:interplay} brings together the two previous sections by discussing the precise interplay of nonlinearity and faithfulness.

\subsection{Characterizations of $\DPX$ under faithfulness} \label{M-ssec:char_faith}

Let $\PX$ be generated by a PLSEM with DAG $D \in \DPX$. The goal of this section is to characterize $\DPX$. Recall that $\DPX$ is the set of all DAGs~$D$ such that $\PX$ is faithful to $D$ and there exists a PLSEM with DAG $D$ that generates~$\PX$. 
%Therefore, by definition, $\PX$ is faithful to $D$. 
In words, faithfulness means that no conditional independence relations other than those entailed by the Markov property hold, see e.g.~\citep{sgs00}. 
%We assume that $\PX$ is faithful with respect to the underlying DAG, that is, 
%By definition, $D \in \DPX$.
% Due to faithfulness, 
In particular, it implies that 
%the distribution equivalence class 
$\DPX$ is a subset of the Markov equivalence class and all DAGs in $\DPX$ have the same skeleton and $v$-structures~\citep{vermapearl90}. Markov equivalence classes can be graphically represented with single graphs, known as CPDAGs (also referred to as essential graphs, maximally oriented graphs or completed patterns)~\citep{chickering95, andersson1997, meek1995, vermapearl90}, where an edge is directed if and only if it is oriented the same way in all the DAGs in the Markov equivalence class, else, it is undirected. 
 The Markov equivalence class then equals the set of all DAGs %extensions of the CPDAG, that is, all DAGs 
 that can be obtained from the CPDAG by orienting the undirected edges without creating new $v$-structures. % or directed cycles. 
 In Section~\ref{M-ssec:char_graphical} we derive an analogous graphical representation of $\DPX$.

Another useful (transformational) characterization result says that any two Markov equivalent DAGs can be transformed into each other by a sequence of distinct covered edge reversals~\citep{chickering95}. We will demonstrate in Section~\ref{M-ssec:char_trafo} that a very similar principle 
%holds for 
transfers to $\DPX$.

%In Sections~\ref{M-ssec:char_graphical} and~\ref{M-ssec:char_trafo} we derive very similar graphical and transformational characterizations for $\DPX$.

\subsubsection{Graphical representation of $\DPX$} \label{M-ssec:char_graphical}

The distribution equivalence class 
$\DPX$ can be graphically represented by a single partially directed acyclic graph (PDAG). A PDAG is a graph with directed and undirected edges that does not contain any directed cycles. A \emph{consistent DAG extension} of a PDAG is a DAG with the same skeleton, the same edge orientations on the directed subgraph of the PDAG, and no additional $v$-structures. 

%\Jan{Careful with definition of maximally oriented...}

\begin{defn} \label{M-def:PDAGrepr}
%Let $\mathcal{E}$ be a set consisting of DAGs with the same skeleton and v-structures. 
%We say that a PDAG is \emph{maximally oriented} with respect to $\mathcal{E}$ and denote it by $G_{\mathcal{E}}$ if and only if it satisfies the following properties: an edge $i \rightarrow j$ is directed in $G_{\mathcal{E}}$ if and only if it is oriented as $i \rightarrow j$ in all the DAGs in $\mathcal{E}$. Moreover, an edge $i \text{ --- } j$ is undirected in $G_{\mathcal{E}}$ if and only if there exist DAGs $D_1, D_2 \in \mathcal{E}$ such that $i \rightarrow j$ in $D_1$ and $i \leftarrow j$ in $D_2$.
%Let $\mathcal{E}$ be a set of DAGs with same skeleton and $v$-structures. 
Let $\mathcal{E}$ be a set of Markov equivalent DAGs.
We denote by $G_{\mathcal{E}}$ the PDAG that has the same skeleton as the DAGs in $\mathcal{E}$ and $i\rightarrow j$ in $G_{\mathcal{E}}$ if and only if $i \rightarrow j$ in all the DAGs in $\mathcal{E}$, else, $i \text{ --- }
 j$. We say that $G_{\mathcal{E}}$ is maximally oriented with respect to $\mathcal{E}$.
%We say that a PDAG is \emph{maximally oriented} with respect to $\mathcal{E}$ and denote it by $G_{\mathcal{E}}$ if and only if $G_{\mathcal{E}}$ has the same skeleton as the DAGs in $\mathcal{E}$ and an edge $i \rightarrow j$ is oriented in $G_{\mathcal{E}}$ if and only if $i \rightarrow j$ in all the DAGs in $\mathcal{E}$, else, it is undirected. 
%Moreover, an edge $i \text{ --- } j$ is undirected in $G_{\mathcal{E}}$ if and only if there exist DAGs $D_1, D_2 \in \mathcal{E}$ such that $i \rightarrow j$ in $D_1$ and $i \leftarrow j$ in $D_2$.
\end{defn}

%This definition is a slight alteration of the one in~\citep{meek1995}, where maximally oriented graphs are defined with respect to a set of background knowledge, see also Section~\ref{M-ssec:estim_GDPX}.
%Note that $G_{\mathcal{E}}$ can be interpreted as a maximally oriented graph as defined in~\citep{meek1995}. For details, we refer to Section~\ref{M-ssec:estim_GDPX}.
For a given distribution equivalence class $\DPX$, the corresponding 
%(maximally oriented) 
PDAG $G_{\DPX}$ is uniquely defined by Definition~\ref{M-def:PDAGrepr}. 
%Moreover, $G_{\DPX}$ provides a full graphical characterization of $\DPX$.
Moreover, $G_{\DPX}$ is a graphical representation of $\DPX$ in the following sense:
%The following theorem shows that $G_{\DPX}$ is also a graphical representation of $\DPX$. % in the following way:
%. Moreover, every DAG in $\DPX$ is an oriented extension of the completed PDAG. 
\begin{thm} \label{M-thm:pdag}
%Let $G_{\DPX}$ be the maximally oriented PDAG with respect to $\DPX$. 
%as defined in Definition~\ref{M-def:PDAGrepr}. 
%Then 
$\DPX$ equals the set of 
all 
consistent DAG extensions of~$G_{\DPX}$.
\end{thm}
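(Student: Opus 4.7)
The plan is to prove the two inclusions in $\DPX = \{\text{consistent DAG extensions of } \GDPX\}$ separately. The forward inclusion is essentially bookkeeping from Definition~\ref{M-def:PDAGrepr} together with the Verma--Pearl characterization of Markov equivalence, while the reverse inclusion relies on the transformational characterization (B) of Section~\ref{M-ssec:char_trafo}, whose statement I am free to use.

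For the forward inclusion, I fix $D \in \DPX$ and verify the three defining properties of a consistent DAG extension of $\GDPX$. That $D$ has the same skeleton as $\GDPX$ and that the directed edges of $\GDPX$ are oriented the same way in $D$ are immediate from Definition~\ref{M-def:PDAGrepr}. To rule out a new v-structure in $D$, I use that faithfulness forces $\DPX$ to lie inside the Markov equivalence class of $D$, so by \citet{vermapearl90} every DAG in $\DPX$ has the same v-structures; consequently the two edges forming any v-structure in $D$ are oriented identically throughout $\DPX$ and hence appear as directed edges in $\GDPX$.

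For the reverse inclusion I fix a reference $D \in \DPX$ and an arbitrary consistent DAG extension $D'$ of $\GDPX$ and argue that $D'$ is reachable from $D$ by covered \emph{linear} edge reversals, so that characterization (B) gives $D' \in \DPX$. Since $D$ and $D'$ share skeleton and v-structures, Chickering's theorem \citep{chickering95} supplies a sequence of covered edge reversals inside the Markov equivalence class transforming $D$ into $D'$, and the reversed edges are exactly those where $D$ and $D'$ disagree, all of which are undirected in $\GDPX$. An edge $i\,\text{---}\,j$ undirected in $\GDPX$ is, by Definition~\ref{M-def:PDAGrepr}, oriented in both directions somewhere in $\DPX$, so applying (B) to the two witnessing DAGs shows that this edge must be linear (reversing a covered nonlinear edge would leave $\DPX$). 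Combined with the uniqueness of the PLSEM corresponding to a given DAG (Lemma~\ref{S-le:uniquePLSEM}) and the locality of covered edge reversals, this should let me upgrade Chickering's sequence into a sequence of covered linear reversals that stays inside $\DPX$ at every step.

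The main obstacle is the edge-by-edge upgrading in that last step. Chickering's construction guarantees only that each reversed edge is covered at the moment of reversal inside the Markov equivalence class, whereas I additionally need each reversed edge to be linear in the current intermediate DAG so that characterization (B) applies to the local move. Making the induction airtight amounts to showing that the linear/nonlinear status of an undirected edge of $\GDPX$ is preserved under all reorientations that occur along the sequence inside $\DPX$, and that the sequence can be reordered so that every reversed edge is covered and linear at the time it is processed. I expect both facts to follow by iterating (B) and exploiting that $\GDPX$ is intrinsic to $\PX$ and does not depend on any particular orientation chosen along the way.
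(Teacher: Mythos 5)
Your architecture is the same as the paper's (the forward inclusion is immediate from Definition~\ref{M-def:PDAGrepr} and Verma--Pearl; the reverse inclusion goes through Chickering's covered-edge-reversal sequence plus the transformational characterization), but the step you yourself flag as ``the main obstacle'' is a genuine gap, and the two devices you propose for closing it would not work. First, you cannot freely reorder Chickering's sequence: coveredness of an edge at the moment of reversal depends on the order, so ``reordering so that every reversed edge is covered and linear when processed'' is not an available move. Second, ``the linear/nonlinear status of an edge is preserved under reorientations inside $\DPX$'' is false in general --- Figure~\ref{M-fig:edge-status-change} exhibits $1 \rightarrow 3$ linear in $D_1$ but nonlinear in $D_2$ with both DAGs in $\DPX$ --- and your argument from the ``two witnessing DAGs'' only shows that an undirected edge of $\GDPX$ is linear in \emph{some} DAG of $\DPX$ in which it happens to be covered, not in the particular intermediate DAG $D_r$ of your sequence.

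The missing ingredient is the second sentence of Theorem~\ref{M-thm:covered-reversals}~(a), which you never invoke: if $i \rightarrow j$ is covered \emph{and nonlinear} in some $D \in \DPX$, then $i \rightarrow j$ in \emph{all} DAGs of $\DPX$, hence $i \rightarrow j$ is directed in $\GDPX$. With this, the induction closes without any preservation claim: walk along Chickering's sequence $D = D_1, \ldots, D_m = D'$; assuming $D_r \in \DPX$, the edge reversed at step $r$ is covered in $D_r$, and it is one on which $D$ and $D'$ disagree (distinctness of the reversals), hence undirected in $\GDPX$; by the contrapositive of the clause above it must therefore be linear in the unique PLSEM of $D_r$, so the ``if and only if'' part of (a) gives $D_{r+1} \in \DPX$. (The paper phrases this as a contradiction at the first nonlinear covered reversal, but it is the same argument.) So your proof is completable, but only by importing this additional piece of Theorem~\ref{M-thm:covered-reversals}~(a), not by the route you sketch.
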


A proof can be found in Section~\ref{S-sec:prf-graphical} in the supplement. Theorem~\ref{M-thm:pdag} states that one can represent $\DPX$ with a single PDAG $G_{\DPX}$ without loss of information, as $\DPX$ can be reconstructed from $G_{\DPX}$ by listing all consistent DAG extensions. An example is given in Figure~\ref{M-fig:exampleGraphChar}.
Note that $\GDPX$ can be interpreted as a maximally oriented graph with respect to some background knowledge as defined in~\citep{meek1995}. For details, we refer to Section~\ref{M-ssec:estim_GDPX}.

\begin{figure}[h]
\centering
\begin{tikzpicture}[->,>=latex,shorten >=1pt,auto,node distance=1.0cm,
                    thick]
  \tikzstyle{every state}=[fill=black!30,draw=black,text=black, inner sep=0.4pt, minimum size=12pt]
  
  % G_{\DPX}
  \node[state] (X1a) {$1$};
  \node[state] (X2a) [below right of=X1a] {$2$};
  \node[state] (X3a) [above right of=X2a] {$3$};
  \node[state] (X4a) [below of=X2a] {$4$};
  
  \path (X1a) edge (X2a) 
  		(X2a) edge (X4a);
  \path[-] 
  		(X1a) edge (X3a)
		(X2a) edge (X3a);

  \coordinate [label=above:$G_{\DPX}$] (L1) at (0.75,-2.75);	

  % DAG 1
  \node[state] (X1b) [right=1.5cm of X3a] {$1$};
  \node[state] (X2b) [below right of=X1b] {$2$};
  \node[state] (X3b) [above right of=X2b] {$3$};
  \node[state] (X4b) [below of=X2b] {$4$};
    
  \path (X1b) edge (X2b)
  		(X2b) edge [dashed] (X4b)
  		(X3b) edge [dashed] (X1b)
  		(X3b) edge [dashed] (X2b);

  % DAG 2
  \node[state] (X1c) [right of=X3b] {$1$};
  \node[state] (X2c) [below right of=X1c] {$2$};
  \node[state] (X3c) [above right of=X2c] {$3$};
  \node[state] (X4c) [below of=X2c] {$4$};
    
  \path (X1c) edge (X2c)
  		(X2c) edge [dashed] (X4c)
  		(X1c) edge [dashed] (X3c)
  		(X3c) edge [dashed] (X2c);

  % DAG 3
  \node[state] (X1d) [right of=X3c] {$1$};
  \node[state] (X2d) [below right of=X1d] {$2$};
  \node[state] (X3d) [above right of=X2d] {$3$};
  \node[state] (X4d) [below of=X2d] {$4$};
    
  \path (X1d) edge (X2d)
  		(X2d) edge [dashed] (X4d)
  		(X1d) edge (X3d)
  		(X2d) edge [dashed] (X3d);

  \coordinate [label=above:$\DPX$] (L1) at (6.5,-2.75);	
\end{tikzpicture}
\caption{Graphical representation of $\DPX$ with the single PDAG $G_{\DPX}$. $\DPX$ equals the set of all consistent DAG extensions of $G_{\DPX}$. The graph with $2 \rightarrow 3 \rightarrow 1$ is not a consistent DAG extension of $G_{\DPX}$ as it contains a cycle. Linear edges are dashed, nonlinear edges are solid.}
\label{M-fig:exampleGraphChar}
\end{figure}
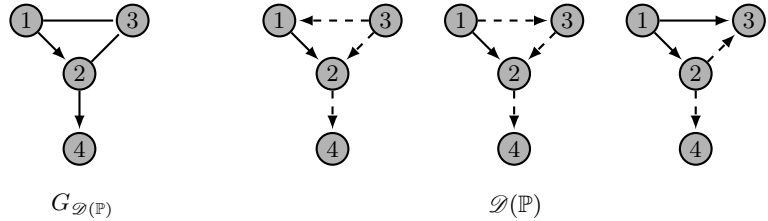

Conceptually, this is analogous to the use of CPDAGs to represent Markov equivalence classes. There are important differences, though: first of all, necessary and sufficient conditions have been derived for a graph to be a CPDAG of a Markov equivalence class~\citep[Theorem 4.1]{andersson1997}. These properties do not all transfer to $G_{\DPX}$. For example, $G_{\DPX}$ typically is not a chain graph, see Figure~\ref{M-fig:exampleGraphChar}. 
%nor is its undirected part necessarily chordal. 
Secondly, given a DAG $D$, the CPDAG (and hence a full characterization of the Markov equivalence class) can be obtained by an iterative application of three purely graphical orientation rules (R1-R3 in Figure~\ref{M-fig:orientationRules}) applied to the pattern of~$D$~\citep{meek1995}. This is not true for $G_{\DPX}$ and $\DPX$. It is still feasible to obtain $G_{\DPX}$ from a DAG $D \in \DPX$, but it is crucial to know which of the functions in the (unique) corresponding PLSEM (cf. Lemma~\ref{S-le:uniquePLSEM} in the supplement) are linear and which are nonlinear. %, that is, to know the distribution $\PX$. 
We will show in Section~\ref{M-sec:estim} that 
the transformational characterization in Theorem~\ref{M-thm:covered-reversals} gives rise to a
%it is possible to provide a consistent, 
consistent and efficient score-based procedure to estimate $G_{\DPX}$ 
%from a DAG
based on $D \in \DPX$ and samples of $\PX$.
% based on the transformational characterization result in Theorem~\ref{M-thm:covered-reversals}. 

\subsubsection{Transformational characterization of $\DPX$} \label{M-ssec:char_trafo}

Given $D \in \DPX$, the distribution equivalence class $\DPX$ can be comprehensively characterized via sequences of local transformations of DAGs. 
%in the following way:
%\begin{thm} \label{M-thm:covered-reversals}
%  Assume that $\PX$ has been generated by a PLSEM and that it is faithful to the underlying DAG. Then, the following two results hold:
%\begin{enumerate}  
%	\item[(a)]    Let $D_1 \in \DPX$, $i \rightarrow j$ be a covered linear edge in $D_1$, and $D_2$ be the DAG that differs from $D_1$ only by the reversal of $i \rightarrow j$. Then, $D_2 \in \DPX$ and $j \rightarrow i$ is a covered linear edge in $D_2$. 
%  	\item[(b)] Let $D_{1}, D_{2} \in \mathcal{D}(\PX)$. Then there exists a sequence of distinct covered linear edge reversals that transforms $D_{1}$ to $D_{2}$.
%  \end{enumerate}
%\end{thm}

\begin{thm}\label{M-thm:covered-reversals}
  Assume that $\PX$ has been generated by a PLSEM and that it is faithful to the underlying DAG. Then, the following two results hold: 
  %\Dominik{Sollten wir in der Definition von $\DPX$ nicht noch faithfulness annehmen?} \Jan{Habe ich jetzt gemacht... Dann brauchen wir hier die obigen zwei Zeilen gar nicht mehr und koennen direkt mit (a) starten, oder was meinst du?}
\begin{enumerate}  
	\item[(a)]    Let $D \in \DPX$, $i \rightarrow j$ covered in $D$, and $D'$ be the DAG that differs from~$D$ only by the reversal of $i \rightarrow j$. Then, $D' \in \DPX$ if and only if $i \rightarrow j$ is linear in $D$. 
	Furthermore, if $i \rightarrow j$ is 
	covered and 
	nonlinear in $D$, then 
	%there exists no DAG in $\DPX$ with $j \rightarrow i$.
	$i \rightarrow j$ in all DAGs in $\DPX$.
  	\item[(b)] Let $D, D' \in \DPX$. Then there exists a sequence of distinct covered linear edge reversals that transforms $D$ to $D'$.
  \end{enumerate}
\end{thm}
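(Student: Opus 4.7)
The plan is to reduce part (a) to a bivariate identifiability question by conditioning on the common parent set, and then to prove (b) by induction via Chickering's covered-edge-reversal theorem combined with (a).

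For part (a), the assumption that $i\to j$ is covered gives $S:=\pa_D(i)=\pa_D(j)\setminus\{i\}$, so the PLSEM on $D$ reads
\[
  X_i = \mu_i + \sum_{k\in S} f_{i,k}(X_k) + \eps_i, \qquad
  X_j = \mu_j + f_{j,i}(X_i) + \sum_{k\in S} f_{j,k}(X_k) + \eps_j.
\]
Conditioning on $X_S=x_S$, the joint law of $(X_i,X_j)$ is a bivariate additive Gaussian SEM in the direction $i\to j$ whose means depend additively on $X_S$ through the $f_{\cdot,k}$'s. Whether the single-edge reversal $D'$ lies in $\DPX$ therefore reduces to whether this bivariate conditional model admits an additive Gaussian decomposition in the direction $j\to i$ whose means remain additive in $X_S$. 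In the linear case $f_{j,i}(x)=\alpha x$, $(X_i,X_j)\mid X_S$ is jointly bivariate Gaussian and the refactored conditionals $X_j\mid X_S$ and $X_i\mid X_j, X_S$ can be computed in closed form; both means stay additive in $X_S$ because they are affine combinations of the $f_{i,k}$'s and $f_{j,k}$'s, and $X_i$ depends \emph{linearly} on $X_j$, which yields the desired PLSEM with DAG $D'$ (refactored noise variances remain strictly positive). Faithfulness is inherited because a covered reversal preserves the skeleton and $v$-structures and the refactored regression coefficients and functions are generically nonzero, so no new conditional independences arise. In the nonlinear case, I invoke the bivariate identifiability of nonlinear additive Gaussian SEMs \citep{hoy09,petersetal13}: such a bivariate model cannot be reversed within the additive Gaussian class, ruling out a PLSEM with DAG $D'$ and hence $D'\notin\DPX$.

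Part (b) proceeds by induction on the number $N$ of edges oriented differently between $D$ and $D'$. For the inductive step, Chickering's theorem, applied inside the Markov equivalence class that contains $\DPX$, produces a covered edge $i\to j$ in $D$ with $j\to i$ in $D'$. By the iff of (a) this edge is either linear---in which case its reversal lies in $\DPX$ and has $N-1$ differently oriented edges, so induction concludes---or nonlinear. The main obstacle is ruling out the nonlinear alternative: one must show that some covered edge with opposite orientation in $D'$ is linear in $D$. I would address this by running Chickering's construction with respect to a topological order of $D'$ and choosing the minimal vertex $v$ where the parent sets in $D$ and $D'$ differ; the covered edge incident to $v$ that Chickering's procedure returns is forced to be linear in $D$ because $D'\in\DPX$ has it reversed and the iff of (a), applied along a sequence of intermediate DAGs tracing $D'$ back to $D$, propagates linearity.

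Finally, the ``furthermore'' of (a) is a corollary of (b) and the iff: if some $D''\in\DPX$ had $j\to i$, part (b) would yield a sequence $D=D_0,\ldots,D_k=D''$ of covered linear reversals, and at the first index $m$ at which the orientation of the edge between $i$ and $j$ flips, this edge must be covered and linear in $D_{m-1}$. Tracking how the linear/nonlinear status of $i\to j$ is preserved under reversals of other edges---using that the conditional $X_j\mid X_{\pa(j)}$, and therefore the form of $f_{j,i}$, only changes when $\pa(j)$ itself changes---then yields a contradiction with the nonlinearity of $i\to j$ in $D$. Rigorous propagation of the linearity status under covered reversals is the second main technical obstacle of the proof.
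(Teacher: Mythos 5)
Your reduction of the ``if and only if'' in (a) to a bivariate problem by conditioning on the common parent set $S=\pa_D(i)=\pa_D(j)\setminus\{i\}$ is a genuinely different route from the paper, which instead works with the PLSEM-function $\F$ of equation~\eqref{M-eq:4} and shows via its Jacobian that $e_{j}^{t}(\D\F)^{-1}\partial_{i}^{2}\F=-\partial_i^2 f_{j,i}$ for a covered edge, so that reversibility is equivalent to membership of $(i,j)$ in the set $\mathcal{V}$ of equation~\eqref{M-eq:defV}. Your route is plausible for the single-reversal ``iff'' (modulo two details: the invoked bivariate identifiability results assume three-times differentiability whereas the paper only assumes $C^2$, and ``generically nonzero refactored coefficients'' should be replaced by the causal-minimality argument the paper uses in Lemma~\ref{S-lem:trans-char-via-V} to conclude that the refactored model has DAG exactly $D'$ rather than a subgraph).

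The genuine gap is in part (b) and in the ``furthermore'' of (a). Your induction takes Chickering's sequence of distinct covered edge reversals from $D$ to $D'$ and needs to rule out that the next covered edge in the sequence is nonlinear in the current DAG. The single-reversal ``iff'' of (a) cannot do this: it only compares a DAG with its one-edge reversal, whereas $D'$ differs from the current DAG in many edges, so $D'\in\DPX$ gives no direct information about that one reversal. Your proposed fix --- propagating linearity ``along a sequence of intermediate DAGs tracing $D'$ back to $D$'' --- presupposes exactly the connectivity of $\DPX$ under covered \emph{linear} reversals that (b) asserts, so the argument is circular. The same issue undermines your derivation of the ``furthermore'': the linear/nonlinear status of the edge $i\rightarrow j$ is \emph{not} preserved under covered reversals of other edges incident to $j$ (those reversals change $\pa(j)$ and hence $f_{j,i}$; see Figure~\ref{M-fig:edge-status-change}), so tracking the status along the sequence does not yield the contradiction you want. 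The paper closes both gaps with a single DAG-independent invariant: Theorem~\ref{M-thm:char-via-order} shows that the set $\mathcal{V}$ computed from any one PLSEM-function characterizes \emph{all} admissible causal orderings of \emph{all} PLSEMs generating $\PX$. Then a covered edge that is nonlinear in one $D\in\DPX$ gives $(i,j)\in\mathcal{V}$, forcing $\sigma(i)<\sigma(j)$ in every member of the class (the ``furthermore''), and conversely any covered edge whose orientation differs between two members of $\DPX$ must satisfy $(i,j)\notin\mathcal{V}$, hence is linear and reversible by Lemma~\ref{S-lem:trans-char-via-V}, which makes the induction for (b) go through. Some such global invariant is needed; the purely local bivariate reduction does not supply one.
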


A proof can be found in Section~\ref{S-sec:prf-covered} in the supplement and an 
%illustrative example
illustration is provided in Figure~\ref{M-fig:exampleTrafoChar}. 
%It is relatively easy to show that one stays in $\DPX$ when reversing a covered linear edge. 
Note that the interesting part of this result is that $\DPX$ is connected with respect to covered linear edge reversals. It will be of particular importance in the design of score-based estimation procedures for $\DPX$ and $G_{\DPX}$ in Section~\ref{M-sec:estim}.

Theorem~\ref{M-thm:covered-reversals} covers the two special cases discussed in Section~\ref{M-ssec:relwork}:
if all the functions $f_{j,i}$ in equation~\eqref{M-eq:PLSEM} are linear, 
%the distribution equivalence class 
$\DPX$ (which, in this setting, is equal to the Markov equivalence class) can be fully characterized by sequences of covered edge reversals of $D$ (as all the edges are linear). If, on the contrary, all the functions $f_{j,i}$ in equation~\eqref{M-eq:PLSEM} are nonlinear, $\DPX$ only consists of the DAG~$D$ as there is no covered linear edge in $D$. 

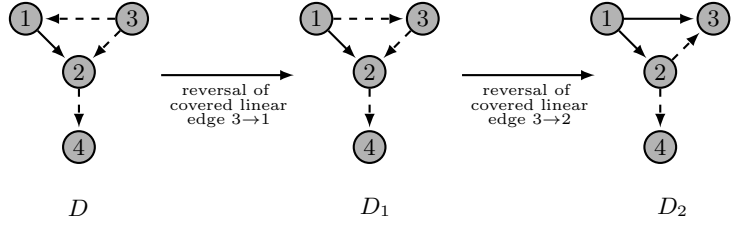
\begin{figure}[h]
\centering
\begin{tikzpicture}[->,>=latex,shorten >=1pt,auto,node distance=1.0cm,
                    thick]
  \tikzstyle{every state}=[fill=black!30,draw=black,text=black, inner sep=0.4pt, minimum size=12pt]
  
  % DAG 1
  \node[state] (X1a) {$1$};
  \node[state] (X2a) [below right of=X1a] {$2$};
  \node[state] (X3a) [above right of=X2a] {$3$};
  \node[state] (X4a) [below of=X2a] {$4$};
  
  \path (X1a) edge (X2a) 
  		(X2a) edge [dashed] (X4a) 
  		(X3a) edge [dashed] (X1a)
		(X3a) edge [dashed] (X2a);

  \coordinate [label=above:$D$] (L1) at (0.7,-2.75);	

  % DAG 2
  \node[state] (X1b) [right=2cm of X3a] {$1$};
  \node[state] (X2b) [below right of=X1b] {$2$};
  \node[state] (X3b) [above right of=X2b] {$3$};
  \node[state] (X4b) [below of=X2b] {$4$};
    
  \path (X1b) edge (X2b)
  		(X2b) edge [dashed] (X4b)
  		(X1b) edge [dashed] (X3b)
  		(X3b) edge [dashed] (X2b);

  \coordinate [label=above:$D_1$] (L1) at (4.65,-2.75);

  % DAG 3
  \node[state] (X1c) [right=2cm of X3b] {$1$};
  \node[state] (X2c) [below right of=X1c] {$2$};
  \node[state] (X3c) [above right of=X2c] {$3$};
  \node[state] (X4c) [below of=X2c] {$4$};
    
  \path (X1c) edge (X2c)
  		(X2c) edge [dashed] (X4c)
  		(X1c) edge (X3c)
  		(X2c) edge [dashed] (X3c);

  \coordinate [label=above:$D_2$] (L1) at (8.6,-2.75);	

  \draw (1.8,-0.75) -- node[below] {$\substack{\text{reversal of } \\ \text{covered linear} \\ \text{edge } 3 \rightarrow 1}$} ++(1.8,0);
  %node[midway,above] {reversal of} ++(1.8,0);
  \draw (5.8,-0.75) -- node[midway,below] {$\substack{\text{reversal of } \\ \text{covered linear} \\ \text{edge } 3 \rightarrow 2}$} ++(1.8,0);

\end{tikzpicture}
\caption{Transformational characterization of $\DPX$ from Figure~\ref{M-fig:exampleGraphChar}. Let $1 \rightarrow 2$ in $D$ be nonlinear (solid) and all other edges in $D$ be linear (dashed). Then, $D_1$ and $D_2$ 
%are the only two DAGs that can be obtained 
can be reached from $D$ by the displayed sequence of covered linear edge reversals. Note that in $D$ and $D_2$, $1 \rightarrow 2$ is covered but nonlinear and hence cannot be reversed by Theorem~\ref{M-thm:covered-reversals}~(a). Moreover, $2 \rightarrow 4$ is not covered in any of $D,D_1$ and $D_2$ and hence cannot be reversed.}
\label{M-fig:exampleTrafoChar}
\end{figure}

%Theorem~\ref{M-thm:covered-reversals} will be crucial for the construction of score-based algorithms to estimate 
%% the distribution equivalence class
%$\DPX$ and $G_{\DPX}$, see Section~\ref{M-sec:estim}.

\subsection{General characterizations not assuming faithfulness} \label{M-ssec:char_unfaith}
In this section we give general characterizations of 
%the distribution equivalence class $\DPX$
PLSEMs that generate the same distribution $\PX$, both, from the perspective of causal orderings and from a functional viewpoint. The functional characterization in Section~\ref{M-ssec:funcchar} describes how the $f_{j,i}$ of different PLSEMs relate to each other. The characterization via causal orderings in Section~\ref{M-ssec:orderchar} describes the set of causal orderings, such that there exists a corresponding PLSEM that generates the given distribution $\PX$.
 %The characterization via causal orderings 
 It will show that nonlinear functions impose a very specific structure on the model, 
 %a structure that 
 which (perhaps surprisingly) is compatible with some of the previous theory on graphical models, as described in Section~\ref{M-ssec:relwork}. Furthermore it will help us understand in the general case how nonlinear functions restrict the set of PLSEMs that generate $\PX$. Section~\ref{M-ssec:intuition} gives some intuition on the functional characterization in Section~\ref{M-ssec:funcchar}. 
 %Lastly, Section~\ref{ssec:interplay} discusses the interplay of nonlinearity and faithfulness. 
 Throughout this section, we assume that $\PX$ is generated by a PLSEM as defined in equation~\eqref{M-eq:PLSEM}. 
%as defined in Section~\ref{ssec:probdescAddGaussSEMandDEC}.

\subsubsection{Functional characterization}\label{M-ssec:funcchar}

Let us first characterize the result on the level of SEMs. Consider a PLSEM that generates $\PX$,
\begin{align*} %\label{M-eq:3}
  X_{j} &= \mu_{j} + \sum\limits_{i \in \pa_D(j)} f_{j,i}(X_i) + \eps_j ,
\end{align*}
%Recall that we require
where $f_{j,i},  D, \eps_{j}, \mu_{j}, \sigma_{j}^{2} = \text{Var}(\eps_{j})$ satisfy the assumptions 
%mentioned in
from Section~\ref{M-ssec:probdescAddGaussSEMandDEC}. 

Let us define the function $\F : \mathbb{R}^p \rightarrow \mathbb{R}^p$ by
\begin{equation}\label{M-eq:4}
	\F(x)_j := \frac{1}{\sigma_{j}} \left(x_j - \mu_{j} -\sum\limits_{i \in \pa_D(j)} f_{j,i}(x_i) \right).
\end{equation}
It turns out to be convenient to work with this function $\F$.
%instead of the $f_{j,i}$, $\eps_{j}$, $\pa_{D}(j)$ and $\mu_{j}$. 
Notably, we do not lose any information by working with $\F$ instead of $f_{j,i}$, $\pa_{D}(j), \mu_{j}$ and~$\sigma_{j}$ as these quantities can 
%in fact 
be recovered from $\F$. Specifically, we can easily obtain the distribution of the errors from the function $\F$
%, i.e. $\mbox{Var}(\eps_{j})$ by
as
\begin{align}\label{M-eq:23}
  \sigma_{j} :=  1/\partial_{j} \F_{j}.
\end{align}
By definition, 
%of $\F$, 
$\F(X) \sim \mathcal{N}(0, \text{Id}_{p})$. Note that $\F$ maps the observed random variable $X \in \mathbb{R}^{p}$ to the scaled residuals $\frac{\eps_{j}}{\sigma_{j}}$. As for every $\eps \in \mathbb{R}^{p}$ there exists exactly one $X \in \mathbb{R}^{p}$ that satisfies equation~\eqref{M-eq:PLSEM}, $\F$ is invertible. Hence, if $Z \sim \mathcal{N}(0, \text{Id}_{p})$, it holds that $\F^{-1}(Z) \sim X$. Using this, we obtain $\mu_{j} = \mathbb{E}_{Z}[\F^{-1}(Z)_{j}]$ and we can recover the functions $f_{j,i}$ from the function $\F$ using the equations
\begin{align} \label{M-eq:23b}
  f_{j,i}' = -  \sigma_{j} \partial_{i} \F_{j} \qquad \mbox{and} \qquad \mathbb{E}_{Z} f_{j,i}( \F^{-1}(Z)_{i})= 0.
\end{align}
Note that the equation on the left hand side determines $f_{j,i}$ up to a constant, whereas the equation on the right hand side determines the constant using only quantities that can be calculated from $\F$. In the same spirit, $\pa_{D}(j)$ can be recovered from $\F$ via
\begin{align}\label{M-eq:29}
  \pa_{D}(j) = \{ i \neq j : \partial_{i} \F_{j} \not \equiv 0  \}.
\end{align}
In this sense, instead of describing the 
%structural equation model
PLSEM by $f_{j,i} , \pa_{D}(j), \mu_j$ and  $ \sigma_{j} $ it can simply be described by the function $\F : \mathbb{R}^{p} \rightarrow \mathbb{R}^{p}$. Now let us define
\begin{align*}
  \mathcal{F}(\PX) := \left\{ \F : \mathbb{R}^{p} \mapsto \mathbb{R}^{p} : \F \mbox{ suffices \eqref{M-eq:4} for a PLSEM that generates $\PX$} \right\}.
\end{align*}
We call the functions in this set \emph{PLSEM-functions}. Let us define the set of orthonormal matrices $\mathcal{O}_{n}(\mathbb{R}) = \{ O \in \mathbb{R}^{ p \times p} : O O^{t} = \mbox{Id} \}$. 
The following theorem follows from Lemma~\ref{S-thm:func-char} in the supplement. See also Remark~\ref{S-rem:proof-func} in the supplement for details. It states that we can construct all PLSEMs that generate $\PX$ by essentially \textit{rotating} $\F$. 
\begin{thm}[Characterization of potential PLSEMs]\label{M-thm:funct-char}
  For a given $\F \in \mathcal{F}(\PX)$ there exists a set of (constant) rotations $\mathcal{O}_{\mathcal{F}(\PX)} \subset \mathcal{O}_{n}(\mathbb{R})$ such that
\begin{align*}
  \mathcal{F}(\PX) = \left\{ O \cdot \F : O \in \mathcal{O}_{\mathcal{F}(\PX)} \right\}.
\end{align*}
A description and explicit formulae for each $ O \in \mathcal{O}_{\mathcal{F}(\PX)}$ can be found in Remark~\ref{S-rem:proof-func} in the supplement.
\end{thm}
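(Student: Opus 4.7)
The goal is to show that for a fixed $\F \in \mathcal{F}(\PX)$ and any $\tilde{\F} \in \mathcal{F}(\PX)$ one has $\tilde{\F} = O \cdot \F$ for some orthogonal $O$; the set $\mathcal{O}_{\mathcal{F}(\PX)}$ is then the collection of such $O$'s, and the explicit formulae in Remark~\ref{S-rem:proof-func} will record exactly which orthogonal matrices produce a valid PLSEM-function when applied to $\F$. The starting point is a density identity. Writing $\eps_j = \sigma_j \F_j(X)$, the change of variables $\eps \mapsto X$ has unit-determinant Jacobian in any causal ordering of $D$ (lower triangular with unit diagonal), so
\begin{equation*}
    p_X(x) \;=\; (2\pi)^{-p/2} \Bigl(\prod_j \sigma_j^{-1}\Bigr) \exp\bigl(-\tfrac{1}{2} \|\F(x)\|^2\bigr),
\end{equation*}
and analogously for $\tilde{\F}$. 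Equating the two expressions and taking expectations under $X$ (using $\EE \|\F(X)\|^2 = \EE \|\tilde{\F}(X)\|^2 = p$) forces the normalizing constants to agree and $\|\F(x)\| = \|\tilde{\F}(x)\|$ pointwise in $x$. Equivalently, the diffeomorphism $G := \tilde{\F} \circ \F^{-1}$ preserves the Euclidean norm: $\|G(z)\| = \|z\|$ for every $z \in \R^p$.

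\textbf{Extracting affineness of $G$.} The additive form of a PLSEM-function implies $\partial_i \partial_{i'} \tilde{\F}_j \equiv 0 \equiv \partial_i \partial_{i'} \F_k$ whenever $i \neq i'$. Applying the chain rule to $\tilde{\F} = G \circ \F$ and using additivity of the $\F_k$ to annihilate the first-order term yields
\begin{equation*}
    a_i(x_i)^{\!T} H^{(j)}(\F(x))\, a_{i'}(x_{i'}) \;=\; 0 \qquad (i \neq i',\ \text{all } x, j),
\end{equation*}
where $a_i(x_i) := (\D \F(x))_{\cdot, i}$ depends only on $x_i$ (cf.\ equation~\eqref{M-eq:4}) and $H^{(j)}$ is the Hessian of $G_j$. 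Differentiating the identity $\|G(z)\|^2 = \|z\|^2$ twice in $z$ gives the complementary relation $\D G(z)^{\!T} \D G(z) + \sum_j G_j(z) H^{(j)}(z) = \mathrm{Id}_p$. Contracting the previous display with $G_j(\F(x)) = \tilde{\F}_j(x)$, summing over $j$, and substituting this relation collapses the constraint to $a_i(x_i)^{\!T} a_{i'}(x_{i'}) = \tilde{a}_i(x_i)^{\!T} \tilde{a}_{i'}(x_{i'})$ for $i \neq i'$, where $\tilde{a}_i = \D G(\F(x)) \cdot a_i$ is the $i$-th column of $\D \tilde{\F}$. Iterating the same \emph{differentiate-then-contract} procedure on the higher mixed partials of $\tilde{\F}_j$ (which again vanish by additivity) propagates the annihilation to the full higher-order derivative tensors of $G$. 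Since at every $x$ the columns $\{a_i(x_i)\}_i$ form a basis of $\R^p$ and, in the nonlinear case, their variation with $x$ (driven by the nonlinear edge functions of $\F$) is rich enough to span every direction, the only solution is $H^{(j)} \equiv 0$, i.e.\ $G$ is affine. The degenerate fully-linear case (all $f_{j,i}$ linear, hence $\F$ affine and $X$ jointly Gaussian) must be handled separately by a Cram\'er-type observation: an additive function of a jointly Gaussian vector is itself Gaussian only if each summand is affine, so $\tilde{\F}$ is then also affine and the argument continues.

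\textbf{Concluding and the main obstacle.} Once $G$ is known to be affine, write $G(z) = Oz + c_0$. Then $\|G(z)\| = \|z\|$ forces $O^{\!T} O = \mathrm{Id}_p$ and $c_0 = 0$ (the latter also from $\EE G(\F(X)) = \EE \tilde{\F}(X) = 0$), giving $\tilde{\F} = O \cdot \F$ with $O \in \mathcal{O}_n(\R)$. Setting $\mathcal{O}_{\mathcal{F}(\PX)} := \{O \in \mathcal{O}_n(\R) : O \cdot \F \in \mathcal{F}(\PX)\}$ and reading off the structural constraints that $O \cdot \F$ must again satisfy the additive form~\eqref{M-eq:4} of a PLSEM-function gives the explicit formulae of Remark~\ref{S-rem:proof-func}. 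The main obstacle is the middle paragraph: the bilinear Hessian constraints alone only force $H^{(j)}$ to be diagonal in the moving basis $\{a_i(x_i)\}_i$, and it is the interplay between the $x$-variation of that basis, the higher-order vanishing of mixed partials, and the norm-preservation identity that rules out nonzero diagonal entries. Cleanly separating out the fully-linear degenerate case and showing that in the generic case the moving basis truly varies in every direction of $\R^p$ is where the core analytic work of Lemma~\ref{S-thm:func-char} lies.
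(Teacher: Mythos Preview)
Your high-level plan coincides exactly with the paper's own heuristic (Section~\ref{M-ssec:intuition}): equate densities, deduce $\|G(z)\|=\|z\|$ for $G:=\tilde\F\circ\F^{-1}$, then argue $G$ is linear and hence orthogonal. The density computation and the second-derivative identity $\D G^{t}\D G+\sum_j G_j H^{(j)}=\mathrm{Id}$ are correct, as is the observation that $a_i^{t}H^{(j)}a_{i'}=0$ for $i\neq i'$.

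The genuine gap is the middle paragraph. Your claimed mechanism for forcing $H^{(j)}\equiv 0$ (``the moving basis varies in every direction'') is not justified and, as you half-acknowledge, is simply false in the linear case and insufficient in the partially linear case: $a_i(x_i)=\partial_i\F$ varies only along $\partial_i^2\F$, which is supported on the nonlinear children of $i$, so the directions swept out are highly constrained. The ``iterate differentiate-then-contract on higher mixed partials'' step is a slogan, not an argument; it is not clear what quantity you differentiate next, nor why the resulting constraints close. Your fallback for the fully linear case is also shaky: ``an additive function of a jointly Gaussian vector is Gaussian only if each summand is affine'' is not a standard Cram\'er statement (the $X_i$ are dependent), and you would need a Ghurye--Olkin type result that you neither state nor prove.

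The paper bypasses all of this by \emph{not} attempting to prove affineness of $G$ directly. Instead it fixes a causal ordering $\sigma$ of $\tilde\F$ (so that $\D\tilde\F$ is lower triangular after permutation), differentiates the log-density identity once to get $h^{t}\D\tilde\F=\F^{t}\D\F$ with $h_i=-(\log g_i)'\circ\tilde\F_i$, and then solves this system recursively from $i=p$ down to $i=1$, using the triangular structure. The key technical step (Lemma~\ref{S-le:mainlemma}) shows that an equation of the form $u(x_l+\sum_s u_s(x_s))=\F^{t}(\mathrm{Id}-A)\partial_l\F$ forces $(\mathrm{Id}-A)\partial_l^2\F\equiv 0$; this is proved by a careful case analysis, not by a spanning argument. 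Constancy of the projection matrices $\Pi_{i+1}^\sigma$ then falls out inductively, and the explicit rotation formula~\eqref{S-eq:14} follows. The linear case requires no separate treatment in this approach.
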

 Astonishingly, in this sense, all PLSEMs that generate $\PX$ are rotations of each other. The importance of this result lies in its simplicity: There are very simple linear relationships between the $f_{j,i}$ in one PLSEM and the $\tilde f_{j,i}$ in another PLSEM. The formulae in Section~\ref{S-sec:prf-functional} in the supplement permit to fully characterize these matrices $\mathcal{O}_{\mathcal{F}(\PX)}$. In fact, the characterization in Lemma~\ref{S-thm:func-char} in the supplement is the first step towards all other characterizations.

\subsubsection{Characterization via causal orderings}\label{M-ssec:orderchar}

This section discusses a characterization of all potential causal orderings of a given PLSEM. 
%To be more precise, 
Let us define the set of \textit{potential causal orderings} as
%\begin{align}
%\begin{split}
%  \mathcal{S}(\PX) = \{ &\sigma \mbox{ permutation on } \{1,\ldots,p\} : \mbox{there exists a PLSEM  with DAG $D$} \\  & \text{that generates $\PX$ such that } \sigma(i) < \sigma(j) \mbox{ for all } i \rightarrow j \text{ in } D\}.
%  \end{split}
%\end{align}
\begin{align*}
\begin{split}
	\mathcal{S}(\PX) \! := \! \left\{ \begin{array}{l} \hspace{-0.15cm}  \sigma \mbox{ permutation on } \{1,...,p\} \!\! : \mbox{there is a PLSEM  with DAG $D$} \\ \hspace{-0.15cm} \text{that generates $\PX$ such that } \sigma(i) < \sigma(j) \mbox{ for all } i \rightarrow j \text{ in } D \end{array} \hspace{-0.15cm} \right\}. 
\end{split}
\end{align*}
 Without assuming faithfulness, if all $f_{j,i}$ are linear, all permutations of $\{1,...,p\}$ are a causal ordering of a DAG corresponding to a PLSEM that generates $\PX$. That is, $\mathcal{S}(\PX)$ is equal to the set of all permutations of $\{1,\ldots,p\}$. Roughly, the more nonlinear functions in the PLSEM, the smaller the resulting set $\mathcal{S}(\PX)$. The interesting point is that nonlinear edges restrict $\mathcal{S}(\PX)$ in a very specific way. Before we state the theorem, consider a PLSEM that generates $\PX$, define the function $\F : \mathbb{R}^{p} \rightarrow \mathbb{R}^{p}$ as in equation~\eqref{M-eq:4} and define the set
\begin{align}\label{M-eq:defV}
  \mathcal{V} := \{ (i,j) \in \{1,...,p \}^{2} :  e_{j}^{t} (\D \F)^{-1} \partial_{i}^{2} \F \not \equiv 0 \},
\end{align}
where $e_{j}$, $j=1,\ldots,p$ is the standard basis of $\mathbb{R}^{p}$, $t$ stands for the transpose and $\mathrm{D} \F$ denotes the Jacobian of $\F$. We will discuss the interpretation of the set $\mathcal{V}$ and the expression $e_{j}^{t} (\D \F)^{-1} \partial_{i}^{2} \F$
in more detail later. 
For now, the potential causal orderings can be characterized as follows:
\begin{thm}[Characterization of potential causal orderings]\label{M-thm:char-via-order}
\begin{align*}
  \mathcal{S}(\PX) = \left\{ \sigma \mbox{ permutation on } \{ 1,\ldots,p\} :  \sigma(i) < \sigma(j) \mbox{ for all } (i,j) \in \mathcal{V} \right\} .
\end{align*}
\end{thm}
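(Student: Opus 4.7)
The plan is to prove both inclusions separately, with the forward direction being an explicit Jacobian computation and the reverse direction relying on the functional characterization of Theorem~\ref{M-thm:funct-char}.

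For the forward inclusion $\mathcal{S}(\PX) \subseteq \{\sigma : \sigma(i) < \sigma(j)\ \forall (i,j)\in\mathcal{V}\}$, the first step is to observe that the vector $(\D F)^{-1}\partial_i^2 F$ is invariant under the rotations in $\mathcal{O}_{\mathcal{F}(\PX)}$. Indeed, if $\tilde F = O F$ with $O$ a constant orthogonal matrix, then $\D \tilde F = O\,\D F$ and $\partial_i^2 \tilde F = O\,\partial_i^2 F$, so
\[
(\D \tilde F)^{-1}\partial_i^2 \tilde F \;=\; (\D F)^{-1} O^{t} O\,\partial_i^2 F \;=\; (\D F)^{-1}\partial_i^2 F.
\]
Hence $\mathcal{V}$ depends only on $\PX$, not on the chosen representative $F \in \mathcal{F}(\PX)$. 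Now fix $\sigma \in \mathcal{S}(\PX)$ witnessed by some $\tilde F \in \mathcal{F}(\PX)$ with DAG $\tilde D$. Using the additive PLSEM form~\eqref{M-eq:4}, $\D \tilde F$ is lower triangular in the $\sigma$-order with constant diagonal entries $1/\tilde\sigma_j$, and hence so is $(\D \tilde F)^{-1}$. The support of $\partial_i^2 \tilde F$ is contained in $\{k : i \in \pa_{\tilde D}(k)\} \subseteq \{k : \sigma(k) > \sigma(i)\}$. Writing out the $j$-th entry of $(\D \tilde F)^{-1}\partial_i^2 \tilde F$, one obtains a sum over indices $k$ with $\sigma(i) < \sigma(k) \leq \sigma(j)$; if $\sigma(j) \leq \sigma(i)$ this index set is empty, so the entry vanishes identically, contradicting $(i,j)\in\mathcal{V}$.

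For the reverse inclusion, given a permutation $\sigma$ satisfying $\sigma(i) < \sigma(j)$ for every $(i,j)\in\mathcal{V}$, the task is to exhibit $O_\sigma \in \mathcal{O}_{\mathcal{F}(\PX)}$ such that $\tilde F := O_\sigma F$ is a PLSEM-function whose DAG admits $\sigma$ as a causal ordering; equivalently, such that $\D \tilde F$ is lower triangular in the $\sigma$-order. My plan is to use the explicit parametrization of $\mathcal{O}_{\mathcal{F}(\PX)}$ provided by Lemma~\ref{S-thm:func-char} and Remark~\ref{S-rem:proof-func} and to construct $O_\sigma$ by a Gram--Schmidt-type procedure that processes the variables in the order $\sigma$, successively rotating rows of $\D F$ to eliminate strictly upper-triangular entries. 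The key verification is that each elementary rotation can be chosen to be constant in $x$ and to preserve the additive form of $\tilde F$; the algebraic obstruction to this at a given pair $(i,j)$ is precisely $e_j^{t}(\D F)^{-1}\partial_i^2 F$, which by hypothesis vanishes whenever $\sigma(j) < \sigma(i)$.

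The main obstacle is this reverse direction: showing that the vanishing of $e_j^{t}(\D F)^{-1}\partial_i^2 F$ on the pairs $(i,j)$ forbidden by $\sigma$ is not merely necessary but also sufficient for the existence of a constant $O_\sigma \in \mathcal{O}_{\mathcal{F}(\PX)}$ that triangularizes $\D F$ in the prescribed order while keeping the image additive. This reduces to bookkeeping with the explicit description of $\mathcal{O}_{\mathcal{F}(\PX)}$ in the supplement and to checking that its free parameters match exactly the degrees of freedom left uncontrolled by $\mathcal{V}$; I expect no new conceptual ingredient beyond Theorem~\ref{M-thm:funct-char}.
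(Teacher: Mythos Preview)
Your forward inclusion is correct and is essentially the same computation the paper does, just packaged differently: the paper phrases it via the projection condition $(\mathrm{Id}-\Pi_{i+1}^{\sigma})\,\partial_{\sigma^{-1}(i)}^{2}F\equiv 0$ and then translates this into the statement $e_{j}^{t}(\D F)^{-1}\partial_{i}^{2}F\equiv 0$ whenever $\sigma(j)\le\sigma(i)$, whereas you compute directly with the triangular structure of $\D\tilde F$. Your invariance observation $(\D\tilde F)^{-1}\partial_i^2\tilde F=(\D F)^{-1}\partial_i^2 F$ is also what underlies the paper's Remark~\ref{S-rem:VindepofF}.

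Where you diverge is the reverse inclusion, and here you are making it harder than it is. The algebraic step you carried out for the forward direction is in fact a chain of equivalences: for any permutation~$\sigma$,
\[
\bigl(\sigma(i)<\sigma(j)\ \text{for all }(i,j)\in\mathcal{V}\bigr)
\ \Longleftrightarrow\
\bigl((\mathrm{Id}-\Pi_{i+1}^{\sigma})\,\partial_{\sigma^{-1}(i)}^{2}F\equiv 0\ \text{for all }i\bigr),
\]
simply because $\partial_{\sigma^{-1}(i)}^{2}F\in\langle\partial_{\sigma^{-1}(i+1)}F,\ldots,\partial_{\sigma^{-1}(p)}F\rangle$ is the same as saying that the coordinates of $(\D F)^{-1}\partial_{\sigma^{-1}(i)}^{2}F$ indexed by $\sigma^{-1}(1),\ldots,\sigma^{-1}(i)$ vanish. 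Once you have this equivalence, Lemma~\ref{S-thm:func-char} is itself an ``if and only if'': the right-hand condition above is \emph{exactly} its hypothesis for the existence of a PLSEM-function with causal ordering~$\sigma$, and the lemma already supplies the constant rotation and verifies additivity for you. So the Gram--Schmidt construction, the constancy of the rotations, and the preservation of the additive form that you flag as the ``main obstacle'' are not additional work---they are precisely the content of the $\Leftarrow$ direction of Lemma~\ref{S-thm:func-char} (and of Lemma~\ref{S-lem:constant-equivalence}). There is no further bookkeeping to do.
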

The proof of this theorem can be found in Section~\ref{S-sec:prf-ordering} in the supplement. In words, all permutations of the indices that do not swap any of the tuples in~$\mathcal{V}$ are a causal ordering of a DAG corresponding to a PLSEM that generates~$\PX$. And for all permutations of indices for which one of the tuples in $\mathcal{V}$ is switched, there exists \emph{no} PLSEM with this causal ordering that generates~$\PX$. Moreover, by Lemma~\ref{S-thm:nonlin-desc}~(b) in the supplement, if $(i,j) \in \mathcal{V}$, then $j$ is a descendant of $i$ in every PLSEM that generates $\PX$. 

Now, let us give some intuition on %the index tuples in 
the set $\mathcal{V}$. 
%\Dominik{New: } 
For $e_{j}^{t} (\D \F)^{-1} \partial_{i}^{2} \F$ to be non-zero it is necessary that there is a directed path from node $i$ to node $j$ that begins with a nonlinear edge. However, the existence of such a path is not sufficient, due to potential cancellations. % Note that the expression accounts for the effects of variable $i$ on variable $j$ over all paths between $i$ and $j$ in a very specific way. 
An example is given in Figure~\ref{M-fig:nonlin-cancel} where the causal ordering of nodes $1$ and $3$ is not fixed even though $\partial_1^2 F_3 \not\equiv 0$. In particular, the requirement that the direct effect of $i$ on $j$ (the function $f_{j,i}$ in the PLSEM) is nonlinear, that is, the requirement that $\partial_i^2 \F_j \not\equiv 0$, is \emph{not} sufficient to fix the causal ordering between $i$ and $j$.  Also, it is not sufficient to require that the total effect of variable $i$ on variable $j$ is nonlinear. This is shown in part (a) of the following example.  
	%To see that, consider the DAG $1 \rightarrow 2 \rightarrow 3$ where $1 \rightarrow 2$ is a linear edge and $2 \rightarrow 3$ is a nonlinear edge. Even though the total effect of variable $1$ on variable $3$ is nonlinear, the causal ordering of $1$ and $3$ is not fixed. By Theorem~\ref{thm:covered-reversals}, there exists a PLSEM with DAG $1 \leftarrow 2 \rightarrow 3$ and corresponding causal ordering $\sigma$ satisfying $\sigma(2) < \sigma(3) < \sigma(1)$ that generates the same distribution.

 %that correspond to edges in the DAG of $\F$.  \\
%\Dominik{Ab hier geaendert:}
\begin{example}
	Consider the DAG $1 \rightarrow 2 \rightarrow 3$ and $\PX$ that has been generated by a PLSEM of the form $X_1 = \eps_1, X_2 = f_{2,1}(X_1) + \eps_2, X_3 = f_{3,2}(X_2) + \eps_3$ with $\eps \sim \mathcal{N}(0, \mathrm{Id}_3)$.
\begin{enumerate}
\item[(a)]  Let $f_{2,1}(x) = 0.5 x$ be linear, $f_{3,2}(x) = x^{3}$ be nonlinear. The corresponding PLSEM-function is $\F(x) = (x_1, x_2 - 0.5 x_1, x_3 - x_{2}^{3})^t$. Using elementary calculations it can be seen that $e_{j}^{t} (\D \F)^{-1} \partial_{i}^{2} \F \not \equiv 0$ only for $(i,j)=(2,3)$. Hence, $\mathcal{V} = \{(2,3)\}$ and all permutations $\sigma$ respecting $\sigma(2) < \sigma(3)$ are a causal ordering of a DAG corresponding to a PLSEM that generates $\PX$. For example, for the causal ordering $\sigma(2) < \sigma(3) < \sigma(1)$, there exists a (unique) PLSEM with DAG $1 \leftarrow 2 \rightarrow 3$  that generates $\PX$. In particular, the causal ordering of variables $1$ and $3$ is not fixed even though there is a nonlinear total effect of variable $1$ on variable $3$.
\item[(b)] Let $f_{2,1}(x) = x^{3}$ be nonlinear, $f_{3,2}(x) = 0.5 x$ be linear.  The corresponding PLSEM-function is $\F(x) = (x_1, x_2 - x_1^{3}, x_3 - 0.5 x_{2})^t$. We obtain $\mathcal{V} = \{(1,2), (1,3)\}$ and all permutations $\sigma$ with $\sigma(1) < \sigma(2)$ and $\sigma(1) < \sigma(3)$ are a causal ordering of a DAG corresponding to a PLSEM that generates $\PX$. In particular, for $\sigma(1) < \sigma(3) < \sigma(2)$ we obtain that the PLSEM corresponding to the (unfaithful) DAG $1 \rightarrow 3 \rightarrow 2$ with $1 \rightarrow 2$ generates~$\PX$. 
%For example, for $f(x) = x^2$ and $\alpha = 0.5$, the PLSEM $X_1 = \eps_1, X_3 = 0.5 X_1^2 + \eps_3, X_2 = 0.8 X_1^2 + 0.4 X_3 + \eps_2$ with $\eps_1 \sim \mathcal{N}(0,1), \eps_2 \sim \mathcal{N}(0,4/5)$ and $\eps_3 \sim \mathcal{N}(0,5/4)$ mutually independent generates the same~$\PX$.  \Dominik{Achtung intercepts!!}
\end{enumerate}
\end{example}
Let us make several concluding remarks: in (a), the causal ordering between  nodes $1$ and $3$ is not fixed, whereas in (b), 
%the order between node $1$ and node $3$
it is fixed. Hence, the set $\mathcal{V}$ sometimes also fixes the causal ordering between two nodes that are not adjacent in the DAG corresponding to $\F$. 
% Without loss of generality let us assume that the nodes are in topological order. Fix a node $i$ that has an edge to node $i+1$. Note that in this case, $i+1$ is a maximal child of $i$. As in the proof of Theorem~\ref{thm:covered-reversals} it can be shown that
% \begin{align}
%   e_{i+1}^{t} (\D \F)^{-1} \partial_{i}^{2} \F = \mbox{Var}(\eps_{i})^{1/2} \partial_{i}^{2} \F_{i+1}.
% \end{align} 
% Hence, $ e_{i+1}^{t} (\D \F)^{-1} \partial_{i}^{2} \F \equiv 0$ if and only if $ \partial_{i}^{2} \F_{i+1} = - \frac{1}{\text{Var}(\eps_{i+1})^{1/2}}\partial_{i}^{2} f_{i+1,i} \equiv 0$. Hence the causal ordering of variable $i$ and its maximal child $i+1$ can be reversed if and only if the edge $i \rightarrow i+1$ is linear. In other words, if $f_{i+1,i}$ (i.e., the edge $i \rightarrow i+1$) is nonlinear there exists no PLSEM that contains the edge $i+1 \rightarrow i$ and generates $\PX$. 
%Secondly, nonlinear edges seem to highly restrict the set of potential PLSEMs. 
Secondly, in both examples, the causal ordering of nodes incident to nonlinear edges is fixed. This raises the question whether it is true in general that nonlinear edges cannot be reversed. The answer is no (see Figure~\ref{M-fig:nonlin-cancel}), but in some sense, the models with ``reversible nonlinear edges'' are rather particular. Finally, if we make additional mild assumptions, stronger statements can be made about the index tuples in $\mathcal{V}$. 
%For a detailed discussion thereof, see Section~\ref{M-ssec:interplay}. 
We will discuss these issues further in Section~\ref{M-ssec:interplay}.

\subsubsection{Intuition on the functional characterization}\label{M-ssec:intuition}

% \Dominik{New:}
% This section motivates Theorem~\ref{thm:funct-char}. Consider two functions $\F, \G \in\mathcal{F}(\PX)$ that correspond to two different PLSEMs. 
% By Proposition~\ref{S-prop:PLSEM2} in the supplement,  %satisfy
% \begin{align*}
%   \F(X)\sim \mathcal{N}(0,\mbox{Id}) \text{   and   } \G(X) \sim \mathcal{N}(0,\mbox{Id}).
% \end{align*}
% Hence the probability density of $X$ can be written in two ways:
% \begin{align*}
%   \frac{1}{(2 \pi )^{p/2}} \exp\left(- \frac{\| \F(x)\|_{2}^{2}}{2}\right) = \frac{1}{(2 \pi )^{p/2}} \exp\left(- \frac{\| \G(x) \|_{2}^{2}}{2}\right).
% \end{align*}
% As $X$ has positive density for all $x \in \mathbb{R}^{p}$, this equation holds for all $x \in \mathbb{R}^{p}$.
% In particular, $\| \F(x)\|_{2}^{2} = \| \G(x) \|_{2}^{2}$ for all $ x \in \mathbb{R}^{p}$. Moreover, it follows from the definition of PLSEMs that $\F$ is invertible. Hence the function $J: \mathbb{R}^{p} \rightarrow \mathbb{R}^{p}$, $J:=  \G (\F^{-1})$ suffices
% \begin{align*}
%   \| J(x) \|_{2} =  \| x \|_{2} \mbox{ for all } x \in \mathbb{R}^{p}.
% \end{align*}
% If we admit that $J$ must be a linear function (which requires some work), this formula gives us $J \in \mathcal{O}_{n}(\mathbb{R}) := \{ O \in \mathbb{R}^{p \times p} : O O^{t} = \mbox{Id}\}$ and it immediately follows that $ \G = J \F$. This reasoning shows that the main work in proving Theorem~\ref{thm:funct-char} lies in showing that $J$ is a linear function.

This section motivates Theorem~\ref{M-thm:funct-char}. Consider two functions $\F, \G \in\mathcal{F}(\PX)$ that correspond to two different PLSEMs. 
By Proposition~\ref{S-prop:PLSEM2} in the supplement,  %satisfy
\begin{align}\label{M-eq:8}
  \F(X)\sim \mathcal{N}(0,\mbox{Id}) \text{   and   } \G(X) \sim \mathcal{N}(0,\mbox{Id}).
\end{align}
Moreover, it follows from the definition of PLSEMs that $\F$ is invertible.
Let $Z \sim  \mathcal{N}(0,\mbox{Id}_{p})$. 
Using equation~\eqref{M-eq:8} twice, 
$$\F^{-1}(Z) \sim X \text{   and   }  \G (\F^{-1}(Z)) \sim \mathcal{N}(0, \mbox{Id}). $$ 
Hence the function $J: \mathbb{R}^{p} \rightarrow \mathbb{R}^{p}$, $J:=  \G (\F^{-1})$ suffices $J(Z) \sim Z \sim \mathcal{N}(0,\mbox{Id})$. Furthermore, it can be shown that $|\det \D J| =1 $. Then, using the transformation formula for probability densities, we obtain
\begin{align*}
  \frac{1}{(2 \pi )^{p/2}} \exp\left(- \frac{\|J(x)\|_{2}^{2}}{2}\right) = \frac{1}{(2 \pi )^{p/2}} \exp\left(- \frac{\| x \|_{2}^{2}}{2}\right) \mbox{ for all } x \in \mathbb{R}^{p}.
\end{align*}
By rearranging,
\begin{align*}
  \| J(x) \|_{2} =  \| x \|_{2} \mbox{ for all } x \in \mathbb{R}^{p}.
\end{align*}
If we admit that $J$ must be a linear function (which requires some work), this formula gives us $J \in \mathcal{O}_{n}(\mathbb{R}) := \{ O \in \mathbb{R}^{p \times p} : O O^{t} = \mbox{Id}\}$ and it immediately follows that $ \G = J \F$. This reasoning shows that the main work in proving Theorem~\ref{M-thm:funct-char} lies in showing that $J$ is a linear function.

\subsection{Understanding the interplay of nonlinearity and faithfulness}\label{M-ssec:interplay}

As indicated in Section~\ref{M-ssec:orderchar}, without further assumptions, some nonlinear edges can be reversed. An example is given in Figure~\ref{M-fig:nonlin-cancel}. There, the edge $1 \rightarrow 3$ can be reversed even though $f_{3,1}$ is a nonlinear function in the PLSEM corresponding to~$D_1$. 
%\Jan{Note that this setting is different from the one discussed at the end of Section~\ref{M-ssec:orderchar} as node $3$ is not a maximal child of node $1$ in $D_1$.} 
The issue here arises because the nonlinear effect from $X_{1}$ to $X_{3}$ in $D_1$ cancels out over two paths. 
%This is meant in the sense that 
If we write $X_{3}$ as a function of $\eps_{1},\eps_{2},\eps_{3}$, that function is linear. 
%However, note that 
The setting of $D_1$ in Figure~\ref{M-fig:nonlin-cancel} is rather particular as $ \partial_{1}^{2} f_{2,1}$ and $ \partial_{1}^{2} f_{3,1}$ are linearly dependent. As the function space $\mathcal{C}^{2}(\mathbb{R})$ is infinite dimensional, this is arguably a degenerate scenario. Note that faithfulness does not save us from this cancellation effect as %in this setting, 
$\PX$ is faithful to both, $D_{1}$ and $D_{2}$. 
%Hence faithfulness does not save us from this effect. 

\begin{figure}[h] 
\begin{tikzpicture}[->,>=latex,shorten >=1pt,auto,node distance=1.4cm,
                    thick]
  \tikzstyle{every state}=[fill=black!30,draw=black,text=black, inner sep=0.4pt, minimum size=12pt]
  
  % full graph 1  
  \node[state] (X1a) {$1$};
  \node[state] (X2a) [below left of=X1a] {$2$};
  \node[state] (X3a) [below right of=X1a] {$3$};

  \path (X1a) edge (X3a)
%  		(X1a) edge [dashed] (X2a)
  		(X1a) edge  (X2a)
  		(X2a) edge [dashed]  (X3a);

  % full graph 2
  \node[state] (X2b) [right=3.3cm of X3a] {$2$};
  \node[state] (X1b) [above right of=X2b] {$1$};
  \node[state] (X3b) [below right of=X1b] {$3$};

  \path (X1b) edge (X2b)
  		(X3b) edge [dashed] (X1b)
  		(X3b) edge [dashed] (X2b);
   
%      \coordinate [label=above:$D1$] (C) at (-0.85,0.2);	
%      \coordinate [label=above:$D2$] (C) at (4.95,0.2);	
        \coordinate [label=above:$D_1$] (C) at (-1.3,0.0);	
      \coordinate [label=above:$D_2$] (C) at (4.4,0);	
      
\end{tikzpicture}
\caption{Nonlinear edges can be reversed if nonlinear effects cancel out. $X_{1} = \eps_{1}$, $X_{2} = X_{1}^{2}+ X_{1} + \eps_{2}$, $X_{3} =  X_{2} - X_{1}^{2} + \eps_{3}$ with $\eps \sim \mathcal{N}(0,\mathrm{Id}_{3})$ generates the same joint distribution of $(X_1,X_2,X_3)$ as $X_{3} = \tilde \eps_{3}$, $X_{1} = X_{3}/3 + \tilde \eps_{1}$, $X_{2} = X_{1}/2+X_{1}^{2}+X_{3}/2+ \tilde \eps_{2}$ %with $\tilde \eps_1 \sim \mathcal{N}(0,3)$, $\tilde \eps_2 \sim \mathcal{N}(0,2/3)$, $\tilde \eps_{3} \sim \mathcal{N}(0,1/2)$ independent.
with $\tilde{\eps}_3 \sim \mathcal{N}(0,3), \tilde{\eps}_1 \sim \mathcal{N}(0,2/3), \tilde{\eps}_2 \sim \mathcal{N}(0,1/2)$ independent.
This stems from the fact that the nonlinear parts of the functions $f_{2,1}(x) $ and $f_{3,1}(x) $ cancel out, i.e. $f_{2,1}''+f_{3,1}'' = 0$. 
%\Jan{Irgendwas stimmt hier nicht ganz, oder? M\"usste nicht: $\tilde{\eps}_3 \sim \mathcal{N}(0,3), \tilde{\eps}_1 \sim \mathcal{N}(0,2/3), \tilde{\eps}_2 \sim \mathcal{N}(0,1/2)$??}
Note that this example does not contradict the previous theoretical results. It holds that $ e_{3}^{t} (\D \F)^{-1} \partial_{1}^{2} \F \equiv 0$ for the PLSEM-function $\F$ corresponding to $D_1$. 
%\Dominik{Changed! Weil die existenz solcher kausalen Ordnungen nicht direkt folgt - man muesste auch 1-2 und 2-3 checken (was denke ich zu weit gehen wuerde).}
%Hence there exist causal orderings with $\sigma(1)<\sigma(3)$ and $\sigma(1)>\sigma(3)$ by 
Hence the causal ordering of $D_2$ does not contradict Theorem~\ref{M-thm:char-via-order}.}
\label{M-fig:nonlin-cancel}
% \Dominik{Checked this both with simulations and by hand.}
\end{figure}

Nevertheless, we can rely on a different, rather weak assumption: 
%\Jan{consider a node $i$ and assume that the functions in the set $ \{ \partial_{i}^{2} f_{j',i} :  j' \text{ is a descendant of } i \text{ and } f_{j',i} \text{ is nonlinear} \}$ are linearly independent. (?) Es ist ja nicht das set, das linearly ind. ist, oder?}
%consider a node $i$ with child $j$ and nonlinear function $f_{j,i}$. Assume that the set of functions $ \{ \partial_{i}^{2} f_{j',i} :  j' \text{ is a descendant of } i \text{ and } f_{j',i} \text{ is nonlinear} \}$ is linearly independent. 
consider a node $i$ in a DAG $D$ and assume that the corresponding functions in the set 
$$ \{ \partial_{i}^{2} f_{j',i} :  j' \text{ is a child of } i \text{ in $D$ and } f_{j',i} \text{ is nonlinear} \}$$ are linearly independent. 
%\Jan{Ich denke, wir sollten hier noch deutlicher rausstreichen, dass diese assumption nur einen DAG $D$ betrifft.}
In other words: assume that the ``nonlinear effects'' from $X_{i}$ on its children are linearly independent functions. Then these nonlinear edges cannot be reversed.

The following theorem is a direct implication of Lemma~\ref{S-thm:nonlin-desc} (a) and (b) in the supplement.
\begin{thm}\label{M-sec:nonl-faithf}
Consider a PLSEM and the corresponding distribution~$\PX$. Let $j$ be a child of $i$ in $D$ and let $f_{j,i}$ be a nonlinear function. If the functions in the set $ \{ \partial_{i}^{2} f_{j',i} : j' \text{ is a child of } i \text{ in $D$ and } f_{j',i} \text{ is nonlinear}  \}$ are linearly independent, then $j$ is a descendant of $i$ in any other DAG $D'$ of a PLSEM that generates $\PX$.
\end{thm}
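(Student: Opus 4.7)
The plan is to reduce the statement to showing $(i,j) \in \mathcal{V}$, with $\mathcal{V}$ as defined in~\eqref{M-eq:defV}, and then to invoke Theorem~\ref{M-thm:char-via-order}. Once $(i,j) \in \mathcal{V}$ is established, every $\sigma \in \mathcal{S}(\PX)$ must satisfy $\sigma(i) < \sigma(j)$; applied to any DAG $D'$ underlying a PLSEM that generates $\PX$, this says that $i$ precedes $j$ in every topological ordering of $D'$. A short graph-theoretic argument then forces $j$ to be a descendant of $i$ in $D'$: if $j$ were not a descendant of $i$, the ancestor-closed set $A := \{l : l \text{ is an ancestor of }j \text{ in } D'\} \cup \{j\}$ would not contain $i$, and processing $A$ first in a topological sort of $D'$ would yield an ordering with $\sigma(j) < \sigma(i)$, a contradiction.

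To establish $(i,j) \in \mathcal{V}$, I first compute from~\eqref{M-eq:4} that $(\partial_i^2 \F)_k = -f''_{k,i}(x_i)/\sigma_k$ when $i \in \pa_D(k)$ and vanishes otherwise. Only the set $C$ of nonlinear children of $i$ in $D$ contributes, and by hypothesis $j \in C$. Writing $c_{jk}(x) := ((\D \F)^{-1})_{jk}/\sigma_k$, the quantity of interest becomes
\begin{equation*}
  v_j(x) \;:=\; e_j^{t}(\D \F)^{-1}\partial_i^2 \F \;=\; -\sum_{k \in C} c_{jk}(x)\, f''_{k,i}(x_i).
\end{equation*}

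The key structural fact is the path expansion of $(\D \F)^{-1}$. The Jacobian $\D \F$ is lower triangular with respect to any causal ordering of $D$, with diagonal $1/\sigma_l$ and off-diagonal $-f'_{l,m}(x_m)/\sigma_l$ for $m \in \pa_D(l)$. Either by inverting $\D \F$ directly, or by differentiating $X_l = \mu_l + \sum_{m \in \pa_D(l)} f_{l,m}(X_m) + \sigma_l \F(X)_l$ with respect to $\F(X)_k$ and unrolling along a causal order, one obtains
\begin{equation*}
  c_{jk}(x) \;=\; \sum_{k = v_0 \to v_1 \to \cdots \to v_m = j \text{ in } D} \prod_{r=1}^{m} f'_{v_r,v_{r-1}}(x_{v_{r-1}}),
\end{equation*}
with the empty-product convention giving $c_{jj} \equiv 1$. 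For $k \in C$, every path from $k$ to $j$ lives strictly within the descendants of $k$, and acyclicity of $D$ guarantees that none of the source vertices $v_0,\ldots,v_{m-1}$ equals $i$; hence $c_{jk}$ is a function of $x_{-i}$ alone.

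Fixing any $x_{-i}^{*} \in \mathbb{R}^{p-1}$ and setting $\lambda_k := c_{jk}(x_{-i}^{*})$, the function $x_i \mapsto v_j(x_i, x_{-i}^{*}) = -\sum_{k \in C} \lambda_k f''_{k,i}(x_i)$ is a linear combination of $\{f''_{k,i}\}_{k \in C}$ with $\lambda_j = c_{jj}(x_{-i}^{*}) = 1 \neq 0$. The hypothesised linear independence of this family then forces the combination to be not identically zero in $x_i$, so $v_j \not\equiv 0$ and $(i,j) \in \mathcal{V}$. The main obstacle I anticipate is the clean justification of the path expansion of $(\D \F)^{-1}$ together with the verification that acyclicity keeps $x_i$ out of the coefficients $c_{jk}$; once this structural fact is in hand, the linear-independence hypothesis is exactly what the contribution $k = j$ (with guaranteed coefficient $\lambda_j = 1$) needs to rule out any cancellation.
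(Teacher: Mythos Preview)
Your proof is correct and follows the same two-step strategy as the paper: first show $(i,j)\in\mathcal{V}$, then use Theorem~\ref{M-thm:char-via-order} together with the elementary fact that $j$ is a descendant of $i$ in a DAG if and only if $\sigma(i)<\sigma(j)$ in every topological ordering. This is exactly the content of Lemma~\ref{S-thm:nonlin-desc}~(a) and~(b), from which the paper derives the theorem.

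The difference lies in how you establish $(i,j)\in\mathcal{V}$. The paper argues abstractly: it shows $\langle \partial_i^2\F(x_i)\rangle_{x_i\in\mathbb{R}}=\langle e_k\rangle_{k\in C_i}$ from the linear-independence hypothesis, observes that the relevant columns of $(\D\F)^{-1}$ depend only on $x_{i+1},\ldots,x_p$ (since column $l$ of $\D\F$ depends only on $x_l$), and then reads off that $e_j^t(\D\F)^{-1}\partial_i^2\F\equiv 0$ is equivalent to $e_j^t(\D\F)^{-1}e_k=0$ for all $k\in C_i$, which fails for $k=j$ by the nonzero diagonal. You instead make the same separation of variables explicit via the Neumann/path expansion $(\D\F)^{-1}=(I-A)^{-1}\Sigma=\bigl(\sum_{m\ge 0}A^m\bigr)\Sigma$, which both justifies $c_{jj}\equiv 1$ and shows directly that no $x_i$ appears in $c_{jk}$ for $k\in C_i$ (since every directed path out of a child of $i$ stays among proper descendants of $i$). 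Your route is more hands-on and self-contained; the paper's is shorter once the lower-triangular structure has been recorded in Proposition~\ref{S-prop:PLSEM2}. Either way, the decisive point is the same: the coefficient on $f''_{j,i}$ is identically $1$, so linear independence blocks cancellation.
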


Intuitively, this should not be the end of the story: if an edge $i \rightarrow j$ is nonlinear, then usually there should also be a nonlinear relationship between~$i$ and the descendants of $j$. Hence it should be possible to infer some statements about the causal ordering of $i$ and the descendants of $j$. In general, this is not true as demonstrated in Figure~\ref{M-fig:unfaith-desc}. 
\begin{figure}[h]
\begin{tikzpicture}[->,>=latex,shorten >=1pt,auto,node distance=1.4cm,
                    thick]
  \tikzstyle{every state}=[fill=black!30,draw=black,text=black, inner sep=0.4pt, minimum size=12pt]
  
  % full graph 1  
  \node[state] (X1a) {$1$};
  \node[state] (X2a) [below of=X1a] {$2$};
  \node[state] (X3a) [below left of=X2a] {$3$};
  \node[state] (X4a) [below right of=X2a] {$4$};

  \path %  (X1a) edge [dashed] (X2a)
  		(X1a) edge  (X2a)
  		(X2a) edge  [dashed] (X3a)
  		(X3a) edge  [dashed] (X4a)
  		(X2a) edge  [dashed] (X4a);

  % full graph 2
  \node[state] (X3b) [right=3.3cm of X4a] {$3$};
  \node[state] (X2b) [above right of=X3b] {$2$};
  \node[state] (X4b) [below right of=X2b] {$4$};
  \node[state] (X1b) [above of=X2b] {$1$};

  \path %  (X1a) edge [dashed] (X2a)
  		(X1b) edge  (X2b)
  		(X2b) edge [dashed] (X3b)
  		(X4b) edge [dashed] (X3b);

%      \coordinate [label=above:$D1$] (C) at (-0.85,0.2);	
%      \coordinate [label=above:$D2$] (C) at (4.95,0.2);	
  
      \coordinate [label=above:$D_1$] (C) at (-1.3,0.0);	
      \coordinate [label=above:$D_2$] (C) at (4.4,0);	  
  
\end{tikzpicture}
\caption{If $\PX$ is not faithful to $D$, descendants are not fixed. Node~$4$ is a descendant of node~$1$ in $D_{1}$ but not in $D_{2}$. On the left hand side, $X_{1} = \eps_{1}$, $X_{2} = X_{1}^{2}+ \eps_{2}$, $X_{3} = X_{2} + \eps_{3}$, $X_{4} =  X_{3} - X_{2} + \eps_{4}$, with $\eps \sim \mathcal{N}(0,\mathrm{Id}_{4})$. On the right hand side, $X_{1} = \tilde \eps_{1}$, $X_{2} = X_{1}^{2}+ \tilde \eps_{2}$, $X_{3} = X_{2} + 1/2 \cdot X_{4} + \tilde \eps_{3}$, $X_{4} = \tilde \eps_{4}$, where $\tilde \eps_{1} \sim \mathcal{N}(0,1), \tilde \eps_{2} \sim \mathcal{N}(0,1), \tilde \eps_{3} \sim \mathcal{N}(0,1/2)$ and $\tilde \eps_{4} \sim \mathcal{N}(0,2)$. 
%$X_{1} = \tilde \eps_{1}$, $X_{2} = X_{1}^{2}+ \tilde \eps_{2}$, $X_{3} = X_{2} + X_{4} + \tilde \eps_{3}$, $X_{4} = \tilde \eps_{4}$, where $\tilde \eps \sim \mathcal{N}(0,\mbox{Id}_{4})$. 
Both PLSEMs generate the same distribution. 
Note that in this case, additional assumptions on the nonlinear function $f_{2,1}$ would not resolve the issue. 
%\Jan{Hier ist auch was komisch. Es m\"usste doch sein: On the right hand side, $X_{1} = \tilde \eps_{1}$, $X_{2} = X_{1}^{2}+ \tilde \eps_{2}$, $X_{3} = X_{2} + 1/2 \cdot X_{4} + \tilde \eps_{3}$, $X_{4} = \tilde \eps_{4}$, where $\tilde \eps_{1} \sim \mathcal{N}(0,1), \tilde \eps_{2} \sim \mathcal{N}(0,1), \tilde \eps_{3} \sim \mathcal{N}(0,1/2), \tilde \eps_{4} \sim \mathcal{N}(0,2)$. Vielleicht ist aber auch meins falsch, da du ja das auch mit simulations gecheckt hast???}
}
\label{M-fig:unfaith-desc}
% Checked this both with simulations and theoretical considerations.
\end{figure}

%\Dominik{Deleted some stuff here}
%Additionally, in the setting of Corollary~\ref{M-sec:nonl-faithf}, $j$ is not necessarily a child of $i$ in each DAG $D'$ of a PLSEM that generates $\PX$. The corollary only guarantees that $j$ is a descendant of $i$ in each of these DAGs, but it does not make any statement on whether the edge $i \rightarrow j$ exists in all of them. 
 Under the assumption of faithfulness, additional statements can be made about descendants of $j$. In some sense the nonlinear effect from $i$ on the descendants of $j$, mediated through some of the descendants of $j$, cannot ``cancel out''. Hence, all descendants of $j$ are fixed. The following theorem is a direct implication of Lemma~\ref{S-thm:nonlin-desc} (c) and (d) in the supplement.

\begin{thm}\label{M-sec:nonl-faithf-2}
Let the assumptions of Theorem~\ref{M-sec:nonl-faithf} be true. In addition, let $\PX$ be faithful to the DAG $D$. Fix $k  \neq i$. Then $k$ is a descendant of $i$ in each DAG $D'$ of a PLSEM that generates $\PX$ if and only if $k$ is a descendant of a nonlinear child of $i$ in $D$.
\end{thm}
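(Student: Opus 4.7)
The plan is to derive both directions of the biconditional from the set-$\mathcal{V}$ characterization in Theorem~\ref{M-thm:char-via-order} combined with a careful analysis of the quantity $e_k^{t}(\D \F)^{-1} \partial_i^2 \F$ for the PLSEM-function $\F$ corresponding to $D$. The guiding principle is that this quantity detects an ``uncancellable'' nonlinear effect of $X_i$ on $X_k$, and under the hypotheses of Theorem~\ref{M-sec:nonl-faithf-2} such an effect exists precisely when $k$ lies below a nonlinear child of $i$ in $D$.

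For the sufficiency (``if'') direction, I would assume that $k$ is a descendant of a nonlinear child $j$ of $i$ in $D$ and show $(i,k) \in \mathcal{V}$; by Lemma~\ref{S-thm:nonlin-desc}(b) this immediately yields that $k$ is a descendant of $i$ in every PLSEM generating $\PX$. Observe that $\partial_i^2 \F_{j'}(x)$ equals $-f_{j',i}''(x_i)/\sigma_{j'}$ for each nonlinear child $j'$ of $i$ in $D$ and vanishes identically otherwise. Letting $M := (\D\F)^{-1}$, the $k$-th coordinate decomposes as $\sum_{j'} M_{kj'}(x)\,\partial_i^2 \F_{j'}(x_i)$ over the nonlinear children $j'$ of $i$. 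Under faithfulness, $M_{kj'}$ is not identically zero iff there is a directed path from $j'$ to $k$ in $D$, so at least the term $j'=j$ contributes. The linear independence of $\{\partial_i^2 f_{j',i}\}$ assumed in Theorem~\ref{M-sec:nonl-faithf} prevents the leading $x_i$-dependence from cancelling across children of $i$, while faithfulness blocks downstream cancellations among the $M_{kj'}$ factors, since any identical cancellation would translate into an algebraic identity encoding a conditional independence not entailed by the Markov property of $D$.

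For the necessity (``only if'') direction I would argue the contrapositive. Assume $k$ is not a descendant of any nonlinear child of $i$ in $D$. I will exhibit a PLSEM generating $\PX$ whose DAG $D'$ does not have $k$ as a descendant of $i$. Applying the sufficiency analysis inductively along the transitive closure, none of the constraints encoded in $\mathcal{V}$ forces $\sigma(i)<\sigma(k)$: every $(i,m) \in \mathcal{V}$ must correspond to an $m$ lying below a nonlinear child of $i$ in $D$, so $k$ is unconstrained relative to $i$. Hence there exists a permutation $\sigma$ with $\sigma(k)<\sigma(i)$ respecting every constraint in $\mathcal{V}$. By Theorem~\ref{M-thm:char-via-order}, $\sigma$ is a potential causal ordering and so arises as the causal ordering of some DAG $D'$ of a PLSEM generating $\PX$; in $D'$, $k$ precedes $i$ and therefore is not a descendant of $i$.

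The main obstacle I anticipate is the sufficiency direction, specifically controlling the sum $\sum_{j'} M_{kj'}(x)\,\partial_i^2 \F_{j'}(x_i)$. Even granting linear independence of the nonlinear contributions at the source node $i$, the downstream factors $M_{kj'}$ are rational expressions built from first derivatives along directed paths in $D$ and can, in principle, cancel across different paths originating at different nonlinear children of $i$; Figure~\ref{M-fig:unfaith-desc} exhibits exactly such a cancellation when faithfulness fails. Faithfulness is the essential ingredient that rules out such cancellations, since an identical cancellation would encode a conditional independence beyond those entailed by the Markov property of $D$, contradicting the faithfulness assumption.
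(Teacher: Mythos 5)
Your proposal follows essentially the same route as the paper's proof, which derives the theorem from Lemma~\ref{S-thm:nonlin-desc}\,(b)--(d) in the supplement: characterize fixed orderings through the set $\mathcal{V}$ and Theorem~\ref{M-thm:char-via-order}, expand $e_k^t(\D\F)^{-1}\partial_i^2\F$ over the nonlinear children of $i$, use linear independence at the source node and faithfulness downstream, and handle the converse through the transitive closure of $\mathcal{V}$. The architecture is right, but two steps as written are not yet proofs.

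First, you assign faithfulness the job of ``blocking downstream cancellations among the $M_{kj'}$ factors'' across different nonlinear children $j'$. That is not the mechanism: since each $M_{kj'}=e_k^t(\D\F)^{-1}e_{j'}$ with $j'$ after $i$ in the causal ordering is constant in $x_i$ (by the triangular structure of $\D\F$ and the fact that its $l$-th column depends only on $x_l$), while the functions $\partial_i^2\F_{j'}$ are, by hypothesis, linearly independent functions of $x_i$ alone, the sum $\sum_{j'}M_{kj'}\,\partial_i^2\F_{j'}$ vanishes identically if and only if \emph{every} $M_{kj'}\equiv 0$ --- a separation-of-variables argument using only linear independence. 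Faithfulness is needed for a different, single-child claim that you assert without proof: $M_{kj}\not\equiv 0$ whenever $k$ is a descendant of $j$ in $D$ (this rules out cancellation along several paths from the one child $j$ to $k$, exactly the failure mode of Figure~\ref{M-fig:unfaith-desc}). The paper proves it by noting that $M_{kj}\equiv 0$ makes $X_k$ a function of $Z_1,\ldots,Z_{j-1},Z_{j+1},\ldots,Z_p$, hence $X_k\independent X_j\mid X_1,\ldots,X_{j-1}$ in a causal ordering of $D$, which faithfulness forbids when $k$ is a descendant of $j$; without pinning down this concrete conditional independence your appeal to ``an algebraic identity encoding a conditional independence'' does not close the argument. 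Second, in the necessity direction you need $(i,k)\notin\tilde{\mathcal{V}}$ (the transitive closure), not merely $(i,k)\notin\mathcal{V}$, before Theorem~\ref{M-thm:char-via-order} yields a permutation with $\sigma(k)<\sigma(i)$; your ``inductive application of the sufficiency analysis'' cannot be invoked at intermediate nodes $m_1,m_2,\ldots$ of a chain, where no linear-independence hypothesis is available. The induction still goes through because the implication needed there --- $(l,m)\in\mathcal{V}$ forces $m$ to be a descendant of a nonlinear child of $l$ in $D$ itself --- holds with no extra assumptions (if $m$ is a non-descendant of every $j'\in C_l$ then all $M_{mj'}\equiv 0$ and the contraction vanishes), so any chain in $\mathcal{V}$ from $i$ to $k$ would already place $k$ below a nonlinear child of $i$ in $D$. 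Making these two steps explicit turns your outline into the paper's proof.
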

Note that we use the convention that a node is a descendant of itself. Theorem~\ref{M-sec:nonl-faithf-2} guarantees that certain descendants of $i$ are descendants of~$i$ in all DAGs~$D'$ of PLSEMs that generate $\PX$. In that sense, it provides a simple criterion that tells us whether or not $k$ is descendant of $i$ in all of these DAGs. It is crucial to be precise: we do not assume that $\PX$ is faithful to $D'$, that means, we search over all PLSEMs that generate $\PX$. 
If we search over the smaller space $\DPX$, that is, additionally assume that $\PX$ is faithful to~$D'$, the set of potential PLSEMs usually gets smaller.
%it is possible to make stronger statements. 
In many cases, there are some edges that are not fixed if we search over all PLSEMs, but fixed if we only search over PLSEMs with DAGs in $\DPX$.
% to which $\PX$ is faithful. 
%\Dominik{Folgende Ueberleitung?}  Recall $\DPX$ is the set of all DAGs $D$ such that $\PX$ is faithful to $D$ and there exists a PLSEM with DAG $D$ that generates $\PX$. 

As discussed in Section~\ref{M-ssec:char_graphical}, 
%these DAGs
$\DPX$ can be represented by a single PDAG $\GDPX$. In the following, we will 
%the estimation of which we will 
discuss 
%in the following.
the estimation of $\DPX$ and $\GDPX$.

%As an example, if $j$ is a nonlinear child of $i$, $j$ is a descendant of $i$ in each DAG $D' \in \DPX$. Recall that under faithfulness all DAGs $D' \in \DPX$ have the same skeleton. Hence $j$ is a child of $i$ in each DAG $D' \in \DPX$, which is not necessarily true in all DAGs of PLSEMs that generate $\PX$.

%%% Local Variables:
%%% mode: latex
%%% TeX-master: "PLSEM"
%%% End:

%\section{Characterization of the distribution equivalence classes for partially linear SEMs} \label{M-sec:main}

%\input{theory}

\section{Score-based estimation of $\DPX$ and $G_{\DPX}$} \label{M-sec:estim}
%\Jan{Intercepts???????????????}
%The goal of this Section is to describe a score-based procedure to estimate $\DPX$ based on $\PX$ and one DAG $D \in \DPX$. 
Consider $\PX$ that has been generated by a PLSEM and assume that $\PX$ is faithful to the underlying DAG. We denote by $\{X^{(i)}\}_{i=1,...,n}$ i.i.d. copies of $X \in \mathbb{R}^p$ and by $\PXn$ their empirical distribution.
The goal of this section is to derive a consistent score-based estimation procedure for the distribution equivalence class $\DPX$ based on $\PXn$ and one (true) DAG $D^0 \in \DPX$. We first describe a ``naive'' recursive solution that lists all members of $\DPX$ and motivate the score-based approach in Section~\ref{M-ssec:estim_recursive}. We then present a more efficient procedure that directly estimates the graphical representation $G_{\DPX}$ as defined in Section~\ref{M-ssec:estim_GDPX}. Both methods rely on the transformational characterization result in Theorem~\ref{M-thm:covered-reversals}. 

In practice, we may replace the true $D^0$ by an estimate, e.g., from the CAM methodology~\citep{pbjoja13}. If the estimate is consistent for a DAG in $\DPX$ we obtain consistency of our method for the entire distribution equivalence class~$\DPX$. 

\subsection{Estimation of $\DPX$} \label{M-ssec:estim_recursive}

Theorem~\ref{M-thm:covered-reversals} provides a straightforward way to list all members of $\DPX$. Starting from the DAG $D^0$, one can search over all sequences of distinct covered linear edges reversals. By Theorem~\ref{M-thm:covered-reversals} (a), all DAGs that are traversed are in $\DPX$ and by Theorem~\ref{M-thm:covered-reversals} (b), $\DPX$ is connected with respect to sequences of distinct covered linear edge reversals. Moreover, by Theorem~\ref{M-thm:covered-reversals}~(a), an edge that is nonlinear and covered in a DAG in $\DPX$ has the same orientation in all the members of $\DPX$. These simple observations immediately lead to a recursive estimation procedure. Its population version is described in Algorithm~\ref{M-alg:listAllDAGsPLSEM}. The inputs are $D^0$ (with all its edges marked as ``unfixed'') and an oracle that answers the question if a specific edge in a DAG in $\DPX$ is linear or nonlinear.

\begin{algorithm}[!htb]
\begin{algorithmic}[1]
\IF{there is no covered edge in DAG $D^0$ that is marked as unfixed}
\STATE Add $D^0$ to the distribution equivalence class $\DPX$ and terminate.
\ENDIF
\STATE Choose a covered edge $i \rightarrow j$ in DAG $D^0$ that is marked as unfixed. 
%\STATE Compute the sum of the log-likelihood scores locally at nodes $i$ and $j$ for edge $i \rightarrow j$ and reversed edge $i \leftarrow j$.
\IF{the edge $i \rightarrow j$ is linear in $D^0$}
\STATE Define a DAG $D^0_1 := D^0$ with edge $i \rightarrow j$ in $D^0_1$ marked as fixed and a DAG $D^0_2$ equal to $D^0$ except for a reversed edge $i \leftarrow j$ marked as fixed in $D_2^0$.
\STATE Call the function \texttt{listAllDAGsPLSEM} recursively for both DAGs $D^0_1$ and $D^0_2$.
\ELSE
\STATE Mark the edge $i \rightarrow j$ in $D^0$ as fixed and call \texttt{listAllDAGsPLSEM} for DAG $D^0$.
\ENDIF
\end{algorithmic}
\caption{listAllDAGsPLSEM (population version)}\label{M-alg:listAllDAGsPLSEM}
\end{algorithm}

Unfortunately, the (true) information whether a selected covered edge $i \rightarrow j$ in a DAG $D \in \DPX$ is linear or not is generally not available. Also, it cannot simply be deduced from the starting DAG $D^0$ as the status of the edge may have changed in $D$. For an example, see Figure~\ref{M-fig:edge-status-change}: edge $1 \rightarrow 3$ is not covered and linear in $D_1$ but nonlinear and covered (and hence irreversible) in $D_2 \in \DPX$. % in which it is covered. 

To check the status of a covered edge in a given DAG $D \in \DPX$, one could either test (non-)linearity of the functional component in the (unique) PLSEM corresponding to $D$ or rely on a score-based approach. In the following we are going to elaborate on the latter.
%modify the approach in~\citep{pbjoja13} to 
%score-based estimation procedure to list all DAGs in $\DPX$. 
We closely follow the approach presented in~\citep{pbjoja13}. 

%We denote by $\PXO$ the true underlying distribution that has been generated by the PLSEM with (true) DAG $D^0$ and by $\DPXO$ the corresponding true distribution equivalence class. 
%\Jan{Notation anpassen... $\mathcal{F}$ already used... Wobei, eigentlich haben wir hier $\mathcal{F}_{i}$ und $\mathcal{F}_{n,i}$ mit subscripts... Was denkst du? Notwendig?} \\
We assume that the functions $f_{j,i}$ in equation~\eqref{M-eq:PLSEM} are from a class of smooth functions $\mathcal{F}_i \subseteq \{f \in C^2(\mathbb{R}), \EE[f(X_i)]=0 \}$, 
%and that $\mathcal{F}_i$ 
which is closed with respect to the $L_2(\mathbb{P}_{X_i})$-norm and closed under linear transformations. For a set of given basis functions, we denote by $\mathcal{F}_{n,i} \subseteq \mathcal{F}_i$ the finite-dimensional approximation space which typically increases as $n$ increases. The spaces of additive functions with components in $\mathcal{F}_i$ and $\mathcal{F}_{n,i}$, respectively, are closed assuming an analogue of a minimal eigenvalue condition. All details are given in~\citep{pbjoja13}. Without loss of generality, we assume $\mu_j = 0$ as in the original paper. For $D^0 \in \DPX$, let $\theta^{D^0} \!\! := \! (\{f^{D^0}_{j,i}\}_{j=1,...,p, i\in \pa_{D^0}(j)}, \{\sigma^{D^0}_j\}_{j=1,...,p})$ be the infinite-dimensional parameter of the corresponding PLSEM. The expected negative log-likelihood reads
\begin{equation*} 
	\EE\left[- \log p_{\theta^{D^0}}(X) \right] = \sum\limits_{j=1}^p \log(\sigma^{D^0}_j) + \text{C}, \qquad C = \frac{p}{2} \log(2\pi) + \frac{p}{2} .
\end{equation*}

%We give a short description and mostly adopt the notation. We denote by $\PXO$ the true underlying distribution and for a permutation $\pi$ on the nodes $\{1,...,p\}$ we denote by $X^{\pi}$ the permuted random variables (i.e. $X^{\pi}_j := X_{\pi(j)}$) and by $D^{\pi}$ the fully connected DAG with causal ordering $\pi$. For the true distribution equivalence class $\DPXO$, let
%$$
%\Pi^0 = \{\pi; \ D^{\pi} \ \text{is a super-DAG of a DAG in } \DPXO \}
%$$ 
%be the set of true permutations. Note that for nonlinear additive SEMs with Gaussian noise, our definition of $\Pi^0$ coincides with the one in~\citep{pbjoja13}.

%Let $\theta := (f_{1,2}, \hdots , f_{1,p}, f_{2,1}, \hdots, f_{p,p-1}, \sigma_1,\hdots, \sigma_p)$ be the infinite-dimensional parameter of the PLSEM~\eqref{M-eq:PLSEM}. The expected negative log-likelihood is given as 
%\begin{equation} \label{M-eq:expnegloglik}
%	\EE\left[- \log p_{\theta}(X) \right] = \sum\limits_{k=1}^p \log(\sigma_k) + \text{C}, \qquad C = \frac{p}{2} \log(2\pi) + \frac{p}{2} 
%\end{equation}

%All $\pi \in \Pi^0$
All $D^0 \in \DPXO$ 
lead to the minimal expected negative log-likelihood, as %all the DAGs in $\DPXO$
by definition, the corresponding PLSEM generates the true distribution~$\PXO$. For a misspecified model with wrong 
%order $\pi \not\in \Pi^0$ and density $p^{\pi}_{\theta^{\pi}}(x) := p_{\theta^{\pi}}(x^{\pi})$, we obtain the projected parameters
DAG $D \not\in \DPXO$ %and density $p^D_{\theta^D}(x)$ satisfying
%$$
%	\log(p^D_{\theta^D}(x)) = \sum\limits_{j=1}^p \log\left(\frac{1}{\sigma^D_j} \varphi\left(\frac{x_j - \sum_{i \in \pa_D(j)} f^D_{j,i}(x_i)}{\sigma^D_j}\right) \right),
%$$	
we obtain the projected parameter $\theta^{D} = \left( \{ f^{D}_{j,i} \}_{j=1,...,p, i\in \pa_{D}(j)}, \{ \sigma^{D}_j \}_{j=1,...,p} \right)$ as
%$$
%	\theta^{D, 0} = \left( \{ f^{D,0}_{j,i} \}, \{ \sigma^{D,0}_j \} \right) = \argminn_{\theta^{D}} \EE_{\PXO}\left[ -\log\left(p^{D}_{\theta^{D}}(X)\right) \right]
%$$
\begin{align*}
	\{f^{D}_{j,i}\}_{i \in \pa_D(j)} &= \argminn\limits_{g_{j,i} \in \Fi} \EE [ (X_j - \sum\limits_{i \in \pa_D(j)} g_{j,i}(X_i) )^2 ] \\
	(\sigma^{D}_j)^2 &= \EE [ (X_j - \sum\limits_{i \in \pa_D(j)} f^{D}_{j,i}(X_i) )^2 ] 
\end{align*}
with expected negative log-likelihood
$$
	\EE \left[ -\log\left(p^{D}_{\theta^{D}}(X)\right) \right] = \sum\limits_{j=1}^p \log(\sigma_j^{D}) + C, \qquad C = \frac{p}{2} \log(2\pi) + \frac{p}{2} ,
$$
where all expectations are taken with respect to the true distribution $\PX$. We refer to $\EE \left[ -\log\left(p^{D}_{\theta^{D}}(X)\right) \right]$ as the \emph{score of $D$} and to $\log(\sigma_j^D)$ as \emph{score} of node~$j$ in $D$.
%with same constant $C$ as in~\eqref{M-eq:expnegloglik}. 
%$$
%	\theta^{\pi, 0} = \left( \{ f^{\pi,0}_{j,k} \}, \{ \sigma^{\pi,0}_j \} \right) = \argminn_{\theta^{\pi}} \EE_{\PXO}\left[ -\log\left(p^{\pi}_{\theta^{\pi}}(X)\right) \right]
%$$
%and expected negative log-likelihood
%$$
%	\EE_{\PXO}\left[ -\log\left(p^{\pi}_{\theta^{\pi,0}}(X)\right) \right] = \sum\limits_{k=1}^p \log(\sigma_k^{\pi,0}) + C
%$$
%with same constant $C$ as in~\eqref{M-eq:expnegloglik}. 
For a DAG $D^0 \in \DPXO$, let 
$$
	\CDO = \left\{D \mid D \text{ and } D^0 \text{ differ by one covered nonlinear edge reversal}\right\}.
$$
%$\mathcal{C}(D^0)$ be the set consisting of all DAGs that are not contained in $\DPXO$ but only differ from $D^0$ by a reversal of a single covered edge. %Let $D^0 \in \DPXO$ and $D \in \mathcal{C}(D)$. Without loss of generality, we assume that $D^0$ and $D$ only differ by the orientation of 
Then, for $D^0 \in \DPXO$ and $D \in \CDO$
%Let $D_1$ and $D_2$ be two DAGs 
that (without loss of generality) only differ by the orientation of the covered edge between the nodes $i$ and $j$, the difference in expected negative log-likelihood is given as
% $i$ and $j$ and let $\pi_1$ and $\pi_2$ be two respective causal orderings such that $\pi_1(k) = \pi_2(k)$ for $k \not\in \{i,j\}$, $\pi_1(i) = \pi_2(j)$ and $\pi_1(j) = \pi_2(i)$. Then, the relative difference in expected negative log-likelihood between the two models is given as
%\begin{align} \label{M-eq:diffScoreCovRev}
%\begin{split}
%	\sum\limits_{k=1}^p \left(\log(\sigma^{\pi_1,0}_k)\right. &- \left.\log(\sigma^{\pi_2,0}_k)\right) \\ 
%	&= \log(\sigma^{\pi_1,0}_i) + \log(\sigma^{\pi_1,0}_j) - \log(\sigma^{\pi_2,0}_i) - \log(\sigma^{\pi_2,0}_j).
%\end{split}
%\end{align}
\begin{align} \label{M-eq:diffScoreCovRev}
\begin{split}
	& \EE\left[ -\log\left(p^{D}_{\theta^{D}}(X)\right) \right] - \EE \left[ -\log\left(p_{\theta^{D^0}}(X)\right) \right] \\
	&= \log(\sigma^{D}_{i}) + \log(\sigma^{D}_{j})  - \log(\sigma^{D^0}_{i}) - \log(\sigma^{D^0}_{j}).
\end{split}
\end{align}
Since the score is decomposable over the nodes, the reversal of a covered edge only affects the scores locally at the two nodes $i$ and $j$ incident to the covered edge.
%As in~\citep{pbjoja13}, we denote by 
%\begin{align} \label{M-eq:xip}
%\begin{split}
%	\xi_p : &= \min\limits_{\pi \not\in \Pi^0} p^{-1} \left( \EE_{\PXO}\left[-\log\left(p^{\pi}_{\theta^{\pi,0}}(X)\right) \right] - \EE_{\PXO}\left[-\log\left(p^{0}(X)\right) \right] \right) \\ 
%	&= \min\limits_{\pi \not\in \Pi^0} p^{-1} \left( \log(\sigma_j^{\pi,0}) - \log(\sigma_j^0)\right).
%\end{split}
%\end{align}
We denote by
\begin{align} \label{M-eq:xip}
\begin{split}
	\xi_p : &= \min\limits_{\substack{D^0 \in \DPXO \\ D \in \CDO}} \left( \EE\left[-\log\left(p^{D}_{\theta^{D}}(X)\right) \right] - \EE \left[-\log\left(p_{\theta^{D^0}}(X)\right) \right] \right) \\ 
\end{split}
\end{align}
the \emph{degree of separation} of true models in $\DPXO$ and misspecified models in $\CDPX$
%not contained in $\DPXO$ 
that can be reached by the reversal of one covered nonlinear edge in any DAG $D^0 \in \DPXO$. From the transformational characterization in Theorem~\ref{M-thm:covered-reversals} it follows that $\xi_p > 0$. 
%Let the function class and parameter estimates be defined as in~\citep{pbjoja13}. Then, the following theorem holds.
%
%\begin{thm} \label{M-thm:consistencyScore}
%	Let $D^0 \in \DPXO$ and $D \in \mathcal{C}(D^0)$ such that $D^0$ and $D$	only differ by the orientation of the edge between nodes $i$ and $j$. Let $\mathcal{I}=\{i,j\}$. Under the assumptions stated in~\citep{pbjoja13}, for $n$ sufficiently large and with high probability,
%$$
%	\sum\limits_{k \in \mathcal{I}} \log(\hat{\sigma}_k^D) \geq \sum\limits_{k \in \mathcal{I}} \log(\hat{\sigma}_k^{D^0}) + \xi .
%$$
%\end{thm}
Combining equations~\eqref{M-eq:diffScoreCovRev} and~\eqref{M-eq:xip} motivates the estimation procedure in Algorithm~\ref{M-alg:listAllDAGsPLSEMscorebased} that takes as inputs $n$ samples
%$n$ i.i.d. samples of $X \in \mathbb{R}^p$
$X^{(1)},...,X^{(n)}$ and a DAG $D^0 \in \DPX$ (with all its edges marked as ``unfixed'') and
outputs a score-based estimate $\DPXhat$ of $\DPX$. To make the algorithm more robust with respect to misspecifications of the noise distributions (cf. Section~\ref{M-sec:misspec}) we only perform one-sided tests in line $8$ of Algorithm~\ref{M-alg:listAllDAGsPLSEMscorebased}.

\begin{algorithm}[!htb]
\begin{algorithmic}[1]
%\vspace{0.2cm}
\IF{there is no covered edge in DAG $D^0$ that is marked as unfixed}
\STATE Add $D^0$ to $\DPXhat$ and terminate.
\ENDIF
\STATE Choose a covered edge $i \rightarrow j$ in DAG $D^0$ that is marked as unfixed. Denote by $D'$ the DAG that equals $D^0$ except for a reversed edge $i \leftarrow j$.
\STATE Additively regress $X_i$ on $X_{\pa_{D^0}(i)}$, $X_j$ on $X_{\pa_{D^0}(j)}$, $X_i$ on $X_{\pa_{D^0}(i) \cup \{j\}}$, $X_j$ on $X_{\pa_{D^0}(i)}$ 
\vspace{-0.1cm}
\STATE Compute the standard deviations of the residuals to obtain $\hat{\sigma}^{D^0}_i, \hat{\sigma}^{D^0}_j, \hat{\sigma}^{D'}_i$ and $\hat{\sigma}^{D'}_j$.
\STATE Compute the score difference $\Delta := \log(\hat{\sigma}^{D'}_i) + \log(\hat{\sigma}^{D'}_j)  - \log(\hat{\sigma}^{D^0}_i) - \log(\hat{\sigma}^{D^0}_j)$
\IF{$\Delta < \alpha$}
\STATE Set $D^0_1 := D^0$ with $i \rightarrow j$
 marked as fixed, $D^0_2 := D'$ with $i \leftarrow j$ marked as fixed, $\alpha_1 := \alpha$ and $\alpha_2 := \alpha - \Delta$.
\STATE Call the function \texttt{listAllDAGsPLSEM} recursively for both, DAG $D^0_1$ with parameter $\alpha = \alpha_1$ and DAG $D^0_2$ with parameter $\alpha = \alpha_2$.
\ELSE
\STATE Mark the edge $i \rightarrow j$ in $D^0$ as fixed and call \texttt{listAllDAGsPLSEM} for DAG $D^0$ with parameter $\alpha = \alpha_1$.
\ENDIF
%\vspace{0.2cm}
\end{algorithmic}
\caption{listAllDAGsPLSEM}\label{M-alg:listAllDAGsPLSEMscorebased}
\end{algorithm}

To prove the (high-dimensional) consistency of the score-based estimation procedure, we make the following assumptions. For a function $h: \mathbb{R} \rightarrow \mathbb{R}$, we write $P(h) = \EE[h(X)]$ and $P_n(h) = \frac{1}{n} \sum\limits_{i=1}^{n} h(X^{(i)})$.
%$\mathcal{F}_i \subseteq L_2(\mathbb{P}^0_{X_i})$ and a closed subset of $\{f \in C^2(\mathbb{R}), \EE[f(X_i)]=0 \}$. Furthermore, 
%$$
%	\mathcal{F}^{\oplus |\pa_D(j)|} := \{f: \mathbb{R}^{ |\pa_D(j)|} \rightarrow \mathbb{R}; \ f(x) = \sum\limits_{i \in \pa_D(j)} f_i(x_i), f_i \in \mathcal{F}_i \}
%$$
%denotes the space of additive functions. For any $I \subseteq \{1,...,p\}$, $\mathcal{F}^{\oplus |I|}$ can be shown to be closed with respect to the $L_2(\mathbb{P}_{X_I})$-norm~\citep[Lemma~2]{pbjoja13}. Our main assumptions are:
\begin{assu} \label{M-assu:consistency}
\hfill
\begin{enumerate}
\item[(i)]	Uniform upper bound on node degrees:
			$$ \max_{\substack{D \in \DPXO \cup \CDPX \\ j = 1,...,p}} \deg_{D}(j) \leq M \text{ for some positive constant } M < \infty.$$
\item[(ii)] %The error variances are uniformly bounded away from zero, 
			Uniform lower bound on error variances: 
			$$\min\limits_{\substack{D \in \DPXO \cup \CDPX \\ j=1,...,p }} (\sigma^{D}_j)^2 \geq L > 0. $$
\item[(iii)] Empirical process bound:
			$$\max\limits_{\substack{D \in \DPXO \cup \CDPX \\ j=1,...,p }} \Delta^D_{n,j} = o_P(1),$$
			where $\Delta^D_{n,j}  = \sup\limits_{g_{j,i} \in \Fi} |(P_n - P)((X_j - \sum\limits_{i \in \pa_D(j)} g_{j,i}(X_i))^2)|$. \item[(iv)] Control of approximation error:
			 $$\max\limits_{\substack{D^0 \in \DPXO \\ j=1,...,p}} |\gamma_{n,j}^{D^0}| = o(1),$$
			 where 
			 $$
				 \gamma_{n,j}^{D^0} = \EE[(X_j- \sum\limits_{i \in \pa_{D^0}(j)} f_{n;j,i}^{D^0}(X_i))^2] - \EE[(X_j- \sum\limits_{i 	
				 \in \pa_{D^0}(j)} f_{j,i}^{D^0}(X_i))^2]
			 $$
			 with
			 $$f^{D^0}_{n; j,i} = \argminn\limits_{g_{j,i} \in \mathcal{F}_{n,i}} \EE [(X_j - \sum\limits_{i \in \pa_{D^0}(j)} 
			 g_{j,i}(X_i) )^2 ]$$
			 and $\mathcal{F}_{n,i}$ are the approximation spaces as introduced before. 
\end{enumerate}
\end{assu}

%By replacing a general additive function with a partially linear additive function we improve estimation accuracy. Therefore, 
Assumption~\ref{M-assu:consistency} (i) is satisfied if $D^0$ has bounded node degrees, as all DAGs under consideration are restricted to the same skeleton and hence all have equal node degrees. In the low-dimensional setting, Assumption~\ref{M-assu:consistency} (iii) is justified by~\citep[Lemma~5]{pbjoja13} under the assumptions mentioned there. 
%\Dominik{New sentence:} 
These assumptions entail smoothness conditions on the functions in $\mathcal{F}_i$ and tail and moment conditions on $X$. In the high-dimensional setting, it follows from~\citep[Lemma~6]{pbjoja13} and $\sqrt{\log(p)/n} = o(1)$ together with Assumption~\ref{M-assu:consistency} (i) and the assumptions mentioned in the original paper.
%, as the given DAG $D \in \DPXO$ has uniformly bounded node degree and all DAGs under consideration are restricted to the same skeleton and hence have bounded node degrees. 
Assumption~\ref{M-assu:consistency} (iv) can be ensured by requiring a smoothness condition on the coefficients of the basis expansion for the true functions~\citep[Section 4.2]{pbjoja13}. A proof of Theorem~\ref{M-thm:consistencyRecursiveAlgorithm} can be found in Section~\ref{S-ssec:proofConsistencyRecursiveAlgorithm} in the supplement.

\begin{thm} \label{M-thm:consistencyRecursiveAlgorithm}
	Under Assumption~\ref{M-assu:consistency} and $\xi_p \geq \xi_0 > 0$, for any constant $\alpha \in (0, \xi_0)$,
	$$
		\mathbb{P}[ \DPXhat = \DPXO ] \rightarrow 1 \qquad (n \rightarrow \infty). %, p \rightarrow \infty).
	$$
	%\Jan{Comment on choice of $\alpha$}
In case of a high-dimensional setting, for which the uniformity in Assumption~\ref{M-assu:consistency} is required, the convergence should be understood as poth $p \rightarrow \infty$ and $n \rightarrow \infty$.
\end{thm}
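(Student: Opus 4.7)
The plan is to show that on an event of probability tending to one Algorithm~\ref{M-alg:listAllDAGsPLSEMscorebased} makes the same branching decisions as its population counterpart Algorithm~\ref{M-alg:listAllDAGsPLSEM} started from the same $D^0 \in \DPXO$, so that $\DPXhat = \DPXO$. The starting point is a population dichotomy for the local score difference of equation~\eqref{M-eq:diffScoreCovRev}: pick $D^0 \in \DPXO$, a covered edge $i \to j$ in $D^0$, and let $D'$ be its reversal. If the edge is linear in $D^0$, Theorem~\ref{M-thm:covered-reversals}~(a) gives $D' \in \DPXO$ and, since every DAG in $\DPXO$ attains the true-entropy log-likelihood, the population version of $\Delta$ equals $0$. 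If the edge is nonlinear in $D^0$, the same theorem forces $D' \in \CDPX\setminus\DPXO$, and the definition of $\xi_p$ in equation~\eqref{M-eq:xip} together with $\xi_p \geq \xi_0$ yields population $\Delta \geq \xi_0$.

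The second step is a uniform concentration of $\log \hat\sigma^D_j$ around $\log \sigma^D_j$ over all $D \in \DPXO \cup \CDPX$ and all $j$. Using that $\hat f^D_{j,i}$ minimizes the empirical $L^2$ risk over $\mathcal{F}_{n,i}$ while $f^D_{j,i}$ minimizes the population risk over $\mathcal{F}_i$, a standard basic inequality bounds $|(\hat\sigma^D_j)^2 - (\sigma^D_j)^2|$ by two empirical-process terms of the form $\Delta^D_{n,j}$ plus an approximation error of the form $\gamma^D_{n,j}$; Assumption~\ref{M-assu:consistency}~(iii)--(iv) (the latter extended to $D \in \CDPX$ in the obvious way) render this bound $o_P(1)$ uniformly in $D,j$. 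Combined with the uniform lower bound (ii), which makes $x \mapsto \tfrac{1}{2}\log x$ Lipschitz on the relevant range, this yields
\begin{equation*}
\varepsilon_n \;:=\; \max_{D \in \DPXO \cup \CDPX,\; j=1,\ldots,p} \bigl|\log \hat\sigma^D_j - \log \sigma^D_j\bigr| \;=\; o_P(1),
\end{equation*}
so every empirical $\Delta$ computed in line~7 lies within $4\varepsilon_n$ of its population counterpart.

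The final step is an induction along the recursion tree. On the event that $4\varepsilon_n$ is smaller than $\min(\alpha,\xi_0-\alpha)$ divided by any uniform cap on the number of covered edges encountered along a root-to-leaf path (a finite cap exists by the bounded-degree Assumption~\ref{M-assu:consistency}~(i)), every recursive call operates with a threshold $\alpha' \in (0,\xi_0)$. The inductive step is immediate from the population dichotomy: a linear covered edge has $|\Delta| \leq 4\varepsilon_n < \alpha'$ and is correctly branched (shifting the threshold to $\alpha - \Delta$ by at most $4\varepsilon_n$), while a nonlinear covered edge has $\Delta \geq \xi_0 - 4\varepsilon_n > \alpha'$ and is correctly fixed. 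Theorem~\ref{M-thm:covered-reversals}~(b) then guarantees that the sweep visits every DAG in $\DPXO$, while the nonlinear-edge verdicts prevent the recursion from entering $\CDPX\setminus \DPXO$, giving $\DPXhat = \DPXO$.

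The hard part is controlling the drift of the threshold $\alpha$ along the recursion, especially in the high-dimensional regime where both the depth of recursion and the cardinality of $\DPXO \cup \CDPX$ grow with $p$. Resolving it is precisely where the uniformity over $\DPXO \cup \CDPX$ in Assumption~\ref{M-assu:consistency}~(iii) and the bounded-degree condition~(i) combine to cap the cumulative drift and keep $\alpha'$ strictly inside $(0,\xi_0)$ throughout the tree.
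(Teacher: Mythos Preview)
Your overall strategy---population dichotomy on the local score difference, uniform concentration of $\log\hat\sigma^D_j$ around $\log\sigma^D_j$, then correctness of the population recursion via Theorem~\ref{M-thm:covered-reversals}---is exactly the route the paper takes. Two deviations are worth flagging.

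First, you extend Assumption~\ref{M-assu:consistency}(iv) to $D\in\CDPX$ in order to get a two-sided bound on $\hat\sigma^D_j$. The paper deliberately avoids this: for $D\in\CDPX$ it uses only the one-sided inequality $(\sigma^D_j)^2\le(\hat\sigma^D_j)^2+\Delta^D_{n,j}$, which follows from (iii) alone (the empirical minimizer cannot undercut the population minimizer by more than the process term). The reverse inequality, which needs the approximation-error control (iv), is invoked only for $D^0\in\DPXO$. Since the theorem is stated under Assumption~\ref{M-assu:consistency} as written, your argument as it stands proves a weaker statement; you should recast the concentration step using these asymmetric bounds.

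Second, you attempt to control the drift of the running threshold $\alpha'$, which the paper's proof glosses over entirely (it simply asserts that the uniform local bounds make ``each score-based decision'' correct). Your instinct here is good, but the proposed fix fails: Assumption~\ref{M-assu:consistency}(i) bounds the degree of each node, not the number of edges in $D^0$, and the recursion depth is bounded by the latter. In the high-dimensional regime the number of edges is of order $pM$, so there is no $p$-free cap on the root-to-leaf path length, and dividing $\min(\alpha,\xi_0-\alpha)$ by that cap does not yield a useful constraint on $\varepsilon_n$. A more transparent reformulation is to note that the updates $\alpha_2=\alpha-\Delta$ telescope, so that at every call the test is equivalently $\sum_j\log\hat\sigma^{\text{cand}}_j-\sum_j\log\hat\sigma^{D^0_{\text{root}}}_j<\alpha_{\text{init}}$; this at least isolates what must be controlled (the total score of each visited DAG against the root), but it does not by itself resolve the high-dimensional issue any better than the paper does.
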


\begin{rem} \label{M-rem:gapcondition}
The assumption on the gap between log-likelihoods of true and wrong models in~\citep{pbjoja13} is stricter and would imply
%We require a weaker separation of true and wrong models as in~\citep{pbjoja13}: In fact, the gap condition in the original paper would imply 
the uniform bound $\xi_p/ p \geq \xi_0 > 0$, whereas here we only require $\xi_p \geq \xi_0 > 0$. 
% \Dominik{is better than:}
% The assumption on the degree of separation of true and wrong models in~\citep{pbjoja13} is stricter and would imply
% the uniform bound $\xi_p/ p \geq \xi_0 > 0$, whereas here we only require $\xi_p \geq \xi_0 > 0$.
As we are given a true DAG $D^0 \in \DPX$, we solely %search over sequences of 
perform local transformations of DAGs
thanks to the transformational characterization result in Theorem~\ref{M-thm:covered-reversals}. This only affects the scores of two nodes % as demonstrated in~\eqref{M-eq:diffScoreCovRev}.
and allows us to rely on this much weaker gap condition.
%If, similar to the problem studied in the original paper, our goal was to estimate a member of $\DPX$ (or $\DPX$) based on $\PX$ only, we could not rely on local transformations only and hence would require an analogous separation condition between ``true'' and ``wrong'' models of the form
%$$
%	\xi_p = \min\limits_{\substack{D^0 \in \DPXO \\ D \not\in \DPXO}} p^{-1} \left( \EE_{\PXO}\left[-\log\left(p^{D}_{\theta^{D,0}}(X)\right) \right] - \EE_{\PXO}\left[-\log\left(p^{0}(X)\right) \right] \right)
%$$
%as in~\citep{pbjoja13}. 
\end{rem}
\vspace{0.1cm}

\subsection{Estimation of $G_{\DPX}$} \label{M-ssec:estim_GDPX}

The estimation of all DAGs in $\DPX$ is feasible but may be computationally intractable in the presence of many linear edges. For example, if $D^0$ is a fully connected DAG with $p$ nodes and all its edges are linear, the number of DAGs in $\DPX$ corresponds to the number of causal orderings of $p$ nodes which is~$p!$. It therefore would be desirable to have a procedure that works without enumerating all DAGs in $\DPX$. In this section we are going to describe 
%a more efficient 
such a procedure that directly estimates the maximally oriented PDAG $G_{\DPX}$ defined in Section~\ref{M-ssec:char_graphical}. Recall that by Theorem~\ref{M-thm:pdag}, this fully characterizes $\DPX$, as $\DPX$ can be recovered from $G_{\DPX}$ by listing all consistent DAG extensions. 

%\Jan{Ist die Intuition hier klar genug erklaert?}
The main idea is the following: instead of traversing the space of DAGs, we traverse the space of maximally oriented PDAGs that represent sets of distribution equivalent DAGs. As an example, let $D^0 \in \DPX$ and $i \rightarrow j$ be covered and linear in $D^0$. By Theorem~\ref{M-thm:covered-reversals}~(a), the DAG $D'$ that only differs from $D^0$ by the reversal of $i \rightarrow j$ is in $\DPX$. 
%The key idea is the following: 
Instead of memorizing both, $D^0$ and $D'$, and recursively searching over sequences of covered linear edge reversals from both of these DAGs as in Algorithms~\ref{M-alg:listAllDAGsPLSEM} and~\ref{M-alg:listAllDAGsPLSEMscorebased}, we represent 
$D^0$ and $D'$ by the PDAG $G$ that is maximally oriented with respect to the set of DAGs $\{D^0, D' \}$. By Definition~\ref{M-def:PDAGrepr}, $G$ equals $D^0$ but for an undirected edge $i \text{ --- } j$.  %To 
%obtain
%construct $G_{\DPX}$, 
%directed edges in $G$ are iteratively replaced by undirected edges if they are covered and linear in one of the DAG extensions of~$G$.
To construct $G_{\DPX}$, the idea is now to iteratively modify $G$ by either fixing or removing orientations of directed edges if they are nonlinear or linear in one of the consistent DAG extensions of $G$ in which they are covered. 
% modify this PDAG $G$ and apply Theorem~\ref{M-thm:covered-reversals} directly to $G$ in order to construct $G_{\DPX}$. 
For that to work based on $G$ only, that is, without listing all consistent DAG extensions of $G$, the two key questions are the following: 
\begin{enumerate}
\item[(Q1)] For $i \rightarrow j$ in a maximally oriented PDAG $G$, can we decide based on $G$ only if there is a consistent DAG extension of $G$ in which $i \rightarrow j$ is covered?%\emph{without} listing all these extensions, that is, ?
\item[(Q2)] If $i \rightarrow j$ is known to be covered in a consistent DAG extension of $G$: can we derive a score-based check if $i \rightarrow j$ is linear or nonlinear in this extension based on $G$?
%If such an $i \rightarrow j$ exists, can we specify a cover of $i \rightarrow j$ based on the PDAG $G$ only? The knowledge of a cover is needed for the score-based decision whether an edge is linear or nonlinear (cf. Algorithm~\ref{M-alg:listAllDAGsPLSEMscorebased}). 
\end{enumerate}
%given a maximally oriented PDAG $G$ and a directed edge $i \rightarrow j$ in $G$, can we decide whether $G$ has a DAG extension in which edge $i \rightarrow j$ is covered \emph{without} listing all DAG extensions, that is, based on local properties of the PDAG $G$ only? 
Interestingly, the answer to both questions is yes (cf. Lemma~\ref{M-le:DAGextension}) 
and can be derived from a related theory on how background knowledge on specific edge orientations restricts the Markov equivalence class. 
%In fact, it can be shown that for a 
%CPDAG $P$ (representing a Markov equivalence class), 
It was shown in \citep[Theorems~2~\&~4]{meek1995} that for a
pattern~$P$ of a DAG, consistent background knowledge $\mathcal{K}$ (in our case: 
%a set of additional edge orientations that are true in at least one of the DAG extensions of $P$)
additional knowledge on edge orientations due to nonlinear functions in the PLSEM) can be incorporated by simply orienting these edges in~$P$ and closing orientations under a set of four sound and complete graphical orientation rules R1-R4, which are depicted in Figure~\ref{M-fig:orientationRules}. 
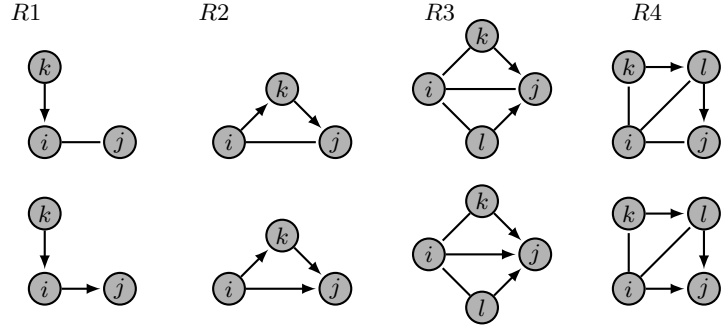
\begin{figure}
\centering
\begin{tikzpicture}[->,>=latex,shorten >=1pt,auto,node distance=1.0cm,
                    thick]
  \tikzstyle{every state}=[fill=black!30,draw=black,text=black, inner sep=0.4pt, minimum size=12pt]

  % rule 1
  \node[state] (Xia) {$i$};
  \node[state] (Xka) [above of=Xia] {$k$};
  \node[state] (Xja) [right of=Xia] {$j$};
  
  \path (Xka) edge (Xia); 
  \path[-] 
  		(Xia) edge (Xja);

  \coordinate [label=above:$R1$] (L1) at (-0.25,1.5);

  \node[state] (Xib) [below=1.5cm of Xia]{$i$};
  \node[state] (Xkb) [above of=Xib] {$k$};
  \node[state] (Xjb) [right of=Xib] {$j$};
  
  \path (Xkb) edge (Xib)
  		(Xib) edge (Xjb);

  % rule 2
  \node[state] (Xic) [right=1cm of Xja] {$i$};
  \node[state] (Xkc) [above right of=Xic] {$k$};
  \node[state] (Xjc) [below right of=Xkc] {$j$};
  
  \path (Xic) edge (Xkc)
  		(Xkc) edge (Xjc); 
  \path[-] 
  		(Xic) edge (Xjc);

  \coordinate [label=above:$R2$] (L2) at (2.25,1.5);

  \node[state] (Xid) [below=1.5cm of Xic]{$i$};
  \node[state] (Xkd) [above right of=Xid] {$k$};
  \node[state] (Xjd) [below right of=Xkd] {$j$};
  
  \path (Xid) edge (Xkd)
  		(Xkd) edge (Xjd)
  		(Xid) edge (Xjd);

  % rule 3
  \node[state] (Xie) [right=1.5cm of Xkc] {$i$};
  \node[state] (Xke) [above right of=Xie] {$k$};
  \node[state] (Xle) [below right of=Xie] {$l$};
  \node[state] (Xje) [below right of=Xke] {$j$};
  
  \path (Xke) edge (Xje)
  		(Xle) edge (Xje); 
  \path[-] 
  		(Xie) edge (Xje)
  		(Xke) edge (Xie)
  		(Xle) edge (Xie);

  \coordinate [label=above:$R3$] (L3) at (5.25,1.5);	

  \node[state] (Xif) [below=1.75cm of Xie] {$i$};
  \node[state] (Xkf) [above right of=Xif] {$k$};
  \node[state] (Xlf) [below right of=Xif] {$l$};
  \node[state] (Xjf) [below right of=Xkf] {$j$};
  
  \path (Xkf) edge (Xjf)
  		(Xlf) edge (Xjf)  		
  		(Xif) edge (Xjf); 
  \path[-] 
  		(Xkf) edge (Xif)
  		(Xlf) edge (Xif);

  % rule 4	
  \node[state] (Xig) [right=1.5cm of Xle] {$i$};
  \node[state] (Xjg) [right of=Xig] {$j$};
  \node[state] (Xkg) [above of=Xig] {$k$};
  \node[state] (Xlg) [right of=Xkg] {$l$};
  
  \path (Xkg) edge (Xlg)
  		(Xlg) edge (Xjg);
  \path[-]
  		(Xig) edge (Xkg)
  		(Xig) edge (Xjg)
  		(Xig) edge (Xlg);
  		
  \coordinate [label=above:$R4$] (L4) at (8,1.5);

  \node[state] (Xih) [below=1.5cm of Xig] {$i$};
  \node[state] (Xjh) [right of=Xih] {$j$};
  \node[state] (Xkh) [above of=Xih] {$k$};
  \node[state] (Xlh) [right of=Xkh] {$l$};
  
  \path (Xkh) edge (Xlh)
		(Xih) edge (Xjh)
  		(Xlh) edge (Xjh);
  \path[-]
  		(Xih) edge (Xkh)
   		(Xih) edge (Xlh);

\end{tikzpicture}
\caption{Orientation rules R1-R4 for Markov equivalence classes with background knowledge from~\citep{meek1995}. If there is an edge constellation as in the top row, $i \text{ --- } j$ is oriented as $i \rightarrow j$ when closing orientations under R1-R4.}
\label{M-fig:orientationRules}
\end{figure}
%In the sense of Definition~\ref{M-def:PDAGrepr} and Theorem~\ref{M-thm:pdag}, 
The resulting PDAG, which we denote by $G_{P, \mathcal{K}}$, is maximally oriented with respect to the 
%subset of the Markov equivalence class consisting 
set
of all 
Markov equivalent 
DAGs with edge orientations that comply with the background knowledge. % that are Markov equivalent to $D^0$. 
%In the context of our problem, this gives rise to the following lemma, which answers questions (1) and (2).
%\Jan{NEU:} \\

It is important to note that we generally do not obtain $G_{\DPX}$ if we simply add all nonlinear edges in $D^0$ as background knowledge $\mathcal{K}$ and close orientations under R1-R4. The resulting maximally oriented PDAG $G_{P,\mathcal{K}}$ is typically not equal to $G_{\DPX}$. For an example, consider~$D_1$ in Figure~\ref{M-fig:edge-status-change} and denote by $P_1$ its pattern. For $\mathcal{K} = \{1 \rightarrow 2\}$ we obtain 
%a maximally oriented
the PDAG $G_{P_1,\mathcal{K}}$ with undirected edge $1 \text{ --- } 3$. But $1 \rightarrow 3$ in $G_{\DPX}$ by Definition~\ref{M-def:PDAGrepr} as $\DPX = \{D_1,D_2\}$. This illustrates that we have to add all edges to $\mathcal{K}$ that are nonlinear in a DAG in $\DPX$ in which they are covered ($1 \rightarrow 3$ is covered and nonlinear in~$D_2$). 

%The following lemma answers questions (1) and (2) above.
%For that, an answer to questions (1) and (2) is needed and given in the following lemma.

%The answers to questions (1) and (2) above are given by the following lemma.

%For a directed edge $i \rightarrow j$ in such a maximally oriented PDAG $G_{P, \mathcal{K}}$, we can decide purely based on the four orientation rules R1-R4 whether there exists an oriented DAG extension of $G_{P, \mathcal{K}}$ in which $i \rightarrow j$ is covered:

\begin{lemm} \label{M-le:DAGextension}
	Let $P$ be the %CPDAG
	pattern of a DAG and $\mathcal{K}$ a consistent set of background knowledge (not containing 
	%any of the 
	directed edges of $P$). Let $G_{P, \mathcal{K}}$ denote the maximally oriented graph with respect to $P$ and $\mathcal{K}$ with orientations closed under R1-R4. 
\begin{enumerate}
	\item[(a)] 
	%Edge $i \rightarrow j$ in $\mathcal{K}$ is not covered in any of the DAG extensions of $G_{P, \mathcal{K}}$ if and only if $G_{P, \mathcal{K}} = G_{P, \mathcal{K} \setminus \{i \rightarrow j\}}$, that is, if and only if the orientation of $i \rightarrow j$ follows from one of R1-R4 when closing orientations for 
	%applied to the 
%pattern $P$ and background knowledge $\mathcal{K}\setminus \{i \rightarrow j\}$.
%	Edge $i \rightarrow j$ in $\mathcal{K}$ is not covered in any of the consistent DAG extensions of $G_{P, \mathcal{K}}$ if and only if $G_{P, \mathcal{K}} = G_{P, \mathcal{K} \setminus \{i \rightarrow j\}}$, that is, if and only if the orientation $i \rightarrow j$ in $G_{P, \mathcal{K} \setminus \{i \rightarrow j\}}$ follows from one of R1-R4. \\
	Edge $i \rightarrow j$ in $\mathcal{K}$ is not covered in any of the consistent DAG extensions of $G_{P, \mathcal{K}}$ if and only if $G_{P, \mathcal{K}} = G_{P, \mathcal{K} \setminus \{i \rightarrow j\}}$.
	\item[(b)] 
%	Suppose that $G_{P, \mathcal{K}} \neq G_{P, \mK \setminus \{i \rightarrow j\}}$. Then, there exists a consistent DAG extension of $G_{P, \mK}$ in which $\pa_{G_{P, \mK}}(j) \setminus \{i\}$ is a cover for $i \rightarrow j$. \\
	If $G_{P, \mathcal{K}} \neq G_{P, \mK \setminus \{i \rightarrow j\}}$, there exists a consistent DAG extension of $G_{P, \mK}$ in which $\pa_{G_{P, \mK}}(j) \setminus \{i\}$ is a cover for $i \rightarrow j$.
	%Suppose that $G_{P, \mathcal{K}} \neq G_{P, \mathcal{K}\setminus \{i \rightarrow j\}}$. Then, $\pa_{G_{P, \mathcal{K}}}(j) \setminus \{i\}$ is a cover for the edge $i \rightarrow j$ in one of the oriented DAG extensions.
	\end{enumerate}
\end{lemm}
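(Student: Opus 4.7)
My plan is to derive Part (a) from Part (b). For Part (a), the direction ``$\Leftarrow$'' is direct: assume $G_{P,\mathcal{K}} = G_{P,\mathcal{K} \setminus \{i \to j\}}$, and suppose for contradiction that a consistent DAG extension $D$ of $G_{P,\mathcal{K}}$ has $i \to j$ as a covered edge. Reversing this covered edge yields $D'$ with the same skeleton and v-structures (a standard property of covered edge reversals), so $D'$ lies in the Markov equivalence class of $P$ and still complies with $\mathcal{K} \setminus \{i \to j\}$, since only the single edge $i \to j$ has been modified. By the soundness of Meek's rules R1--R4~\citep{meek1995}, $D'$ must respect every oriented edge of $G_{P,\mathcal{K} \setminus \{i \to j\}}$, which equals $G_{P,\mathcal{K}}$ by hypothesis and contains $i \to j$; this contradicts $j \to i$ in $D'$. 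The direction ``$\Rightarrow$'' is precisely the contrapositive of Part (b).

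For Part (b), let $\mathcal{K}' = \mathcal{K} \setminus \{i \to j\}$ and $H = G_{P,\mathcal{K}}$, and assume $G_{P,\mathcal{K}} \neq G_{P,\mathcal{K}'}$. First, I observe that the edge between $i$ and $j$ must be undirected in $G_{P,\mathcal{K}'}$: the orientation $i \to j$ there would leave $G_{P,\mathcal{K}} = G_{P,\mathcal{K}'}$, and $j \to i$ would contradict consistency of $\mathcal{K}$. By Meek's completeness~\citep{meek1995}, there exist consistent DAG extensions $D_1$ and $D_2$ of $G_{P,\mathcal{K}'}$ with $i \to j$ in $D_1$ and $j \to i$ in $D_2$. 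By Chickering's covered edge reversal theorem~\citep{chickering95}, $D_1$ can be transformed into $D_2$ by a sequence of covered edge reversals within the Markov equivalence class of $P$. Taking $D^*$ to be the DAG in this sequence immediately before the first reversal of the edge between $i$ and $j$, the edge $i \to j$ is covered in $D^*$. Because $D^*$ is in the Markov equivalence class of $P$ and still contains $i \to j$, it complies with $\mathcal{K}' \cup \{i \to j\} = \mathcal{K}$; by soundness of R1--R4, $D^*$ respects every oriented edge of $H$, so $D^*$ is a consistent DAG extension of $H$.

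The main obstacle is to ensure that the cover is exactly $\pa_H(j) \setminus \{i\}$, that is, $\pa_{D^*}(j) = \pa_H(j)$ (the reverse inclusion being automatic). For any problematic $l \in \pa_{D^*}(j) \setminus (\pa_H(j) \cup \{i\})$, the edge $l \text{ --- } j$ is undirected in $H$ while the cover forces $l \to i$ in $D^*$. A short case analysis shows this $l$ can be eliminated by a local re-orientation: if $l \to i$ were in $H$, then R2 applied to $l \to i \to j$ together with $l \text{ --- } j$ would force $l \to j$ in $H$, contradicting $l \notin \pa_H(j)$; hence $l \text{ --- } i$ is undirected in $H$, and both edges $l \text{ --- } i$ and $l \text{ --- } j$ can be re-oriented as $i \to l$ and $j \to l$ without affecting the intended cover and without introducing a new v-structure at $l$ (since $i$ and $j$ are adjacent). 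The remaining technical difficulty is to perform all such re-orientations simultaneously while maintaining acyclicity and compatibility with the other oriented edges of $H$; I plan to handle this by orienting all undirected edges incident to $j$ out of $j$, orienting each undirected edge $k \text{ --- } i$ of $H$ as $k \to i$ when $k \in \pa_H(j)$ and as $i \to k$ otherwise, and extending the result to a DAG via a topological order consistent with $H$'s directed structure, then verifying global acyclicity and the no-new-v-structure condition using the structural fact that every $k \in \pa_H(j) \setminus \{i\}$ is adjacent to $i$ in the skeleton (a consequence of consistency of $\mathcal{K}$, which forbids the v-structure $(i,j,k)$ when $k \not\sim i$).
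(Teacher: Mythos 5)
Your overall route coincides with the paper's: part (a) via the soundness/completeness of R1--R4 combined with Chickering's covered-edge-reversal theorem, and part (b) by taking a consistent extension in which $i \rightarrow j$ is covered and then adjusting it so that the cover becomes exactly $\pa_{G_{P,\mathcal{K}}}(j)\setminus\{i\}$. The first half is essentially sound, with one loose joint: to conclude that the intermediate DAG $D^{*}$ from the Chickering sequence is a consistent extension of $G_{P,\mathcal{K}}$ you need $D^{*}$ to comply with all of $\mathcal{K}'=\mathcal{K}\setminus\{i\rightarrow j\}$, not just to contain $i\rightarrow j$; this does hold, but only because the sequence reverses exclusively edges on which $D_{1}$ and $D_{2}$ differ, and $D_{1},D_{2}$ agree on every edge oriented in $G_{P,\mathcal{K}'}$ --- an observation you should state rather than infer compliance with $\mathcal{K}$ from the single edge $i\rightarrow j$.

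The genuine gap is in part (b), and it is exactly the step you defer (``I plan to handle this by \dots then verifying \dots''). What must be shown is that your orientation scheme (undirected edges at $j$ oriented out of $j$; $k \text{ --- } i$ oriented as $k\rightarrow i$ precisely when $k\in\pa_{G_{P,\mathcal{K}}}(j)$) extends to a \emph{consistent DAG extension of} $G_{P,\mathcal{K}}$: closing these choices under R1--R4 must never force an additional arrowhead into $i$ or into $j$, must not create a directed cycle with the already-directed part of $G_{P,\mathcal{K}}$, and must not create a new $v$-structure. This verification is where essentially all of the work of the lemma lies; the paper's proof devotes two claims to a case-by-case analysis of R1--R4 showing that no further edge gets oriented into $i$ or $j$, and relies on two auxiliary lemmas (newly forced arrowheads land only on descendants of the re-oriented targets; the remaining undirected edges at $i$ and $j$ can then be oriented outward). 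Your local fix for a ``problematic'' $l$ only rules out the $v$-structure $i\rightarrow l\leftarrow j$ and does not address interactions between the re-orientations at different nodes or with subsequent rule applications. Finally, for the cover to equal $\pa_{G_{P,\mathcal{K}}}(j)\setminus\{i\}$ you also need $\pa_{G_{P,\mathcal{K}}}(i)\subseteq\pa_{G_{P,\mathcal{K}}}(j)$ (every already-directed parent of $i$ is necessarily a parent of $i$ in every extension, so it must lie in the prescribed cover); the paper derives this from the covered extension via R2, and your proposal omits it. As written, the proposal is a correct plan whose decisive verification is missing.
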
 
A proof is given in Section~\ref{S-ssec:proofLemAlgorithm} in the supplement. By construction, $G_{P, \mathcal{K}} = G_{P, \mathcal{K} \setminus \{i \rightarrow j\}}$ if and only if the orientation of $i \rightarrow j$ in $G_{P, \mathcal{K} \setminus \{i \rightarrow j\}}$ is implied by one of R1-R4 applied to $G_{P, \mathcal{K}}$ with undirected edge $i \text{ --- } j$. Hence, Lemma~\ref{M-le:DAGextension}~(a) answers~(Q1) as it provides a simple graphical criterion to check whether $i \rightarrow j$ in $G_{P, \mathcal{K}}$ is covered in one of the consistent DAG extensions of $G_{P, \mathcal{K}}$ based on $G_{P, \mathcal{K}}$ only. 
Note that part (a) is closely related to~\citep[Section~5]{andersson1997}, where the authors construct the CPDAG (representing the Markov equivalence class) from a given DAG by removing edge orientations that are not implied by a set of graphical orientation rules, which contain R1-R3 in Figure~\ref{M-fig:orientationRules}.
Lemma~\ref{M-le:DAGextension}~(b) answers (Q2): it allows us to implement a score-based check whether $i \rightarrow j$ is linear or nonlinear in a DAG extension of $G_{P, \mathcal{K}}$ in which it is covered by simply reading off the parents of $j$ in $G_{P, \mathcal{K}}$ and use them as a cover for $i \rightarrow j$. Details are given in Remark~\ref{M-rem:score}.

We now propose the following iterative estimation procedure for $G_{\DPX}$:
let $D^0 \in \DPX$ be given, $P$ denote its pattern and 
%The above reasoning motivates 
 %starting from $D^0$, in each iteration, we either fix an orientation $i \rightarrow j$ or we replace $i \rightarrow j$ by $i \text{ --- } j$. 
%\Jan{Ist die Konstruktion verstaendlich?}
define $\mathcal{K}_1 := \mathcal{K}^{\text{init}}_{1} \cup \mathcal{K}^{\text{nonl}}_{1}$, where $\mathcal{K}^{\text{init}}_{1}$ contains all directed edges in $D^0$ that are undirected in $P$ and $\mathcal{K}^{\text{nonl}}_{1} := \emptyset$. By construction, %$\mathcal{K}_1$ is a consistent set of background knowledge
%($D^0$ is a valid DAG extension respecting all edge orientations in $\mathcal{K}_1$) 
%and 
%the maximally oriented PDAG 
$G_{P,\mathcal{K}_1} = D^0$. % is a consistent extension of $G_{\DPX}$. 
For $k \geq 1$, in each iteration $k$ to $k+1$, we apply Lemma~\ref{M-le:DAGextension}~(a) and use R1-R4 to select $\{i \rightarrow j\} \in \mathcal{K}^{\text{init}}_k$ ($i \rightarrow j$ in $G_{P,\mathcal{K}_k}$) that is covered in a consistent DAG extension of $G_{P,\mathcal{K}_k}$ (that is, not implied by any of R1-R4). If $\mathcal{K}^{\text{init}}_k=\emptyset$ or no such edge exists, we stop and output $G_{P,\mathcal{K}_k}$. Else, we check whether $i \rightarrow j$ is linear or nonlinear in a consistent DAG extension in which it is covered and construct a new set of background knowledge $\mathcal{K}_{k+1} := \mathcal{K}^{\text{init}}_{k+1} \cup \mathcal{K}^{\text{nonl}}_{k+1} \subseteq \mathcal{K}_{k}$ according to the following rules: 
% We now iteratively construct 
%different sets of consistent 
%sets $\mathcal{K}_1 \supseteq \mathcal{K}_2 \supseteq ...$ of background knowledge such that $G_{P,\mathcal{K}_1} = D^0$ and for $k\geq 1$,
%in iteration $k$ to $k+1$: 
%we choose an edge $i \rightarrow j$ in $G_{P,\mathcal{K}_k}$ that has not been considered yet but is known to be covered in a consistent DAG extension of $G_{P, \mathcal{K}_k}$ by Lemma~\ref{M-le:DAGextension}~(a). 
\\
\underline{Case 1}: If $i \rightarrow j$ is linear, $\mathcal{K}^{\text{nonl}}_{k+1} = \mathcal{K}^{\text{nonl}}_{k}$ and  $\mathcal{K}^{\text{init}}_{k+1} = \mathcal{K}^{\text{init}}_{k} \setminus \{i \rightarrow j\}$.  \\
\underline{Case 2}: If $i \rightarrow j$ is nonlinear, $\mathcal{K}^{\text{nonl}}_{k+1} = \mathcal{K}^{\text{nonl}}_{k} \cup \{i \rightarrow j\}$;  $\mathcal{K}^{\text{init}}_{k+1} = \mathcal{K}^{\text{init}}_{k} \setminus \{i \rightarrow j\}$. 

In particular, by construction, \underline{Case 1} implies that $i \text{ --- } j$ in all $G_{P,\mathcal{K}_{l}}$ for $l > k$, whereas \underline{Case 2} fixes the orientation $i \rightarrow j$ in all $G_{P,\mathcal{K}_{l}}$ for $l > k$. 

%Then, the following holds.
\begin{lemm} \label{M-le:correct-computeGDPX}
	Let $\{\mathcal{K}_k\}_k$ be constructed as above. Then, the corresponding sequence of maximally oriented PDAGs $\{G_{P,\mathcal{K}_k}\}_k$ converges to~$\GDPX$.
\end{lemm}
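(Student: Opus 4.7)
The plan is an induction on the iteration index $k$, maintaining two invariants: (I) every consistent DAG extension of $G_{P,\mathcal{K}_k}$ lies in $\DPX$, and (II) every edge in $\mathcal{K}^{\text{nonl}}_k$ is oriented identically in every member of $\DPX$. The base case is immediate since $G_{P,\mathcal{K}_1}=D^0\in\DPX$ is its own unique consistent extension and $\mathcal{K}^{\text{nonl}}_1=\emptyset$.

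For the inductive step, let $i\rightarrow j$ be the edge processed at step $k$. By Lemma \ref{M-le:DAGextension}(b) there is a consistent DAG extension $D^*$ of $G_{P,\mathcal{K}_k}$ in which $i\rightarrow j$ is covered with cover $\pa_{G_{P,\mathcal{K}_k}}(j)\setminus\{i\}$, and by (I) we have $D^*\in\DPX$, so the linearity status of $i\rightarrow j$ in $D^*$ is well-defined. In Case 2 (nonlinear), Theorem \ref{M-thm:covered-reversals}(a) forces $i\rightarrow j$ to have the same orientation in every DAG of $\DPX$, preserving (II); since $G_{P,\mathcal{K}_{k+1}}=G_{P,\mathcal{K}_k}$, (I) is unchanged. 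In Case 1 (linear), Theorem \ref{M-thm:covered-reversals}(a) yields $D^{**}\in\DPX$ obtained from $D^*$ by reversing $i\rightarrow j$. New DAGs may become consistent extensions of $G_{P,\mathcal{K}_{k+1}}$; to show each such extension $D'$ belongs to $\DPX$, I would connect $D'$ to $D^{**}$ by a sequence of covered edge reversals lying entirely within consistent extensions of $G_{P,\mathcal{K}_{k+1}}$ and argue inductively that each reversed edge is linear in the current DAG of the sequence, so by Theorem \ref{M-thm:covered-reversals}(a) each step stays in $\DPX$. In particular, edges in $\mathcal{K}^{\text{nonl}}_{k+1}=\mathcal{K}^{\text{nonl}}_k$ are never flipped along such a sequence because they are directed in $G_{P,\mathcal{K}_{k+1}}$.

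Termination follows from $\mathcal{K}^{\text{init}}_k$ strictly decreasing at each step. At termination, Lemma \ref{M-le:DAGextension}(a) implies every directed edge of $G_{P,\mathcal{K}_\infty}$ is either in $\mathcal{K}^{\text{nonl}}_\infty$ or forced from the pattern and $\mathcal{K}^{\text{nonl}}_\infty$ by R1--R4. Invariant (I) then gives that every consistent extension of $G_{P,\mathcal{K}_\infty}$ lies in $\DPX$. For the reverse inclusion, I would take any $D\in\DPX$, use Theorem \ref{M-thm:covered-reversals}(b) to connect $D$ to $D^0$ via a sequence of covered linear edge reversals, and observe that none of these reversed edges can lie in $\mathcal{K}^{\text{nonl}}_\infty$ (whose edges, by (II) and Theorem \ref{M-thm:covered-reversals}(a), are identically oriented across $\DPX$). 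Hence the orientations of $D$ extend those of $G_{P,\mathcal{K}_\infty}$. Combining both inclusions, the consistent extensions of $G_{P,\mathcal{K}_\infty}$ coincide with $\DPX$, and by Theorem \ref{M-thm:pdag} together with the uniqueness of the maximally oriented PDAG in Definition \ref{M-def:PDAGrepr}, $G_{P,\mathcal{K}_\infty}=\GDPX$.

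The main obstacle is the argument in Case 1 that every newly appearing consistent extension of $G_{P,\mathcal{K}_{k+1}}$ is in $\DPX$. This requires a connectivity statement for the consistent DAG extensions of a PDAG under covered edge reversals confined to the undirected part of the PDAG, combined with the observation that the reversed edges along such a connecting path are necessarily linear in the intermediate DAGs (since the nonlinear covered edges are frozen in $\mathcal{K}^{\text{nonl}}_{k+1}$). A related subtlety, which invariant (I) is designed to address, is ensuring the score-based linearity check is performed on a DAG $D^*$ that actually lies in $\DPX$, rather than on a spurious extension outside $\DPX$ where Theorem \ref{M-thm:covered-reversals}(a) would not apply.
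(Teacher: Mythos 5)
Your overall architecture (induction over $k$, a case split on linearity, termination after $|\mathcal{K}^{\text{init}}_1|$ steps, and a final identification via Theorem~\ref{M-thm:pdag}) parallels the paper's, but there is a genuine gap at exactly the point you flag yourself: maintaining invariant (I) in Case~1. After undirecting $i \text{ --- } j$, new consistent DAG extensions of $G_{P,\mathcal{K}_{k+1}}$ appear, and your plan is to connect each such $D'$ to $D^{**}$ by covered edge reversals confined to the undirected part of the PDAG, arguing that every reversed edge along the way is linear. Neither half of this is available. The connectivity statement you need (consistent extensions of a maximally oriented PDAG are connected by covered edge reversals that never flip its directed edges) is not established anywhere, and, more seriously, ``not in $\mathcal{K}^{\text{nonl}}_{k+1}$'' does not imply ``linear'': linearity of an edge is DAG-dependent, and an edge that is still undirected in $G_{P,\mathcal{K}_{k+1}}$ (hence not frozen) can perfectly well be covered and nonlinear in some intermediate DAG of your connecting sequence --- that is precisely an edge the algorithm has not yet examined at step $k$. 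A reversal of such an edge exits $\DPX$ by Theorem~\ref{M-thm:covered-reversals}(a), and invariant (I) fails.

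The paper closes this gap by running the induction with a different invariant: $G_{P,\mathcal{K}_k}$ is a \emph{consistent extension of $\GDPX$}, i.e.\ it carries every orientation that $\GDPX$ has. Then every consistent DAG extension of $G_{P,\mathcal{K}_k}$ is automatically a consistent DAG extension of $\GDPX$ and hence lies in $\DPX$ by Theorem~\ref{M-thm:pdag} --- no connectivity argument inside the PDAG is needed. Maintaining that invariant in Case~1 is then immediate: the processed edge is linear and covered in some $D \in \DPX$, so by Theorem~\ref{M-thm:covered-reversals}(a) it is undirected in $\GDPX$, and undirecting it preserves the invariant. Your termination step has a related weakness: Lemma~\ref{M-le:DAGextension}(a) gives that each edge of $\mathcal{K}^{\text{init}}_{k_0}$ is \emph{individually} redundant (removing it does not change the PDAG), which does not imply that all of them are \emph{jointly} implied by $P$ and $\mathcal{K}^{\text{nonl}}_{k_0}$; and your reverse inclusion only controls flips of edges in $\mathcal{K}^{\text{nonl}}_\infty$, whereas the directed part of $G_{P,\mathcal{K}_\infty}$ is larger. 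The paper instead argues by contradiction: if $M \ge 1$ orientations of $G_{P,\mathcal{K}_{k_0}}$ are undirected in $\GDPX$, they all lie in $\mathcal{K}^{\text{init}}_{k_0}$, and connectivity of $\DPX$ under covered linear edge reversals (Theorem~\ref{M-thm:covered-reversals}(b)) forces at least one of them to be covered in some consistent extension, contradicting the stopping condition.
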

A proof is given in Section~\ref{S-ssec:prf-le-correct-computeGDPX} in the supplement and an illustration is provided in Figure~\ref{M-fig:exampleGDPXalgorithm}. As in both cases, $|\mathcal{K}^{\text{init}}_{k+1}| = |\mathcal{K}^{\text{init}}_{k}| - 1$, $\{G_{P,\mathcal{K}_k}\}_k$ converges to $\GDPX$
after at most $|\mathcal{K}^{\text{init}}_{1}|$ iterations, where $|\mathcal{K}^{\text{init}}_{1}|$ is the number of undirected edges in~$P$.

\begin{rem} \label{M-rem:score}
Let $\{i \rightarrow j\} \in \mathcal{K}^{\mathrm{init}}_{k}$ be the edge chosen in iteration $k$ to $k+1$. By Lemma~\ref{M-le:DAGextension}~(b), $S:=\pa_{G_{P,\mathcal{K}_k}}(j) \setminus \{i\}$ is a cover of $i \rightarrow j$ in one of the consistent DAG extensions of $G_{P,\mathcal{K}_k}$. From that, we easily obtain a score-based version:
%In particular, we do not have to care about the orientations of other edges. 
we simply regress $X_i$ on $X_{S}$ and $X_j$ on $X_{S \cup \{i\}}$ to obtain the estimates $\hat{\sigma}_i, \hat{\sigma}_j$ of the standard deviations of the residuals at nodes $i$ and $j$ for $i \rightarrow j$. Similarly, we regress $X_i$ on $X_{S \cup \{j\}}$ and $X_j$ on $X_{S}$ to get $\hat{\sigma}_i', \hat{\sigma}_j'$ for $i \leftarrow j$. If the estimated score difference $|\log(\hat{\sigma}_i') + \log(\hat{\sigma}_j') - \log(\hat{\sigma}_i) - \log(\hat{\sigma}_j)|$ is smaller than $\alpha$, we conclude that $i \rightarrow j$ is linear, else, nonlinear. The pseudo-code of the score-based procedure is provided in Algorithm~\ref{M-alg:computeGDPX}. It outputs an estimate $\widehat{G}_{n,p}$ of $G_{\DPX}$ based on $n$ samples $X^{(1)},...,X^{(n)}$ and $D^0 \in \DPX$.
%Its inputs are $n$ samples $X^{(1)},...,X^{(n)}$ and a DAG $D^0 \in \DPX$.
\end{rem}

\begin{algorithm}[!htb]
\begin{algorithmic}[1]
%\vspace{0.2cm}
\STATE Initialize $\widehat{G}_{n,p} \leftarrow D^0$, $k \leftarrow 1$, $\mathcal{K}^{\text{init}}_{1} \leftarrow \emptyset$ and $\mathcal{K}^{\text{nonl}}_{1} \leftarrow \emptyset$.
\STATE Construct the pattern $P$ of $D^0$.
%\STATE Compute the CPDAG $P$ of DAG $D$, representing the Markov equivalence class of $D$. 
\STATE Add directed edges in $D^0$ that are undirected in $P$ to $\mathcal{K}^{\text{init}}_{1}$. 
\WHILE{There is $i \rightarrow j$ in $\mathcal{K}^{\text{init}}_{k}$, such that its orientation is not implied by applying rules R1, R2, R3 or R4 to $\widehat{G}_{n,p}$ with undirected edge $i \text{ --- } j$}
\STATE Use $\pa_{\widehat{G}_{n,p}}(j) \setminus \{i\}$ to cover $i \rightarrow j$ and estimate the standard deviations $\hat{\sigma}_i, \hat{\sigma}_j, \hat{\sigma}_i'$ and $\hat{\sigma}_j'$ of the residuals as described in Remark~\ref{M-rem:score}.
\IF{$|\log(\hat{\sigma}_i') + \log(\hat{\sigma}_j')  - \log(\hat{\sigma}_i) - \log(\hat{\sigma}_j)| < \alpha$}
\STATE Set $\mathcal{K}^{\text{init}}_{k+1} \leftarrow \mathcal{K}^{\text{init}}_{k} \setminus \{i \rightarrow j\}$ and replace $i \rightarrow j$ by $i \text{ --- } j$ in $\widehat{G}_{n,p}$.
\ELSE
\STATE Set $\mathcal{K}^{\text{init}}_{k+1} \leftarrow \mathcal{K}^{\text{init}}_{k} \setminus \{i \rightarrow j\}$ and keep $i \rightarrow j$ in $\widehat{G}_{n,p}$.
%$\mathcal{K}^{\text{nonl}}_{k+1} \leftarrow \mathcal{K}^{\text{nonl}}_{k} \cup \{i \rightarrow j\}$. 
\ENDIF
\STATE $k \leftarrow k + 1$.
\ENDWHILE
\RETURN Estimated PDAG $\widehat{G}_{n,p}$ representing $\DPX$.
%\vspace{0.2cm}
\end{algorithmic}
\caption{computeGDPX}\label{M-alg:computeGDPX}
\end{algorithm}

% Therefrom, the number of score computations is also bounded by $|\mathcal{K}^{\mathrm{init}}_{1}|$. 
A major advantage of Algorithm~\ref{M-alg:computeGDPX} is that it can be implemented based on one adjacency matrix only that is updated in every iteration.

\begin{thm} \label{M-thm:consistencyGDPXAlgorithm}
	Under Assumption~\ref{M-assu:consistency} and $\xi_p \geq \xi_0 > 0$, for any constant $\alpha \in (0, \xi_0)$,
	$$
		\mathbb{P} \left[ \widehat{G}_{n,p} = G_{\DPX} \right] \rightarrow 1 \qquad (n \rightarrow \infty) %, p \rightarrow \infty).
	$$
\end{thm}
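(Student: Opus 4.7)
The plan is to mirror the consistency argument for Algorithm~\ref{M-alg:listAllDAGsPLSEMscorebased} in Theorem~\ref{M-thm:consistencyRecursiveAlgorithm}, but to track the evolving PDAG $G_{P,\mathcal{K}_k}$ instead of a pair of DAGs. I would proceed by induction over the iteration counter $k$, maintaining the invariant that (i) every consistent DAG extension of $G_{P,\mathcal{K}_k}$ lies in $\DPX$, and (ii) the algorithm's accumulated orientation decisions agree with $\GDPX$. At $k=1$ this holds because $G_{P,\mathcal{K}_1}=D^0 \in \DPX$.

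For the inductive step, suppose the invariant holds at iteration $k$ and the algorithm selects an edge $i \to j \in \mathcal{K}^{\mathrm{init}}_k$ whose orientation is not forced by R1--R4. By Lemma~\ref{M-le:DAGextension}(b), $S := \pa_{G_{P,\mathcal{K}_k}}(j) \setminus \{i\}$ is a cover of $i \to j$ in some consistent DAG extension $D^{(k)}$ of $G_{P,\mathcal{K}_k}$, and by the inductive hypothesis $D^{(k)} \in \DPX$. Let $D^{(k)\prime}$ denote $D^{(k)}$ with this edge reversed. Since the score decomposes over nodes, the population log-variance difference computed on the regressions in line~5 equals, in expectation,
\begin{equation*}
\Delta^{\mathrm{pop}} := \log \sigma_i^{D^{(k)\prime}} + \log \sigma_j^{D^{(k)\prime}} - \log \sigma_i^{D^{(k)}} - \log \sigma_j^{D^{(k)}},
\end{equation*}
exactly as in equation~\eqref{M-eq:diffScoreCovRev}. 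If $i\to j$ is linear in $D^{(k)}$, Theorem~\ref{M-thm:covered-reversals}(a) gives $D^{(k)\prime} \in \DPX$, so $\Delta^{\mathrm{pop}}=0$; if $i\to j$ is nonlinear in $D^{(k)}$, then $D^{(k)\prime} \in \CDPX$ and the definition of $\xi_p$ in~\eqref{M-eq:xip} yields $|\Delta^{\mathrm{pop}}| \geq \xi_p \geq \xi_0 > \alpha$.

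It then remains to show that the empirical statistic in line~6 concentrates uniformly on $\Delta^{\mathrm{pop}}$ over every possible iteration. All four regressions done by the algorithm involve parent sets of nodes in DAGs drawn from $\DPXO \cup \CDPX$, so Assumption~\ref{M-assu:consistency}(iii) and (iv), combined with (i)--(ii), give the same kind of uniform control on $|\hat\sigma - \sigma|$ used in the proof of Theorem~\ref{M-thm:consistencyRecursiveAlgorithm}; the logarithm is Lipschitz on $[\sqrt{L},\infty)$ by (ii). Since the algorithm performs at most $|\mathcal{K}^{\mathrm{init}}_1| \leq \binom{p}{2}$ tests and Assumption~\ref{M-assu:consistency}(iii)--(iv) already includes the supremum over the relevant DAG class, a union bound yields that with probability tending to one every test correctly classifies its edge as linear or nonlinear. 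Conditional on this event, each iterate $G_{P,\mathcal{K}_k}$ coincides with the iterate constructed in the noise-free procedure underlying Lemma~\ref{M-le:correct-computeGDPX}, so the output satisfies $\widehat{G}_{n,p} = \GDPX$.

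The main obstacle I anticipate is bookkeeping the inductive hypothesis cleanly: I need to verify that the cover $S$ used at iteration $k$ is genuinely the parent set of $j$ in some DAG in $\DPX$ (so that the relevant $\sigma$'s equal those of a correctly specified PLSEM), and that after processing the edge the updated $G_{P,\mathcal{K}_{k+1}}$ still consists of edge orientations that are present in $\GDPX$. This reduces to combining Lemma~\ref{M-le:DAGextension}(a)--(b) with the fact that R1--R4 are sound and complete under background knowledge (cited from~\citep{meek1995}), so that no incorrect forced orientation is ever introduced.
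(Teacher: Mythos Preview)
Your proposal is correct and follows essentially the same approach as the paper: the paper's proof simply invokes Lemma~\ref{M-le:correct-computeGDPX} for the population correctness of the PDAG sequence and then defers the consistency of each score-based decision to the argument already given for Theorem~\ref{M-thm:consistencyRecursiveAlgorithm}. You have unpacked both ingredients inline rather than citing them, but the logic is identical; note only that the explicit union bound you mention is unnecessary, since the maxima in Assumption~\ref{M-assu:consistency}(iii)--(iv) already deliver uniform control over the relevant DAG class.
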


\begin{prf}
The correctness of Algorithm~\ref{M-alg:computeGDPX} is proved in Lemma~\ref{M-le:correct-computeGDPX}. The consistency of the score-based estimation follows from the proof of Theorem~\ref{M-thm:consistencyRecursiveAlgorithm}.
\end{prf}

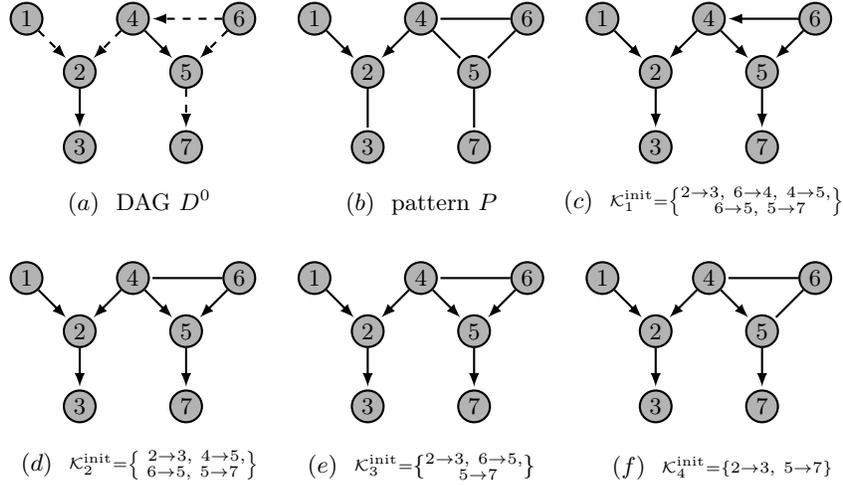
\begin{figure}[h]
\centering
\begin{tikzpicture}[->,>=latex,shorten >=1pt,auto,node distance=1.0cm,
                    thick]
  \tikzstyle{every state}=[fill=black!30,draw=black,text=black, inner sep=0.4pt, minimum size=12pt]
  
  % DAG D
  \node[state] (X1a) {$1$};
  \node[state] (X2a) [below right of=X1a] {$2$};
  \node[state] (X3a) [below of=X2a] {$3$};
  \node[state] (X4a) [above right of=X2a] {$4$};
  \node[state] (X5a) [below right of=X4a] {$5$};
  \node[state] (X6a) [above right of=X5a] {$6$};
  \node[state] (X7a) [below of=X5a] {$7$};

  \path (X1a) edge [dashed] (X2a) 
  		(X2a) edge (X3a)
  		(X4a) edge [dashed] (X2a)
  		(X4a) edge (X5a)
  		(X6a) edge [dashed](X4a)
		(X6a) edge [dashed] (X5a)
		(X5a) edge [dashed] (X7a);

  % pattern P
  \node[state] (X1b) [right of=X6a]{$1$};
  \node[state] (X2b) [below right of=X1b] {$2$};
  \node[state] (X3b) [below of=X2b] {$3$};
  \node[state] (X4b) [above right of=X2b] {$4$};
  \node[state] (X5b) [below right of=X4b] {$5$};
  \node[state] (X6b) [above right of=X5b] {$6$};
  \node[state] (X7b) [below of=X5b] {$7$};
    
  \path (X1b) edge (X2b)
  		(X4b) edge (X2b);
  \path[-] (X4b) edge (X5b)
  		(X2b) edge (X3b)
  		(X6b) edge (X4b)
		(X6b) edge (X5b)
		(X5b) edge (X7b);

  % DAG with fixed edges
  \node[state] (X1c) [right of=X6b]{$1$};
  \node[state] (X2c) [below right of=X1c] {$2$};
  \node[state] (X3c) [below of=X2c] {$3$};
  \node[state] (X4c) [above right of=X2c] {$4$};
  \node[state] (X5c) [below right of=X4c] {$5$};
  \node[state] (X6c) [above right of=X5c] {$6$};
  \node[state] (X7c) [below of=X5c] {$7$};
    
  \path (X1c) edge (X2c) 
  		(X2c) edge (X3c)
  		(X4c) edge (X2c)
  		(X4c) edge (X5c)
  		(X6c) edge (X4c)
		(X6c) edge (X5c)
		(X5c) edge (X7c);
  
  \coordinate [label=above:$(a) \ \ \text{DAG } D^0$] (L1) at (1.5,-2.75);	
  \coordinate [label=above:$(b) \ \ \text{pattern } P$] (L1) at (5.25,-2.75);	
  %\coordinate [label=above:$\substack{\widehat{G}_{n,p} = G_{P,\mathcal{K}^{\text{init}}_{1} \cup \mathcal{K}^{\text{nonl}}_{1}} = D^0 \\ \mathcal{K}^{\text{init}}_{1} = \{2 \rightarrow 3, \ 6 \rightarrow 4, \ 4 \rightarrow 5, \ 6\rightarrow 5, \ 5 \rightarrow 7\} \\ \mathcal{K}^{\text{nonl}}_{1} = \emptyset}$] (L1) at (9,-3.25);
  \coordinate [label=above:$(c) \ \ \substack{\mathcal{K}^{\text{init}}_{1} = \left\{ \substack{ 2 \rightarrow 3, \ 6 \rightarrow 4, \ 4 \rightarrow 5, \\ \ 6 \rightarrow 5, \ 5 \rightarrow 7} \right\}}$] (L1) at (9,-2.75);
 % \coordinate [label=above:$(a)$] (L1) at (-0.25,0.3);	
 % \coordinate [label=above:$(b)$] (L1) at (3.6,0.3);	
 % \coordinate [label=above:$(c)$] (L1) at (7.45,0.3);	

  % full graph 1  
  \node[state] (X1d) [below=3cm of X1a] {$1$};
  \node[state] (X2d) [below right of=X1d] {$2$};
  \node[state] (X3d) [below of=X2d] {$3$};
  \node[state] (X4d) [above right of=X2d] {$4$};
  \node[state] (X5d) [below right of=X4d] {$5$};
  \node[state] (X6d) [above right of=X5d] {$6$};
  \node[state] (X7d) [below of=X5d] {$7$};

  \path (X1d) edge (X2d) 
  		(X2d) edge (X3d)
  		(X4d) edge (X2d)
  		(X4d) edge (X5d)
		(X6d) edge (X5d)
		(X5d) edge (X7d);
  \path[-]
        (X6d) edge (X4d);

  % full graph 2
  \node[state] (X1e) [right of=X6d]{$1$};
  \node[state] (X2e) [below right of=X1e] {$2$};
  \node[state] (X3e) [below of=X2e] {$3$};
  \node[state] (X4e) [above right of=X2e] {$4$};
  \node[state] (X5e) [below right of=X4e] {$5$};
  \node[state] (X6e) [above right of=X5e] {$6$};
  \node[state] (X7e) [below of=X5e] {$7$};
    
  \path (X1e) edge (X2e)
  		(X2e) edge (X3e)
  		(X4e) edge (X2e)
		(X6e) edge (X5e)
		(X4e) edge (X5e)		
		(X5e) edge (X7e);
  \path[-]   	
		(X6e) edge (X4e);

  % full graph 3
  \node[state] (X1f) [right of=X6e]{$1$};
  \node[state] (X2f) [below right of=X1f] {$2$};
  \node[state] (X3f) [below of=X2f] {$3$};
  \node[state] (X4f) [above right of=X2f] {$4$};
  \node[state] (X5f) [below right of=X4f] {$5$};
  \node[state] (X6f) [above right of=X5f] {$6$};
  \node[state] (X7f) [below of=X5f] {$7$};
    
  \path (X1f) edge (X2f) 
  		(X2f) edge (X3f)
  		(X4f) edge (X2f)
  		(X4f) edge (X5f)
  		(X5f) edge (X7f);
  \path[-] (X6f) edge (X4f)
  		   (X6f) edge (X5f);

%  \coordinate [label=above:$\substack{\widehat{G}_{n,p} = G_{P,\mathcal{K}^{\text{init}}_{2}\cup\mathcal{K}^{\text{nonl}}_{2}} \\ \mathcal{K}^{\text{init}}_{2} = \{2 \rightarrow 3, \ 4 \rightarrow 5, \ 6\rightarrow 5, \ 5 \rightarrow 7\} \\ \mathcal{K}^{\text{nonl}}_{2} = \emptyset}$] (L1) at (1.5,-7);	
%  \coordinate [label=above:$\substack{\widehat{G}_{n,p} = G_{P,\mathcal{K}^{\text{init}}_{3}\cup\mathcal{K}^{\text{nonl}}_{3}} \\ \mathcal{K}^{\text{init}}_{3} = \{2 \rightarrow 3, \ 6\rightarrow 5, \ 5 \rightarrow 7\} \\ \mathcal{K}^{\text{nonl}}_{3} = \{4\rightarrow 5\}}$] (L1) at (5.25,-7);	
%  \coordinate [label=above:$\substack{\widehat{G}_{n,p} = G_{P,\mathcal{K}^{\text{init}}_{4}\cup\mathcal{K}^{\text{nonl}}_{4}} = G_{\DPX} \\ \mathcal{K}^{\text{init}}_{4} = \{2 \rightarrow 3, \ 5 \rightarrow 7\} \\ \mathcal{K}^{\text{nonl}}_{4} = \{4 \rightarrow 5\}}$] (L1) at (9.25,-7);	

  	\coordinate [label=above:$(d) \ \ \substack{\mathcal{K}^{\text{init}}_{2} = \left\{ \substack{ \ 2 \rightarrow 3, \ 4 \rightarrow 5, \\ 6\rightarrow 5, \ 5 \rightarrow 7} \right\}}$] (L1) at (1.5,-6.25);
	\coordinate [label=above:$(e) \ \ \substack{\mathcal{K}^{\text{init}}_{3} = \left\{\substack{2 \rightarrow 3, \ 6\rightarrow 5, \\ \ 5 \rightarrow 7} \right\} }$] (L1) at (5.25,-6.25);
	\coordinate [label=above:$(f) \ \ \substack{\mathcal{K}^{\text{init}}_{4} = \{2 \rightarrow 3, \ 5 \rightarrow 7\} }$] (L1) at (9.25,-6.25);
%  	\coordinate [label=above:$(d)$] (L1) at (-0.25,-3.5);	
%  	\coordinate [label=above:$(e)$] (L1) at (3.6,-3.5);	
%  	\coordinate [label=above:$(f)$] (L1) at (7.45,-3.5);	

\end{tikzpicture}
\caption{
%Illustration of Algorithm~\ref{alg:computeGDPX}. (a) Given DAG $D^0 \in \DPX$ with linear edges (dashed) and nonlinear edges (solid). (b) step 2: pattern $P$ of DAG $D^0$. (c) step 3: directed edges in $D^0$ that are undirected in $P$ are added 
%as background knowledge 
%to $\Kinit$. (d)-(f) 
%%repeated application of 
%steps 4-11: note that even though $2 \rightarrow 3$ is nonlinear, it is not covered in any of the extensions and hence never added to $\mathcal{K}_{\text{nonl}}$. In (f), edge $5 \rightarrow 7$ is implied by R1 applied to $4 \rightarrow 5 \text{ --- } 7$, hence $5 \rightarrow 7$ is not covered in any of the consistent DAG extensions. As both edges in $\mathcal{K}^{\text{init}}_{4}$ are implied by R1-R4 in Figure~\ref{fig:orientationRules}, the algorithm terminates and $\widehat{G}_{n,p}$ in (f) corresponds to $G_{\DPX}$ by Lemma~\ref{le:correct-computeGDPX}.
Illustration of Algorithm~\ref{M-alg:computeGDPX}. (a) DAG $D^0$ with linear edges (dashed) and nonlinear edges (solid). (b) step 2: pattern $P$ of $D^0$. (c) step 3: directed edges in $D^0$ that are undirected in $P$ are added 
%as background knowledge 
to $\Kinit_1$. By construction, $\widehat{G}_{n,p} = D^0$. (c)-(f) steps 4-12: $4 \leftarrow 6$ is covered and linear in (c), hence, orientation is removed in $\widehat{G}_{n,p}$ in (d). $4 \rightarrow 5$ is covered and nonlinear in (c), hence, orientation is fixed in $\widehat{G}_{n,p}$ in (e). $6 \rightarrow 5$ is covered and linear in a consistent DAG extension of (e), hence, orientation is removed in $\widehat{G}_{n,p}$ in (f). As both edges in $\Kinit_4$ are implied by R1 in (f), they are not covered in any of the consistent DAG extensions of $\widehat{G}_{n,p}$ in (f). Concludingly, $\widehat{G}_{n,p}=\GDPX$ in (f).
}
\label{M-fig:exampleGDPXalgorithm}
\end{figure}

\section{Model misspecification} \label{M-sec:misspec}

In this section we will discuss how small deviations from a Gaussian error distribution affect the distribution equivalence class and how the output of the algorithm \texttt{listAllDAGsPLSEM} should be interpreted in this case. We define a \emph{generalized PLSEM} by essentially dropping the assumption of Gaussianity of the noise variables from the definition of a PLSEM:
\begin{defn}[generalized PLSEM]\label{M-defn:gPLSEM}
A \emph{generalized PLSEM} with DAG $D$ is a partially linear additive SEM of the form:
\begin{align} \label{M-eq:gPLSEM}
	\mathring X_j & = \mathring \mu_j + \sum\limits_{i \in \pa_D(j)} \mathring f_{j,i}(\mathring X_i) +\mathring \eps_j, %\qquad j=1,...,p , \\
\end{align}
where $\mathring \mu_j \in \mathbb{R}$, $\mathring f_{j,i} \in C^2(\mathbb{R})$, $\mathring f_{j,i} \not\equiv 0$, with $\EE[ \mathring f_{j,i}(X_i)]=0$, and the noise variables $\mathring \eps_j $ are  centered with variance $\mathring \sigma_{j}^{2} >0 $, have positive density on $\mathbb{R}$, and are jointly independent  for $j=1,...,p$.
\end{defn}
 In analogy to before, without loss of generality, we assume $\mathring \mu_{j}= 0, j=1,\ldots,p$ and define projected parameters. Furthermore, for a DAG $D$, we will define the projected density $\mathring p_{\mathring \theta^{D}}^{D}$. Consider $\mathring X \sim \mathring{\PX}$ generated by a generalized PLSEM with DAG $D^{0}$.  For each DAG $D$ that is Markov equivalent to $D^{0}$, define
\begin{align*}
  \{ \mathring f_{j,i}^{D}\}_{i \in \pa_{D}(i)} &= \argmin_{g_{j,i} \in \mathcal{F}_{i}} \mathbb{E}[( \mathring X_{j} - \sum_{i \in \pa_{D}(i)} g_{j,i}(\mathring X_{i}))^{2}]  \\
(\mathring \sigma_{j}^{D})^{2} &=  \mathbb{E}[(\mathring X_{j} - \sum_{i \in \pa_{D}(j)} \mathring  f_{j,i}(\mathring X_{i}))^{2}]\\
\mathring p_{\mathring \theta^{D}}^{D}(x) &= \prod_{j=1}^{p} \mathring q_{j} (  x_{j} - \sum_{i \in \pa_{D}(j)} \mathring f_{j,i}( x_{i})),
\end{align*}
where $\mathring q_{j}$ denotes the density of $\mathring X_{j} - \sum_{i \in \pa_{D}(j)} \mathring f_{j,i}(\mathring X_{i})$ for $j=1,\ldots,p$.  Analogously define the projected density $p_{\theta^{D}}^{D}$ of $X \sim \mathbb{P}$ generated by a (Gaussian) PLSEM. Note that here $p_{\theta^{D}}^{D}$ denotes the projected density of $X$ with respect to generalized PLSEMs, in contrast to Section~ \ref{M-sec:estim} where it denotes the projected density of $X$ with respect to (Gaussian) PLSEMs. \\
The first question we answer is: How does the algorithm \texttt{listAllDAGsPLSEM} behave when the error distributions are non-Gaussian (and the algorithm wrongly assumes Gaussianity)?  The following result answers this question in the population case. Let $X \sim  \PX$ be generated by a  PLSEM with the same DAG, same edge functions, same error variances as the generalized PLSEM that generates $\mathring X$,  but with Gaussian errors.  It turns out that the DAGs in the distribution equivalence class $\mathscr{D}(\mathbb{P})$ have an interesting property. For all $D \in \mathscr{D}(\mathbb{P})$, the computed scores are lower than the score for $D^{0}$.  The proof of Theorem~\ref{M-thm:lower-loglik} can be found in Section~\ref{S-sec:proofs-model-missp} in the supplement.
\begin{thm}\label{M-thm:lower-loglik}
For all $D \in \mathscr{D}( \mathbb{P})$,
\begin{equation*}
  \sum_{j=1}^{p} \log \mathring \sigma_{j}^{D} \le \sum_{j=1}^{p} \log  \mathring \sigma_{j}^{D^{0}}.
\end{equation*}
\end{thm}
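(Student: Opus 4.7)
My plan is to combine the functional characterization in Theorem~\ref{M-thm:funct-char} with the least-squares definition of $\mathring \sigma_j^D$ and the fact that, under Gaussian errors, $\sum_j \log \sigma_j^D$ is constant across the distribution equivalence class. Fix $D \in \mathscr{D}(\PX)$ and let $f_{j,i}^D$, $\sigma_j^D$, $\mu_j^D$ denote the parameters of the (unique) Gaussian PLSEM with DAG $D$ generating $\PX$. Since $\EE[X_j]=0$ and $\EE[f_{j,i}^D(X_i)]=0$, the intercepts $\mu_j^D$ vanish, so $\F^D(x)_j = (\sigma_j^D)^{-1}\bigl(x_j - \sum_i f_{j,i}^D(x_i)\bigr)$, and Theorem~\ref{M-thm:funct-char} yields a constant orthogonal matrix $O^D$ with $\F^D = O^D \F^{D^0}$.

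The crux is to evaluate this pointwise identity at $\mathring X$. Because the generalized PLSEM at $D^0$ shares edge functions and noise variances with the Gaussian PLSEM at $D^0$, we have $\F^{D^0}(\mathring X)_k = \mathring \eps_k / \sigma_k^{D^0}$, and hence
\begin{equation*}
\mathring X_j - \sum_i f_{j,i}^D(\mathring X_i) = \sigma_j^D \sum_{k=1}^{p} O^D_{jk}\, \frac{\mathring \eps_k}{\sigma_k^{D^0}}.
\end{equation*}
Joint independence of $\{\mathring \eps_k\}$, the matching variances $\EE[\mathring \eps_k^2] = (\sigma_k^{D^0})^2$, and the orthogonality $\sum_k (O^D_{jk})^2 = 1$ collapse the second moment of the right-hand side to $(\sigma_j^D)^2$. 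Since $f_{j,i}^D \in \mathcal{F}_i$ by construction, it is a feasible candidate for the additive projection defining $\mathring \sigma_j^D$, giving
\begin{equation*}
(\mathring \sigma_j^D)^2 \le \EE\bigl[(\mathring X_j - \textstyle\sum_i f_{j,i}^D(\mathring X_i))^2\bigr] = (\sigma_j^D)^2.
\end{equation*}

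To finish, Gaussian distribution equivalence gives $\sum_j \log \sigma_j^D = \sum_j \log \sigma_j^{D^0}$, since every $D' \in \mathscr{D}(\PX)$ parametrises the same Gaussian density of $X$ whose expected negative log-likelihood reduces to $\sum_j \log \sigma_j^{D'} + C$; and at the true DAG the $L^2$-projection trivially recovers the generating additive functions, so $\mathring \sigma_j^{D^0} = \sigma_j^{D^0}$. Chaining these three relations yields
\begin{equation*}
\sum_j \log \mathring \sigma_j^D \le \sum_j \log \sigma_j^D = \sum_j \log \sigma_j^{D^0} = \sum_j \log \mathring \sigma_j^{D^0}.
\end{equation*}
The main obstacle is the second-moment collapse above: the entire argument hinges on the rotation structure of Theorem~\ref{M-thm:funct-char}, which makes the cross-terms in $\EE[(\sum_k O^D_{jk} \mathring \eps_k/\sigma_k^{D^0})^2]$ vanish by independence alone (not by Gaussianity of $\mathring \eps$), while the orthogonality of $O^D$ normalises the diagonal terms to one. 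This is precisely the place where the Gaussian-derived quantity $\sigma_j^D$ relates cleanly to moments of the non-Gaussian $\mathring \eps$.
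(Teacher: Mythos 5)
Your proposal is correct and follows essentially the same route as the paper's proof: apply the functional characterization to write $\F^{D}=O\F^{D^{0}}$ for a constant orthonormal $O$, evaluate at $\mathring X$ so that independence of the $\mathring\eps_k$ and $\sum_k O_{jk}^2=1$ give $\EE[\F^{D}(\mathring X)_j^2]=1$, use the Gaussian-model functions as feasible candidates in the additive projection to get $(\mathring\sigma_j^{D})^2\le(\sigma_j^{D})^2$, and chain with $\sum_j\log\sigma_j^{D}=\sum_j\log\sigma_j^{D^{0}}$ and $\mathring\sigma_j^{D^{0}}=\sigma_j^{D^{0}}$. The only (immaterial) difference is that you justify the middle equality via equality of expected Gaussian log-likelihoods across $\mathscr{D}(\mathbb{P})$, whereas the paper derives it from $\det\mathrm{D}\F=\det\mathrm{D}\G$ under the rotation.
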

Hence if the algorithm \texttt{listAllDAGsPLSEM} starts at $D^{0}$ with $\alpha \ge 0$, it will never reject any $D \in \mathscr{D}(\mathbb{P})$. The output of the algorithm will hence be a superset of $\mathscr{D}(\mathbb{P})$. \\
From a theoretical perspective, the other question might be more interesting: what statements can be made about the distribution equivalence class of $\mathring X \sim \mathring{\mathbb{P}}$? To be more precise, for a distribution~$\mathring \PX$ that has been generated by a faithful generalized PLSEM, we call the set of DAGs
\begin{align*} %\label{M-eq:DEC}
\begin{split}
	\mathring{\mathscr{D}}(\mathring{\mathbb{P}}) := \left\{ D \ \begin{array}{|l}  \mathring{\PX} \text{ is faithful to } D \text{ and there exists a} \\ \text{generalized PLSEM with DAG } D \text{ that generates } \mathring{\PX}  \end{array} \right\} 
\end{split}
\end{align*}
the \emph{(generalized PLSEM) distribution equivalence class}. %, and $D_1, D_2 \in \DPX$ are called \emph{distribution equiv 
How do small violations of Gaussianity affect the distribution equivalence class? Intuitively, identification of certain edges should get easier, in the sense that previously identified edges stay identified. This intuition turns out to be correct. The following theorem tells us that small deviations from the Gaussian error distribution can only make the distribution equivalence class smaller. 
\begin{thm}\label{M-thm:close-loglik}
 Let
\begin{equation*}
  | \mathbb{E}[ \log p_{\theta^{D}}^{D} (X)] -  \mathbb{E}[ \log \mathring p_{\mathring \theta^{D}}^{D}(\mathring X)] | < \zeta 
\end{equation*}
for all DAGs $D \sim D^{0}$ (all DAGs $D$ that are Markov equivalent to $D^{0}$) for $\zeta >0$ sufficiently small. Then we have
\begin{equation*}
  \mathring{ \mathscr{D}}( \mathring{\mathbb{P}}) \subseteq  \mathscr{D}(  \mathbb{P}).
\end{equation*}
\end{thm}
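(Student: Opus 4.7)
\textbf{Proof plan for Theorem~\ref{M-thm:close-loglik}.}  The plan is to fix an arbitrary $D \in \mathring{\mathscr{D}}(\mathring{\mathbb{P}})$ and exhibit both ingredients required for $D \in \mathscr{D}(\mathbb{P})$: faithfulness of $\mathbb{P}$ to $D$, and the existence of a (Gaussian) PLSEM with DAG $D$ that generates $\mathbb{P}$. Since $\mathring{\mathbb{P}}$ is faithful to both $D$ and $D^{0}$, the DAGs $D$ and $D^{0}$ share skeleton and v-structures, so $D \sim D^{0}$. Faithfulness is a purely graph-theoretic statement about implied conditional independencies, so under the implicit assumption that $\mathbb{P}$ is faithful to $D^{0}$, it is automatically faithful to every member of the Markov equivalence class of $D^{0}$, including $D$. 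Thus it remains to produce a Gaussian PLSEM with DAG $D$ generating $\mathbb{P}$.

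Write $\mathcal{L}(D) := \mathbb{E}[\log p^{D}_{\theta^{D}}(X)]$ and $\mathring{\mathcal{L}}(D) := \mathbb{E}[\log \mathring{p}^{D}_{\mathring{\theta}^{D}}(\mathring{X})]$. Because $D \in \mathring{\mathscr{D}}(\mathring{\mathbb{P}})$, uniqueness of the additive $L_{2}$-projection onto the parent variables (applied to the generalized PLSEM that actually generates $\mathring{\mathbb{P}}$ with DAG $D$) implies $\mathring{p}^{D}_{\mathring{\theta}^{D}} = p_{\mathring{\mathbb{P}}}$; the same holds trivially for $D^{0}$. Hence $\mathring{\mathcal{L}}(D) = \mathring{\mathcal{L}}(D^{0}) = -H(\mathring{\mathbb{P}})$, and two applications of the closeness hypothesis combined with the triangle inequality yield
\begin{equation*}
|\mathcal{L}(D) - \mathcal{L}(D^{0})| \;<\; 2\zeta.
\end{equation*}

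The heart of the argument, and also where I expect the main work, is to show that this smallness forces $D \in \mathscr{D}(\mathbb{P})$. For any $D' \sim D^{0}$, write
\begin{equation*}
\mathcal{L}(D^{0}) - \mathcal{L}(D') \;=\; \mathrm{KL}\!\left(\mathbb{P} \,\big\|\, p^{D'}_{\theta^{D'}}\right) \;\ge\; 0,
\end{equation*}
with equality if and only if the projected Gaussian-PLSEM density matches $p_{\mathbb{P}}$, which is exactly the defining property of $D' \in \mathscr{D}(\mathbb{P})$ (faithfulness being automatic on the Markov equivalence class). Define the uniform gap
\begin{equation*}
\delta \;:=\; \min\bigl\{\, \mathcal{L}(D^{0}) - \mathcal{L}(D') \,:\, D' \sim D^{0},\ D' \notin \mathscr{D}(\mathbb{P})\,\bigr\}.
\end{equation*}
The Markov equivalence class of $D^{0}$ is finite and every term in this minimum is strictly positive by the KL characterization just given, so $\delta > 0$. (Alternatively, one can exhibit $\delta$ more concretely via Theorem~\ref{M-thm:covered-reversals}(b): a DAG $D' \notin \mathscr{D}(\mathbb{P})$ in the MEC of $D^{0}$ can be reached from some $D'' \in \mathscr{D}(\mathbb{P})$ by a covered nonlinear edge reversal, across which the score jumps by at least $\xi_{p} > 0$.) Choosing $\zeta < \delta/2$ in the hypothesis then gives $|\mathcal{L}(D) - \mathcal{L}(D^{0})| < \delta$, which is incompatible with $D \notin \mathscr{D}(\mathbb{P})$; therefore $D \in \mathscr{D}(\mathbb{P})$, proving the claimed inclusion.

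The technical point to double-check is the identification $\mathring{p}^{D}_{\mathring{\theta}^{D}} = p_{\mathring{\mathbb{P}}}$ in the non-Gaussian setting. This rests on two ingredients: the $L_{2}$-projection uniqueness for additive functions over $\mathcal{F}_{i}$ (available under the minimal-eigenvalue condition used throughout Section~\ref{M-sec:estim}) and the observation that the true generating functions of the generalized PLSEM with DAG $D$ constitute a valid minimizer of the projection problem; uniqueness then forces the projected parameters to agree with the true ones, and the residual densities $\mathring{q}_{j}$ in the definition of $\mathring{p}^{D}_{\mathring{\theta}^{D}}$ reproduce the true noise densities. Everything else is triangle inequality and the KL bookkeeping above.
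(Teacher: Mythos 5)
Your proposal is correct and follows essentially the same route as the paper: both arguments rest on (i) the strictly positive minimum log-likelihood gap over Markov-equivalent DAGs outside $\mathscr{D}(\mathbb{P})$ (your $\delta$ is exactly twice the paper's $\zeta$), (ii) the fact that the projected density coincides with the true density for DAGs in the respective distribution equivalence class, and (iii) two applications of the closeness hypothesis via the triangle inequality. The only difference is organizational — the paper argues by contraposition ($D \notin \mathscr{D}(\mathbb{P}) \Rightarrow D \notin \mathring{\mathscr{D}}(\mathring{\mathbb{P}})$) whereas you argue directly — which is immaterial.
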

The proof of Theorem~\ref{M-thm:close-loglik} and the definition of a feasible $\zeta >0$ can be found in Section~\ref{S-sec:proofs-model-missp}  in the supplement. In words, the assumption requires that the projected log-likelihoods of $X$ and $\mathring X$ do not differ too much for all DAGs $D \sim D^{0}$. If the error distribution of $\mathring \eps$ is close to Gaussian, then the distributions of $X$ and $\mathring X$ are close and the assumption is fulfilled. 

We now collect the implications of these theorems for the population case. By Theorem~\ref{M-thm:lower-loglik}, the output of the algorithm \texttt{listAllDAGsPLSEM} is a superset of $\mathscr{D}(  \mathbb{P}) $. Furthermore, under the assumptions of Theorem~\ref{M-thm:close-loglik}, $\mathscr{D}(  \mathbb{P}) $ is a superset of $\mathring{\mathscr{D}}(\mathring{\mathbb{P}})$. Hence, the algorithm is conservative in the sense that it will return a superset of the true underlying distribution equivalence class $\mathring{ \mathscr{D}}( \mathring{\mathbb{P}})$. In particular, it will not draw any wrong causal conclusions as it will not return incorrectly oriented edges.\\
%\Dominik{NEW:}
% where the true distribution equivalence class consists of only one DAG but the algorithm would output two DAGs.
Does  \texttt{listAllDAGsPLSEM} sometimes return a proper superset  of $\mathring{\mathscr{D}}(\mathring{\mathbb{P}})$? Intuitively, the algorithm only orients edges that are identified due to nonlinear edge functions. However, edges in generalized PLSEMs can sometimes be identified due to non-Gaussianity of certain error distributions. The algorithm \texttt{listAllDAGsPLSEM} does not take the latter into account. In such a case, under the assumptions of Theorem~\ref{M-thm:close-loglik}, the algorithm will usually output a proper superset of the distribution equivalence class. An example can be found below. To compute the distribution equivalence class $\mathring{\mathscr{D}}(\mathring{\mathbb{P}})$, we recommend to compute the log-likelihoods of the output of \texttt{listAllDAGsPLSEM} with a nonparametric log-likelihood estimator (e.g., in the spirit of \cite{nowzopb13}) and keep the DAGs with the largest corresponding log-likelihoods. Under the assumptions of Theorem~\ref{M-thm:close-loglik}, this would return the exact generalized PLSEM distribution equivalence class $\mathring{\mathscr{D}}(\mathring{\mathbb{P}})$. In this case, the main benefit of \texttt{listAllDAGsPLSEM} is to reduce the computational burden compared to more naive approaches, such as computing nonparametric log-likelihood estimates of all DAGs in the Markov equivalence class.

\begin{example} Consider the generalized PLSEM  $X_{1} \leftarrow \epsilon_{1}$,$X_{2} \leftarrow \frac{1}{\sqrt{2}} X_{1} + \epsilon_{2}$,  where $\eps_{1} $ and $\eps_{2}$ both follow a scaled $t$-distribution, with $\text{Var}(\eps_{1}) =1$ and $\text{Var}(\eps_{2}) = \frac{1}{2}$. From \cite{hoy09} it follows that there exists no additive backward model, i.e. there exists no generalized PLSEM with $X_{2} \rightarrow X_{1}$ that generates the given distribution of $(X_{1},X_{2})$. However, the ``residuals'' $r_{1} := X_{2}$ and $r_{2} := X_{1} - \frac{1}{\sqrt{2}} X_{2}$ satisfy $\text{Var}(r_{1})=1$ and $\text{Var}(r_{2})=\frac{1}{2}$. Hence the projected (Gaussian) log-likelihoods of these two models match. In this case, \texttt{listAllDAGsPLSEM} would return the two DAGs $ X_{1} \rightarrow X_{2}$  and $X_{2}\rightarrow X_{1}$, which is a strict superset of  $\mathring{\mathscr{D}}(\mathring{\mathbb{P}}) =\{X_{1} \rightarrow X_{2} \}$.
\end{example}

%%% Local Variables:
%%% mode: latex
%%% TeX-master: "PLSEM"
%%% End:

\section{Simulations} \label{M-sec:simulations}
%\Jan{Generell zu dieser Section: Auswahl an Experimenten ok? Fehlt noch was? Ist etwas unklar?}
In this section we empirically analyze the performance of \texttt{computeGDPX} (Algorithm~\ref{M-alg:computeGDPX}) in various settings. Consider $\PX$ that has been generated by a faithful PLSEM with known DAG $D^0$. 
%As motivated in Section~\ref{M-sec:estim} 
The goal is to estimate the corresponding distribution equivalence class $\DPX$ based on 
%one member $D^0 \in \DPX$ 
$D^0$ and samples of 
$\PX$.
%the distribution generated by a PLSEM with DAG $D^0$. 
In Section~\ref{M-ssec:sim-setting}, we start 
%by describing 
with a description of 
the simulation setting. 
%and
We then briefly comment on a population version of Algorithm~\ref{M-alg:computeGDPX} in Section~\ref{M-ssec:sim-reference}, which is used to obtain the underlying true distribution equivalence class $\DPX$. In the subsequent sections we examine the role of the tuning parameter $\alpha$ (Section~\ref{M-ssec:sim-alpha}), the performance in low- and high-dimensional settings (Section~\ref{M-ssec:sim-p}) and the computation time (Section~\ref{M-ssec:sim-time}).

\subsection{Simulation setting and implementation details} \label{M-ssec:sim-setting}
Throughout the section, let $p$ denote the number of variables, $n$ the number of samples, $\nrep$ the number of repetitions of an experiment, $\pc$ the probability to connect two nodes by an edge and $\pl$ the probability that an edge is linear. For each experiment we generate $\nrep$ random true DAGs $D^0$ with the function \texttt{randomDAG} in the R-package \texttt{pcalg}~\citep{kaetal11_2} with parameters \texttt{n =} $p$ and \texttt{prob = }$\pc$. For each of the random DAGs, we generate $n$ samples of $\PX$ from a PLSEM with edge functions chosen as follows: 
%let $u$ be a random number in $[0,1]$ and $i\rightarrow j$ an edge in $D$. Then, if $u \leq \pl$, 
with probability $\pl$, $f_{j,i}(x) = \alpha_{j,i} \cdot x$ is linear with $\alpha_{j,i}$ randomly drawn from $[-1.5,-0.5] \cup [0.5,1.5]$. Otherwise, $f_{j,i}(x)$ is nonlinear and randomly drawn from the set $\{c_0 \cdot \cos(c_1 \cdot (x - c_2)), c_0 \cdot \tanh(c_1 \cdot (x - c_2))\}$ to have a mix of monotone and non-monotone functions in the PLSEM. In order to be able to empirically support our theoretical findings we choose the parameters $c_0 \sim \text{Unif}([-2,-1]\cup[1,2]), c_1\sim \text{Unif}([1,2])$ and $c_2 \sim \text{Unif}([-\pi/3, \pi/3])$ such that the nonlinear functions are ``sufficiently nonlinear'' and not too close to linear functions. Exemplary randomly generated nonlinear functions are shown in Figure~\ref{M-fig:nonlinfcts}. The noise variables satisfy $\eps_j \sim \mathcal{N}(0, \sigma_j^2)$ with $\sigma_j^2 \sim \text{Unif}([1,2])$ for source nodes (nodes with empty parental set) and $\sigma_j^2 \sim \text{Unif}([1/4,1/2])$ otherwise. 
\begin{figure}[h]
\centering
\includegraphics[width=0.9\textwidth]{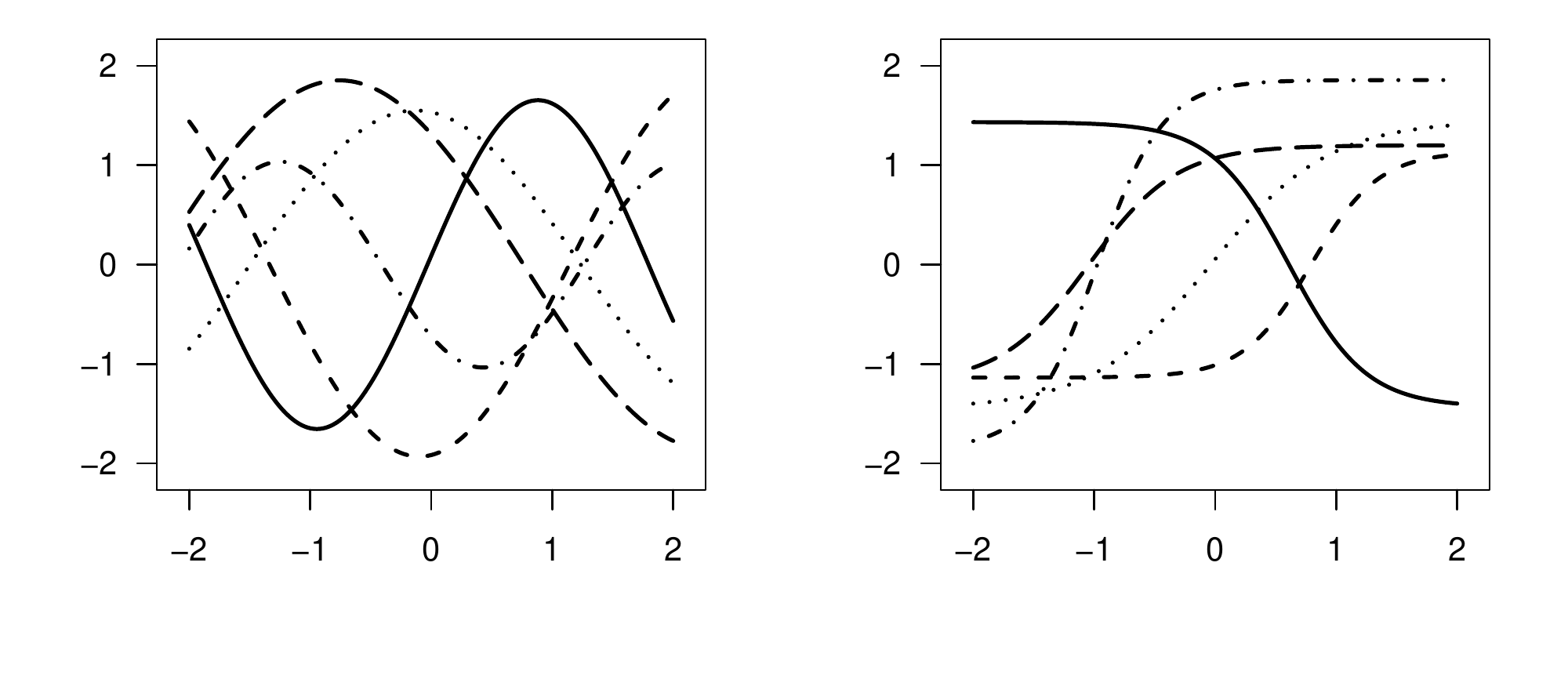}
\vspace{-1cm} 
\caption{Exemplary nonlinear functions used in simulated PLSEMs.}
\label{M-fig:nonlinfcts}
\end{figure}

In order to estimate the residuals in step~5 of \texttt{computeGDPX}, we use additive model fitting based on the \texttt{R}-package \texttt{mgcv} with default settings~\citep{wood03,wood06}. The basis dimension for each smooth term is set to 6.

There exists no state-of-the-art method that we can compare our algorithm with. In principle, given $D^0$, we can estimate the corresponding PLSEMs for all DAGs in the Markov equivalence class of $D^0$ and compute their scores. This also gives us an estimate for $\DPX$, but as explained in Section~\ref{M-ssec:estim_GDPX}, is less efficient than \texttt{computeGDPX}. We therefore only evaluate how accurately \texttt{computeGDPX} estimates $\GDPX$. For that,
let $G_{\DPX}$ and $\hat{G}$ denote the true and estimated graphical representations of $\DPX$, respectively. 
%To evaluate the accuracy of \texttt{computeGDPX}, 
We count (i) the number of edges that are undirected in $G_{\DPX}$ but directed in $\hat{G}$ (``falsely kept orientations'') and (ii) the number of edges that are directed in $G_{\DPX}$ but undirected in $\hat{G}$ (``falsely removed orientations''). Note that as we assume faithfulness, all DAGs in $\DPX$ have the same CPDAG. By construction, \texttt{computeGDPX} does not falsely remove orientations on the directed part of the CPDAG as all these edges are not covered in any of the consistent DAG extensions. To obtain the percentages shown in Figures~\ref{M-fig:effectalpha_sparse} to~\ref{M-fig:effectp} we therefore only divide by the number of undirected edges in the CPDAG. The percentages then reflect a measure for the fraction of ``correct score-based decisions''. 
%In the next section we describe a reference method to compute %the true graphical representation 
%$G_{\DPX}$ based on $D^0$ and information on linearity and nonlinearity of the edges in the PLSEM corresponding to $D^0$.}

\subsection{Reference method for true distribution equivalence class $\DPX$} \label{M-ssec:sim-reference}
%In order to evaluate the accuracy of our algorithm we need to know the true distribution equivalence class. 
To be able to characterize the true distribution equivalence class based on $D^0$ and the corresponding PLSEM we 
%make the assumption
assume that for each $i \in \{1,...,p\}$, the functions in the set $\{   \partial_{i}^{2} f_{j,i} :j \text{ is a child of } i \text{ in } D^0 \text{ and } f_{j,i} \text{ is nonlinear} \}_i$ are linearly independent for 
%our distribution generating 
the 
PLSEM with DAG $D^0$
that generates $\PX$. As all functions in our simulations are randomly drawn (cf. Section~\ref{M-ssec:sim-setting}), the assumption is satisfied with probability one for $D^0$ and the corresponding edge functions. 

This additional assumption rules out cases where nonlinear effects in $D^0$ exactly cancel out over different paths and hence excludes cases as in Figure~\ref{M-fig:nonlin-cancel} where nonlinear edges may be reversed. In particular, it allows us 
%to use Theorem~\ref{S-thm:nonlin-desc} in the supplement 
to obtain
%characterize the true distribution equivalence class
$\GDPX$ only based on $D^0$ and knowledge of the functions in the corresponding PLSEM: %let $J_i$ be the set of nonlinear children of node $i$ as defined in Theorem~\ref{M-thm:nonlin-desc}. Then, by Theorem~\ref{M-thm:nonlin-desc}~(a), for all $i \in \{1,...,p\}$, the orientations of all edges $i \rightarrow j$ for $j \in J_i$ (all nonlinear edges) are fixed. Moreover, by Theorem~\ref{M-thm:nonlin-desc}~(b), the orientations of all edges $i \rightarrow k$ are fixed for all nodes $k$ that are a descendant of a node $j \in J_i$. 
%
%\Jan{ACHTUNG!!! Musste das hier ziemlich aendern. Bitte kurz checken ob ok... Das Problem war der Fall $(i,j) \not\in \mathcal{V}$. Aus Thm 6.1 (d) folgt dann zwar, dass es einen DAG gibt, wo $i$ ein descendant of $j$ ist, es ist a priori aber nicht klar, dass es einen DAG in $\DPX$ gibt, in dem $j \rightarrow i$. Dies folgt aber durch Lemma 6.1.} \\
%\Jan{
first, we use Theorem~\ref{S-thm:nonlin-desc}~(c) in the supplement to construct the set $\mathcal{V}$. For all nodes $i$ in $D^0$, corresponding sets of nonlinear children $C_i$ 
%(as defined in Theorem~\ref{thm:nonlin-desc})
(as defined in Section~\ref{S-sec:prf-interplay} in the supplement) and $k \neq i$, we add $(i,k)$ to $\mathcal{V}$ if $k$ is a descendant of a node in $C_i$. 
%Theorem~\ref{thm:nonlin-desc}~(d) together with Remark~\ref{rem:uniqueness-V} then allow us to conclude on edge orientations based on $\mathcal{V}$. 
In principle, we now apply Algorithm~\ref{M-alg:computeGDPX}, but instead of the score-based decision in steps~6-9, we use the set $\mathcal{V}$ to decide about edge orientations. Let $i \rightarrow j$ be the edge chosen in step~4 and $D$ one of the consistent DAG extensions in which $i \rightarrow j$ is covered. If $(i,j) \in \mathcal{V}$, by Theorem~\ref{S-thm:nonlin-desc}~(d) and Remark~\ref{S-rem:uniqueness-V} in the supplement, $i \rightarrow j$ in all DAGs of a PLSEM that generates $\PX$. Hence, in particular, $i \rightarrow j$ in all DAGs in $\DPX$ and by definition, $i \rightarrow j$ in $G_{\DPX}$. If $(i,j) \not \in \mathcal{V}$, by Lemma~\ref{S-lem:trans-char-via-V} in the supplement,
% and Remark~\ref{rem:VindepofF}, 
the DAG~$D'$ that differs from $D$ only by reversing $i \rightarrow j$ is in $\DPX$. Hence, by definition, $i \text{ --- } j$ in $G_{\DPX}$. 
%Under faithfulness, the set $\mathcal{V}$ is invariant for all PLSEM-functions with corresponding DAGs in $\DPX$ (cf. Remark~\ref{rem:VindepofF}). For this reason, 

%Note that this procedure allows us to obtain 
%the true distribution equivalence class 
%$\GDPX$ only based on information on linearity/nonlinearity of the known functions in the generating PLSEM with corresponding DAG $D^0$.}

\subsection{The role of $\alpha$ for varying sample size} \label{M-ssec:sim-alpha}
In \texttt{computeGDPX}, the score-based decision 
%in Algorithms~\ref{M-alg:listAllDAGsPLSEMscorebased} and~\ref{M-alg:computeGDPX} 
whether a selected covered edge is linear or nonlinear is based on a comparison of the absolute difference of the expected negative log-likelihood scores of two models with a parameter $\alpha$. Optimally, one would choose 
$\alpha$ close to $\xi_p$,
%$\alpha \approx \xi_p / 2$, 
see equation~\eqref{M-eq:xip}, but $\xi_p$ depends on the setting (number of variables, sparsity of the DAG, degree of nonlinearity of the nonlinear functions,~etc.) and is unknown. 
%We therefore consider 
In practice, the parameter $\alpha$ reflects a measure of how conservative the estimate $\hat{G}$ of $\GDPX$ is (in the sense of how many causal statements can be made). 
%one wants to be with respect to keeping or removing edge orientations. 
For example, choosing $\alpha$ large results in a conservative estimate 
%(a large set $\DPX$, or equivalently, a maximally oriented PDAG $G_{\DPX}$ 
$\hat{G}$ with many undirected edges (a large set $\DPX$ of equivalent DAGs). In Figures~\ref{M-fig:effectalpha_sparse} and~\ref{M-fig:effectalpha_dense}, we empirically analyze the dependence of $\hat{G}$ on $\alpha$ for different sample sizes for sparse and dense graphs, respectively.
\begin{figure}[h]
\centering
\includegraphics[width=0.75\textwidth]{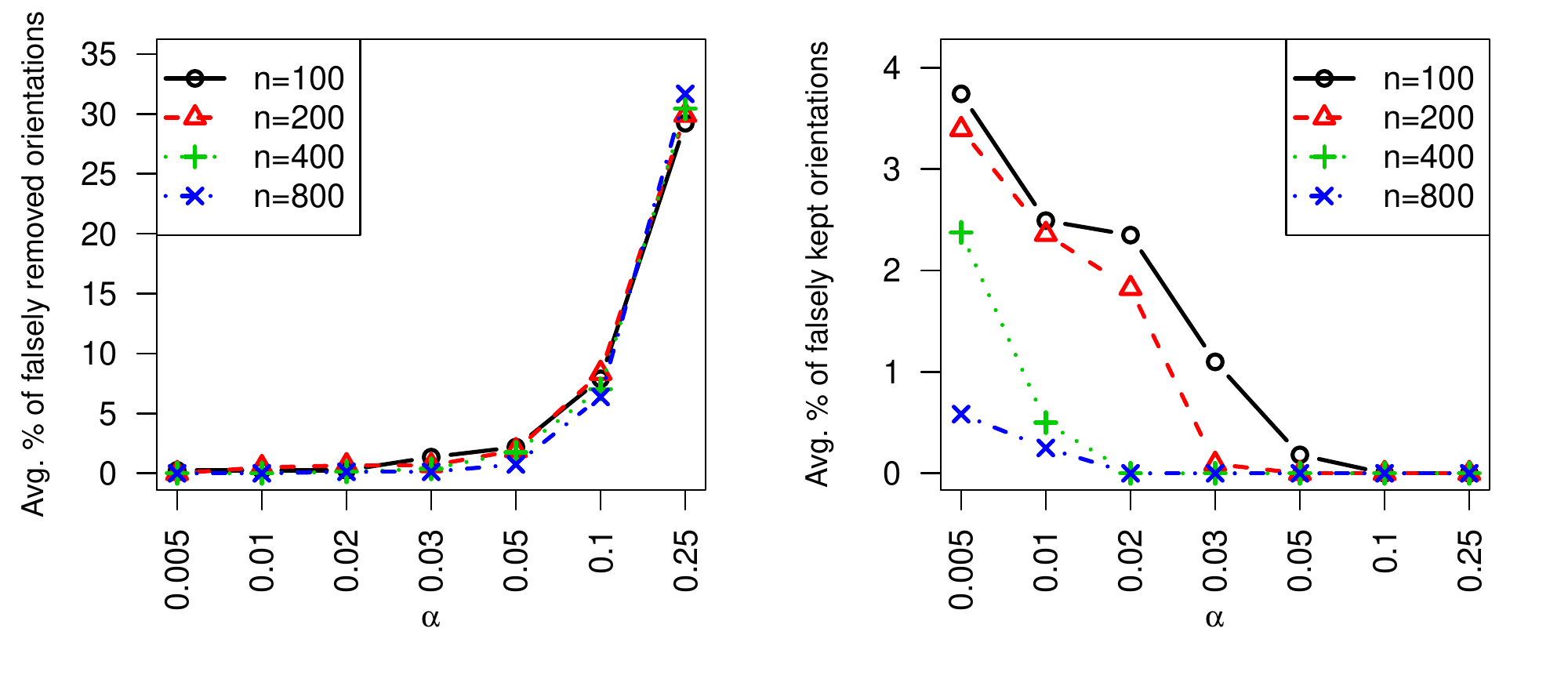} \\
\vspace{-0.4cm}
\includegraphics[width=0.75\textwidth]{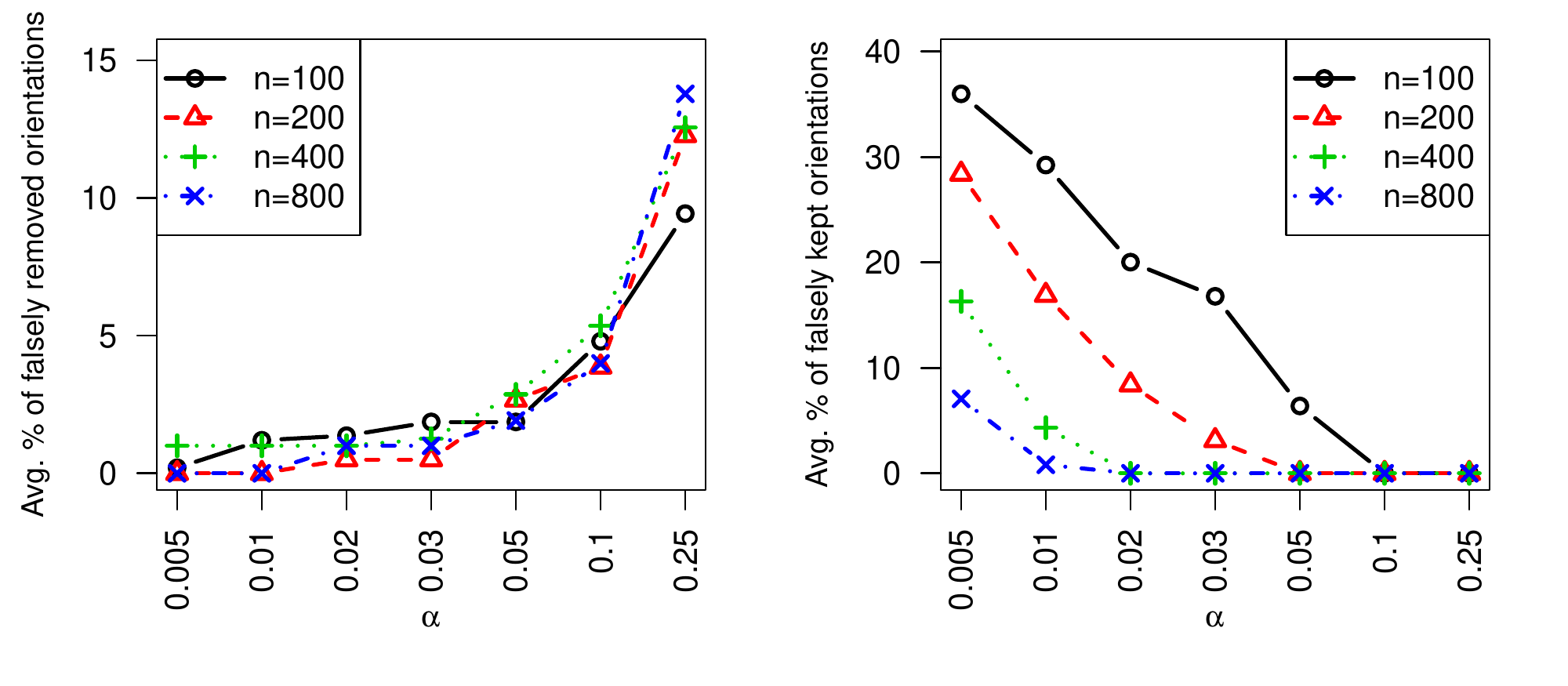} \\
\vspace{-0.4cm}
\caption{Performance of \texttt{computeGDPX} for varying sample sizes and values of $\alpha$ (x-axis) in sparse DAGs with $\pl=0.2$ (top) and $\pl=0.8$ (bottom). Parameters: $p=10$, $\nrep=100$ and $\pc=2/9$ (expected number of edges: $10$).}
\label{M-fig:effectalpha_sparse}
\end{figure}

\texttt{computeGDPX} exhibits a good performance for a wide range of values of $\alpha$. In particular, as the sample size increases, choosing $\alpha$ small results in 
%yields
very 
accurate estimates 
$\hat{G}$ 
of $G_{\DPX}$. The sparsity of the DAG does not strongly influence the results.
\begin{figure}[h]
\centering
\includegraphics[width=0.75\textwidth]{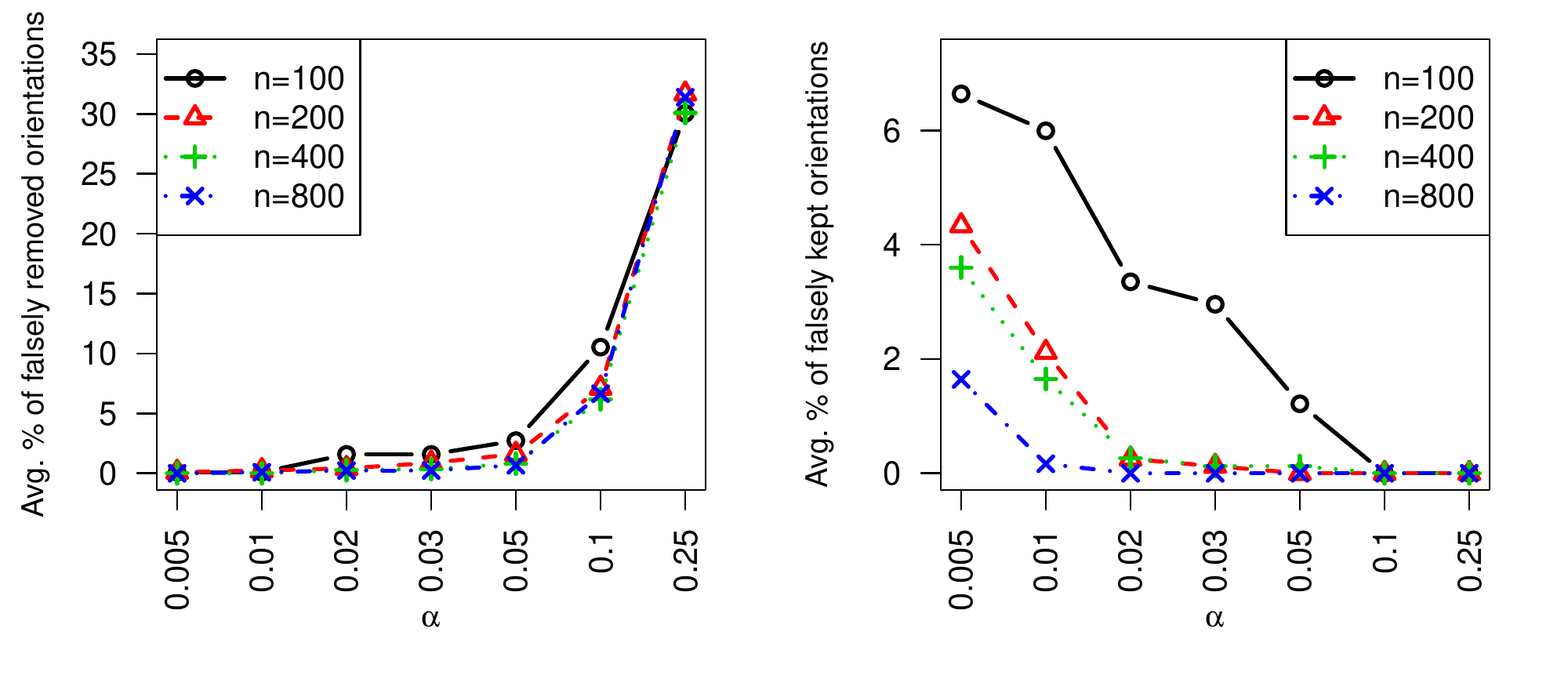} \\
\vspace{-0.4cm}
%\includegraphics[width=0.75\textwidth]{Figures/p10_pl05_pc2_nrep100_mean} \\
%\vspace{-0.4cm}
\includegraphics[width=0.75\textwidth]{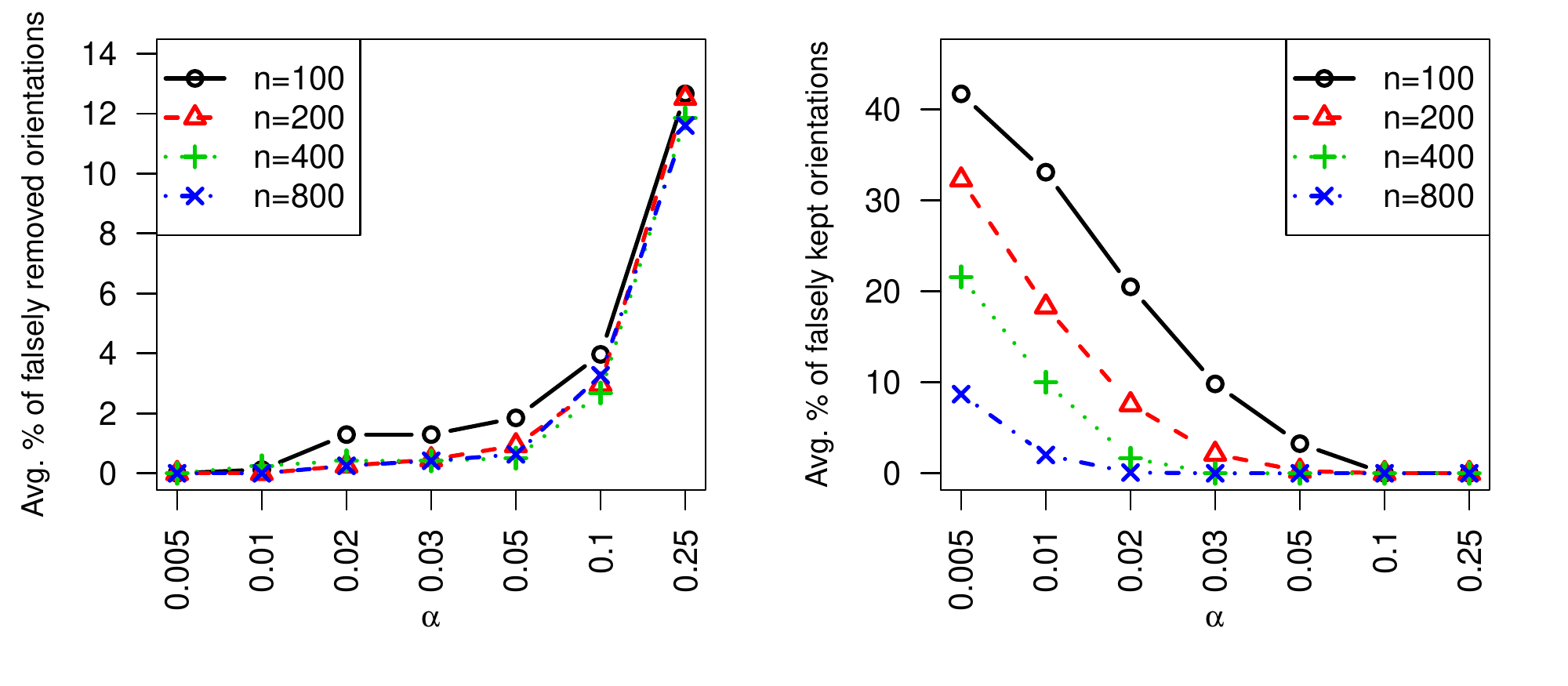}
\vspace{-0.4cm}
\caption{Performance of \texttt{computeGDPX} for varying sample sizes and values of $\alpha$ (x-axis) in dense DAGs for $\pl=0.2$ (top) and $\pl=0.8$ (bottom). Parameters: $p=10$, $\nrep=100$ and $\pc=6/9$ (expected number of edges: $30$).}
\label{M-fig:effectalpha_dense}
\end{figure}

\subsection{The dependence on $p$: low- and high-dimensional setting} \label{M-ssec:sim-p}
%As motivated in Section~\ref{M-sec:estim},
From the fact that \texttt{computeGDPX} only relies on local score computations, %From that 
we expect that its performance does not strongly depend on the number of variables~$p$ as long as the neighborhood sizes in the DAGs (the node degrees) are similar for different values of $p$. We simulate $\nrep = 100$ random DAGs with $p=10$, $p=100$ and $p=1000$ nodes, respectively. Moreover, we set $\pc = 2/(p-1)$ which results in an expected number of $p$ edges and an expected node degree of $2$ for all settings. As demonstrated in Figure~\ref{M-fig:effectp}, the accuracy of \texttt{computeGDPX} with respect to varying values of $\alpha$ is barely affected by the number of variables $p$. In particular, \texttt{computeGDPX} exhibits a good performance even in high-dimensional settings with $p=1000$ and sample sizes in the hundreds. The same conclusions hold for $\pc=6/(p-1)$ with an expected node degree of $6$ (not shown).
\begin{figure}[h]
\centering
\includegraphics[width=0.75\textwidth]{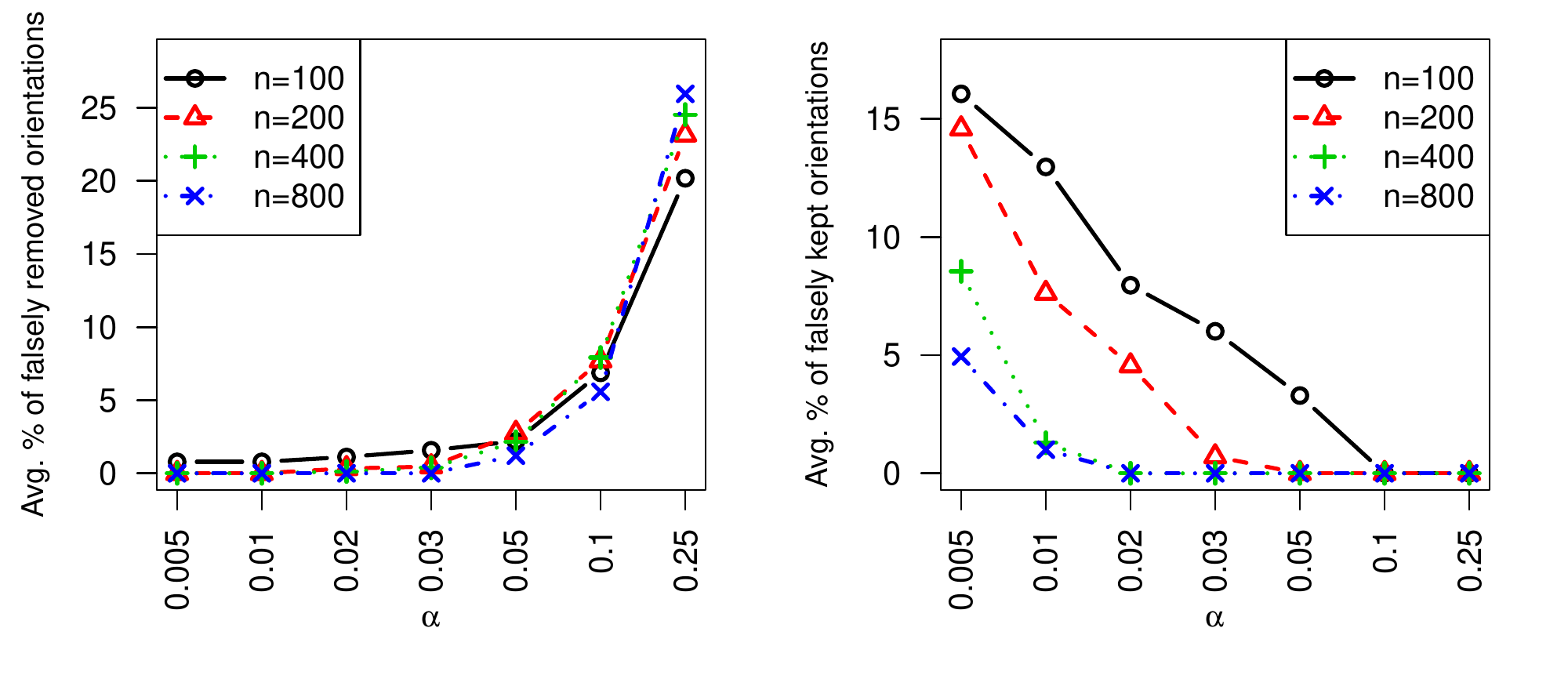} \\
\vspace{-0.4cm}
\includegraphics[width=0.75\textwidth]{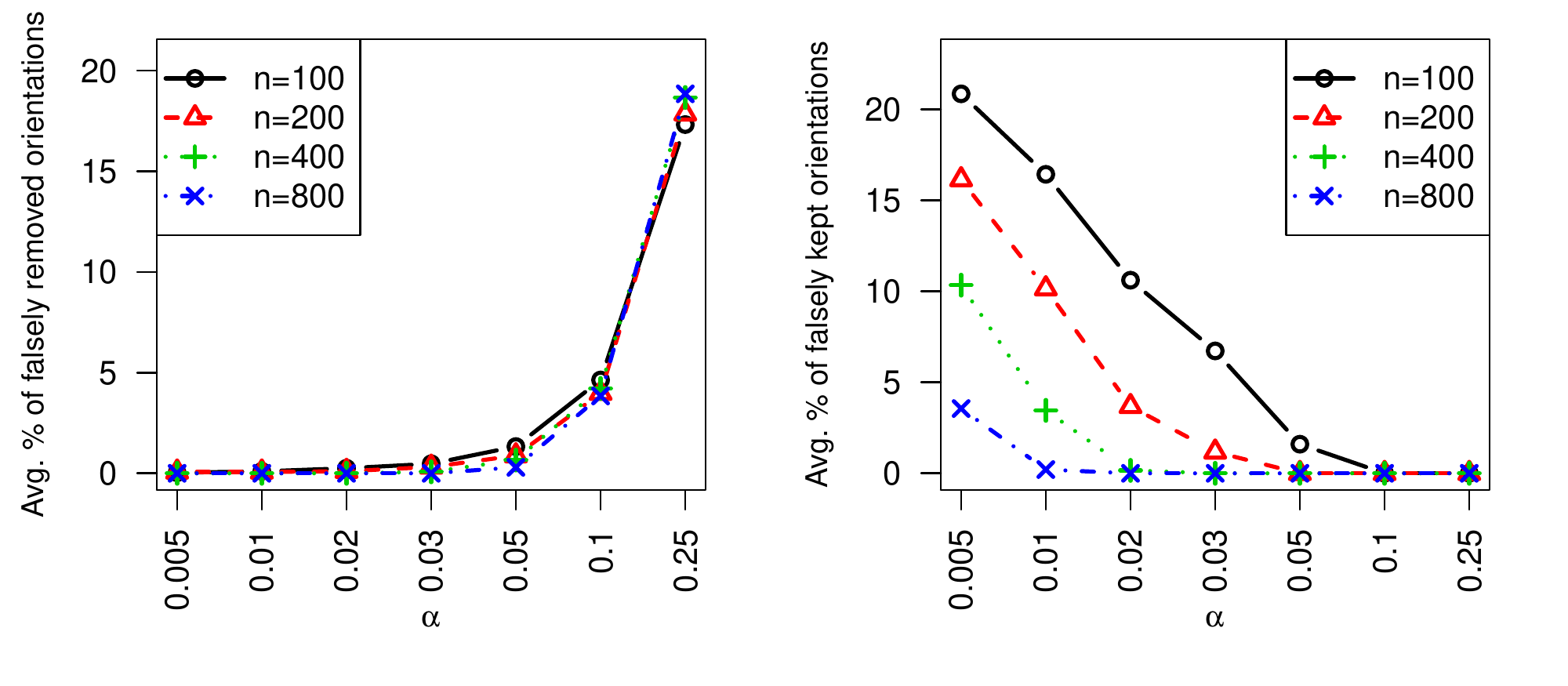} \\
\vspace{-0.4cm}
\includegraphics[width=0.75\textwidth]{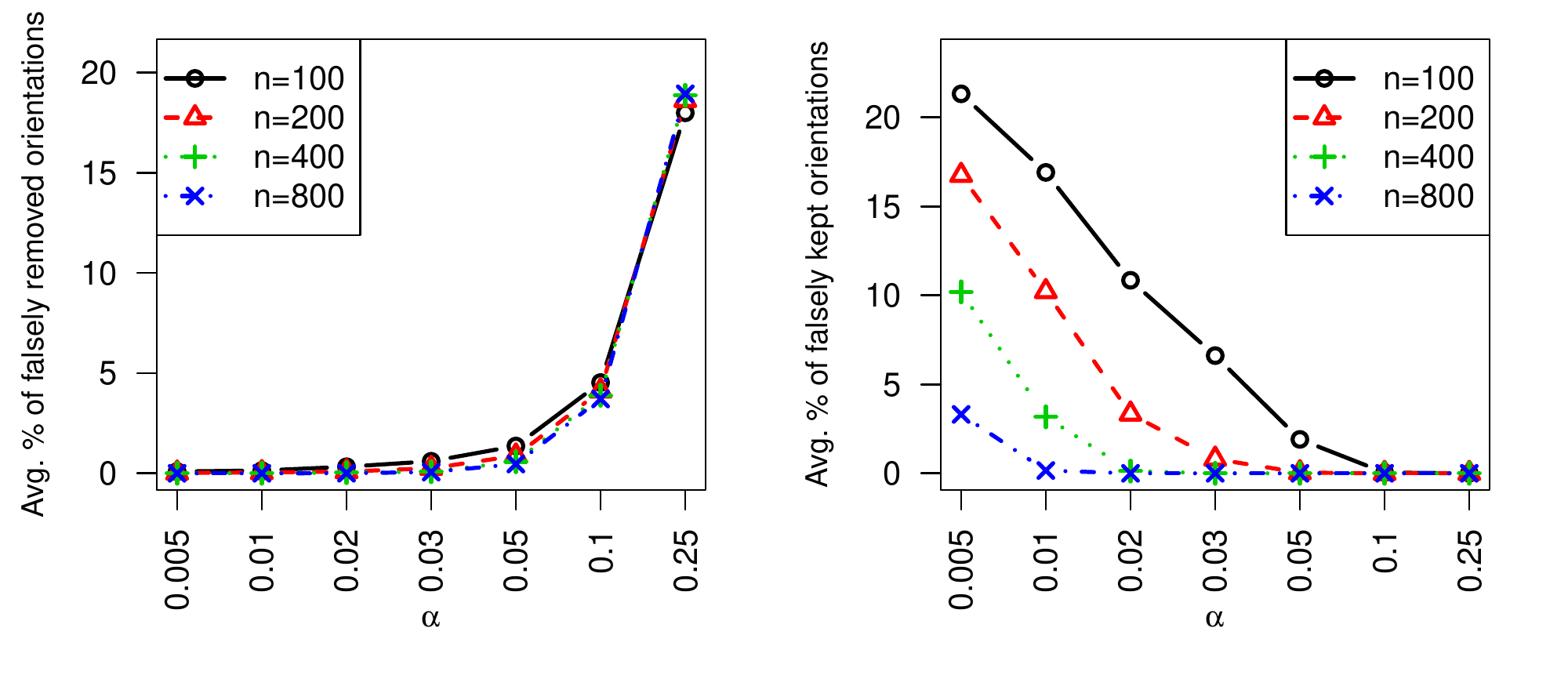}
\caption{Performance of \texttt{computeGDPX} for varying sample sizes and values of $\alpha$ (x-axis) for $p=10$ (top), $p=100$ (middle) and $p=1000$ (bottom). Parameters: $\pl=0.5$, $\nrep=100$ and $\pc=2/(p-1)$ (expected number of edges: $p$).}
\label{M-fig:effectp}
\end{figure}

\subsection{Computation time} \label{M-ssec:sim-time}
Finally, we analyze the computation time of \texttt{computeGDPX} depending on the number of variables $p$ and sparsity $\pc$. We examine two scenarios: (i) most of the functions in the PLSEM are nonlinear ($\pl=0.2$) and (ii) the worst-case scenario (w.r.t. computation time) where all the functions in the PLSEM are linear ($\pl = 1$) and $\DPX$ is equal to the Markov equivalence class ($G_{\DPX}$ equals the CPDAG). For all combinations of $p \in \{10,20,50,100,250,500,1000,2000,5000\}$ and $\pc \in \{2/(p-1), 8/(p-1)\}$ and for both scenarios (i) and (ii), we measure the time consumption of \texttt{computeGDPX} for $n=400$ and $\alpha = 0.05$.
%simulate $\nrep = 100$ random DAGs and evaluate the time consumption of the population version and the score-based version of Algorithm~\ref{M-alg:computeGDPX}. 
In the scenario where all the functions are linear, we additionally compare it to \texttt{dag2cpdag} in the R-package \texttt{pcalg}, which constructs the CPDAG based on iterative application of R1-R3 in Figure~\ref{M-fig:orientationRules}. The median CPU times are shown in Table~\ref{M-tab:CPUtimes}. 
\texttt{computeGDPX} is able to estimate $G_{\DPX}$ in less than a minute even if the number of variables is in the thousands. In general, the speed of our implementation heavily depends on the sparsity of the DAGs. This can be seen from the case with $p=10$ and expected number of edges $40$. In this setting the DAGs are almost fully connected. This in turn implies that not many of the edges are fixed due to $v$-structures and a lot of score-based tests have to be performed. On the other hand, if the underlying DAGs are sparse, we observe that \texttt{computeGDPX} even outperforms \texttt{dag2cpdag} with respect to computation time if the number of variables is large. Note that this only holds for sparse DAGs. In general, \texttt{dag2cpdag} is much faster than our implementation (not shown).

\begin{table}[h]
\centering
\caption{Median CPU times [s] for \texttt{computeGDPX} and for \texttt{dag2cpdag} that iteratively applies R1 to R3 in Figure~\ref{M-fig:orientationRules}. $\nrep = 100$ repetitions for $\pl=0.2$ and $\nrep=20$ repetitions for $\pl=1$.}
\label{M-tab:CPUtimes}
\begin{tabular}{l|l|l|l|l|l|l|}
\cline{2-7}
& \multicolumn{2}{c|}{$\pl=0.2$} & \multicolumn{4}{c|}{$\pl=1$} \\ \cline{2-7} 
& \multicolumn{2}{c|}{\texttt{computeGDPX}} & \multicolumn{2}{c|}{\texttt{computeGDPX}} & \multicolumn{2}{c|}{\texttt{dag2cpdag}} \\ \hline 
\multicolumn{1}{|l|}{$\EE[|\text{edges}|]$} & $p$ & $4p$ & $p$ & $4p$ & $p$ & $4p$ \\ \hline
\multicolumn{1}{|l|}{$p=10$} & 0.092 & 0.785 & 0.157 & 1.101 & 0.007 & 0.005  \\ \hline
\multicolumn{1}{|l|}{$p=20$} & 0.150 & 0.105 & 0.174 & 0.162 & 0.006 & 0.006  \\ \hline
\multicolumn{1}{|l|}{$p=50$} & 0.300 & 0.164 & 0.332 & 0.223 & 0.008 & 0.009  \\ \hline
\multicolumn{1}{|l|}{$p=100$} & 0.604 & 0.281 & 0.665 & 0.325 & 0.014 & 0.016 \\ \hline
\multicolumn{1}{|l|}{$p=250$} & 1.446 & 0.630 & 1.740 & 0.717 & 0.072 & 0.087 \\ \hline
\multicolumn{1}{|l|}{$p=500$} & 2.705 & 1.253 & 3.486 & 1.523 & 0.395 & 0.599 \\ \hline
\multicolumn{1}{|l|}{$p=1000$} & 5.616 & 2.513 & 6.603 & 2.974 & 3.464 & 4.231 \\ \hline
\multicolumn{1}{|l|}{$p=2000$} & 11.504 & 5.380 & 13.493 & 6.331 & 25.463 & 31.591 \\ \hline
\multicolumn{1}{|l|}{$p=5000$} & 29.226 & 16.276 & 35.094 & 18.462 & 400.324 & 591.574 \\ \hline
\end{tabular}
\end{table}

\section{Conclusion} \label{M-sec:conclus}
We comprehensively characterized the identifiability of partially linear structural equation models with Gaussian noise (PLSEMs) from various perspectives. First, we proved that under faithfulness we obtain graphical and transformational characterizations of distribution equivalent DAGs similar to well-known characterizations of Markov equivalence classes of DAGs. More generally, we demonstrated that reinterpreting PLSEMs as PLSEM-functions leads to an interesting geometric characterization of all PLSEMs that generate the same distribution $\PX$, as they can all be expressed as constant rotations of each other. Therefrom we derived a precise condition how PLSEM-functions (and hence also how single nonlinear additive components in PLSEMs) restrict the set of potential causal orderings of the variables and showed how it can be leveraged to conclude about the causal relations of specific pairs of variables under mild additional assumptions.  We also provided some robustness results when the noise terms are in the neighborhood of Gaussian distributions.
The theoretical results were complemented with an efficient algorithm that finds all equivalent DAGs to a given DAG or PLSEM. We proved its high-dimensional consistency and evaluated its performance on simulated data.

From an application perspective, the algorithms  \texttt{listAllDAGsPLSEM}  and \texttt{computeGDPX} can serve two purposes. First, they can be used in conjunction with any causal structure learning procedure in the DAG space. This has been proposed in~\citep{castelo2003} and it can also be used in the context of PLSEMs. In comparison to the Markov equivalence class, the algorithms can potentially identify additional directed edges. 
In addition, the proposed methods can play an important role for the output of the CAM algorithm~\citep{pbjoja13} (with pruning).  In particular if some of the edge functions are close to linear or the sample size is low, the CAM algorithm will output one DAG even though there might be many DAGs with similar scores. In that scenario, the proposed algorithms provide a simple and important criterion to assess the reliability of  oriented edges.

More broadly speaking, our characterizations of PLSEMs (and corresponding DAGs) that generate the same distribution $\PX$ are crucial for further algorithmic developments in structure learning. For example, as mentioned before, in the spirit of~\citep{castelo2003}, or also for Monte Carlo sampling in Bayesian settings, see a related discussion in~\citep[Section~1]{andersson1997}. 

%%% Local Variables:
%%% mode: latex
%%% TeX-master: "PLSEM"
%%% End:

% \appendix
% \section{Proofs} \label{M-append}

% %\subsection{Proof of the graphical characterization}
% %\input{proofThmPDAG}

% \input{proofs}

% \subsection{Proofs of consistency and correctness of estimation procedures} \label{M-ssec:prf-consistency}
% \input{proofs_estimation}

%\subsubsection{Proof of Theorem~\ref{thm:consistencyRecursiveAlgorithm}} \label{M-ssec:proofConsistencyRecursiveAlgorithm}
%\input{proofConsistencyRecursiveAlgorithm}
%\subsubsection{Proof of Lemma~\ref{le:DAGextension}} \label{M-ssec:proofLemAlgorithm}
%\input{proofLemAlgorithm}
%\input{proof_lemm_paj}

\section*{Acknowledgements}
The authors thank Emilija Perkovi\'c and  Jonas Peters  for fruitful discussions. They also thank some anonymous reviewers, an Associate Editor and the Editor for constructive comments.

\section{Appendix}

This supplement contains detailed specifications and proofs of our main theorems. The order of the presentation matches the one in the main paper. 
%Throughout the supplement, all references starting with a number refer to the main text. 
Figure~\ref{S-figproof} gives an overview of the dependency structure of the different theorems.

\begin{figure}[htb!]
    \centering
      \begin{tikzpicture}[>=stealth',shorten >=1pt,node distance=2.5cm, main node/.style={minimum size=0.4cm}]
      [>=stealth',shorten >=1pt,node distance=3cm,initial/.style    ={}]

    \node[main node] (L54)  {Theorem~\ref{M-thm:covered-reversals}};
    \node[main node,yshift=1cm] (L55) at (L54)  {Lemma~\ref{S-thm:nonlin-desc}};
    \node[main node] (L63) [right=0.5cm of L54]  {Theorem~\ref{M-thm:pdag}}; 
\node[main node,yshift=1.75cm] (L70) at (L63)  {Theorem~\ref{M-sec:nonl-faithf}};
\node[main node,yshift=1cm] (L71) at (L63)  {Theorem~\ref{M-sec:nonl-faithf-2}};

    \node[main node]            (L56)  [left of= L55]   {Theorem~\ref{M-thm:char-via-order}}; 
    \node[main node]            (L59)  [left of=L54]   {Lemma~\ref{S-lem:trans-char-via-V}};
    \node[main node,yshift=-2cm]            (L60)  at  (L56)  {Lemma~\ref{S-le:uniquePLSEM}};

%model mispecification theorems

 \node[main node,yshift=1.75cm,xshift=0.5cm] (L68) at (L56) {Theorem~\ref{M-thm:lower-loglik}};
 \node[main node,yshift=2.75cm,xshift=0.5cm] (L64) at (L56) {Theorem~\ref{M-thm:close-loglik}};

    \node[main node,yshift=1cm]            (L57)  [left=0.5cm of L56]   {Lemma~\ref{S-thm:func-char}}; 

    %    \node[main node]            (L58)  [left of= L57,text width=2.2cm,align=center]   {Lemma 6.2};

%shifted by 1cm cause of removal of transformation Lemma
\node[main node,yshift=1cm]            (L61)  [left of=L57]   {Lemma~\ref{S-le:mainlemma}};
% Transformation Lemma
%\node[main node,yshift=1cm]            (L58)  at (L61)   {Lemma~\ref{le:gaussian}};
\node[main node,yshift=-1cm]            (L62)  at (L57)   {Lemma~\ref{S-lem:constant-equivalence}};
\node[main node,yshift=-1cm]  (L65)  at (L61)   {Proposition~\ref{S-prop:PLSEM2}};
\node[main node,yshift=-2cm]  (L66)  at (L61)   {Lemma~\ref{S-lemma:decomposability}};
\node[main node, yshift=1cm] (L67) at (L57)  {Theorem~\ref{M-thm:funct-char}};

\draw[->]  (L57) edge (L68);
\draw[->]  (L57) edge (L64);

% Transformation Lemma
% \draw[->] (L58 edge    (L57);
 \draw[->] (L61) edge    (L57);
 \draw[->] (L62) edge    (L57);

 \draw[->] (L57) edge    (L56);
 \draw[->] (L57) edge    (L67);

 \draw[->] (L56) edge    (L55);
 \draw[->] (L56) edge    (L54);
 \draw[->] (L59) edge    (L54);
 \draw[->] (L60) edge    (L54);

 \draw[->] (L54) edge    (L63);
 
 \draw[->] (L55) edge    (L70);
 \draw[->] (L55) edge    (L71);

 \draw[->] (L66) edge    (L65);
 \draw[->] (L65) edge    (L57);
 \draw[->] (L65) edge    (L61);
 \draw[->] (L65) edge    (L62);

 \tikzset{black dotted/.style={draw=black, line width=1pt,
                               dash pattern=on 1pt off 1pt on 1pt off 1pt,
                                inner xsep=1mm, inner ysep=1mm, rectangle, rounded corners}};
 \node (first dotted box) [black dotted, 
                            fit = (L54) (L59) (L60)] {};
 \node (second dotted box) [black dotted, 
                            fit = (L56)] {};
 \node (third dotted box) [black dotted, 
                            fit = (L57)  (L67) (L61) (L62)  (L65) (L66)] {};
 \node (fourth dotted box) [black dotted, 
                            fit = (L63)] {};
 \node (fifth dotted box) [black dotted, 
                            fit = (L55) (L70) (L71)] {};     

%misspec
\node (sixth dotted box) [black dotted, 
                             fit = (L64) (L68)]{};

 \node at (third dotted box.south) [below, inner sep=2mm] {$\substack{\textit{Functional characterization} \\ \textit{(Section~\ref{S-sec:prf-functional})}}$};
 \node at (first dotted box.south) [below, inner sep=2mm] {$\substack{\textit{Transformational characterization} \\ \textit{(Section~\ref{S-sec:prf-covered})}}$};
  \node at (second dotted box.north) [above, inner sep=1mm] {$\substack{\textit{Causal ordering} \\ \textit{characterization} \\ \textit{(Section~\ref{S-sec:prf-ordering})}}$};
  \node at (fourth dotted box.south) [below, inner sep=1mm] {$\substack{\textit{Graphical} \\ \textit{representation} \\ \textit{(Section~\ref{S-sec:prf-graphical})}}$};
  \node at (fifth dotted box.north) [above, inner sep=1mm] {$\substack{\textit{Nonlinearity \& faithfulness} \\ \textit{(Section~\ref{S-sec:prf-interplay})}}$};
  \node at (sixth dotted box.north) [above, inner sep=1mm] {$\substack{\textit{Model misspecification} \\ \textit{(Section~\ref{S-sec:proofs-model-missp})}}$};
    \end{tikzpicture}
    \vspace{-0.5cm}
   \caption{Proof structure for the characterization results in Section~\ref{M-sec:char}. The proofs for Section~\ref{M-sec:estim} are given in Section~\ref{S-ssec:prf-consistency} (not depicted). }
    \label{S-figproof}
\end{figure}
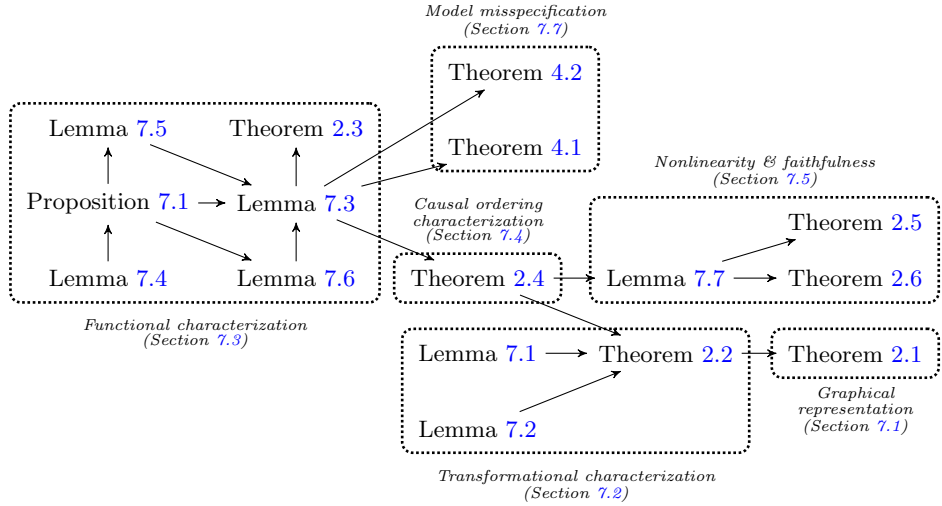

\subsection{Proof of the graphical characterization (Theorem~\ref{M-thm:pdag})} \label{S-sec:prf-graphical}

\begin{proof}
By definition, $\DPX$ is a subset of the set of all consistent DAG extensions of $G_{\DPX}$. It remains to show, that the set of all consistent DAG extensions of $G_{\DPX}$ is a subset of $\DPX$. Suppose there is a consistent DAG extension $\tilde{D}$ of $G_{\DPX}$ such that $\tilde{D} \not\in \DPX$. Let $D \in \DPX$. As both, $D$ and $\tilde{D}$ are consistent DAG extensions of $G_{\DPX}$, they have the same skeleton and $v$-structures and are Markov equivalent. %As $D$ and $\tilde{D}$ are Markov equivalent
Hence, there exists a sequence of distinct covered edge reversals transforming $D$ into $\tilde{D}$~\citep[Theorem~2]{chickering95}. 
%\Jan{NEU: Ok so? :)} \Dominik{Sieht sehr gut aus! Kannste auskommentieren}\\
Let us denote the sequence of traversed DAGs by $D = D_1,...,D_m = \tilde{D}$.
% Note that each subsequent pair of DAGs in this sequence only differs by the reversal of a covered edge. 
If all covered edge reversals are linear, $\tilde{D} \in \DPX$ by Theorem~\ref{M-thm:covered-reversals}~(a), which contradicts the assumption. Therefore, there is at least one covered nonlinear edge reversal in this sequence. Without loss of generality, for $1 \leq r \leq m-1$, let the edge reversal of $i \rightarrow j$ to $i \leftarrow j$ between $D_r$ and $D_{r+1}$ be the first covered nonlinear edge reversal in the above sequence. First note that as the sequence of covered edge reversals is distinct, $i \rightarrow j$ in $D$ and $i \leftarrow j$ in $\tilde{D}$.  
%For an $r \geq 1$, assume that $D_r$  the covered edge that is reversed from $D_r$ to $D_{r+1}$ is nonlinear. Without loss of generality, we denote it by $i \rightarrow j$ in $D_r$. we obtain $D_r$ via a sequence of covered linear edge reversals from $D$, therefore, 
Moreover, as $D_r$ is obtained from $D$ by a sequence of covered linear edge reversals, $D_r \in \DPX$ by Theorem~\ref{M-thm:covered-reversals}~(a). Again, by Theorem~\ref{M-thm:covered-reversals}~(a), as $D_r \in \DPX$ and $i \rightarrow j$ is covered and nonlinear in $D_r$, 
%there exists no DAG $D' \in \DPX$ with $i \leftarrow j$, that is, 
$i \rightarrow j$ for all DAGs $D' \in \DPX$. % (all DAGs in $\DPX$ have the same skeleton). 
%As $D \in \DPX$ and $\tilde{D} \not\in \DPX$, by Theorem~\ref{M-thm:covered-reversals} (a), one of the reversed covered edges must have been nonlinear, say $i \rightarrow j$ in $D$. But then, again by Theorem~\ref{M-thm:covered-reversals} (a), if a covered edge $i \rightarrow j$ in $D$ is nonlinear and $D \in \DPX$, then $i \rightarrow j$ for all DAGs $D' \in \DPX$. 
Therefore, by Definition~\ref{M-def:PDAGrepr}, $i \rightarrow j$ in $G_{\DPX}$ which contradicts the assumption that $\tilde{D}$ is a consistent DAG extension of $G_{\DPX}$.
\end{proof}

\subsection{Proof of transformational characterization (Theorem~\ref{M-thm:covered-reversals})} \label{S-sec:prf-covered}

\begin{proof}

\textit{Part (a):}
By Lemma~\ref{S-le:uniquePLSEM} there exists a unique PLSEM with DAG $D$ that generates $\PX$. Let $\F$ denote the function that corresponds to this PLSEM as defined in Section~\ref{M-ssec:funcchar}. Without loss of generality let us assume that $\D\F$ is lower triangular. Furthermore, as $i \rightarrow j$ is covered in $D$, no other child of $i$ is an ancestor of $j$ and we can assume that $j=i+1$. The differential $\D\F$ is of the form 
\begin{align*}
  \begin{pmatrix}
    \mbox{Var}(\eps_{1})^{-1/2} & 0 & \ldots & \ldots & \ldots   & 0 \\
    \partial_{1} \F_{2}           & \ddots & \ddots & \ddots & \ddots   & \vdots \\ 
  \vdots & \ddots &  \mbox{Var}(\eps_{i})^{-1/2} & 0 & \ddots   & \vdots \\
  \vdots & \ddots & \partial_{i} \F_{i+1} &  \mbox{Var}(\eps_{i+1})^{-1/2}   & \ddots & \vdots \\
\vdots & \ddots &\ddots &\ddots &\ddots &\vdots \\
\vdots & \ddots &\ddots &\ddots &\ddots  & 0 \\
\partial_{1} \F_{p} & \ldots & \ldots    & \ldots & \ldots  & \mbox{Var}(\eps_{p})^{-1/2}
  \end{pmatrix}.
\end{align*}
Let us write $ v = (\D \F)^{-1} \partial_{i}^{2} \F$, i.e. $ \partial_{i}^{2} \F = \D \F v$.  As $\D\F$ is lower triangular with $\left(\Var(\eps_{i})^{-1/2}\right)_{i=1,\ldots,p}$ on the diagonal we get $v_{1},\ldots,v_{i} =0$ and $v_{i+1} =  \Var(\eps_{i+1})^{1/2} \partial_{i}^{2} \F_{i+1}$. Hence,
\begin{equation*}
  e_{i+1}^{t} (\D\F)^{-1} \partial_{i}^{2} \F =  \Var(\eps_{i+1})^{1/2} \partial_{i}^{2} \F_{i+1}.
\end{equation*}
Now recall that by definition of $\F$,
\begin{equation*}
  \partial_{i}^{2} \F_{i+1} = - \frac{1}{\mbox{Var}(\eps_{i+1})^{1/2}} \partial_{i}^{2} f_{i+1,i}(x_{i}).
\end{equation*}
By combining these two equations,
\begin{equation}\label{S-eq:7}
   e_{i+1}^{t} (\D\F)^{-1} \partial_{i}^{2} \F = - \partial_{i}^{2} f_{i+1,i}(x_{i}).
\end{equation}
By Lemma~\ref{S-lem:trans-char-via-V}, the edge can be reversed if and only if $(i , i+1) \not \in \mathcal{V}$, which by definition of $\mathcal{V}$ is the case if and only if
\begin{align*}
   e_{i+1}^{t} (\D\F)^{-1} \partial_{i}^{2} \F \equiv 0.
\end{align*}
By equation~\eqref{S-eq:7} this is the case if and only if $ \partial_{i}^{2} f_{i+1,i}(x_{i}) \equiv 0$. Hence the edge can be reversed if and only if the edge is linear. This concludes the proof of the ``if and only if'' statement. \\

If the edge $i \rightarrow i+1$ is nonlinear, we can argue analogously as above that  $(i , i+1) \in \mathcal{V}$. By 
%Lemma~\ref{S-lem:trans-char-via-V} \Jan{das hier ist wohl eher Theorem~\ref{M-thm:char-via-order}}
Theorem~\ref{M-thm:char-via-order}, all causal orderings of PLSEMs that generate $\PX$ satisfy $\sigma(i) < \sigma(i+1)$. As, by definition, $\PX$ is faithful to all DAGs in $\DPX$, they all have the same skeleton. Hence, $i \rightarrow i+1$ in all DAGs in $\DPX$. \\ 
%\Jan{anstelle von: Hence there is no DAG in $\DPX$ with $i+1 \rightarrow i$.} \\

\textit{Part (b):}
As $D,D' \in \DPX$, $D'$ is Markov equivalent to $D$. Hence, there
exists a sequence of distinct covered edge reversals transforming $D$
into $D'$~\citep[Theorem~2]{chickering95}. Let us denote the sequence
of traversed DAGs by $D=D_{1},\ldots,D_{m}=D'$. By part~(a), we are
done if we can show that each DAG $D_r$ in this sequence lies in $\DPX$. 
%For that, it remains to show that for each of the DAGs $D_{r}$, $2
%\le r \le m-1$, there exists a PLSEM with DAG $D_{r}$ that generates $\PX$. 
We prove this by induction. So let us assume $D_{r} \in \DPX$ with $r < m$. Then $D_{r+1}$ only differs from $D_{r}$ by the reversal of a covered edge, w.l.o.g. $i \rightarrow j$ in $D_r$ and $j \rightarrow i$ in $D_{r+1}$. By construction, all covered edge reversals are distinct, hence, $j \rightarrow i$ in $D'$. Define the set $\mathcal{V}$ as in Theorem~\ref{M-thm:char-via-order}. As $D, D' \in  \DPX$, by Theorem~\ref{M-thm:char-via-order}, $(i , j) \not \in \mathcal{V}$. Hence by Lemma~\ref{S-lem:trans-char-via-V} we immediately get that $D_{r+1} \in \DPX$. Moreover, by Theorem~\ref{M-thm:covered-reversals}~(a), $i \rightarrow j$ is linear. This concludes the proof.
\end{proof}

\begin{lemm}\label{S-lem:trans-char-via-V}
Let $D \in \DPX$. Let $i \rightarrow j$ be a covered edge in $D$. Let $D'$ be a DAG that differs from $D$ only by reversing $i \rightarrow j$. Let $\F$ be a PLSEM-function of $\PX$ and define $\mathcal{V}$ as in equation~\eqref{M-eq:defV}. Then $D' \in \DPX$ if and only if $(i, j) \not \in \mathcal{V}$.
\end{lemm}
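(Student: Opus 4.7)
The plan is to prove both directions by leveraging Theorem~\ref{M-thm:char-via-order} (the causal ordering characterization), which translates constraints on which permutations can be realized by some PLSEM of $\PX$ into constraints on the set $\mathcal{V}$.

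The $(\Rightarrow)$ direction is essentially a one-liner. If $D'\in\DPX$, then $\PX$ is faithful to $D'$ and in particular any causal ordering $\sigma'$ of $D'$ lies in $\mathcal{S}(\PX)$. Since $j\to i$ in $D'$, $\sigma'(j)<\sigma'(i)$, and Theorem~\ref{M-thm:char-via-order} then precludes $(i,j)\in\mathcal{V}$.

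For the $(\Leftarrow)$ direction, assume $(i,j)\notin\mathcal{V}$. First, because $i\to j$ is covered in $D$, the DAG $D'$ is Markov equivalent to $D$, so the two DAGs entail exactly the same $d$-separations and faithfulness of $\PX$ transfers from $D$ to $D'$. The remaining task is to produce a PLSEM with DAG $D'$ that generates $\PX$. My plan is to construct an ordering $\sigma'$ that is simultaneously a causal ordering of $D'$ and an element of $\mathcal{S}(\PX)$. Start with a causal ordering $\sigma$ of $D$ in which $j$ sits \emph{immediately} after $i$, i.e.\ $\sigma(j)=\sigma(i)+1$. This is possible precisely because the covered-edge relation $\pa_D(j)\setminus\{i\}=\pa_D(i)$ forces every ancestor of $j$ other than $i$ to be an ancestor of $i$, leaving nothing between them in the partial order of $D$. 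Defining $\sigma'$ by swapping $i$ and $j$ in $\sigma$ then yields a causal ordering of $D'$. Because $\sigma\in\mathcal{S}(\PX)$ (as $\sigma$ is a causal ordering of $D\in\DPX$), Theorem~\ref{M-thm:char-via-order} gives $\sigma(i'')<\sigma(j'')$ for every $(i'',j'')\in\mathcal{V}$, and the only pairs $(i'',j'')$ for which the swap could invert this inequality are those with exactly one coordinate in $\{i,j\}$ or the pair $(i,j)$ itself. The latter is ruled out by assumption, and for the former the adjacency $\sigma(j)=\sigma(i)+1$ leaves no integer slot for a violation. Hence $\sigma'\in\mathcal{S}(\PX)$.

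Invoking Theorem~\ref{M-thm:char-via-order} in the other direction yields a PLSEM generating $\PX$ whose DAG $\tilde D$ admits $\sigma'$ as a causal ordering. To close the proof I need to identify $\tilde D$ with $D'$: both are DAGs compatible with $\sigma'$, and $\PX$ is Markov with respect to both ($\tilde D$ by virtue of being the DAG of a PLSEM of $\PX$, $D'$ by faithfulness). For each node $k$, the parent set in each DAG must therefore equal the unique minimal subset of its $\sigma'$-predecessors that renders $X_k$ conditionally independent of the remaining predecessors; faithfulness of $\PX$ to $D'$ guarantees that this minimal set is unambiguously determined by $d$-separation in $D'$, so $\pa_{\tilde D}(k)=\pa_{D'}(k)$ for every $k$ and $\tilde D=D'$.

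The main obstacle is this final identification step: Theorem~\ref{M-thm:char-via-order} only produces ``some'' DAG with the target ordering, and it requires the combination of Markov compatibility with $\PX$ and faithfulness to $D'$ to conclude that this DAG is exactly $D'$ rather than a proper sub- or supergraph. The combinatorial bookkeeping behind constructing $\sigma$ with $\sigma(j)=\sigma(i)+1$ and verifying case by case that the swap respects every $\mathcal{V}$-constraint is tedious but essentially forced by the covered-edge structure.
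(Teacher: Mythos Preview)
Your approach is essentially the paper's: both directions run through Theorem~\ref{M-thm:char-via-order}, you construct $\sigma$ with $\sigma(j)=\sigma(i)+1$, swap to get $\sigma'\in\mathcal{S}(\PX)$, and then identify the resulting PLSEM-DAG $\tilde D$ with $D'$.

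There is, however, a real gap in your identification step. You assert that for each node $k$, the parent set in \emph{each} DAG equals the unique minimal predecessor set rendering $X_k$ independent of the rest. Faithfulness to $D'$ does pin down that minimal set and, combined with the Markov property of $\tilde D$, gives you $\pa_{\tilde D}(k)\supseteq\pa_{D'}(k)$. But nothing you have said forces $\pa_{\tilde D}(k)$ itself to be \emph{minimal}: Markov compatibility alone allows $\tilde D$ to carry extra edges. The paper closes this by invoking causal minimality of $\PX$ with respect to $\tilde D$ (Proposition~17 of \cite{petersetal13}), which follows from the PLSEM requirement $f_{j,i}\not\equiv 0$; this is what yields the reverse inclusion $\pa_{\tilde D}(k)\subseteq\pa_{D'}(k)$. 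You should make this explicit rather than folding it into an appeal to ``unique minimal subset''.
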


\begin{proof}''$\Rightarrow$'': Let $D' \in \DPX$ and  $(i , j) \in \mathcal{V}$. Consider a causal ordering $\sigma$ of $D'$. As $j \rightarrow i$ in $D'$, $\sigma(j) < \sigma(i)$. By Theorem~\ref{M-thm:char-via-order} this leads to a contradiction. Hence if $D' \in \DPX$, then $(i , j) \not \in \mathcal{V}$.\\

``$\Leftarrow$'':  Let  $(i , j) \not \in \mathcal{V}$. Let $\sigma$ be a causal ordering of $D$. As $i \rightarrow j$ is covered in $D$, no other child of $i$ is an ancestor of $j$ in $D$. Hence without loss of generality we can assume that $\sigma(j) = \sigma(i)+1$. Define $\sigma'$ as the permutation with $i$ and $j$ switched, i.e.
\begin{equation*}
  \sigma'(k) = \begin{cases}
  \sigma(k) & k \not \in \{i,j\} \\
  \sigma(j) & k = i \\
  \sigma(i) & k = j.
\end{cases}
\end{equation*}
Note that as  $\sigma(j) = \sigma(i)+1$, the causal orderings of other pairs of variables are unaffected, i.e. 
\begin{equation}\label{S-eq:15}
\sigma(k) < \sigma(l) \iff \sigma'(k) < \sigma'(l) \text{ for all $k,l$ with $\{k,l \} \neq \{i,j\}$. }
\end{equation}
As $\sigma$ is a causal ordering of a PLSEM that generates $\PX$, by Theorem~\ref{M-thm:char-via-order},
\begin{equation}\label{S-eq:17}
\sigma(k) < \sigma(l) \text{ for all $(k , l) \in \mathcal{V}$.}
\end{equation} 
We want to show that the same holds for $\sigma'$. Let $(k , l) \in \mathcal{V}$. As $(i , j)\not \in \mathcal{V}$,  $(k,l) \neq (i,j)$. Hence, by equations~\eqref{S-eq:15} and~\eqref{S-eq:17}, $\sigma'(k) < \sigma'(l)$. This proves~that
\begin{equation*}
\sigma'(k) < \sigma'(l) \text{ for all $(k , l) \in \mathcal{V}$.}
\end{equation*} 
By Theorem~\ref{M-thm:char-via-order}, $\sigma'$ is a causal ordering of
a PLSEM that generates $\PX$. Consider the DAG $\tilde D$ of this
PLSEM. Then $\PX$ is Markov with respect to $\tilde D$ and by
Proposition 17 of \cite{petersetal13}, $\PX$ satisfies causal
minimality with respect to $\tilde D$. By
\citep[Lemma~1]{chickering95}, $\PX$ is Markov and faithful with
respect to $D'$ and we know that $\sigma'$ is a causal ordering of both $\tilde D$ and $D'$. Now we want to show that this implies $\tilde D = D'$.
Without loss of generality assume $\sigma' = \text{Id}$. First, we want to show that  $ \text{pa}_{\tilde D}(l) \supseteq \text{pa}_{D'}(l)$ for all $l$. Fix $l$. Consider the parental set $\text{pa}_{\tilde D}(l)$ of $l$ in $\tilde D$ and let $k$ be a parent of $l$ in $ D'$ but not in $\tilde D$. As $\sigma' = \text{Id}$ is a causal ordering of $D'$, $k < l$, and as $\sigma' = \text{Id}$ is a causal ordering of $\tilde D$ as well, $k$ is not a descendant of~$l$ in $ \tilde D$. As $\PX$ is Markov with respect to $ \tilde D$,
\begin{equation*}
  X_{l} \independent X_k | X_{\text{pa}_{\tilde D}(l)}.
\end{equation*}
 Hence, as $\PX$ is faithful to $D'$, $l$ and $k$ are d-separated by $\text{pa}_{\tilde D}(l)$ in $D'$. But $k$ is a parent of $l$ in $D'$, contradiction. Hence $ \text{pa}_{\tilde D}(l) \supseteq \text{pa}_{D'}(l)$ for all $l$. $\PX$ satisfies causal minimality with respect to $\tilde D$, hence $ \text{pa}_{\tilde D}(l) =\text{pa}_{D'}(l)$ for all $l$. This proves $\tilde D = D'$. Therefore, there exists a PLSEM with DAG $D'$ that generates~$\PX$ and $\PX$ is faithful with respect to $D'$. By definition, $D' \in \DPX$.
\end{proof}

\begin{lemm}\label{S-le:uniquePLSEM}
  Let $\PX$ be generated by a PLSEM. Let $D \in \DPX$. Then there exists a unique PLSEM  (unique set of intercepts, edge functions and Gaussian error variances) with DAG $D$ that generates $\PX$.
\end{lemm}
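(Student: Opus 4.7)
The plan is to show that if two PLSEMs with the same DAG $D \in \DPX$ both generate $\PX$, then they agree in all their components (intercepts, edge functions, and error variances). Say the two candidate representations are
\begin{equation*}
X_j = \mu_j + \sum_{i \in \pa_D(j)} f_{j,i}(X_i) + \eps_j,\qquad X_j = \tilde\mu_j + \sum_{i \in \pa_D(j)} \tilde f_{j,i}(X_i) + \tilde\eps_j,
\end{equation*}
with all quantities satisfying the requirements of Section~\ref{M-ssec:probdescAddGaussSEMandDEC}. The first step is to observe that in either SEM, $\eps_j$ (respectively $\tilde\eps_j$) is a function of noise variables not appearing in any ancestor of $\pa_D(j)$, hence is independent of $X_{\pa_D(j)}$. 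Taking conditional expectations of $X_j$ given $X_{\pa_D(j)}$ therefore yields
\begin{equation*}
\mu_j + \sum_{i \in \pa_D(j)} f_{j,i}(X_i) \;=\; \mathbb{E}[X_j\mid X_{\pa_D(j)}] \;=\; \tilde\mu_j + \sum_{i \in \pa_D(j)} \tilde f_{j,i}(X_i)\quad\PX\text{-a.s.}
\end{equation*}
The right-hand and left-hand expressions are each a version of the same conditional expectation, so they coincide almost surely.

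The second step is to extract the intercept. Taking expectations and using the normalization $\EE[f_{j,i}(X_i)] = \EE[\tilde f_{j,i}(X_i)] = 0$ immediately gives $\mu_j = \EE[X_j] = \tilde\mu_j$. Subtracting then gives the additive identity
\begin{equation*}
\sum_{i \in \pa_D(j)} \bigl(f_{j,i}(X_i) - \tilde f_{j,i}(X_i)\bigr) = 0\quad\PX\text{-a.s.}
\end{equation*}
The third step is uniqueness of this additive decomposition. Because the PLSEM recursion expresses $X$ as an invertible transformation of the Gaussian vector $\eps$ (one can solve for $X$ in causal order, each equation being of the form ``$X_j = $ deterministic function of ancestors $+ \eps_j$''), the joint distribution of $X$, and hence of the marginal $X_{\pa_D(j)}$, has full support on Euclidean space. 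Fixing any $i_0 \in \pa_D(j)$ and freezing the coordinates $\{X_i : i \in \pa_D(j)\setminus\{i_0\}\}$ at a feasible value, the identity above forces $f_{j,i_0} - \tilde f_{j,i_0}$ to equal a constant on $\mathbb{R}$; the mean-zero constraint then forces that constant to be $0$, so $f_{j,i_0} = \tilde f_{j,i_0}$ as claimed.

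Finally, the error variance is pinned down by $\sigma_j^2 = \Var(X_j \mid X_{\pa_D(j)}) = \tilde\sigma_j^2$, which is again determined purely by $\PX$. The main obstacle is the full-support argument underlying the uniqueness of the additive decomposition: one has to justify that the joint distribution of $X_{\pa_D(j)}$ really does have full support on $\mathbb{R}^{|\pa_D(j)|}$, which requires unwinding the triangular PLSEM recursion under Gaussian noise. Everything else reduces to standard properties of conditional expectation and the normalization conventions built into the PLSEM definition.
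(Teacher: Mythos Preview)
Your proof is correct and follows essentially the same route as the paper: identify the additive mean via $\mathbb{E}[X_j\mid X_{\pa_D(j)}]$, extract $\mu_j$ from the zero-mean normalization, use the fact that $X$ has a strictly positive density on $\mathbb{R}^p$ (which the paper notes follows from $\sigma_k>0$ for all $k$) together with continuity to upgrade the a.s.\ identity to a pointwise one, and then separate the additive components. The only cosmetic difference is that the paper differentiates in $x_i$ to conclude $f_{j,i}'=\tilde f_{j,i}'$ and hence the two differ by a constant, whereas you freeze the other coordinates; both rely on the same full-support-plus-continuity justification you flagged as the main obstacle.
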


\begin{proof} 
  By definition of the distribution equivalence class $\DPX$ there exists such a PLSEM with DAG $D$ that generates $\PX$. Now we will show that this PLSEM is unique. Consider another PLSEM with DAG $D$ that generates $\PX$. For a given node $j$ we have
\begin{align*}
 \mu_j + \sum_{i \in \pa_D(j)} f_{j,i}(X_i) &= \mathbb{E}[X_{j} | X_{\pa_{D}(j)}] =  \tilde \mu_j + \sum\limits_{i \in \pa_D(j)} \tilde f_{j,i}(X_i).
\end{align*}
By definition of PLSEMs, the expectations of the $f_{j,i}(X_{i})$ and $\tilde f_{j,i}(X_{i})$ are zero, hence we have $\mu_{j} = \tilde \mu_{j}$. As $\sigma_{k} >0$ for all $k \in \{1,\ldots, p\}$, the density of $X$ is positive on $\mathbb{R}^{p}$. Recall that by definition,  $f_{j,i}$ and $\tilde f_{j,i}$ are continuous.  Hence, for all $x \in \mathbb{R}^{p}$,
\begin{align*}
\sum_{i \in \pa_D(j)} f_{j,i}(x_i) &= \lim_{\delta \rightarrow 0}  E \left[\sum_{i \in \pa_D(j)} f_{j,i}(X_i) | X \in B_{\delta}(x) \right] \\
&= \lim_{\delta \rightarrow 0}  E \left[\sum_{i \in \pa_D(j)} \tilde f_{j,i}(X_i) | X \in B_{\delta}(x) \right] = \sum_{i \in \pa_D(j)} \tilde f_{j,i}(x_i),
\end{align*}
where $B_{\delta}(x)$ denotes the closed ball around $x$ with radius $\delta$. Take an arbitrary $i \in \pa_{D}(j)$. By taking the derivative with respect to $x_{i}$ on both sides of the equation we obtain
\begin{align*}
  f_{j,i}'(x_{i}) = \tilde f_{j,i}'(x_{i}).
\end{align*}
Hence there exists a constant $c$ such that
\begin{align*}
  f_{j,i}(x_{i}) = c + \tilde f_{j,i}(x_{i}).
\end{align*}
By definition of PLSEMs, we have $ \mathbb{E} [f_{j,i}(X_{i})] = 0$ and  $ \mathbb{E} [\tilde f_{j,i}(X_{i})] = 0$. Hence, $c=0$ and $f_{j,i} = \tilde f_{j,i}$ for all $i \in \pa_{D}(j)$. We just showed that $\mu_{j} = \tilde \mu_{j}$ and $f_{j,i} = \tilde f_{j,i}$. It remains to show that $\sigma_{j} = \tilde \sigma_{j}$:
\begin{align*}
  \sigma_{j}^{2} = \Var \! \left(\! X_{j} -\mu_j - \!\!\! \sum_{i \in \pa_D(j)} \!\!\! f_{j,i}(X_i)\right) =  \Var\!\left(\! X_{j} -\tilde \mu_j - \!\!\! \sum_{i \in \pa_D(j)} \!\!\! \tilde{f}_{j,i}(X_i)\right) = \tilde \sigma_{j}^{2}.
\end{align*}
Hence, the intercepts, edge functions and Gaussian error variances of both PLSEMs are equal, which concludes the proof.
\end{proof}

\subsection{Proof of functional characterization (Theorem~\ref{M-thm:funct-char})} \label{S-sec:prf-functional}

In the following, let $\PX$ be generated by a PLSEM.
\begin{defn}[PLSEM-function]\label{S-def:PLSEM-function}
  We call $\F : \mathbb{R}^{p} \rightarrow \mathbb{R}^{p}$ a PLSEM-function of $\PX$ if there exists a PLSEM that generates $\PX$ such that $\F$ can be written as in equation~\eqref{M-eq:4}.
\end{defn}
\begin{rem}\label{S-rem:plsem-function}
  For a PLSEM-function $\F$ of $\PX$ we can retrieve the unique corresponding PLSEM (i.e. the unique DAG, unique set of intercepts, edge functions and Gaussian error
variances) through  equations~\eqref{M-eq:23}-\eqref{M-eq:29}.
\end{rem}

Recall generalized PLSEMs, in particular Definition~\eqref{M-defn:gPLSEM}.
\begin{defn}[generalized PLSEM-function]\label{S-def:generalized-plsem-function}
  We call $\F : \mathbb{R}^{p} \rightarrow \mathbb{R}^{p}$ a generalized PLSEM-function of $\mathring{\mathbb{P}}$ if there exists a generalized PLSEM that generates $\mathring{\mathbb{P}}$ such that $\F$ can be written as in  equation~\eqref{M-eq:gPLSEM}.
\end{defn}

\begin{prop}\label{S-prop:PLSEM2} 
%\Dominik{CHANGED. UPDATE REFERENCES in your proofs/text, Jan :)}
A function  $\F : \mathbb{R}^{p} \rightarrow \mathbb{R}^{p}$ is a PLSEM-function of $\PX$ if and only if
\begin{enumerate}
\item $\F$ is twice continuously differentiable,
\item $\partial_{k} \partial_{l} \F \equiv 0$ for all $k \neq l$,
\item there exists a permutation $\sigma$ such that $(\D\F_{i \sigma^{-1}(j)})_{ij}$ is lower triangular with constant positive entries on the diagonal. 
\item If $X \sim \PX$, then $\F(X) \sim \mathcal{N} (0 ,\text{Id}_{p})$.
\end{enumerate}
Furthermore, a function $F: \mathbb{R}^{p} \rightarrow \mathbb{R}^{p}$ is a  generalized PLSEM-function of $\mathring{ \mathbb{P}}$ if and only if $(1)-(3)$ and $(4')$ holds.
\begin{enumerate}
  \item[4']   If $X \sim \mathring{\PX}$, then $\F(X)_{i}$, $i=1,\ldots,p$ are independent and centered with  variance one.
\end{enumerate}

\end{prop}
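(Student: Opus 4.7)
The plan is to prove both the PLSEM characterization and the generalized PLSEM characterization simultaneously, since the only difference is in condition (4) vs.\ (4'). The forward direction is straightforward: given $\F$ arising from a (generalized) PLSEM via equation~\eqref{M-eq:4}, each component $\F_j$ is a sum of univariate $C^2$ functions plus a linear term, so (1) and (2) hold. Taking $\sigma$ to be a causal ordering of $D$, the partial derivative $\partial_i \F_j = -\sigma_j^{-1} f_{j,i}'$ vanishes whenever $\sigma(i) > \sigma(j)$ (since then $i \notin \pa_D(j)$), and $\partial_j \F_j = 1/\sigma_j$ is a positive constant, yielding (3). Finally $\F(X)_j = \eps_j/\sigma_j$, so the coordinates are independent and centered with variance one, and in the Gaussian case they are jointly $\mathcal{N}(0,\mathrm{Id}_p)$, giving (4) or (4').

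For the backward direction, assume (1)--(3). After relabelling by $\sigma$, we may suppose the Jacobian $\D\F$ is lower triangular with positive constant diagonal entries $c_j := \partial_j \F_j$. Condition (2) combined with (1) means each $\partial_i \F_j$ is a function of $x_i$ alone, so integrating component by component gives the additive decomposition
\begin{equation*}
  \F_j(x) = a_j + \sum_{i=1}^{p} g_{j,i}(x_i)
\end{equation*}
for constants $a_j$ and functions $g_{j,i} \in C^2(\mathbb{R})$ with $\mathbb{E}[g_{j,i}(X_i)] = 0$ (shifting constants into $a_j$). Triangularity of $\D\F$ forces $g_{j,i} \equiv 0$ for $i > j$, and the diagonal condition gives $g_{j,j}(x_j) = c_j x_j$.

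Now set $\sigma_j := 1/c_j > 0$, $\mu_j := -\sigma_j a_j$, and $f_{j,i}(x_i) := -\sigma_j g_{j,i}(x_i)$ for $i < j$, with the candidate DAG $D$ containing the edge $i \to j$ precisely when $f_{j,i} \not\equiv 0$. By construction $\F$ matches the expression in equation~\eqref{M-eq:4}, and since $\D\F$ is triangular with nonzero diagonal, $\F$ is globally invertible by back-substitution. Define $\eps_j := \sigma_j \F(X)_j$: by (4), $\eps \sim \mathcal{N}(0,\mathrm{diag}(\sigma_j^2))$ with independent coordinates, so the structural equations
\begin{equation*}
  X_j = \mu_j + \sum_{i \in \pa_D(j)} f_{j,i}(X_i) + \eps_j
\end{equation*}
form a valid PLSEM. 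The push-forward of $\mathcal{N}(0,\mathrm{Id}_p)$ under $\F^{-1}$ equals the law of $X$, so this PLSEM generates $\PX$. In the generalized case, condition (4') yields independent centered $\eps_j$ with variance $\sigma_j^2$; their densities are positive on $\mathbb{R}$ because $\F$ is a $C^1$ diffeomorphism transporting $\mathring\PX$ (which has positive density on $\mathbb{R}^p$) to the law of $\F(\mathring X)$, and the marginals inherit positive densities. Hence the construction gives a valid generalized PLSEM.

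The main obstacle is the passage from the pointwise condition $\partial_k \partial_l \F \equiv 0$ to the global additive representation $\F_j(x) = a_j + \sum_i g_{j,i}(x_i)$. Although this is standard, it must be invoked cleanly on all of $\mathbb{R}^p$ and combined with the triangular-with-constant-diagonal structure from (3) to extract the linear leading term $c_j x_j$. Once this structural decomposition is established, the remaining work is purely bookkeeping: centering each $f_{j,i}$ by absorbing constants into $\mu_j$, reading off the DAG from the nonzero-edge pattern (automatically acyclic by triangularity), and invoking (4) or (4') to supply the noise distribution. Invertibility of $\F$, needed to identify the push-forward of the noise distribution with $\PX$, follows immediately from the triangular nonsingular Jacobian.
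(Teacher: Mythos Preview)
Your proof is correct and follows essentially the same route as the paper's, which also invokes the additive decomposition (the paper's Lemma~\ref{S-lemma:decomposability}) to pass from $\partial_k\partial_l \F \equiv 0$ to $\F_j(x)=a_j+\sum_i g_{j,i}(x_i)$ and then reads off the PLSEM data from the triangular structure and condition~(4)/(4'). One minor slip: if you center $g_{j,j}$ to have mean zero you obtain $g_{j,j}(x_j)=c_j\bigl(x_j-\mathbb{E}[X_j]\bigr)$ rather than $c_j x_j$---the paper instead normalizes via $g_{j,j}(0)=0$---but the stray constant is absorbed into $\mu_j$ either way, so nothing is affected.
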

\begin{rem}
  We call a permutation $\sigma$ that satisfies (3) a causal ordering of the (generalized) PLSEM-function $\F$. Define the directed graph $D$, the functions $f_{j,i}$, $\sigma_{j}^{2} = \mathrm{Var}(\eps_{j})$ and $\mu_{j}$  through equations~\eqref{M-eq:23} -- \eqref{M-eq:29}. The first condition reflects that the functions $f_{j,i}$ are twice continuously differentiable. The second condition reflects that the functions $f_{j,i}$ depend on $x_{i}$ only. The third condition ensures that the directed graph $D$ is acyclic and that the variances of all $\eps_{j}$ are strictly positive. The last condition ensures that the distribution generated by this PLSEM or generalized PLSEM is $\PX$ or $\mathring{\PX}$, respectively. %\Dominik{Proof necessary? Soll ich aus dem Remark ein Lemma mit Beweis machen oder ist das klar?}
\end{rem}

\begin{proof}
We only prove this result for PLSEM-functions. The proof for generalized PLSEM-functions is analogous. 
``$\Rightarrow$'' By definition of a PLSEM and equation~\eqref{M-eq:4}. \\
``$\Leftarrow$'': Without loss of generality let us assume that the indices are ordered such that $\sigma = \text{Id}$, hence by (3), $\D \F$ is lower triangular with constant positive entries on the diagonal. Let $Z \sim \mathcal{N}(0,\text{Id}_{p})$ and define $X := \F^{-1}(Z)$. Using (4), we obtain $ X \sim \PX$.  Use (1) and (2) and Lemma~\ref{S-lemma:decomposability} for each component of $\F_{j}$, i.e. decompose $\F_{j}(x) = \tilde \mu_{j} + \sum_{i} \tilde f_{j,i}(x_{i})$ with twice continuously differentiable functions $\tilde f_{j,i}$. Here we choose $\tilde \mu_{j}$ and the $\tilde f_{j,i}$ (i.e. the constants) such that $\mathbb{E}[\tilde f_{j,i}(X_{i})] \equiv 0$ for all $j \neq i$ and $ \tilde f_{j,j}$ such that $\tilde f_{j,j}(0)=0$. We define the parental sets $\pa(j) := \{ i \neq j : \tilde f_{j,i} \not \equiv 0\}$. As $\D \F$ is lower triangular, $\pa(j) \subseteq \{1,\ldots,j-1\}$, hence the directed graph $D$ defined by these parental sets is acyclic.
As  $\D \F$ has constant positive entries on the diagonal, $\partial_{j} \F_{j}$ is constant, and we can define the error variances $ \sigma_{j}^{2} := 1/\left(\partial_{j} \F_{j} \right)^{2} >0 $. Furthermore, define functions $f_{j,i}(x_{i}) := -\sigma_{j} \tilde f_{j,i}(x_{i}) $ that only depend on $x_{i}$ and the constants $\mu_{j} := -\sigma_{j} \tilde \mu_{j}$. To sum it up, we have the following relations:
\begin{align*}
  \F_{j}(x) = \frac{1}{\sigma_{j}} \left(  x_{j} - \mu_{j}  - \sum_{i \in \pa_{D}(j)} f_{j,i}(x_{i}) \right),
\end{align*}
with DAG $D$, $f_{j,i} \not \equiv 0$, $\mathbb{E} [f_{j,i}(X_{i})] = 0$ for all $i \in \pa_{D}(j)$. Using that $\F(X) = Z \sim \mathcal{N}(0,\text{Id}_{p})$,
\begin{align*}
  X_{j} = \mu_{j}+ \sum_{i \in \pa_{D}(j)} f_{j,i}(X_{i})   + \sigma_{j} Z_{j}
\end{align*}
By defining the Gaussian errors $\eps_{j} := \sigma_{j} Z_{j}$, it is immediate to see that $\sigma_{j},f_{j,i},D$ define a PLSEM that generates $\PX$.
\end{proof}

% \begin{defn}
% %\Dominik{CHANGED. UPDATE REFERENCES in your proofs/text, Jan :)}
% A function  $\F : \mathbb{R}^{p} \rightarrow \mathbb{R}^{p}$ is called a generalized PLSEM-function of $\mathring{\PX}$ if and only if
% \begin{enumerate}
% \item $\F$ is twice continuously differentiable,
% \item $\partial_{k} \partial_{l} \F \equiv 0$ for all $k \neq l$,
% \item there exists a permutation $\sigma$ such that $(\D\F_{i \sigma^{-1}(j)})_{ij}$ is lower triangular with constant positive entries on the diagonal. 
% \item If $X \sim \mathring{\PX}$, then $\F(X)_{i}$, $i=1,\ldots,p$ are independent and centered with  variance one.
% \end{enumerate}
% \end{defn}

\begin{lemm}[Functional characterization]\label{S-thm:func-char}
  Let $\PX$ be generated by a PLSEM. Let $\F$ be a PLSEM-function of $\PX$. Let $\sigma$ be a permutation. Define  $\Pi_{i+1}^{\sigma}$ as the linear projection on the space  $\langle \partial_{\sigma^{-1}(i+1)} \F, \ldots, \partial_{\sigma^{-1}(p)} \F \rangle$ and $\Pi_{p+1}^{\sigma} := 0 \in \mathbb{R}^{p \times p}$. Let $\G : \mathbb{R}^{p} \rightarrow \mathbb{R}^{p} $. Then $\G$ is a generalized PLSEM-function of $\PX$ with causal ordering $\sigma$ if and only if 
 \begin{equation*}
  \left( \text{Id} - \Pi_{i+1}^{\sigma} \right) \partial_{\sigma^{-1}(i)}^{2} \F \equiv 0, \qquad i=1,\ldots,p,
 \end{equation*}
and
\begin{equation}\label{S-eq:14}
  	\G_{i} =  \left( \frac{(\mbox{Id}- \Pi_{i+1}^{\sigma}) \partial_{\sigma^{-1}(i)} \F}{ \| (\mbox{Id}- \Pi_{i+1}^{\sigma}) \partial_{\sigma^{-1}(i)} \F \|_{2}} \right)^{t} \F.
\end{equation} 
In that case, the matrices $ \Pi_{i+1}^{\sigma}$ and the vectors $(\mbox{Id}- \Pi_{i+1}^{\sigma}) \partial_{\sigma^{-1}(i)} \F$ are constant and $\G(X)\sim \mathcal{N}(0,\mathrm{Id}_{p})$, i.e. $G$ is a PLSEM-function.
\end{lemm}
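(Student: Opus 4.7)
The plan is to handle both directions of the equivalence. Throughout, since the statement is invariant under relabeling of coordinates, I will work with $\sigma = \mathrm{Id}$, so that $\sigma^{-1}(i) = i$ and $\Pi_{i+1}^\sigma$ is orthogonal projection onto $V_{i+1}(x) := \langle \partial_{i+1}\F(x), \ldots, \partial_p\F(x)\rangle$. By Proposition~\ref{S-prop:PLSEM2} the Jacobian $\D\F$ is lower triangular with positive constants on the diagonal, so in particular its columns are linearly independent at every $x$.

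For the backward direction, assume $(\mathrm{Id}-\Pi_{i+1})\partial_i^2 \F \equiv 0$ for every $i$ and let $\G$ be defined by~\eqref{S-eq:14}. I would first prove by downward induction on $i$ that the vectors $v_i := (\mathrm{Id}-\Pi_{i+1})\partial_i\F$ are constant in $x$ and that $V_{i+1}$ is a constant subspace. The base case $i = p$ is immediate since $\Pi_{p+1}=0$ and $\partial_p\F = (0,\ldots,0,\partial_p\F_p)^t$ with $\partial_p\F_p$ a positive constant. For the inductive step, $V_{i+1}$ is constant because the $v_{i+1},\ldots,v_p$ are constant by the induction hypothesis and, by Gram--Schmidt, span $V_{i+1}(x)$; then $\partial_k v_i = (\mathrm{Id}-\Pi_{i+1})\partial_k\partial_i\F$ vanishes for $k = i$ by hypothesis and for $k\neq i$ by the identity $\partial_k\partial_i \F \equiv 0$ (condition (2) of Proposition~\ref{S-prop:PLSEM2}). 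The normalized vectors $o_i := v_i/\|v_i\|$ form an orthonormal set, and the matrix $O$ with rows $o_i^t$ gives $\G = O\F$. It follows that $\G(X) = O\F(X) \sim \mathcal{N}(0,\mathrm{Id})$, and $\D\G = O\,\D\F$ is lower triangular with the positive constants $\|v_i\|$ on the diagonal, so $\G$ satisfies all four conditions of Proposition~\ref{S-prop:PLSEM2} with causal ordering $\mathrm{Id}$.

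For the forward direction, assume $\G$ is a generalized PLSEM-function of $\PX$ with causal ordering $\mathrm{Id}$. The first step is to show $\G = O\F$ for a constant orthogonal matrix $O$: if $Z \sim \mathcal{N}(0,\mathrm{Id}_p)$, then $J(Z) := \G(\F^{-1}(Z))$ has independent, centered, unit-variance components, and a change-of-variables argument using $\F(X) \sim \mathcal{N}(0,\mathrm{Id})$ gives $|\det\D J| \equiv 1$. This matches the intuition of Section~\ref{M-ssec:intuition}, and I would invoke Lemma~\ref{S-le:mainlemma} to conclude $J$ is linear; combined with the unit Jacobian determinant and independence of the components of $J(Z)$, this forces $J$ to be orthogonal, which incidentally upgrades $\G$ to a genuine PLSEM-function. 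Writing $\G = O\F$ with rows $o_i^t$, the PLSEM structure of $\G$ demands both (a) the causal-ordering condition $(\D\G)_{ij} = o_i^t\partial_j\F \equiv 0$ for $j > i$, and (b) the diagonal condition that $(\D\G)_{ii} = o_i^t\partial_i\F$ is a positive constant. Differentiating both in any variable and using $\partial_k\partial_l\F \equiv 0$ for $k \neq l$ yields $o_i^t\partial_j^2\F \equiv 0$ for every $j \geq i$. Since $\D\F$ has full rank at each $x$, the subspace $V_{i+1}(x)^\perp$ has constant dimension $i$; the $i$ constant orthonormal vectors $o_1,\ldots,o_i$ all lie in it, so they span it, whence $V_{i+1}$ and $\Pi_{i+1}$ are constant. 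From $o_j^t \partial_i^2 \F \equiv 0$ for $j \leq i$ we read off $\partial_i^2\F \in V_{i+1}$, i.e.\ $(\mathrm{Id}-\Pi_{i+1})\partial_i^2 \F \equiv 0$. Finally, $o_i$ is a unit vector in $V_{i+1}^\perp$ orthogonal to $o_1,\ldots,o_{i-1}$, which together with positivity of $o_i^t\partial_i\F$ pins it down to~\eqref{S-eq:14}.

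The hardest step is establishing linearity of $J$ in the forward direction. In the Gaussian-to-Gaussian case, the sketch in Section~\ref{M-ssec:intuition} reduces it to $\|J(x)\|_2 = \|x\|_2$ plus linearity of norm-preserving $C^2$ maps with unit Jacobian; in the generalized setting one must additionally exploit the independence of the components of $J(Z)$ against the Gaussianity of $Z$, in the spirit of Darmois--Skitovich rigidity, which is presumably what Lemma~\ref{S-le:mainlemma} packages. Once that linearity is in hand, the remainder---Gram--Schmidt identification of the rows of $O$ and verification of the PLSEM-function conditions for $\G$---is essentially bookkeeping with the triangular structure of $\D\F$.
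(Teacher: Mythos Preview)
Your backward direction is essentially the paper's argument (packaged there as Lemma~\ref{S-lem:constant-equivalence} plus verification of the four conditions of Proposition~\ref{S-prop:PLSEM2}), and is fine.

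The forward direction has a genuine gap. You want to first establish $\G = O\F$ with $O$ constant orthogonal, and then read off the projection conditions from the rows of $O$. But your route to $O$ being orthogonal relies on the norm identity $\|J(z)\|_2 = \|z\|_2$ from Section~\ref{M-ssec:intuition}, and that identity is derived from $\G(X)\sim\mathcal N(0,\mathrm{Id})$. For a \emph{generalized} PLSEM-function you only know that the components of $\G(X)$ are independent, centered, and unit-variance; the change-of-variables formula then equates two densities, but does not collapse to a norm identity. So at this stage you have neither linearity of $J$ nor Gaussianity of $\G(X)$, and you cannot simply ``invoke Lemma~\ref{S-le:mainlemma} to conclude $J$ is linear'': that lemma takes as input an equation of the very specific shape $u\bigl(x_l+\sum_{s} u_s(x_s)\bigr)=\F^t(\mathrm{Id}-A)\partial_l\F$ with $A$ a \emph{constant} projection, and outputs $(\mathrm{Id}-A)\partial_l^2\F\equiv 0$; it says nothing about $J$ globally.

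The paper's forward argument is structured quite differently and is worth comparing. It never assumes $\G=O\F$ up front. Instead it equates log-densities, differentiates once to obtain $h^t\,\D\G = \F^t\,\D\F$ with $h_i(x):=-g_i'(\G_i(x))/g_i(\G_i(x))$ (here $g_i$ is the unknown density of $\G_i(X)$), and then exploits lower-triangularity of $\D\G$ to solve recursively for $c_i h_i$. By downward induction on $i$ it shows $c_i h_i = \F^t(\mathrm{Id}-\Pi_{i+1})\partial_i\F$; Lemma~\ref{S-le:mainlemma} is applied at each step to this equation (the left side has the required additive form because $\G_i$ does) to obtain $(\mathrm{Id}-\Pi_{i+1})\partial_i^2\F\equiv 0$. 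Only then does one deduce that $\partial_i h_i$ is constant, which forces $\log g_i$ to be quadratic, hence $g_i$ Gaussian, hence $h_i=\G_i$. So Gaussianity of $\G(X)$ and the projection conditions are established \emph{together} in the induction; neither is available in advance to bootstrap the other. Your proposed shortcut collapses precisely because it tries to separate them.
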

\begin{rem}\label{S-rem:construct-PLSEM}
  Lemma~\ref{S-thm:func-char} tells us that every potential causal ordering satisfies  $ \left( \text{Id} - \Pi_{i+1}^{\sigma} \right) \partial_{\sigma^{-1}(i)}^{2} \F \equiv 0$, $i=1,\ldots,p$ and contains a concrete formula to compute the unique PLSEM-function for this given causal ordering. Furthermore, every causal ordering that satisfies that condition gives rise to a corresponding PLSEM-function by equation~\eqref{S-eq:14}. As $\F$ is a PLSEM-function, $\D \F$ is invertible. This implies $\| (\mbox{Id}- \Pi_{i+1}^{\sigma}) \partial_{\sigma^{-1}(i)} \F \|_{2} >0$ and hence equation~\eqref{S-eq:14} is well-defined. Given the PLSEM-function, we can retrieve the unique corresponding PLSEM (i.e. the unique DAG, unique set of intercepts, edge functions and Gaussian error
variances) through equations~\eqref{M-eq:23} -- \eqref{M-eq:29}.
\end{rem}
%\Dominik{NEU:}
\begin{rem}\label{S-rem:proof-func}
If $\G$ is a (generalized) PLSEM-function, from this theorem it follows that the vectors
\begin{equation*}
  \left(\frac{(\mbox{Id}- \Pi_{i+1}^{\sigma}) \partial_{\sigma^{-1}(i)} \F}{ \| (\mbox{Id}- \Pi_{i+1}^{\sigma}) \partial_{\sigma^{-1}(i)} \F \|_{2}} \right)^{t} \quad i=1,\ldots,p,
\end{equation*}
are constant in $x$, have norm $1$ and are orthogonal for $i=1,\ldots,p$. Hence the row-wise concatenation of these vectors for $i=1,\ldots,p$ forms an orthogonal matrix $O$ and by equation~\eqref{S-eq:14}, $  \G = O \F$.
\end{rem}

\begin{proof} 
``$\Rightarrow$''. Let $\G$ be a generalized PLSEM-function of $\PX$ with causal ordering $\sigma$. Without loss of generality let $\sigma = \text{Id}$, i.e. without loss of generality we assume that $\D \G$  is lower triangular. We write $\Pi_{i+1}$ instead of $\Pi_{i+1}^{\sigma}$ for brevity. 
As both PLSEMs generate the same distribution and the probability density function of $\mathbb{P}$ is positive on $\mathbb{R}^{p}$ their log-densities are well-defined and agree for all $x \in \mathbb{R}^{p}$. Hence for all $x \in \mathbb{R}^{p}$,
\begin{equation*}
-\log \det \mathrm{D} G  - \sum_{i=1}^{p}  \log (g_{i} (G_{i}(x))) = - \log \det \mathrm{D} F  + \frac{p}{2} \log(2 \pi ) +\sum_{i=1}^{p} \frac{ F_{i}(x)^{2}}{2},
\end{equation*}
where $g_{i}$ denotes the probability density function of $G_{i}(X)$, $i=1,\ldots,p$. Note that $ \det \mathrm{D} F$ and $ \det \mathrm{D} G$ are constant as for both $F$ and $G$ there exists a permutation of the indices such that the differential is lower triangular with constant diagonal.\\
Define $h :\mathbb{R}^{p} \rightarrow \mathbb{R}^{p}$ by $h_{i}(x) :=  - \frac{g_{i}'(G_{i}(x))}{g_{i}(G_{i}(x))} $ for $i=1,\ldots,p$. As the density of $\mathbb{P}$ is positive on $\mathbb{R}^{p}$, the density of $g_{i}$ is positive on $\mathbb{R}$, i.e. $h$ is well-defined.
By differentiating on both sides,
\begin{equation*}
	h^{t} \D\G = \F^t \D\F.
\end{equation*}
We assumed without loss of generality that $\sigma = \text{Id}$, hence by Proposition~\ref{S-prop:PLSEM2} the differential $\D \G$ is lower triangular and the diagonal entries $c_i := \partial_i \G_i$ are positive. Hence we can recursively solve for $i=1,\ldots,p$ and obtain
\begin{equation}\label{S-eq:31}
	c_{i} h_{i}  =   \F^t \partial_i \F - \sum_{j>i}  h_j \partial_i \G_j.
\end{equation}
Using induction, we will show that 
\begin{equation}\label{S-eq:13}
c_{i} h_i  = \F^t(\mbox{Id}-\Pi_{i+1}) \partial_i \F,
\end{equation}
that $c_i = \| (\mbox{Id}- \Pi_{i+1}) \partial_i \F \|_{2}$, that the matrix $\Pi_{i+1}$ is constant, that the vectors $(\mbox{Id}- \Pi_{i+1}) \partial_i^{2} \F \equiv 0$  and that $h_{i} = G_{i}$ for $i=1,\ldots,p$. 
%Furthermore we show that $\cite{} _{i}>0$ is constant (which implies that $G_{i}(X)$ is Gaussian) \Dominik{I think}. 
By using equation~\eqref{S-eq:31} we obtain equation~\eqref{S-eq:13} for $i=p$ and by Lemma \ref{S-le:mainlemma} we obtain $(\mbox{Id}- \Pi_{p+1}) \partial_p^{2} \F = \partial_p^{2} \F \equiv 0$. \\
Now we want to show that $h_{p} = G_{p}$. To this end, note that as $ ( \mbox{Id}- \Pi_{p+1}) \partial_{p}^{2} F \equiv 0$,
\begin{equation*}
  \partial_{p} h_{p} = \frac{ \partial_{p} F^{t} (\mbox{Id}- \Pi_{p+1}) \partial_{p} F}{c_{p}} \text{ is constant.} 
\end{equation*}
As $ \partial_{p} h_{p} =  \partial_{p}^{2} \left( -\log(g_{p} (G_{p}(x))) \right)/c_{p}$,
\begin{equation*}
   \partial_{p}^{2} \left( -\log g_{p} (G_{p}(x)) \right) \text{ is constant.}
\end{equation*} 
Using that $\partial_{p} G_{p}(x) =c_{p} \neq 0$ is constant, $z \mapsto \log(g_{p}(z))$ is a polynomial of degree two. As $g_{p}$ is the density of a centered distribution with variance one and this density is positive on $\mathbb{R}$, it is the density of a centered Gaussian distribution with variance one. For a centered Gaussian distribution with variance one, $ (- \log g_{p})'(z)= z$. Hence,  $h_{p}(x) = - \frac{g_{p}'(G_{p}(x))}{g_{p}(G_{p}(x))} = (- \log(g_{p}))' (G_{p}(x)) = G_{p}(x)$. This proves that $h_{p} = G_{p}$.\\
Hence $\partial_{p} h_{p} = \partial_{p} G_{p} = c_{p}$ and using equation~\eqref{S-eq:13} for $i=p$
\begin{equation*}
   	c_p^2 = c_p \partial_p h_p = (\partial_p  \F)^t \partial_p \F =\|(\mbox{Id} - \Pi_{p+1}) \partial_p \F \|_{2}^2.
\end{equation*}
Furthermore, as $(\mbox{Id}- \Pi_{p+1}) \partial_p^{2} \F  \equiv 0$, $(\mbox{Id}- \Pi_{p+1}) \partial_p \F $ is a constant vector and hence by definition the matrix $\Pi_{p}$ is constant.  This finishes the proof for $i=p$.

Now let us assume  $c_j h_j = \F^t(\mbox{Id}-\Pi_{j+1}) \partial_j \F$, $h_{j} = G_{j}$, $c_j = \| (\mbox{Id}- \Pi_{j+1}) \partial_j \F \|_{2}$, that the matrix $\Pi_{j}$ is constant and that the vectors $(\mbox{Id}- \Pi_{j+1}) \partial_j^{2} \F \equiv 0$  for all $p \ge j > i \ge 1$. We want to prove these statements for $j=i$. By using equation~\eqref{S-eq:31} and the induction assumptions we can rewrite~$c_i h_i$,
\begin{align*}
	c_i h_i &=  \F^t \partial_i \F - \sum_{j>i}  h_j \partial_i \G_j \\ 
        &= \F^t \partial_i \F - \sum_{j>i}  \G_j \partial_i \G_j  \\
	&= \F^t \partial_i \F -  \sum_{j>i}  \frac{\F^t(\mbox{Id}-\Pi_{j+1}) \partial_j \F}{c_j} \frac{\partial_i \F^t(\mbox{Id}-\Pi_{j+1}) \partial_j \F}{c_j} \\
	&= \F^t \left( \mbox{Id} - \sum_{j>i} \frac{(\mbox{Id}-\Pi_{j+1}) \partial_j \F}{\| (\mbox{Id}- \Pi_{j+1}) \partial_j \F \|_{2}} \frac{\left((\mbox{Id}-\Pi_{j+1}) \partial_j \F \right)^t}{\| (\mbox{Id}- \Pi_{j+1}) \partial_j \F \|_{2}} \right) \partial_i \F \\
	&= \F^t \left( \mbox{Id}- \Pi_{i+1} \right) \partial_i \F .
\end{align*}
By Lemma \ref{S-le:mainlemma} we get $\left( \mbox{Id}- \Pi_{i+1} \right) \partial_i^{2} \F \equiv 0$.\\
Now we want to show that $h_{i} = G_{i}$. To this end, note that as $ ( \mbox{Id}- \Pi_{i+1}) \partial_{i}^{2} F \equiv 0$,
\begin{equation*}
  \partial_{i} h_{i} = \frac{ \partial_{i} F^{t} (\mbox{Id}- \Pi_{i+1}) \partial_{i} F}{c_{i}} \text{ is constant.} 
\end{equation*}
As $ \partial_{i} h_{i} =  \partial_{i}^{2} \left( -\log(g_{i} (G_{i}(x))) \right)/c_{i}$,
\begin{equation*}
   \partial_{i}^{2} \left( -\log g_{i} (G_{i}(x)) \right) \text{ is constant.}
\end{equation*} 
Using that $\partial_{i} G_{i}(x) =c_{i} \neq 0$ is constant, $z \mapsto \log(g_{i}(z))$ is a polynomial of degree two. As $g_{i}$ is the density of a centered distribution with variance one and this density is positive on $\mathbb{R}$, it is the density of a centered Gaussian distribution with variance one. For a centered Gaussian distribution with variance one, $ (- \log g_{i})'(z)= z$. Hence,  $h_{i}(x) = - \frac{g_{i}'(G_{i}(x))}{g_{i}(G_{i}(x))} = (- \log(g_{i}))' (G_{i}(x)) = G_{i}(x)$. This proves that $h_{i} = G_{i}$.\\
As $\left( \mbox{Id}- \Pi_{i+1} \right) \partial_i^{2} \F \equiv 0$,
\begin{align*}
c_i^2 &= c_i \partial_i \G_i \\
&= c_{i} \partial_{i} h_{i} \\
 &= \partial_i \left( \F^t \left( \mbox{Id}- \Pi_{i+1} \right) \partial_i \F \right) \\
&= \partial_i \F^t \left( \mbox{Id}- \Pi_{i+1} \right) \partial_i \F + 0  \\
&=\|(\mbox{Id} - \Pi_{i+1}) \partial_i \F \|_{2}^2.
\end{align*}
It remains to show that $\Pi_{i}$ is constant. We proved that 
%the vector  
$\left( \mbox{Id}- \Pi_{i+1} \right) \partial_i^{2} \F \equiv 0$. $\Pi_{i+1}$ is constant by induction assumption. Thus, $\partial_{i} \left( \left( \mbox{Id}- \Pi_{i+1} \right) \partial_i \F \right) \equiv 0$. By Proposition~\ref{S-prop:PLSEM2} (2), $\partial_{i} \F$ depends only on $x_{i}$. Thus, for $j=i$ the vector  $\left( \mbox{Id}- \Pi_{j+1} \right) \partial_j \F$ is constant. By induction assumption we also know that this is true for all $j >i$. By definition, we know that
\begin{align*}
   \Pi_{i} =  \sum_{j \ge i} \frac{(\mbox{Id}-\Pi_{j+1}) \partial_j \F}{\| (\mbox{Id}- \Pi_{j+1}) \partial_j \F \|_{2}} \frac{\left((\mbox{Id}-\Pi_{j+1}) \partial_j \F \right)^t}{\| (\mbox{Id}- \Pi_{j+1}) \partial_j \F \|_{2}}.
\end{align*}
As shown, the quantities on the right-hand side are constant. This concludes the proof by induction.
``$\Leftarrow$''  We will show $1)-4)$ of Proposition~\ref{S-prop:PLSEM2} to prove that $\G$ is a PLSEM-function of $\PX$. By Lemma~\ref{S-lem:constant-equivalence},  the vectors $  (\mbox{Id}- \Pi_{i+1}^\sigma) \partial_{\sigma^{-1}(i)} \F$ are constant. As $\F$ is twice continuously differentiable, $\G$ is twice differentiable as well. This proves $1)$. By part $2)$ of  Proposition~\ref{S-prop:PLSEM2}, $\partial_{k} \partial_{l} \F = 0$ for all $k \neq l$. Recall that the vector $(\mbox{Id}- \Pi_{i+1}^\sigma) \partial_{\sigma^{-1}(i)} \F$ is constant. Let $k \neq l$. Hence $\partial_k \partial_l \G_i = (\partial_k \partial_l \F )^{t} (\mbox{Id}- \Pi_{i+1}^\sigma) \partial_{\sigma^{-1}(i)} \F / \| (\mbox{Id}- \Pi_{i+1}^\sigma) \partial_{\sigma^{-1}(i)} \F \|_{2} = 0 $.
This proves that for all $k \neq l$, $\partial_k \partial_l \G = 0$, i.e. part $2)$ of  Proposition~\ref{S-prop:PLSEM2}. Now we want to show that $(\D\G_{i \sigma^{-1}(j)})_{ij}$ is lower triangular. By construction, $\D\G_{i \sigma^{-1}(j)}=\partial_{\sigma^{-1}(j)} \G_i = \partial_{\sigma^{-1}(j)} \F^t (\mbox{Id}- \Pi_{i+1}^\sigma) \partial_{\sigma^{-1}(i)} \F = 0$ for all $j > i$ as by definition $\partial_{\sigma^{-1}(j)} \F^t (\mbox{Id}- \Pi_{i+1}^\sigma) =0$. Now we want to show that  $(\D\G_{i \sigma^{-1}(j)})_{ij}$ has positive constant entries on the diagonal. Recall that by assumption $  (\mbox{Id}- \Pi_{i+1}^\sigma) \partial_{\sigma^{-1}(i)}^{2} \F \equiv 0$ for $i=1,\ldots,p$. Recall that the vector $  (\mbox{Id}- \Pi_{i+1}^\sigma) \partial_{\sigma^{-1}(i)} \F$ is constant. The vector is nonzero as $\D \F$ is invertible. Hence,
\begin{align*}
  \D\G_{i \sigma^{-1}(i)} &=  \frac{ \partial_{\sigma^{-1}(i)} \F^{t} (\mbox{Id}- \Pi_{i+1}^\sigma) \partial_{\sigma^{-1}(i)} \F }{\| (\mbox{Id}- \Pi_{i+1}^\sigma) \partial_{\sigma^{-1}(i)} \F\|_{2}} =\frac{\| (\mbox{Id}- \Pi_{i+1}^\sigma) \partial_{\sigma^{-1}(i)} \F \|_{2}^{2}}{\| (\mbox{Id}- \Pi_{i+1}^\sigma) \partial_{\sigma^{-1}(i)} \F\|_{2}}.
\end{align*}
Thus $\D\G_{i \sigma^{-1}(i)}$ is constant. This proves $3)$.
Let $X \sim \PX$. Now it remains to show that $\G(X) \sim \mathcal{N}(0,\mbox{Id})$.
%  To this end, note that if $\F(X)=0$, then $\G(X)=0$. Furthermore by construction
% \begin{align}\label{S-eq:1}
% \G^t \partial_{\sigma(i)} \G &= \sum_j   \frac{ \F^t(\mbox{Id}- \Pi_{j-1}^\sigma) \partial_{\sigma(j)} \F}{\| (\mbox{Id}- \Pi_{j-1}^\sigma) \partial_{\sigma(j)} \F\|} \cdot \frac{ \partial_{\sigma(i)} \F^t(\mbox{Id}- \Pi_{j-1}^\sigma) \partial_{\sigma(j)} \F}{\| (\mbox{Id}- \Pi_{j-1}^\sigma) \partial_{\sigma(j)} \F\|} \\
% &= \F^t \sum_{ j  }  \frac{(\mbox{Id}- \Pi_{j-1}^\sigma) \partial_{\sigma(j)} \F }{\| (\mbox{Id}- \Pi_{j-1}^\sigma) \partial_{\sigma(j)} \F\|} \cdot \frac{ \left((\mbox{Id}- \Pi_{j-1}^\sigma) \partial_{\sigma(j)} \F\right)^t}{\| (\mbox{Id}- \Pi_{j-1}^\sigma) \partial_{\sigma(j)} \F\|} \partial_{\sigma(i)} \F \\
% &= \F^t \partial_{\sigma(i)} \F.
% \end{align}
% In the last step, we used that by definition
% \begin{equation*}
%    (\mbox{Id}- \Pi_{j-1}^\sigma) \partial_{\sigma(j)} \F, j=1,\ldots,p,
% \end{equation*}
% is an orthogonal basis of $\mathbb{R}^{p}$.
%  Equation \eqref{S-eq:1} implies $\F^t \D\F = \G^t \D\G$, which together with the former reasoning implies $\| \F(X) \|_2^2 = \| \G(X)\|_2^2$ for all $X \in \mathbb{R}^p$.
To this end, note that by definition of  $\Pi_{j+1}^\sigma$, the vectors
\begin{equation*}
   (\mbox{Id}- \Pi_{j+1}^\sigma) \partial_{\sigma^{-1}(j)} \F, j=1,\ldots,p,
\end{equation*}
are orthogonal. As shown above, these vectors are constant and nonzero, therefore $(\mbox{Id}- \Pi_{j+1}^\sigma) \partial_{\sigma^{-1}(j)} \F, j=1,\ldots,p$ is an orthogonal basis of $\mathbb{R}^{p}$. Therefore, $\| \F(x) \|_2^2 = \| \G(x)\|_2^2$ for all $x \in \mathbb{R}^{p}$. As $\det \D\G$ is constant, we have by the change of variables formula that $|\det \D\G |= |\det \D\F |$ (probability densities integrate to one). Hence, again by the change of variables formula, $\G(X) \sim \mathcal{N}(0, \mbox{Id})$, which is $4)$. This concludes the proof of the ``if and only if'' statement. 

Lemma~\ref{S-lem:constant-equivalence} proves that in that case, the matrices $\Pi_{i+1}^{\sigma}$ and the vectors $(\text{Id}- \Pi_{i+1}^{\sigma}) \partial_{\sigma^{-1}(i)} \F$ are constant. This concludes the proof.
\end{proof}

\begin{lemm}\label{S-lemma:decomposability}
  Let $F : \mathbb{R}^{p} \mapsto \mathbb{R}$ be twice continuously differentiable. If $\partial_{k} \partial_{l} F \equiv 0$ for all $l \neq k$, then $F$ can be written in the form
\begin{equation}\label{S-eq:30}
  F(x) = c + g_{1}(x_{1}) + \ldots + g_{p}(x_{p}).
\end{equation}
In this case, the functions $g_{i}(x_{i})$ are unique up to constants and twice continuously differentiable.
\end{lemm}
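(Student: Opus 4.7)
The plan is to use the hypothesis $\partial_k \partial_l F \equiv 0$ ($l \neq k$) to show that each partial derivative of $F$ depends only on one variable, then construct the $g_k$'s by integration and verify that the remainder is constant.

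First I would observe that since $\partial_l(\partial_k F) \equiv 0$ for every $l \neq k$, the function $\partial_k F$ is constant in all coordinates except $x_k$. Since $\partial_k F$ is moreover continuous (as $F \in C^2$), there exists a $C^1$ function $h_k : \mathbb{R} \to \mathbb{R}$ with $\partial_k F(x) = h_k(x_k)$ for all $x \in \mathbb{R}^p$. Define
\begin{equation*}
  g_k(x_k) := \int_0^{x_k} h_k(t)\, \mathrm{d}t, \qquad k=1,\ldots,p.
\end{equation*}
By the fundamental theorem of calculus, $g_k'(x_k) = h_k(x_k)$, and since $h_k \in C^1(\mathbb{R})$, we have $g_k \in C^2(\mathbb{R})$.

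Next I would set $G(x) := F(x) - \sum_{k=1}^p g_k(x_k)$ and compute, for each $k$,
\begin{equation*}
  \partial_k G(x) = \partial_k F(x) - g_k'(x_k) = h_k(x_k) - h_k(x_k) = 0.
\end{equation*}
Since $G$ is continuously differentiable on the connected set $\mathbb{R}^p$ with all partial derivatives vanishing identically, $G$ is constant; denote this constant by $c$. This yields the decomposition \eqref{S-eq:30}.

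For uniqueness, suppose $F(x) = c + \sum_k g_k(x_k) = \tilde c + \sum_k \tilde g_k(x_k)$. Applying $\partial_k$ gives $g_k'(x_k) = \tilde g_k'(x_k)$, so $g_k - \tilde g_k$ is a constant for each $k$. The twice continuous differentiability of each $g_k$ was already established above. The main (and only non-trivial) step is the opening observation that the mixed-partial hypothesis forces $\partial_k F$ to depend only on $x_k$; everything else is a routine application of the fundamental theorem of calculus and the fact that a function with vanishing gradient on a connected open set is constant.
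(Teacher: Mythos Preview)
Your proof is correct and follows essentially the same approach as the paper: both arguments use the hypothesis to see that $\partial_k F$ depends only on $x_k$, integrate to obtain $g_k$, and then verify (you via the vanishing-gradient argument, the paper via a telescoping line integral from a fixed base point) that the remainder is a constant; the uniqueness argument is identical.
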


\begin{proof}
Fix an arbitrary $y \in \mathbb{R}^{p}$. We use Taylor:
\begin{align*}
  F(x) - F(y) &= \int_{y_{1}}^{x_{1}} \partial_{1} F(z,x_{2},...,x_{p}) \mathrm{d}z + ... + \int_{y_{p}}^{x_{p}} \partial_{p} F(y_{1},y_{2},...,y_{p-1},z) \mathrm{d}z \\
&= \int_{y_{1}}^{x_{1}} \partial_{1} F(z,0,...,0) \mathrm{d}z + ... + \int_{y_{p}}^{x_{p}} \partial_{p} F(0,0,...,0,z) \mathrm{d}z
\end{align*}
In the second line we used that $\partial_{k} \partial_{l} F \equiv 0$ for all $l \neq k$. Now we can define 
\begin{equation*}
  g_{i}(x_{i}) = \int_{y_{i}}^{x_{i}} \partial_{i} F(0,...,0,z,0,...,0) \mathrm{d}z,
\end{equation*}
which proves equation~\eqref{S-eq:30} with constant $c= F(y)$. Furthermore, as
\begin{equation*}
  \partial_{i} F = \partial_{i} g_{i}(x_{i}),
\end{equation*}
the $g_{i}$ are unique up to constants. This completes the proof.
\end{proof}

\begin{lemm}\label{S-le:mainlemma}
Let $F$ be a PLSEM-function. Without loss of generality let us assume that the variables are ordered such that $\mathrm{D} F$ is lower triangular. Fix $l \in \{1,\ldots,p\}$ and consider a constant projection matrix $A \in \mathbb{R}^{p \times p}$.
% that projects on the space $\langle \partial_{j} F, j \in \tilde S \rangle $ for some $\tilde S \subset \{1,\ldots,p\} \setminus \{l\}$. Assume that $A$ is constant. \Dominik{Don't we only need that $A$ is constant projection matrix?}
 Let $S \subset \{1,\ldots,p\} \setminus \{l\} $ and let ${u}, {u}_{s}  : \mathbb{R} \rightarrow \mathbb{R}$, $s \in S$ be twice continuously differentiable functions. For notational convenience, we assume that for each $s \in S$ there exists $x_{s} \in \mathbb{R}$ such that ${u}'_{s}(x_{s}) \neq 0$. If
%with $F$ additive and $\mathrm{D} F$ invertible, upper triangular under permuation. We also need $\partial_{p}^{2} F \in \langle \partial_{j}F,  1 \le j \le p -1\rangle$.
\begin{equation}\label{S-eqS:general}
  {u} \left(x_{l} + \sum_{s \in S} {u}_{s}(x_{s})\right)=  F^{t} (\mathrm{Id}- A) \partial_{l}   F, 
\end{equation}
then $(\mathrm{Id}-A) \partial_{l}^{2} F \equiv 0$.
\begin{proof}
\textbf{Part 1:} By assumption and as $F$ is a PLSEM-function, $\mathrm{D} F$ is lower triangular with positive constant entries on the diagonal. In particular, $\partial_{l}^{2} F \in \langle \partial_{k} F,  l < k \le p \rangle$. Hence to show $(\mathrm{Id}-A) \partial_{l}^{2} F \equiv 0$, as $A$ is a projection matrix, it suffices to show that
% \begin{equation}
%   \partial_{k} F^{t} (\mathrm{Id}-A) \partial_{l}^{2} F \equiv 0 \text{ for }  k \in \{1,\ldots,p\} \setminus \{l\}. 
% \end{equation}
% As $\mathrm{D} F$ is lower triangular with nonzero constant entries on the diagonal, it suffices to show that\\
\begin{equation*}
  \partial_{k} F^{t} (\mathrm{Id}-A) \partial_{l}^{2} F \equiv 0 \text{ for } l < k \le p.
\end{equation*}
\textbf{Part 2:} 
Consider the case $k \in \{1,\ldots,p\} \setminus \{l\}$, $k \not \in S$.
By taking the derivative with respect to $x_{k}$ and $x_{l}$ in equation~\eqref{S-eqS:general},
\begin{equation*}
   \partial_{k} F^{t} (\mathrm{Id}- A) \partial_{l}^{2} F \equiv 0.
\end{equation*}
Here we used that as $F$ is a PLSEM-function, $\partial_{k} \partial_{l} F \equiv 0$.\\
% As $(\mathrm{Id}-A)\partial_{l}^{2}  F \in \langle \partial_{j}  F^{t}, j \in \{1,\ldots,p\} \setminus \{l\} \rangle$, 
% $(\mathrm{Id}- A)\partial_{l}^{2}  F = 0$. \\
\textbf{Part 3:} Consider the case in which $k \in S$ and there exists $k' \in S$ such that $k' \neq k$.
Choose $x_{k'}$ such that ${u}_{k'}'(x_{k'}) \neq 0$. Then,
\begin{align*}
%\begin{split}
\partial_{l}  F^{t} (\mathrm{Id} -A) \partial_{l}  F +  F^{t} (\mathrm{Id} -A)\partial_{l}^{2}  F &= \partial_{l}  \left({u} \left(x_{l} + \sum_{s \in S} {u}_{s}(x_{s})\right) \right) \\
= \frac{ \partial_{k'} \left(  {u} \left(x_{l} + \sum_{s \in S} {u}_{s}(x_{s})\right)  \right)}{{u}'_{k'}(x_{k'})}&=  \frac{\partial_{k'}  F^{t} (\mathrm{Id}-A) \partial_{l}  F }{{u}'_{k'}(x_{k'})}.
%\end{split}
\end{align*}
In the last line we used that as $F$ is a PLSEM-function, $\partial_{k'} \partial_{l} F \equiv 0$. By rearranging and taking the derivative with respect to $x_{k}$,
\begin{align*}
\begin{split}
  \partial_{k} F^{t}(\mathrm{Id}-A) \partial_{l}^{2} F &=    \partial_{k} \left( F^{t}(\mathrm{Id}-A) \partial_{l}^{2} F  \right)\\ 
&=\partial_{k} \left(\frac{\partial_{k'}  F^{t} (\mathrm{Id}-A) \partial_{l}  F }{{u}'_{k'}(x_{k'})}- \partial_{l}  F^{t} (\mathrm{Id} -A) \partial_{l}  F \right) \\ 
&= 0.
\end{split}
\end{align*}
In the first line we used that as $F$ is a PLSEM-function, $\partial_{k} \partial_{l}^{2} F \equiv 0$. In the second line we used that $\partial_{k} \partial_{k'} F \equiv 0$ and $\partial_{k} \partial_{l} F \equiv 0$. Thus, $ \partial_{k} F^{t} (\mathrm{Id}-A) \partial_{l}^{2} F \equiv 0$. \\
\textbf{Part 4:}
Consider the case $S = \{k\}$, i.e.
\begin{equation}\label{S-eqS:generalsmall}
  {u} \left(x_{l} + {u}_{k}(x_{k})\right)= F^{t} (\mathrm{Id}- A) \partial_{l}  F.
\end{equation}
\textbf{Case 1:} If ${u}_{k}'(x_{k}) = 0$, then by equation~\eqref{S-eqS:general}
\begin{equation*}
0=  {u}_{k}'(x_{k}) {u}'  \left(x_{l} + {u}_{k}(x_{k})\right)= \partial_{k} \left( {u}  \left(x_{l} + {u}_{k}(x_{k})\right)\right) = \partial_{k} F^{t} (\mathrm{Id}- A) \partial_{l}  F,
\end{equation*}
and in particular $  \partial_{k} F^{t} (\mathrm{Id}- A) \partial_{l}^{2}  F =0$. Here we used that as $F$ is a PLSEM-function, $\partial_{k} \partial_{l} F \equiv 0$. Hence in this case,  $\partial_{k} F^{t} (\mathrm{Id}- A) \partial_{l}^{2}  F =0$ and we are done. \\
\textbf{Case 2:} Thus in the following we can assume that  ${u}_{k}'(x_{k})\neq 0$. By taking derivatives in equation~\eqref{S-eqS:generalsmall} with respect to $x_{j} , j \in \{1,\ldots,p\} \setminus \{l,k\}$, we obtain
\begin{equation}\label{S-eqS:almost}
  \partial_{j}  F^{t} (\mathrm{Id}- A) \partial_{l}  F \equiv 0  \text{ for } j \in \{1,\ldots,p\} \setminus  \{l,k\}.
\end{equation}
Here we used that as $F$ is a PLSEM-function, $\partial_{j} \partial_{l} F \equiv 0$. 
Analogously as in Part 3 one can show that
\begin{align*}
    F^{t}(\mathrm{Id}-A) \partial_{l}^{2} F  =\frac{\partial_{k}  F^{t} (\mathrm{Id}-A) \partial_{l}  F }{{u}'_{k}(x_{k})}- \partial_{l}  F^{t} (\mathrm{Id} -A) \partial_{l}  F.
\end{align*}
By taking the derivative with respect to $x_{k}$ on both sides,
\begin{align}\label{S-eqS:almostt}
 \partial_{k} F^{t}(\mathrm{Id}-A) \partial_{l}^{2} F   = \partial_{k} \left(  \frac{\partial_{k}  F^{t}}{{u}'_{k}(x_{k})}  \right) (\mathrm{Id}-A) \partial_{l} F.
\end{align}
Here we used that as $F$ is a PLSEM-function, $\partial_{k} \partial_{l} F \equiv 0$. As $F$ is a PLSEM-function and $\mathrm{D} F$ is lower triangular, $\mathrm{D} F$ has positive constant entries on its diagonal. Hence $\langle\partial_{j} F, k< j \le p \rangle = \langle e_{j}, k< j \le p \rangle $, where $e_{j} \in \mathbb{R}^{p}$ denotes the $j$-th unit vector. As discussed in Part 1, we only have to show $ \partial_{k} F^{t} (\mathrm{Id}-A) \partial_{l}^{2} F \equiv 0$ for $k > l$. In that case, by equation~\eqref{S-eqS:almost},
\begin{equation*}
  e_{j}^{t} (\mathrm{Id}- A) \partial_{l}  F \equiv 0  \text{ for } k < j \le p.
\end{equation*}
As $\partial_{k} F_{k} = c \neq 0$ is constant, and as $\mathrm{D} F$ is lower triangular,
\begin{equation*}
  \partial_{k}F^{t}  (\mathrm{Id}- A) \partial_{l}  F = c e_{k}^{t} (\mathrm{Id}- A) \partial_{l}  F.
\end{equation*}
By taking the derivative with respect to $x_{l}$,
\begin{equation}\label{S-eqS:almostttt}
  \partial_{k}F^{t}  (\mathrm{Id}- A) \partial_{l}^{2}  F = c e_{k}^{t} (\mathrm{Id}- A) \partial_{l}^{2}  F.
\end{equation}
Here we used that as $F$ is a PLSEM-function, $\partial_{k} \partial_{l} F \equiv 0$. Analogously it follows that
\begin{equation}\label{S-eqS:almosttt}
\partial_{k} \left(  \frac{\partial_{k}  F^{t}}{{u}'_{k}(x_{k})}  \right) (\mathrm{Id}-A) \partial_{l} F = \partial_{k} \left(\frac{1}{{u}'_{k}(x_{k})}  \right) c e_{k}^{t} (\mathrm{Id}-A) \partial_{l} F.
\end{equation}
By combining equation~\eqref{S-eqS:almostttt} and equation~\eqref{S-eqS:almosttt}  with equation~\eqref{S-eqS:almostt},
\begin{equation}\label{S-eqS:almosttttt}
  \partial_{k}F^{t}  (\mathrm{Id}- A) \partial_{l}^{2}  F = c e_{k}^{t} (\mathrm{Id}- A) \partial_{l}^{2}  F = \partial_{k} \left(\frac{1}{{u}'_{k}(x_{k})}  \right) c e_{k}^{t} (\mathrm{Id}-A) \partial_{l} F.
\end{equation}
If $ \partial_{k}F^{t}  (\mathrm{Id}- A) \partial_{l}^{2}  F =0$ for all $x_{l}$ and $x_{k}$, then the proof is finished. If not, then  by equation~\eqref{S-eqS:almosttttt} for some $x_{l}$ we have that $e_{k}^{t} (\mathrm{Id}- A) \partial_{l}  F \neq 0$ and  $e_{k}^{t} (\mathrm{Id}- A) \partial_{l}^{2}  F \neq 0$. Hence for all $x_{k}$ with ${u}'_{k}(x_{k}) \neq 0$
\begin{equation*}
   \partial_{k} \left(\frac{1}{{u}'_{k}(x_{k})} \right) = \frac{e_{k}^{t} (\mathrm{Id}-A) \partial_{l}^{2} F}{ e_{k}^{t} (\mathrm{Id}-A) \partial_{l} F} \neq 0,
\end{equation*}
which is constant in $x_{k}$. However, there exists no nonzero continuously differentiable function ${u}_{k} : \mathbb{R} \rightarrow \mathbb{R}$ that satisfies this, contradiction. Hence  $\partial_{k}F^{t} (\mathrm{Id}-A) \partial_{l}^{2} F \equiv 0$.\\
\textbf{Part 5:} By Part~2-4, $ \partial_{k}F^{t} (\mathrm{Id}-A) \partial_{l}^{2} F \equiv 0  $ for all $k \in \{ l+1,\ldots,p \}$. By Part~1, this finishes the proof.
\end{proof}

\end{lemm}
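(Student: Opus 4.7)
My plan is to reduce the vector identity $(\mathrm{Id}-A)\partial_l^2 F\equiv 0$ to a family of scalar inner-product identities and then verify those by repeatedly differentiating the master equation. For the reduction, note that since $\mathrm{D}F$ is lower triangular with constant positive diagonal, $\partial_l^2 F$ has zero entries in positions $1,\ldots,l$, and the vectors $\partial_k F$ for $l<k\le p$ form a basis of $\langle e_{l+1},\ldots,e_p\rangle$; hence $\partial_l^2 F$ lies in $V:=\langle\partial_k F:l<k\le p\rangle$. Reading $A$ as an orthogonal projection (as is the case in the intended application via Lemma~\ref{S-thm:func-char}), $\mathrm{Id}-A$ is symmetric and idempotent, and expanding $\partial_l^2 F=\sum_{k>l}c_k\partial_k F$ gives
\begin{equation*}
\|(\mathrm{Id}-A)\partial_l^2 F\|_2^2=(\partial_l^2 F)^t(\mathrm{Id}-A)\partial_l^2 F=\sum_{k>l}c_k\,\partial_k F^t(\mathrm{Id}-A)\partial_l^2 F.
\end{equation*}
So it suffices to prove $\partial_k F^t(\mathrm{Id}-A)\partial_l^2 F\equiv 0$ for every $k>l$.

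With this reduction in place, I would split into three cases by the position of $k$ relative to $S$, in each case exploiting the additive form $u(x_l+\sum_{s\in S}u_s(x_s))$ of the left-hand side together with the PLSEM identities $\partial_i\partial_j F\equiv 0$ for $i\ne j$. If $k\notin S\cup\{l\}$, the LHS is independent of $x_k$, so differentiating first in $x_k$ and then in $x_l$ immediately gives the conclusion. If $k\in S$ with $|S|\ge 2$, I would pick $k'\in S\setminus\{k\}$ with $u_{k'}'(x_{k'})\ne 0$: differentiating in $x_{k'}$ yields $u'(\cdot)=\partial_{k'}F^t(\mathrm{Id}-A)\partial_l F/u_{k'}'(x_{k'})$, while differentiating in $x_l$ yields $u'(\cdot)=\partial_l F^t(\mathrm{Id}-A)\partial_l F+F^t(\mathrm{Id}-A)\partial_l^2 F$; equating the two and then differentiating in $x_k$ collapses all cross-terms (using $\partial_k\partial_{k'}F\equiv\partial_k\partial_l F\equiv 0$) and leaves the desired scalar identity.

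The main obstacle is the remaining case $S=\{k\}$, where no auxiliary variable is available to divide against. The subcase $u_k'(x_k)=0$ is handled directly by differentiating the master equation once; otherwise, the same strategy of extracting $u'(\cdot)$ in two ways and equating gives
\begin{equation*}
F^t(\mathrm{Id}-A)\partial_l^2 F\;=\;\partial_k F^t(\mathrm{Id}-A)\partial_l F/u_k'(x_k)-\partial_l F^t(\mathrm{Id}-A)\partial_l F,
\end{equation*}
and differentiating in $x_k$ produces
\begin{equation*}
\partial_k F^t(\mathrm{Id}-A)\partial_l^2 F\;=\;\partial_k\!\bigl(1/u_k'(x_k)\bigr)\cdot\partial_k F^t(\mathrm{Id}-A)\partial_l F.
\end{equation*}
Here I would lean on lower triangularity together with the Case~1 identity already established for $j\notin\{l,k\}$ to show that $(\mathrm{Id}-A)\partial_l F$ is supported in coordinates $\le k$, which lets one replace $\partial_k F$ by the constant $c\,e_k$ inside these inner products. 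If the desired identity failed, one would then obtain $\partial_k(1/u_k'(x_k))$ equal to a nonzero ratio depending only on $x_l$ and hence constant in $x_k$, forcing $1/u_k'$ to be affine in $x_k$, which contradicts $u_k\in C^2(\mathbb{R})$. This contradiction closes the last case and completes the proof.
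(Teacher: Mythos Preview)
Your proposal is correct and follows essentially the same approach as the paper's own proof: the same reduction via $\partial_l^2 F\in\langle\partial_k F:k>l\rangle$ and the same three-way case split on $k$ relative to $S$, including the contradiction in the $S=\{k\}$ case via $\partial_k(1/u_k')$ being a nonzero constant. The only cosmetic difference is that in your $S=\{k\}$ case you display the simplified identity $\partial_k F^t(\mathrm{Id}-A)\partial_l^2 F=\partial_k(1/u_k')\cdot\partial_k F^t(\mathrm{Id}-A)\partial_l F$ \emph{before} invoking the support argument that kills the $\partial_k^2 F^t/u_k'$ term; the paper does the support reduction first and then writes the identity, but you correctly name the needed ingredient immediately after, so there is no gap.
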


\begin{lemm}\label{S-lem:constant-equivalence}
  Let $\F$ be a PLSEM-function and $\sigma$ be a permutation on $\{1,\ldots,p\}$. Let
\begin{equation}\label{S-eq:12}
 \left( \text{Id} - \Pi_{i+1}^{\sigma} \right) \partial_{\sigma^{-1}(i)}^{2} \F \equiv 0, \text{ for }i=1,\ldots,p,
\end{equation}
where $\Pi_{i+1}^{\sigma}$ denotes the linear projection on $\langle  \partial_{\sigma^{-1}(i+1)} \F, \ldots,  \partial_{\sigma^{-1}(p)} \F \rangle$ and $\Pi_{p+1}^{\sigma} =0 \in \mathbb{R}^{p \times p}$. Then the matrices $ \Pi_{i+1}^{\sigma}$ and vectors $(\mbox{Id}- \Pi_{i+1}^{\sigma}) \partial_{\sigma^{-1}(i)} \F$ are constant for $i=1,\ldots,p$.
\end{lemm}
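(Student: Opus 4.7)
The plan is to perform a downward induction on $i$, starting at $i=p$ and working back to $i=1$. At each step I will simultaneously establish that the projection $\Pi_{i+1}^{\sigma}$ and the vector $(\mathrm{Id}-\Pi_{i+1}^{\sigma})\partial_{\sigma^{-1}(i)}\F$ are both constant. The key structural fact I will exploit is that $(\mathrm{Id}-\Pi_{i+1}^{\sigma})\partial_{\sigma^{-1}(i)}\F$ is exactly the reverse Gram--Schmidt residual of $\partial_{\sigma^{-1}(i)}\F$ against the tail $\partial_{\sigma^{-1}(i+1)}\F,\ldots,\partial_{\sigma^{-1}(p)}\F$, so $\Pi_{i+1}^{\sigma}$ admits the explicit rank-one sum representation
\begin{equation*}
\Pi_{i+1}^{\sigma}=\sum_{j>i}\frac{(\mathrm{Id}-\Pi_{j+1}^{\sigma})\partial_{\sigma^{-1}(j)}\F\;\bigl((\mathrm{Id}-\Pi_{j+1}^{\sigma})\partial_{\sigma^{-1}(j)}\F\bigr)^{t}}{\|(\mathrm{Id}-\Pi_{j+1}^{\sigma})\partial_{\sigma^{-1}(j)}\F\|_{2}^{2}}.
\end{equation*}
This means that once I know the numerators are constant for all $j>i$, the matrix $\Pi_{i+1}^{\sigma}$ is automatically constant (invertibility of $\D\F$ guarantees none of these residual vectors vanish, so the denominators are harmless).

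For the base case $i=p$, I have $\Pi_{p+1}^{\sigma}=0$, which is trivially constant. The hypothesis for $i=p$ reduces to $\partial_{\sigma^{-1}(p)}^{2}\F\equiv 0$. Combined with the fact that by Proposition~\ref{S-prop:PLSEM2}(2) the vector $\partial_{\sigma^{-1}(p)}\F$ depends only on $x_{\sigma^{-1}(p)}$, this yields constancy of $\partial_{\sigma^{-1}(p)}\F=(\mathrm{Id}-\Pi_{p+1}^{\sigma})\partial_{\sigma^{-1}(p)}\F$.

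For the inductive step, suppose the two claims are established for every index $j>i$. The displayed formula above then gives that $\Pi_{i+1}^{\sigma}$ is constant. Since the projection is constant, $\partial_{\sigma^{-1}(i)}$ commutes past $(\mathrm{Id}-\Pi_{i+1}^{\sigma})$, and the hypothesis $(\mathrm{Id}-\Pi_{i+1}^{\sigma})\partial_{\sigma^{-1}(i)}^{2}\F\equiv 0$ becomes
\begin{equation*}
\partial_{\sigma^{-1}(i)}\bigl[(\mathrm{Id}-\Pi_{i+1}^{\sigma})\partial_{\sigma^{-1}(i)}\F\bigr]\equiv 0,
\end{equation*}
so the vector field has vanishing derivative with respect to $x_{\sigma^{-1}(i)}$. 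But by Proposition~\ref{S-prop:PLSEM2}(2) each component of $\partial_{\sigma^{-1}(i)}\F$ depends only on the single variable $x_{\sigma^{-1}(i)}$, hence so does $(\mathrm{Id}-\Pi_{i+1}^{\sigma})\partial_{\sigma^{-1}(i)}\F$; combined with the vanishing of its $x_{\sigma^{-1}(i)}$-derivative this forces it to be globally constant. The induction then closes.

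The only genuine obstacle is bookkeeping: I must verify that none of the residual vectors $(\mathrm{Id}-\Pi_{j+1}^{\sigma})\partial_{\sigma^{-1}(j)}\F$ are identically zero, so that the rank-one expansion of $\Pi_{i+1}^{\sigma}$ is well defined; this follows because $\D\F$ is invertible (a PLSEM-function has triangular Jacobian with positive diagonal up to the permutation, by Proposition~\ref{S-prop:PLSEM2}(3)), hence the vectors $\partial_{\sigma^{-1}(1)}\F,\ldots,\partial_{\sigma^{-1}(p)}\F$ are pointwise linearly independent and their reverse Gram--Schmidt residuals are nonzero at every point. Everything else is a mechanical check that constancy propagates through the recursion.
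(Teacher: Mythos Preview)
Your proof is correct and follows essentially the same approach as the paper's: a downward induction from $i=p$, using Proposition~\ref{S-prop:PLSEM2}(2) to reduce to dependence on the single variable $x_{\sigma^{-1}(i)}$ and the hypothesis~\eqref{S-eq:12} to kill the remaining derivative. The only organizational difference is that the paper separates the argument into two passes (first showing all the spans, hence all the $\Pi_{i+1}^{\sigma}$, are constant; then deducing constancy of the residual vectors), whereas you run a single simultaneous induction via the explicit Gram--Schmidt rank-one expansion---both routes are equivalent and rely on the same ingredients.
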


\begin{proof}
Let us first show that the projection matrices  $\Pi_{i+1}^{\sigma}$ are constant. For $i=p$ the claim is trivial as $\Pi_{p+1}^{\sigma} \equiv 0$ and hence by equation~\eqref{S-eq:12}, $\partial_{\sigma^{-1}(p) }^{2} \F \equiv 0$. For arbitrary $i$, equation~\eqref{S-eq:12} implies that $ \partial_{\sigma^{-1}(i)}^{2} \F $ is an element of the space $ \langle   \partial_{\sigma^{-1}(i+1)} \F, \ldots,  \partial_{\sigma^{-1}(p)} \F \rangle$. Furthermore, as $\F$ is a PLSEM-function, by Proposition~\ref{S-prop:PLSEM2},  $  \partial_{\sigma^{-1}(i)} \F$ only depends on $x_{\sigma^{-1}(i)}$. Using these two facts it follows inductively for $i=p-1,\ldots,1$ that the linear spaces  $ \langle   \partial_{\sigma^{-1}(i+1)} \F, \ldots,  \partial_{\sigma^{-1}(p)} \F \rangle$, $i=p-1,\ldots,1$ are constant in $x$. Hence the linear projections on these spaces are constant matrices. This proves that the matrices  $ \Pi_{i+1}^{\sigma}$, $i=1,\ldots,p$ are constant. Now we want to show that the vectors $(\mbox{Id}- \Pi_{i+1}^{\sigma}) \partial_{\sigma^{-1}(i)} \F$, $i=1,\ldots,p$ are constant. Recall that  $  \partial_{\sigma^{-1}(i)} \F$ only depends on $x_{\sigma^{-1}(i)}$. Hence $(\mbox{Id}- \Pi_{i+1}^{\sigma}) \partial_{\sigma^{-1}(i)} \F$ can only depend on  $x_{\sigma^{-1}(i)}$. Hence~it suffices to show that $\partial_{\sigma^{-1}(i)}(\mbox{Id}- \Pi_{i+1}^{\sigma}) \partial_{\sigma^{-1}(i)} \F = 0$. By equation~\eqref{S-eq:12} and as the matrix  $ \Pi_{i+1}^{\sigma}$ is constant, $\partial_{\sigma^{-1}(i)}(\mbox{Id}- \Pi_{i+1}^{\sigma}) \partial_{\sigma^{-1}(i)} \F =(\mbox{Id}- \Pi_{i+1}^{\sigma}) \partial_{\sigma^{-1}(i)}^{2} \F=0$. This concludes the proof.
\end{proof}

\subsection{Proof of characterization via causal orderings (Theorem \ref{M-thm:char-via-order})} \label{S-sec:prf-ordering}

% \begin{thm}[Characterization via causal orderings]\label{S-M-thm:char-via-order}
%   Consider a PLSEM-function $\F$ of $\PX$. Let $\sigma$ be a permutation. Recall the definition of $\mathcal{V}$ in equation~(\ref{M-eq:defV}),
%  \begin{equation*}
%   \mathcal{V} = \{ (i, j) : e_{j}^{t} (\D \F)^{-1} \partial_{i}^{2} \F \not \equiv 0 \}.
% \end{equation*}
% Then $\sigma$ is a causal ordering of a PLSEM that generates $\PX$ if and only if 
% \begin{equation}\label{S-eq:18}
% \sigma(i) < \sigma(j) \text{ for all $(i , j) \in \mathcal{V}$}.
% \end{equation}
% \end{thm}

%\Dominik{Do we even need that remark?} \Jan{Ich verweise in der simulation section drauf...}

\begin{rem} \label{S-rem:VindepofF}
Slight abuse of notation. A priori, $\mathcal{V}$ might depend on the
concrete choice of $\F$. However by Theorem~\ref{M-thm:char-via-order},
the set of permutations $\{ \sigma : \sigma(i) < \sigma(k) \text{ for
  all } (i,k) \in \mathcal{V} \}$ does only depend on $\PX$. 
%If $\PX$ is faithful with respect to the DAG $D$ corresponding to the PLSEM-function $\F$, then by Theorem~\ref{S-thm:nonlin-desc} (d) it has the property that $(i,j) \in \mathcal{V}$ and $(j,k) \in \mathcal{V}$ imply that $(i,k) \in \mathcal{V}$. From this property it can be shown that $\mathcal{V}$ does only depend on $\mathbb{P}$, not the concrete choice of $\F$ that generates $\PX$. \Dominik{Achtung...}
\end{rem}

\begin{proof}
 Let  $\Pi_{i+1}^{\sigma}$ be the linear projection on $\langle \partial_{\sigma^{-1}(i+1)} \F, \ldots, \partial_{\sigma^{-1}(p)} \F \rangle$ and $\Pi_{p+1}^{\sigma} := 0 \in \mathbb{R}^{p \times p}$. %Note that for each $i$, $\partial_{\sigma^{-1}(i)} \F$ only depends on $x_{\sigma^{-1}(i)}$. 
By some algebra,
\begin{align*}
  &(\text{Id} - \Pi_{i+1}^{\sigma}) \partial_{\sigma^{-1}(i)}^{2} \F \equiv 0  \text{ for all } i \\
\iff &  \partial_{\sigma^{-1}(i)}^{2} \F(x) \in \langle \partial_{\sigma^{-1}(i+1)} \F(x), \ldots, \partial_{\sigma^{-1}(p)} \F(x) \rangle \text{ for all } i, \text{ for all }x \in \mathbb{R}^{p} \\
\iff & e_{\sigma^{-1}(j)}^{t} (\D \F )^{-1} \partial_{\sigma^{-1}(i)}^{2} \F \equiv 0 \text{ for all } j \le i\\
%\iff & e_{\sigma^{-1}(j)}^{t} (\D \F )^{-1} \partial_{\sigma^{-1}(i)}^{2} \F \equiv 0 \text{ for all } j \le i \\
\iff & j > i \text{ for all } e_{\sigma^{-1}(j)}^{t} (\D \F )^{-1} \partial_{\sigma^{-1}(i)}^{2} \F \not \equiv 0 \\
\iff &  \sigma(j) > \sigma(i) \text{ for all } e_{j}^{t} (\D \F )^{-1} \partial_{i}^{2} \F \not \equiv 0.
\end{align*}
Here, $e_{j}, j=1,\ldots,p$ denotes the standard basis of
$\mathbb{R}^{p}$. By invoking Lemma~\ref{S-thm:func-char} and Remark~\ref{S-rem:construct-PLSEM} the assertion follows.
\end{proof}

\subsection{Interplay of nonlinearity~\&~faithfulness (Theorems~\ref{M-sec:nonl-faithf}~\&~\ref{M-sec:nonl-faithf-2})} \label{S-sec:prf-interplay}

  Let $C_{i} := \{ j \text{ child of } i \text{ in } D : \partial_{i}^{2} f_{j,i} \not \equiv 0 \}$ be the set of nonlinear children of $i$. Define $\mathcal{V}$ as in equation~\eqref{M-eq:defV} and consider the condition
 \begin{equation}\label{S-eq:linind}
\text{The functions  } f_{j,i}, j \in C_{i}  \text{ are linearly independent}.
\end{equation}

\begin{lemm}[Nonlinear descendants are identifiable under minor assumptions] \label{S-thm:nonlin-desc}
  Consider a PLSEM with DAG $D$ that generates $\PX$ and the corresponding PLSEM-function $\F$. Define $\mathcal{V}$ and $C_{i}$ as above. Fix $i \in \{1,\ldots,p\}$. Then:
\begin{enumerate}  
	\item[(a)]  Let equation~\eqref{S-eq:linind} hold and let $j \in C_{i}$.  Then $(i, j)  \in \mathcal{V}$.
          % The statement $j \in C_{i} \rightarrow $j$ is a child of $i$ in all DAGs $D'$ of PLSEMs that generate $\PX$.
        \item[(b)]  Let $(i,j) \in \mathcal{V}$. Then $j$ is a descendant of $i$ in every DAG $D'$ of a PLSEM that generates $\PX$.
	\item[(c)]  Let $\PX$ be faithful to $D$ and let equation~\eqref{S-eq:linind} hold. Consider a descendant $k$ of $i$, $k \neq i$. Then $(i , k) \in \mathcal{V}$ if and only if $k$ is descendant of one of the nonlinear children in $C_{i}$.
        % \item[(d)]  Let $\PX$ be faithful to $D$. Let $k \neq i$. Then $(i,k) \in \mathcal{V}$ if and only if $k$ is a descendant of $i$ in every DAG $D'$ of a PLSEM that generates $\PX$.
% Das umgekehrte gilt nicht!
         \item[(d)]  Define $ \tilde{\mathcal{V}}$ as the transitive closure of $\mathcal{V}$. Let $k \neq i$. Then $(i,k) \in  \tilde{\mathcal{V}}$ if and only if $k$ is a descendant of $i$ in every DAG $D'$ of a PLSEM that generates~$\PX$.
  \end{enumerate}
\end{lemm}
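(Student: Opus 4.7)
All four parts follow from an explicit formula for $v := (\D F)^{-1} \partial_i^2 F$, since by definition $(i,m) \in \mathcal{V}$ iff $v_m \not\equiv 0$. Reorder the indices along a topological order of $D$ so that $\D F$ is lower triangular; expanding $(\D F)^{-1}$ as a Neumann series in the strictly-lower part and combining with $(\partial_i^2 F)_l = -\sigma_l^{-1} f''_{l,i}(x_i)\,\mathbb{1}[l \in C_i]$ yields the path expansion
\begin{equation*}
  v_m \;=\; -\sum_{l \in C_i} f''_{l,i}(x_i)\; P_{l,m}(x), \qquad P_{l,m}(x) := \sum_{\text{directed paths }l \to m \text{ in }D} \prod_{e} f'_{e}(x_{\mathrm{tail}(e)}),
\end{equation*}
with $P_{l,l} \equiv 1$ and $P_{l,m} \equiv 0$ whenever $l$ is not an ancestor of $m$ in $D$.

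\textbf{Parts (a) and (b).} For (a), fix $j \in C_i$ and suppose for contradiction $v_j \equiv 0$; freezing $x_k = c_k$ for all $k \neq i$ makes every $P_{l,j}(x)$ a constant and reduces the identity to $f''_{j,i}(x_i) + \sum_{l \in C_i,\, l \neq j} P_{l,j}(c)\, f''_{l,i}(x_i) \equiv 0$ as a function of $x_i$; the coefficient of $f''_{j,i}$ is $1 \neq 0$, contradicting the linear independence~\eqref{S-eq:linind} of $\{f''_{l,i} : l \in C_i\}$. For (b), Remark~\ref{S-rem:VindepofF} guarantees that $\mathcal{V}$ depends only on $\PX$, and Theorem~\ref{M-thm:char-via-order} then forces $\sigma(i) < \sigma(j)$ in every causal ordering of every PLSEM that generates $\PX$; combined with the elementary fact that $j$ is a descendant of $i$ in a DAG $D'$ iff every topological ordering of $D'$ satisfies $\sigma(i) < \sigma(j)$ (shown by placing the ancestor-set of $j$, which excludes $i$, first when $j$ is not a descendant), this yields the graph-theoretic claim.

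\textbf{Parts (c) and (d).} The ``$\Rightarrow$'' direction of (c) is immediate from the path expansion: if $k$ is not a descendant of any $l \in C_i$, then $P_{l,k} \equiv 0$ for every $l \in C_i$ and hence $v_k \equiv 0$. The converse proceeds as in (a): for $k$ a descendant of some $l_0 \in C_i$, combining $v_k \equiv 0$ with coordinate-freezing and~\eqref{S-eq:linind} forces $P_{l,k}(x) \equiv 0$ for every $l \in C_i$ that is an ancestor of $k$; the case $k = l_0 \in C_i$ contradicts $P_{k,k} \equiv 1$, and otherwise I would use faithfulness of $\PX$ to $D$ to exclude $P_{l_0,k} \equiv 0$ by interpreting $P_{l_0,k}(X)$ as the structural path-derivative of $X_k$ with respect to $X_{l_0}$, whose identical vanishing would be a non-generic cancellation along the directed paths $l_0 \to k$ that faithfulness precludes. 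For (d), ``$\Rightarrow$'' is a chained application of (b) along any witness of $(i,k) \in \tilde{\mathcal{V}}$; for ``$\Leftarrow$'' I would argue by contraposition, producing a permutation of $\{1,\ldots,p\}$ that respects every pair in $\mathcal{V}$ but places $k$ before $i$ whenever $(i,k) \notin \tilde{\mathcal{V}}$, and then invoking Theorem~\ref{M-thm:char-via-order} to realize this permutation as the causal ordering of a PLSEM generating $\PX$, in whose DAG $k$ is not a descendant of $i$.

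\textbf{Main obstacle.} The most delicate step is the converse of (c): ruling out $P_{l_0,k}(x) \equiv 0$ under faithfulness when $l_0 \in C_i$ is an ancestor of $k$ in $D$. The example in Figure~\ref{M-fig:nonlin-cancel} shows that such cancellations genuinely occur in the absence of~\eqref{S-eq:linind}, so faithfulness must be invoked via a structural argument, for instance by identifying $P_{l_0,k}$ with the derivative of a suitable conditional-mean functional in the PLSEM and using d-connection of $l_0$ and $k$ in $D$ to preclude its identical vanishing.
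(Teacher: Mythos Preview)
Your path-expansion setup and the arguments for (a), (b), and (d) are correct and essentially equivalent to the paper's proof, just expressed via $P_{l,m}=\sigma_l\, e_m^t(\D F)^{-1}e_l$ rather than the paper's direct linear-algebra manipulation of $(\D F)^{-1}$. In particular, your coordinate-freezing argument that $v_k\equiv 0$ forces $P_{l,k}\equiv 0$ for every $l\in C_i$ is exactly the content of the paper's span identity $\langle \partial_i^2 F_{(i+1):p}(x_i)\rangle_{x_i}=\langle (e_l)_{(i+1):p}\rangle_{l\in C_i}$ combined with the separation of variables in $(\D F)^{-1}_{\bullet,(i+1):p}$.

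The genuine gap is the one you flag yourself: the converse of (c). Your proposed fix---reading $P_{l_0,k}$ as a ``structural path-derivative'' and saying faithfulness precludes its vanishing as a ``non-generic cancellation''---is not a valid use of faithfulness. Faithfulness is a statement about conditional independences, not about derivatives of structural maps, and the two are not interchangeable in nonlinear SEMs. The paper closes this gap by a different route: writing $X=F^{-1}(Z)$ with $Z\sim\mathcal N(0,\mathrm{Id})$, one has $\partial_{z_{l_0}}\bigl(e_k^t F^{-1}(z)\bigr)=e_k^t(\D F)^{-1}e_{l_0}=\sigma_{l_0}^{-1}P_{l_0,k}$, so $P_{l_0,k}\equiv 0$ means $X_k$ is constant in $Z_{l_0}$. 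Since $X_k$ depends only on $Z_1,\ldots,Z_k$ and there is a bijection between $X_{1:(l_0-1)}$ and $Z_{1:(l_0-1)}$, this yields $X_k\perp\!\!\!\perp X_{l_0}\mid X_1,\ldots,X_{l_0-1}$. Now faithfulness applies legitimately: the directed path $l_0\to\cdots\to k$ in $D$ has all intermediate nodes with index $>l_0$, so it is not blocked by $\{1,\ldots,l_0-1\}$, contradicting the implied d-separation. This is the structural argument you need; the ``conditional-mean functional'' you suggest does not produce the required CI statement.
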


\begin{rem}\label{S-rem:uniqueness-V}
  We follow the convention that $i$ is a descendant of itself. If equation~\eqref{S-eq:linind} holds for all $i$, then from (c) it follows that $\tilde{\mathcal{V}} = \mathcal{V}$. In particular, by (d), $k$ is descendant of one of the nonlinear children in $C_{i}$ if and only if $k$ is a descendant of $i$ in every DAG $D'$ of a PLSEM that generates $\PX$.
\end{rem}

\begin{proof}
We will first show (a) and (c).
Recall Proposition~\ref{S-prop:PLSEM2}. Without loss of generality let us assume that $\sigma = \text{Id}$, i.e. that $\D\F$ is lower triangular with constant positive entries on the diagonal.  Furthermore, $\partial_{i}^{2} \F_{l} = 0$ for all $l \le i$. Using this we obtain  $(\D\F)^{-1} \partial_{i}^{2} \F = (\D\F)_{\bullet,(i+1):p}^{-1} \partial_{i}^{2} \F_{(i+1):p}$, and
\begin{align}\label{S-eq:19}
&  e_{k}^{t} (\D\F)^{-1} \partial_{i}^{2} \F &&\equiv 0 \\
\iff &   e_{k}^{t} (\D\F)_{\bullet,(i+1):p}^{-1} \partial_{i}^{2} \F_{(i+1):p} &&\equiv 0. \nonumber
\end{align} 
Here the subindex $\bullet$ denotes all rows $1:p$. In the next step, we want to prove that 
\begin{equation}\label{S-eq:16}
  \langle \partial_{i}^{2} \F_{(i+1):p}(x_{i}) \rangle_{x_{i} \in \mathbb{R}} = \langle (e_{j})_{(i+1):p}  \rangle_{j \in C_{i}}.
\end{equation}
Note that as the components $1,\ldots,i$ of $\partial_{i}^{2} \F$ and $e_{j}, j \in C_{i}$ are zero, this is equivalent to showing that
\begin{equation*}
  \langle \partial_{i}^{2} \F(x_{i}) \rangle_{x_{i} \in \mathbb{R}} = \langle e_{j}  \rangle_{j \in C_{i}}.
\end{equation*}
As $\partial_{i}^{2} \F_{l} \equiv 0$ for all $l \not \in C_{i}$, we have
\begin{equation*}
 \langle \partial_{i}^{2} \F(x_{i}) \rangle_{x_{i} \in \mathbb{R}} \subseteq \langle e_{j}  \rangle_{j \in C_{i}}.
\end{equation*}
Let $\gamma \in \langle e_{j}  \rangle_{j \in C_{i}}$, $\gamma \neq 0$.
\begin{align*}
  \left(\partial_{i}^{2} \F(x_{i}) \right)^{t} \gamma = \sum_{j \in C_{i}} \partial_{i}^{2} \F_{j}(x_{i}) \gamma_{j}
\end{align*}
As the nonlinear children in $C_{i}$ are linearly independent, there exists an $x_{i} \in \mathbb{R}$ such that $\sum_{j} \partial_{i}^{2} \F_{j}(x_{i}) \gamma_{j} \neq 0$. Hence there exists no nonzero vector $\gamma$ in $\langle e_{j}  \rangle_{j \in C_{i}}$ that is orthogonal to $\langle \partial_{i}^{2} \F(x_{i}) \rangle_{x_{i} \in \mathbb{R}}$ and hence
\begin{equation*}
  \langle \partial_{i}^{2} \F(x_{i}) \rangle_{x_{i} \in \mathbb{R}} = \langle e_{j}  \rangle_{j \in C_{i}}.
\end{equation*}
As discussed above, this proves equation~\eqref{S-eq:16}.  By Proposition~\ref{S-prop:PLSEM2}, each column $l$ of $\D \F$ is a function of $x_{l}$ (i.e. constant in $x_{1},\ldots,x_{l-1},x_{l+1},\ldots x_{p}$). Hence, $(\D\F)_{\bullet,(i+1):p}^{-1}$ is a function of $x_{i+1},\ldots,x_{p}$.  In particular,  $(\D\F)_{\bullet,(i+1):p}^{-1}$ is constant in $x_{i}$. Now we can continue:
\begin{align}\label{S-eq:20}
   &   e_{k}^{t} (\D\F)_{\bullet,(i+1):p}^{-1} \partial_{i}^{2} \F_{i+1:p} &&\equiv 0\\ 
\iff &  e_{k}^{t} (\D\F)_{\bullet,(i+1):p}^{-1}(x_{i+1},\ldots,x_{p}) \partial_{i}^{2} \F_{i+1:p}(x_{i}) &&= 0 \text{ for all } x \in \mathbb{R}^{p} \nonumber \\ 
  \iff &  e_{k}^{t} (\D\F)_{\bullet,(i+1):p}^{-1}(x_{i+1},\ldots,x_{p}) (e_{j})_{(i+1):p} &&= 0 \text{ for all } j \in C_{i}, x \in \mathbb{R}^{p} \nonumber \\
\iff &  e_{k}^{t} (\D\F)^{-1}(x) e_{j} &&= 0 \text{ for all } j \in C_{i}, x \in \mathbb{R}^{p} \nonumber
\end{align}
As $\D \F$ is lower triangular with positive entries on the diagonal, $(\D\F)^{-1}$ is lower triangular too, with nonzero entries on the diagonal. As a result, $ e_{k}^{t} (\D\F)^{-1}(x) e_{k} \not \equiv 0$. So if $k \in C_{i}$, by equation~\eqref{S-eq:20}, $e_{k}^{t} (\D\F)_{\bullet,(i+1):p}^{-1} \partial_{i}^{2} \F_{i+1:p} \not \equiv 0$. By equation~\eqref{S-eq:19}, $  e_{k}^{t} (\D\F)^{-1} \partial_{i}^{2} \F \not \equiv 0$ and hence by definition of $\mathcal{V}$, $(i, k) \in \mathcal{V}$. This proves $(a)$.

Let $Z \sim \mathcal{N}(0,\text{Id}_{p})$. Let $X = \F^{-1}(Z)$. By Proposition~\ref{S-prop:PLSEM2}, $X \sim \PX$. Note that $X_{k} = e_{k}^{t} \F^{-1} (Z)$. We denote the partial derivative with respect to $z_{j}$ by $\partial_{j}^{z}$. Note that $x_{k}$ is constant in $z_{j}$ if and only if $\partial_{j}^{z} e_{k}^{t} \F^{-1} (z) =e_{k}^{t} (\D\F)^{-1} (\F^{-1}(z)) e_{j} \equiv 0$. Fix $j$. As there are bijective relationships between $x$ and $z$ and between $x_{1:(j-1)}$ and $z_{1:(j-1)}$,
\begin{align}\label{S-eq:21}
 &e_{k}^{t} (\D\F)^{-1}(x) e_{j} = 0 \ \ \text{  for all }x \in \mathbb{R}^{p} \\
 \iff&e_{k}^{t} (\D\F)^{-1} (\F^{-1}(z)) e_{j} = 0 \ \ \text{  for all } z \in \mathbb{R}^{p} \nonumber \\
\iff & x_{k} = e_{k}^{t } \F^{-1}(z) \text{ is constant in } z_{j}    &&& \nonumber \\
\iff & x_{k} = e_{k}^{t } \F^{-1}(z) \text{ is a function of } z_{1},\ldots, z_{j-1},z_{j+1},\ldots,z_{k}  &&& \nonumber \\
\Rightarrow & X_{k} \independent Z_{j} | Z_{1},\ldots, Z_{j-1}  &&& \nonumber \\
\Rightarrow & X_{k} \independent Z_{j} | X_{1},\ldots,X_{j-1} &&& \nonumber \\
\Rightarrow & X_{k} \independent X_{j} | X_{1},\ldots,X_{j-1}  &&& \nonumber \\
\Rightarrow & k \text{ is not a descendant of } j \text{ in } D.  &&& \nonumber
\end{align}
In the second to last line we used that $X_{j} = \sum_{j' \in \pa_{D}(j)} f_{j,j'}(X_{j'}) + \sigma_{j} Z_{j}$. In the last line we used faithfulness. The other direction follows from the definition of a PLSEM with DAG $D$: For all $j \in C_i$ we have that
\begin{align}\label{S-eq:22}
&k \text{ is a non-descendant of } j \text{ in } D \\ &\Rightarrow x_{k} = e_{k}^{t } \F^{-1}(z) \text{ is constant in } z_{j}. \nonumber
\end{align}
  % & k \text{ is a non-descendant of } j \text{ in } D &&&\text{ for all }  j \in C_{i} \\
  % \Rightarrow & x_{k} = e_{k}^{t } \F^{-1}(z) \text{ is constant in } z_{j} &&&\text{ for all } j \in C_{i}.
By combining equation~\eqref{S-eq:21} and equation~\eqref{S-eq:22},
\begin{equation*}
 e_{k}^{t} (\D\F)^{-1}(x) e_{j} = 0 \iff  k \text{ is a non-descendant of } j \text{ in } D.
\end{equation*}
Hence, by equation~\eqref{S-eq:19} and equation~\eqref{S-eq:20}
\begin{align*}
   &k \text{ is a non-descendant of } j \text{ in } D \text{ for all }  j \in C_{i} \\
\iff & e_{k}^{t} (\D\F)^{-1}_{\bullet,(i+1):p} (e_{j})_{(i+1):p} \equiv 0 \text{ for all } j \in C_{i} \\
\iff & e_{k}^{t} (\D\F)^{-1} \partial_{i}^{2} \F \equiv 0.
\end{align*}
This concludes the proof of $(c)$. Now let us turn to the proof of $(d)$. \\

Fix a DAG $D'$. $k$ is a descendant of $i$ in $D'$ if and only if for all causal orderings $\sigma$ of $D'$ we have $\sigma(i) < \sigma(k)$. Hence,
\begin{align}\label{S-eq:28}
\begin{split}
 & \text{$k$ is a descendant of $i$ in all DAGs $D'$ of a PLSEM that generates $\PX$ } \\
\iff & \text{for all causal orderings $\sigma$ of a DAG $D'$ of a PLSEM}  \\
 & \text{that generates $\PX$: $\sigma(i) < \sigma(k)$}.
 \end{split}
\end{align}
 By Theorem~\ref{M-thm:char-via-order} a permutation  $\sigma$ is a causal ordering of a DAG $ D'$ of a PLSEM that generates $\PX$ if and only if $\sigma(l) < \sigma(m)$ for all $(l,m)\in \mathcal{V}$: 
\begin{align}\label{S-eq:24}
\begin{split}
  & \text{for all causal orderings $\sigma$ of a DAG $D'$ of a PLSEM} \\
 &\text{that generates $\PX$: $\sigma(i) < \sigma(k)$} \\
\iff & \text{for all permutations $\sigma$ with $\sigma(l) < \sigma(m)$ for all $(l,m) \in \mathcal{V}$} \\
&\text{we have $\sigma(i) < \sigma(k)$} \\
\iff & (i,k) \in \tilde{\mathcal{V}}.
\end{split}
\end{align}
Combining equations~\eqref{S-eq:28} and \eqref{S-eq:24} concludes the proof of (d). (b) follows from (d): $(i,j) \in \mathcal{V}$ implies that $(i,j) \in \tilde{\mathcal{V}}$. This concludes the proof of (b).
\end{proof}

\subsection{Proofs of consistency and correctness of estimation procedures} \label{S-ssec:prf-consistency}
\subsubsection{Proof of Theorem~\ref{M-thm:consistencyRecursiveAlgorithm}} \label{S-ssec:proofConsistencyRecursiveAlgorithm}
\begin{proof}
We prove that for $n$ sufficiently large and with high probability, for any DAG $D^0 \in \DPXO$ and $D \in \CDO$ such that (without loss of generality) $D^0$ and $D$ only differ by reversal of a covered edge between nodes $i$ and $j$, 
\begin{equation} \label{S-eq:consistency_score}
%	\log(\hat{\sigma}_i^D) + \log(\hat{\sigma}_j^D) \geq \log(\hat{\sigma}_i^{D^0}) + \log(\hat{\sigma}_j^{D^0}) + 3 \xi_0 / 4 ,
\log(\hat{\sigma}_i^D) + \log(\hat{\sigma}_j^D) - \log(\hat{\sigma}_i^{D^0}) - \log(\hat{\sigma}_j^{D^0}) \geq 3 \xi_0 / 4 ,
\end{equation}
%\Jan{vielleicht besser:
%$$
%\log(\hat{\sigma}_i^D) + \log(\hat{\sigma}_j^D) - \log(\hat{\sigma}_i^{D^0}) - \log(\hat{\sigma}_j^{D^0}) \geq 3 \xi_0 / 4 ,
%$$}
where $\hat{\sigma}_j^D$ denotes the unpenalized maximum likelihood estimator of the standard deviation of the residuals at node $j$ in DAG $D$.
Similarly, for $n$ sufficiently large and with high probability, for $D^0, \tilde{D}^0 \in \DPXO$ that only differ by a reversal of a covered linear edge between nodes $i$ and $j$, 
  \begin{equation} \label{S-eq:consistency_score2}
%	\log(\hat{\sigma}_i^{D^0}) + \log(\hat{\sigma}_j^{D^0}) \leq \log(\hat{\sigma}_i^{\tilde{D}^0}) + \log(\hat{\sigma}_j^{\tilde{D}^0}) + \xi_0 / 4 .
\left| \log(\hat{\sigma}_i^{D^0}) + \log(\hat{\sigma}_j^{D^0}) - \log(\hat{\sigma}_i^{\tilde{D}^0}) - \log(\hat{\sigma}_j^{\tilde{D}^0}) \right| \leq \xi_0 / 4 .
\end{equation}
%\Jan{vielleicht besser:
%$$
%\log(\hat{\sigma}_i^{D^0}) + \log(\hat{\sigma}_j^{D^0}) - \log(\hat{\sigma}_i^{\tilde{D}^0}) - \log(\hat{\sigma}_j^{\tilde{D}^0}) \leq \xi_0 / 4 .
%$$}
The uniform bounds in~\eqref{S-eq:consistency_score} and~\eqref{S-eq:consistency_score2} imply that for $\alpha \in (\xi_0/4, 3\xi_0/4)$, each score-based decision whether a covered edge $i \rightarrow j$ is linear or not in step~6 of Algorithm~\ref{M-alg:listAllDAGsPLSEMscorebased} is consistent. The consistency of the estimated distribution equivalence class then follows from the correctness of Algorithm~\ref{M-alg:listAllDAGsPLSEM}, which is justified at the beginning of Section~\ref{M-ssec:estim_recursive}. Obviously, the constants in~\eqref{S-eq:consistency_score} and~\eqref{S-eq:consistency_score2} can be changed allowing for any $\alpha \in (0,\xi_0)$.

\textit{Proof of~\eqref{S-eq:consistency_score}.}
%By replacing a general additive function with a partially linear additive function we improve estimation accuracy. Therefore, it immediately follows under assumptions (A1),(A2) and (A4) from Lemma 5 and 
By Assumption~\ref{M-assu:consistency}~(i), all DAGs under consideration have uniformly bounded node degrees. 
%As all DAGs under consideration are restricted to the same skeleton, the same uniform bound on the node degrees holds for all DAGs $D \in \DPXO \cup  \CDPX$. 
It now follows exactly along the lines of Sections~2.1,~2.2,~2.3 and~5 in the supplement to~\citep{pbjoja13} that for %$D \not\in \DPXO$, 
$D \in \DPX \cup \CDPX$,
\begin{equation} \label{S-eq:cons1}
	( \sigma_k^{D} )^2 \leq ( \hat{\sigma}^D_k )^2 + \Delta_{n,k}^D, 
\end{equation}
%with 
%$$\max\limits_{\substack{D \not\in \DPXO \\ k=1,...,p}} \Delta_{n,k}^D = o_P(1),$$ 
and for $D^0 \in \DPXO$,
\begin{equation} \label{S-eq:cons2}
	( \hat{\sigma}_k^{D^0} )^2 \leq ( \sigma^{D^0}_k )^2 + \gamma_{n,k}^{D^0} + \Delta_{n,k}^{D^0} ,
\end{equation}
with
$\Delta_{n,k}^D$, $\gamma_{n,k}^{D^0}$ as defined in Assumptions~\ref{M-assu:consistency}~(iii) and~(iv).

%$$
%\max\limits_{\substack{D^0 \in \DPXO \\ k=1,...,p}} \gamma_{n,k}^{D^0} = o(\xi) . %\qquad \text{and} \qquad 
%\max\limits_{\substack{D^0 \in \DPXO \\ k=1,...,p}} \Delta_{n,k}^{D^0} = o_P(1).
%$$ 

Without loss of generality, let $D^0 \in \DPXO$ and $D \in \CDO$ such that $D^0$ and~$D$ only differ by reversal of a covered nonlinear edge between nodes $i$ and~$j$. Substituting~\eqref{S-eq:cons1} and~\eqref{S-eq:cons2} and then using~\eqref{M-eq:diffScoreCovRev} and~\eqref{M-eq:xip},
\begin{align*}
	\sum\limits_{k\in \{i,j\}} \left( \log(\hat{\sigma}_k^D) - \log(\hat{\sigma}_k^{D^0}) \right) &\geq 
	\sum\limits_{k\in \{i,j\}}\left(\log(\sigma_k^{D}) - \log(\sigma_k^{D^0})\right) + R_{n,D,D^0} \\
	&\geq \xi_p +  R_{n,D,D^0},
\end{align*}
where
\begin{align*}
	R_{n,D,D^0} = \frac{1}{2} \sum\limits_{k\in \{i,j\}} \left( \log\left(1 + \frac{-\Delta_{n,k}^{D}}{(\sigma_k^{D})^2}\right)
	- \log\left(1+ \frac{\gamma_{n,k}^{D^0} + \Delta_{n,k}^{D^0}}{(\sigma_k^{D^0})^2} \right) \right).
\end{align*}
By Assumption~\ref{M-assu:consistency}~(ii), the error variances are bounded away from zero. Using Taylor expansion and Assumptions~\ref{M-assu:consistency}~(iii) and~(iv), $R_{n,D,D^0} = o_P(1)$. %Combining the degree of separation condition in equation~\eqref{S-eq:xip} with equation~\eqref{S-eq:diffScoreCovRev} and using that 
As $\xi_p$ is uniformly bounded from below by $\xi_0$, we have that 
%(uniformly over all \Jan{$\DPXO$,} DAGs $D^0 \in \DPXO$ and $D \in \CDO$) %that 
$$
	\sum\limits_{k\in \{i,j\}} \left( \log(\hat{\sigma}_k^D) - \log(\hat{\sigma}_k^{D^0}) \right) \geq \xi_0 + o_P(1).
$$
Therefore, for $n$ sufficiently large and with high probability,
$$
%	\sum\limits_{k \in \{i,j\}} \log(\hat{\sigma}_k^D) \geq \sum\limits_{k \in \{i,j\}} \log(\hat{\sigma}_k^{D^0}) + 3 \xi_0 / 4 .
\sum\limits_{k\in \{i,j\}} \left( \log(\hat{\sigma}_k^D) - \log(\hat{\sigma}_k^{D^0}) \right) \geq 3 \xi_0 / 4.
$$
A completely analogous argument yields~\eqref{S-eq:consistency_score2} for $D^0, \tilde{D}^0 \in \DPXO$. 
\end{proof}

\subsubsection{Proof of Lemma~\ref{M-le:DAGextension}} \label{S-ssec:proofLemAlgorithm}
\begin{proof}
(a) 
For a DAG $D$ with $i \rightarrow j$ in $D$, we denote by $D_{i \leftarrow j}$ the graph that differs from $D$ only by the reversal of $i \rightarrow j$. Let $i \rightarrow j$ in $\mK$. Recall that $G_{P,\mathcal{K}}$ is obtained by imposing all edge orientations in $\mathcal{K}$ on the pattern $P$ and closing orientations under R1-R4 in Figure~\ref{M-fig:orientationRules}. Hence, by construction, $i\rightarrow j$ in $G_{P, \mK}$. Throughout the proof we use that for a background knowledge $\mK'$, by~\citep[Theorems~2\&4]{meek1995}, the set of all consistent DAG extensions of $G_{P,\mK'}$ equals the set of all Markov equivalent DAGs that have pattern $P$ and edge orientations that comply with the background knowledge $\mK'$.
 %From~\citep[Lemma~1 \& Theorem~2]{chickering95} and~\citep{meek1995},
 Then, it holds that
\begin{align*}
	& \text{$\exists$ consistent DAG extension $D$ of $G_{P, \mK}$ in which $i \rightarrow j$ is covered} \\
%	\iff &\text{$\exists$ consistent DAG extension $D$ of $G_{P, \mK\setminus \{i \rightarrow j\}}$ in which $i \rightarrow j$ is covered} \\
	\iff &\text{$\exists$ consistent DAG extension $D$ of $G_{P, \mK}$: $D_{i \leftarrow j}$ is Markov} \\
	&\text{equivalent to $D$} \\
	\iff &\text{$\exists$ consistent DAG extension $D$ of $G_{P, \mK\setminus \{i \rightarrow j\}}$: $D_{i \leftarrow j}$ is Markov} \\
	&\text{equivalent to $D$}, 
\end{align*}
where the first equivalence follows from \citep[Lemma~1]{chickering95}. By assumption, $i \rightarrow j$ is not in $P$ and not in the background knowledge. Hence, the soundness of the four orientation rules R1-R4~\citep[Theorem~2]{meek1995} implies that $i \text{ --- } j \text{ in } G_{P, \mK \setminus \{i \rightarrow j\}}$ which is the case if and only if $G_{P, \mK} \neq G_{P, \mK \setminus \{i \rightarrow j\}}$. \\
To show the other implication, by the completeness of the orientation rules in \citep[Theorem~4]{meek1995}, 
\begin{align*}
 & i \text{ --- } j \text{ in } G_{P, \mK \setminus \{i \rightarrow j\}} \\
 \Longrightarrow \ &\text{$\exists$ consistent DAG extensions $D_1,D_2$ of $G_{P, \mK\setminus \{i \rightarrow j\}}$: $i \rightarrow j$ in $D_1$ and } \\
	& \text{$i \leftarrow j$ in $D_2$}  \\
	\Longrightarrow \  &\text{$\exists$ consistent DAG extension $D$ of $G_{P, \mK\setminus \{i \rightarrow j\}}$: $D_{i \leftarrow j}$ is a consistent}  \\
	&\text{DAG extension of } G_{P, \mK \setminus \{i \rightarrow j\}}, %  \\
%	\Longrightarrow \  &\text{$\exists$ consistent DAG extension $D$ of $G_{P, \mK\setminus \{i \rightarrow j\}}$: $D_{i \leftarrow j}$ is Markov} \\
%	&\text{equivalent to $D$}.
\end{align*}
where the last implication follows from~\citep[Theorem~2]{chickering95}. Clearly, as both, $D$ and $D_{i \leftarrow j}$ are consistent DAG extension of the same pattern $P$, $D_{i \leftarrow j}$ is Markov equivalent to $D$, which finishes the proof.

%\begin{align*}
%& \text{$\exists$ consistent DAG extension $D$ of $G_{P, \mK}$ in which $i \rightarrow j$ is covered} \\
%	\iff &\text{$\exists$ consistent DAG extension $D$ of $G_{P, \mK\setminus \{i \rightarrow j\}}$ in which $i \rightarrow j$ is covered} \\
%	\iff &\text{$\exists$ consistent DAG extension $D$ of $G_{P, \mK\setminus \{i \rightarrow j\}}$: $D_{i \leftarrow j}$ is Markov} \\
%	&\text{equivalent to $D$} \\
%	&\Jan{[\Rightarrow]: \text{Orientation soundness. } \{i \rightarrow j\} \not\in P,\mathcal{K}\setminus\{i\rightarrow j\}. \text{Hence, } i \rightarrow j \text{ implied by rule would}} \\
%	&\Jan{\text{mean that } j \rightarrow i \text{ would lead to new unshielded collider or cycle.}} \\
%	&\Jan{[\Leftarrow]: \text{both extend the same pattern}} \\
%	\iff &\text{$\exists$ consistent DAG extension $D$ of $G_{P, \mK\setminus \{i \rightarrow j\}}$: $D_{i \leftarrow j}$ is a consistent DAG}  \\
%		 &\text{extension of $G_{P, \mK \setminus \{i \rightarrow j\}}$} \\
%	\iff &\text{$\exists$ consistent DAG extensions $D_1,D_2$ of $G_{P, \mK\setminus \{i \rightarrow j\}}$: $i \rightarrow j$ in $D_1$ and } \\
%	& \text{$i \leftarrow j$ in $D_2$}  \\
%	&\Jan{[\Leftarrow]: \text{Orientation completeness. In the graph obtained by applying R1-R4,}} \\
%	&\Jan{\text{an undirected edge means that we can find } D_1, D_2 \text{ with } i \rightarrow j, i \leftarrow j} \\
%	\iff & i \text{ --- } j \text{ in } G_{P, \mK \setminus \{i \rightarrow j\}} 	\iff G_{P, \mK} \neq G_{P, \mK \setminus \{i \rightarrow j\}}
%\end{align*}

(b) 
As $G_{P, \mK} \neq G_{P, \mK \setminus \{i \rightarrow j\}}$, by
  Lemma~\ref{M-le:DAGextension}~(a), there exists a consistent DAG
  extension of $G_{P, \mK}$ in which $i \rightarrow j$ is covered.
  From that, it immediately follows that all $k \in \pa_{G_{P,\mathcal{K}}}(i)$ are adjacent to $j$ in $G_{P,\mK}$. As $G_{P,\mathcal{K}}$ is closed under R1-R4, $k \rightarrow j$ in $G_{P,\mathcal{K}}$ due to R2. Concludingly, $\pa_{G_{P,\mathcal{K}}}(i) \subseteq \pa_{G_{P,\mathcal{K}}}(j)$. Analogously, for $k' \in \pa_{G_{P,\mathcal{K}}}(j)\setminus \{i\}$, either $k' \rightarrow i$ or $k' \text{ --- } i$ in $G_{P,\mathcal{K}}$ as $i\rightarrow j$ can be covered.\vspace{0.1cm}
  \\
  \textsc{Step~1}: Orient $k' \text{ --- } i$ with $k' \in \pa_{G_{P,\mathcal{K}}}(j)$ into $i$. 
  To be precise, define the new background knowledge $\widetilde{\mathcal{K}} := \mathcal{K} \cup (\ \bigcup\limits_{k' \in \pa_{G_{P,\mathcal{K}}}(j)} \{k' \rightarrow i\})$.
  % First, orient all edges $\pa_{G_{P, \mK}}(i) \cup \pa_{G_{P, \mK}}(j)$ towards $i$ and $j$. 
  As there is a consistent DAG extension $D$ of $G_{P, \mK}$ in which $i \rightarrow j$ is covered, $k' \rightarrow i$ in $D$ for all $k' \in \pa_{G_{P,\mathcal{K}}}(j)\setminus \{i\}$. Hence, by definition, $\mKt$ is consistent.\vspace{0.1cm} \\
%  There exists a consistent DAG extension of this PDAG in which
%  $i \rightarrow j$ is covered, as the same is true for~$G_{P, \mK}$.   
 \textsc{Step~2}: Close orientations under R1-R4 to obtain the maximally oriented PDAG $G_{P,\mKt}$ with respect to the pattern $P$ and background knowledge $\mKt$~\citep{meek1995}.\vspace{0.2cm} \\
  \textit{Claim 1: Let $(x,y,z)$, $z \in \{i,j\}$ be a triple such
  that  $ x - y$ or $y - z$ in $G_{P, \mathcal{K}}$ and $x \rightarrow
  y \rightarrow z$ in $G_{P,\mKt}$.  
  %applying R1-R4
  %after \textsc{Step~1} and \textsc{Step~2}. 
  Then $y=i, z=j$ or $x
  \rightarrow y \text{ --- } z$ in  $G_{P, \mathcal{K}}$.} \\
  %In the former case, 
%  If $y=i$, then~$z=j$.} \\ 
  Suppose that $y \neq i$ and $x - y$ in $G_{P, \mathcal{K}}$. 
  %As $y \neq i$, 
  %the orientation 
  Then,  
  $x \rightarrow y$ in $G_{P,\mKt}$ was oriented in \textsc{Step~2} by applying R1-R4. We will lead this to a contradiction. By 
  %\textsc{Step~1} and 
  Lemma~\ref{S-le:desc-of-y0}, $y$ is a descendant of $i$ in $G_{P,\mKt}$. Moreover, recall that $y \rightarrow z$ in $G_{P,\mKt}$. By construction, there exists a consistent DAG extension $D$ of $G_{P,\mKt}$ in which $i \rightarrow j$ is covered. As $y \neq i$ and $z \in \{i,j\}$, $y \rightarrow i$ and $y \rightarrow j$ in $D$. But $y$ is a descendant of $i$ in $D$, so $D$ contains a cycle. Contradiction.\vspace{0.2cm} \\
  \textit{Claim 2: In \textsc{Step~2}, no additional edges are oriented into $i$ or $j$.} % by applying R1-R4.  
  \\ 
  %To this end, 
  Suppose the contrary and consider the first edge that is oriented into $i$ or $j$ by applying one of R1-R4 in \textsc{Step~2}. We will consider the rules case-by-case. \\
%To this end, consider the first edge $y - z$ in $G_{P, \mK}$, $z \in \{i,j\}$ that gets oriented towards $z$ in that process, $y \not \in \pa_{G_{P, \mK}}(i) \cup \pa_{G_{P, \mK}}(j)$.\\
\textit{R1:} Let $x$ be the upper, 
%node,
$y$ the 
%node on the 
lower left and $z$ the right node in R1 in Figure~\ref{M-fig:orientationRules}. Suppose $y \rightarrow z$, $z \in \{i,j\}$ is implied by R1 in \textsc{Step 2}. Thus, $x \rightarrow y \rightarrow z$ in $G_{P,\mKt}$ and $y - z$ in $G_{P,\mK}$. As $G_{P,
  \mK}$ is closed under R1-R4, 
  %we have 
  $x - y$ in $G_{P, \mK}$. Then, by \textit{Claim 1},
 %we must have 
 $y=i$ and $z = j$. But $i \rightarrow j$ in   $G_{P, \mK}$, contradiction.\\
\textit{R2:} Let $x$ be the left,
% node, 
$y$ the 
%node in the 
middle and $z$ the right node in R2 in Figure~\ref{M-fig:orientationRules}. Thus, $x \rightarrow y \rightarrow z$ with $x \rightarrow z$ in $G_{P,\mKt}$, and $x \text{ --- } z$ in $G_{P,\mK}$. \\
\textit{Case 1:} $y = i$. As $z \in \{i,j\}$, $z=j$. As $i \rightarrow
j$ and $x - z = j$ in $G_{P, \mK}$ and $G_{P,\mK}$ is closed under R1-R4, $x - y=i$ in $G_{P,\mK}$. Therefore,
% the edge 
$x \rightarrow y=i$ in $G_{P, \mKt}$ was either oriented in 
%the first step 
\textsc{Step~1} or \textsc{Step~2}.
%is implied by the rules. 
It was not oriented in 
%the first step 
\textsc{Step~1}
as $x - z=j$ (that is, $x \not\in \pa_{G_{P,\mK}}(j)$). Also, it was
not 
%implied by the rules 
oriented in \textsc{Step~2} as by assumption, $x \rightarrow z$ is the first edge
oriented into $i$ or $j$ 
%by the rules
in \textsc{Step~2}. Contradiction. 
%By \textit{Claim~1}, we have \textit{Case~2}.
\\
\textit{Case 2:} $y \neq i$. By \textit{Claim~1}, $x \rightarrow y - z$ in  $G_{P, \mK}$. By assumption, $x - z$ is the first edge that is oriented into $z \in \{i,j\}$ 
%by one of the rules R1-R4
in \textsc{Step~2}. %the edge 
Hence, $y - z$ was oriented into $z$ in 
%the first step 
\textsc{Step~1} ($y \in \pa_{G_{P, \mK}}(j)$) and $x - z$ not ($x \not \in \pa_{G_{P, \mK}}(j)$). As $x$ is oriented into $z \in \{i,j\}$ in $G_{P,\mKt}$ and as there is a consistent DAG extension of $G_{P,\mKt}$ in which $i \rightarrow j$ is covered, $x-i$ and $x-j$ in $G_{P, \mK}$. Recall that $x \rightarrow y$ in  $G_{P, \mK}$ and $y \in \pa_{G_{P, \mK}}(j)$. 
%By applying R2 on 
As $G_{P, \mK}$ is closed under R1-R4, 
%$x$ must be a parent of $i$ or $j$ 
$x \rightarrow j$ in $G_{P, \mK}$ by R2, which contradicts $x \not \in \pa_{G_{P, \mK}}(j)$. \\
%But $x \not \in \pa_{G_{P, \mK}}(i) \cup \pa_{G_{P, \mK}}(j)$, contradiction.\\
\textit{R3:} Does not apply. 
\textsc{Step~1} and \textsc{Step~2} do not create new $v$-structures. \\
%Orienting $k \rightarrow i$ for $k \in K$ and applying R1-R4 does not create new $v$-structures. \\
\textit{R4:} Let $x$ be the 
%node on the 
upper left, $y$ the 
%node on the 
upper right and $z$ 
%the node on 
the lower right node in R4 in Figure~\ref{M-fig:orientationRules}. We have $x \rightarrow y \rightarrow z$ in $G_{P, \mKt}$ and $x - y$ or $y - z$ in $G_{P,\mK}$. Note that $x$ and $z$ are not adjacent in $G_{P,\mK}$ and $G_{P,\mKt}$. If $y=i$, then $z=j$ and $x$ is a parent of $i$ but not adjacent to $j$ in $G_{P,\mKt}$. This contradicts the fact that 
%there exists
$i \rightarrow j$ is covered in 
a consistent DAG extension of $G_{P, \mKt}$. Hence, $y \neq i$.
%in which $i \rightarrow j$ is covered
By \textit{Claim 1}, $x \rightarrow y - z$ in $G_{P, \mK}$. As $x$ is not adjacent to $z$ in $G_{P,\mK}$,
$y \rightarrow z$ in 
 $G_{P, \mK}$ by R1, which contradicts the fact that $G_{P, \mK}$ is closed under R1-R4. This concludes the proof of \textit{Claim 2}.\vspace{0.2cm} \\
 %is not maximally oriented as one can apply R1. Contradiction! \\
By \textit{Claim~2}, we do not orient edges into $i$ or $j$ 
%when closing the orientations under R1-R4. 
in \textsc{Step~2}. Hence, by construction,
%after \textsc{Step~1\&2} and \textsc{Step~2}, we obtain a maximally oriented PDAG $\widetilde{G}$, which is closed under R1-R4 and 
$G_{P, \mKt}$ satisfies $\pa_{G_{P, \mKt}}(i) = \pa_{G_{P, \mKt}}(j) \setminus \{i\} = \pa_{G_{P, \mK}}(j) \setminus \{i\}$.
%$\pa_{G_{P, \mKt}}(i) = \pa_{G_{P, \mKt}}(j) \setminus \{i\} = \pa_{G_{P, \mK}}(j) \setminus \{i\}$. 
By Lemma~\ref{S-le:oriented-away}, there exists a consistent DAG extension $D$ of $G_{P, \mKt}$ 
%(and hence of $G_{P, \mK}$) 
in which all undirected edges incident to $i$ and $j$ are oriented out of $i$ and $j$. By construction, $D$ is a consistent DAG extension of $G_{P, \mK}$. Moreover, $\pa_{D}(i) = \pa_{D}(j) \setminus \{i\} = \pa_{G_{P, \mK}}(j) \setminus \{i\}$, which concludes the proof.
\end{proof}

\begin{lemm} \label{S-le:desc-of-y0}
%   Consider a PDAG $G$. Consider two nodes with $x_{0} \text{ --- } y_{0}$. We orient $x_{0} \rightarrow y_{0}$ and use the four Meek rules. Let's denote the edges we orient $x_{i} \rightarrow y_{i}$. Then $y_{i}, i \ge 1$ are descendants of $y_{0}$.
Consider the maximally oriented PDAG $G_{P, \mathcal{K'}}$ (with orientations closed under R1-R4) with respect to the pattern $P$ and consistent background knowledge $\mathcal{K'}$. Let $a_{m} \text{ --- } b$ in $G_{P, \mathcal{K'}}$ for all $1 \le m \le  M$ and assume there exists a consistent DAG extension of $G_{P,\mathcal{K'}}$ in which $a_{m} \rightarrow b$ for all $1 \le m \le M$. We orient $a_{m} \rightarrow b $ for all $m \le M$ and close the orientations under R1-R4. Let us denote the edges we orient $a_{m}' \rightarrow b_{m}'$, $m=1,2,...$. Then $b_{m}', m \ge 0$ are descendants of $b$. 
\end{lemm}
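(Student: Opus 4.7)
The plan is to proceed by induction on the order in which edges get oriented during the closure under R1--R4. For the base case, the initially imposed orientations $a_m \rightarrow b$ all have head $b$, which is a descendant of itself by the convention adopted throughout the paper. For the inductive step, I would consider the $k$-th newly oriented edge $i \rightarrow j$ (so that the head $j$ plays the role of some $b_m'$) and show that $j$ must be a descendant of $b$, assuming the claim holds for all previously oriented edges.

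The key observation making the case analysis go through is the following. Since $G_{P,\mathcal{K}'}$ is \emph{already} closed under R1--R4, any application of one of these rules during the closure step must have at least one of its antecedent directed edges coming from the set of new orientations; otherwise the rule would already have fired in $G_{P,\mathcal{K}'}$ and the edge $i \rightarrow j$ would already be directed. With this in hand, I would check the four rules from Figure~\ref{M-fig:orientationRules} one at a time. In R1, the antecedent $k \rightarrow i$ must be new, so by the induction hypothesis $i$ is a descendant of $b$, and therefore so is $j$ via $i \rightarrow j$. In R2, if $k \rightarrow j$ is the new antecedent then $j$ is directly a descendant of $b$, and if $i \rightarrow k$ is new then $k$ is a descendant of $b$ and $k \rightarrow j$ propagates this to $j$. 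In R3, one of $k \rightarrow j$ or $l \rightarrow j$ must be new, and in both subcases $j$ is directly a descendant of $b$. In R4, if $l \rightarrow j$ is new then $j$ is a descendant of $b$, and if $k \rightarrow l$ is new then $l$ is a descendant of $b$, whence $j$ is one too via $l \rightarrow j$.

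I do not anticipate any serious obstacle; the main subtlety is simply the observation above that a freshly triggered rule must use at least one new directed antecedent edge, which is what lets the induction go through. The hypothesis that there exists a consistent DAG extension of $G_{P,\mathcal{K}'}$ realizing $a_m \rightarrow b$ for all $m$ is only used to ensure that the closure under R1--R4 does not produce a contradiction (i.e.\ no cycle or conflicting orientation arises), so the iterative orientation process is well-defined; the descendant statement itself then follows purely from the structural analysis of R1--R4 described above.
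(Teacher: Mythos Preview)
Your proposal is correct and follows essentially the same approach as the paper's proof: induction on the order of orientation, using the observation that since $G_{P,\mathcal{K}'}$ is already closed under R1--R4, any newly triggered rule must involve at least one previously new directed edge, followed by a case analysis over R1--R4. The paper's version is more terse (it simply says ``by looking at Figure~\ref{M-fig:orientationRules} we can see that in each case, $b_m'$ is a descendant of $b$''), whereas you spell out the four cases explicitly, but the argument is the same.
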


\begin{proof}
  By induction. By definition, $b$ is a descendant of $b$. At each step, apply one of 
  R1-R4 and 
  orient $a_{m}' \rightarrow b_{m}'$. This only 
  occurs if one of the directed edges in one of 
  R1-R4 is actually an edge $a_{k} \rightarrow b$ or  $a_{k}' \rightarrow b_{k}'$ that was 
  oriented at an earlier stage $1 \le k \le M$ (in the first case) or $k < m$ (in the second case). By the induction assumption, $b_{k}'$ is a descendant of $b$ for all $k < m$. By looking at Figure~\ref{M-fig:orientationRules} (i.e. going through the cases R1-R4) we can see that in each case, $b_{m}'$ is a descendant of $b$ (in the first case) or $b_{k}'$ (in the second case). Hence $b_{m}'$ is a descendant of $b$.
\end{proof}

\begin{lemm} \label{S-le:oriented-away}
Consider the maximally oriented PDAG $G_{P, \mathcal{K}}$  (with orientations closed under R1-R4) with respect to the pattern $P$ and consistent background knowledge $\mathcal{K}$. Let $x \rightarrow y$ in $G_{P, \mathcal{K}}$. Then, there exists a consistent DAG extension of $G_{P, \mathcal{K}}$ in which all undirected edges incident to $x$ and $y$ are oriented out of $x$ and $y$.
\end{lemm}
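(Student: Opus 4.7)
The plan is to construct the desired consistent DAG extension explicitly by orienting the chain components of $G_{P,\mathcal{K}}$ independently. Let $S_x$ and $S_y$ denote the chain components (connected components of the undirected subgraph of $G_{P,\mathcal{K}}$) containing $x$ and $y$, respectively. Since $x \rightarrow y$ is directed in $G_{P,\mathcal{K}}$, we have $S_x \neq S_y$. Every undirected edge $x \text{ --- } z$ in $G_{P,\mathcal{K}}$ lies within $S_x$ and every undirected edge $y \text{ --- } z$ lies within $S_y$, so it suffices to orient the edges of $S_x$ as a DAG in which $x$ has no parents, orient $S_y$ similarly with $y$ having no parents, and extend the remaining chain components arbitrarily, all without introducing new $v$-structures.

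The key ingredient is the classical fact that each chain component of a maximally oriented PDAG closed under R1-R4 is a chordal graph, and that in any chordal graph $H$ and any vertex $v \in V(H)$, there is an acyclic orientation of $H$ in which $v$ has no parents and the parents of every vertex form a clique. One verifies this by running a maximum cardinality search on $H$ starting at $v$; this produces an ordering in which $v$ is first and every vertex's earlier neighbors form a clique by chordality, so orienting each edge from its earlier to its later endpoint gives the required DAG. I would apply this with $v=x$ in $S_x$ and $v=y$ in $S_y$, and use any perfect elimination ordering to orient every other chain component.

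It remains to verify that the union of these within-component orientations with the directed edges of $G_{P,\mathcal{K}}$ is a consistent DAG extension. Acyclicity holds because the directed part of $G_{P,\mathcal{K}}$ induces a DAG on the quotient graph with chain components contracted, and each within-component orientation is itself acyclic; hence no cycle can arise. The pattern condition requires that no new $v$-structure is created: within-component $v$-structures are ruled out by the construction above, and for a potential boundary $v$-structure $u \rightarrow v \leftarrow w$ where $u$ lies outside and $w$ lies inside the chain component of $v$, closure of $G_{P,\mathcal{K}}$ under rule R1 applied to $u \rightarrow v \text{ --- } w$ would have already forced $v \rightarrow w$ whenever $u$ and $w$ are non-adjacent, so the non-adjacency case cannot occur and no new $v$-structure is produced. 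The main obstacle is to cite or verify these structural properties of MPDAGs cleanly; both the chordality of chain components and the chordal-graph source-orientation lemma are standard in the literature on CPDAGs and MPDAGs.
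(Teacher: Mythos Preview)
Your argument imports two structural facts from the CPDAG setting that do not carry over to maximally oriented PDAGs with background knowledge. First, you claim that $x\to y$ being directed forces $S_x\neq S_y$; this is precisely the chain-graph property, and the paper notes explicitly (just after Figure~\ref{M-fig:exampleGraphChar}) that $G_{\DPX}$ is in general \emph{not} a chain graph. Second, you claim that the undirected part of each chain component is chordal; this also fails for MPDAGs. A single example refutes both. Take vertices $\{1,2,3,4\}$ with skeleton the $4$-cycle $1\text{ --- }2\text{ --- }3\text{ --- }4\text{ --- }1$ plus the chord $2\text{ --- }4$. The DAG $1\to 2$, $2\to 3$, $1\to 4$, $4\to 3$, $2\to 4$ has no $v$-structures, so its pattern $P$ is fully undirected. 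With $\mathcal{K}=\{2\to 4\}$ none of R1--R4 fires, and $G_{P,\mathcal{K}}$ consists of $2\to 4$ together with the undirected chordless $4$-cycle $1\text{ --- }2\text{ --- }3\text{ --- }4\text{ --- }1$. Here $x=2$ and $y=4$ lie in the same chain component, so your case split collapses; and the undirected part is not chordal, so MCS carries no clique guarantee (the ordering $2,1,3,4$ creates the new $v$-structure $1\to 4\leftarrow 3$). The lemma still holds in this example---orient $2\to 1$, $2\to 3$, $4\to 1$, $4\to 3$---but your construction cannot find this extension, since $4$ would have to precede both of its undirected neighbours while having no undirected edge to the source $2$, which MCS will never do.

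The paper's proof avoids any chain-graph or chordality claim. It orients one undirected edge incident to $y$ out of $y$, closes under R1--R4, and invokes Lemma~\ref{S-le:desc-of-y0} to conclude that every edge oriented during that closure has its head at a descendant of the new child of $y$; since $x\to y$ is already present, neither $x$ nor $y$ can be such a descendant without creating a directed cycle. Iterating over all undirected edges at $y$, and then at $x$, gives the desired extension using only acyclicity and Meek's completeness, with no appeal to the structure of chain components.
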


\begin{proof}
 Orient an undirected edge $e$ incident to $y$ out of $y$ and  close the orientations under R1-R4. By~\citep[Theorems~2~\&~4]{meek1995} the resulting PDAG is maximally oriented with respect to the pattern $P$ and consistent background knowledge $K \cup \{ e \}$. By Lemma~\ref{S-le:desc-of-y0}, no edge that is oriented in that process will point into $x$ or $y$. Now repeat, until there is no more undirected edge incident to $y$. Then, analogously orient 
 all undirected edges incident to $x$ out of $x$.
\end{proof}

\subsubsection{Proof of Lemma~\ref{M-le:correct-computeGDPX}}
\label{S-ssec:prf-le-correct-computeGDPX}

\begin{proof}
We first prove that for $k \geq 1$, 
%if there is an edge $\{i \rightarrow j\} \in \mathcal{K}_k^{\text{init}}$ that is covered in a consistent DAG extension of $G_{P, \mathcal{K}_k}$, 
by construction, each $G_{P, \mathcal{K}_{k}}$ is a consistent extension of $\GDPX$. This means that $G_{P,\mathcal{K}_{k}}$ and $G_{\DPX}$ have the same skeleton and $v$-structures and $i \rightarrow j$ in $G_{\DPX} \Rightarrow i \rightarrow j$ in $G_{P,\mathcal{K}_{k}}$. Then, we show that there exists a $k_0 \leq |\mathcal{K}_1^{\text{init}}| + 1$ such that either $\mathcal{K}^{\text{init}}_{k_0}=\emptyset$ or there is no edge in $\mathcal{K}^{\text{init}}_{k_0}$ that is covered in any of the consistent DAG extensions of $G_{P,\mathcal{K}_{k_0}}$. For $k_0$ it holds that $G_{P,\mathcal{K}_{k_0}} = \GDPX$. \\
By construction, $G_{P,\mathcal{K}_1} = D^0$ is a consistent extension of $\GDPX$. By Theorem~\ref{M-thm:covered-reversals}~(b), if $\mathcal{K}_1^{\text{init}}=\emptyset$ or none of the edges in $\mathcal{K}_1^{\text{init}}$ are covered in $D^0$, $\GDPX = D^0 = G_{P,\mathcal{K}_1}$. For a fixed $k \geq 1$, suppose that $G_{P,\mathcal{K}_k}$ is a consistent extension of $\GDPX$ and that there exists $\{i \rightarrow j\} \in \mathcal{K}^{\text{init}}_{k}$ that is covered in a consistent DAG extension of $G_{P,\mathcal{K}_k}$, which we denote by $D$. By assumption, as $G_{P,\mathcal{K}_k}$ is a consistent extension of $G_{\DPX}$, $D$ is a consistent DAG extension of $\GDPX$. Hence, $D \in \DPX$ by Theorem~\ref{M-thm:pdag}. \\ %$i \rightarrow j$ is known to be covered in a DAG $D$ in $\DPX$. \\
\underline{Case 1}: %$i \rightarrow j$ is linear in $D$. 
As $i \rightarrow j$ is covered and linear in $D \in \DPX$, by Theorem~\ref{M-thm:covered-reversals}~(a), there is a DAG $D' \in \DPX$ with $i \leftarrow j$. Therefore, by Definition~\ref{M-def:PDAGrepr}, $i \text{ --- } j$ in $G_{\DPX}$. \\
By construction, $\mathcal{K}_{k+1} = \mathcal{K}_k \setminus \{i \rightarrow j\}$. Hence, by Lemma~\ref{M-le:DAGextension}~(a), $G_{P,\mathcal{K}_{k+1}}$ equals $G_{P,\mathcal{K}_k}$ except for an undirected edge $i \text{ --- } j$ (all other directed edges in $G_{P,\mathcal{K}_k}$ are either directed in~$P$ or still contained in $\mathcal{K}_{k+1}$, hence they must be directed in $G_{P,\mathcal{K}_{k+1}}$). Therefore, $G_{P,\mathcal{K}_{k+1}}$ is a consistent extension of $G_{\DPX}$. \\
\underline{Case 2}: %$i \rightarrow j$ is nonlinear in $D$. 
As $i \rightarrow j$ is covered and nonlinear in $D \in \DPX$, by Theorem~\ref{M-thm:covered-reversals}~(a), $i \rightarrow j$ in all DAGs in $\DPX$. Hence, $i \rightarrow j$ in $G_{\DPX}$ by Definition~\ref{M-def:PDAGrepr}. \\
By construction, $\mathcal{K}_{k+1} = \mathcal{K}_k$ and $G_{P,\mathcal{K}_{k+1}} = G_{P,\mathcal{K}_k}$ is a consistent extension of $G_{\DPX}$. Moreover, as $\{i \rightarrow j\} \not \in \mathcal{K}^{\text{init}}_{k+1}$ and $\{i \rightarrow j\} \in \mathcal{K}^{\text{nonl}}_{k+1}$, $i \rightarrow j$ is fixed in all $G_{P,\mathcal{K}_l}$ for $l > k$. 

In both cases, $|\mathcal{K}^{\text{init}}_{k+1}| = |\mathcal{K}^{\text{init}}_{k}| - 1$. Hence, there exists a $k_0 \leq |\mathcal{K}_1^{\text{init}}| + 1$ such that either $\mathcal{K}^{\text{init}}_{k_0} = \emptyset$ or no edge in $\mathcal{K}^{\text{init}}_{k_0}$ is covered in any of the consistent DAG extensions of $G_{P,\mathcal{K}_{k_0}}$. We will now prove that $G_{P,\mathcal{K}_{k_0}} = \GDPX$. If $\mathcal{K}^{\text{init}}_{k_0} = \emptyset$, this immediately follows from \underline{Case 1} and \underline{Case 2}. For $\mathcal{K}^{\text{init}}_{k_0} \neq \emptyset$,
%is a consistent extension of $G_{P,\mathcal{K}_{k_0}}$. 
suppose $G_{P,\mathcal{K}_{k_0}} \neq \GDPX$. Then,
%the contrary, that is, suppose 
there are $M \geq 1$ edges $i_m \text{ --- } j_m, m=1,...,M$ in $\GDPX$ with $i_m \rightarrow j_m$ in $G_{P,\mathcal{K}_{k_0}}$. By construction, $\{i_m \rightarrow j_m\}_{m=1,...,M} \subseteq \mathcal{K}_{k_0}$. From \underline{Case 2} it must hold that $\{i_m \rightarrow j_m\}_{m=1,...,M} \subseteq \mathcal{K}^{\text{init}}_{k_0}$. By Theorem~\ref{M-thm:covered-reversals}~(b), $\DPX$ is connected with respect to covered linear edge reversals. Hence, there exists an $1 \leq m_0 \leq M$ for which $i_{m_0} \rightarrow j_{m_0}$ is covered in a consistent DAG extension of $G_{P,\mathcal{K}_{k_0}}$. But $\{i_{m_0} \rightarrow j_{m_0}\} \in \mathcal{K}^{\text{init}}_{k_0}$, contradiction. We just showed that $\GDPX$ is a consistent extension of $G_{P,\mathcal{K}_{k_0}}$.
As by construction, $G_{P,\mathcal{K}_{k_0}}$ is a consistent extension of $\GDPX$, we conclude that $G_{P,\mathcal{K}_{k_0}} = \GDPX$, which finishes the proof.
% as there is no covered edge in any of the DAG extensions of $G_{P,\mathcal{K}_{k_0}}$ that may be reversed.
\end{proof}

%%% Local Variables:
%%% mode: latex
%%% TeX-master: "supplementary_material"
%%% End:

\subsection{Proofs of model misspecification}\label{S-sec:proofs-model-missp}
\subsubsection{Proof of Theorem~\ref{M-thm:lower-loglik}}

\begin{proof}
Recall PLSEM-functions and generalized PLSEM-functions, in particular Definition~\ref{S-def:PLSEM-function}, Definition~\ref{S-def:generalized-plsem-function} and Proposition~\ref{S-prop:PLSEM2}. 

Let $\mathring X$ be generated by a generalized PLSEM with DAG $D^{0}$. Let $X \sim  \PX$ be generated by a  PLSEM with same DAG, same edge functions, same error variances as the generalized PLSEM, but with  Gaussian errors.
Let $  G$ denote the corresponding PLSEM-function. As $D \in \mathscr{D}(\mathbb{P})$, there exists a PLSEM-function $ F$ of $ X$ with DAG $D$. By Lemma~\ref{S-thm:func-char}, there exists a constant orthonormal matrix $O$ such that $ F =O  G$.
Then,
\begin{align}\label{S-eq:3}
\begin{split}
  \left( \mathring \sigma_{j}^{D} \right)^{2} &=  \min_{g_{j,i} \in \mathcal{F}_{i}} \mathbb{E}[( \mathring X_{j} - \sum_{i \in \pa_{D}(i)} g_{j,i}( \mathring X_{i}))^{2}]  \\
&\le \mathbb{E}[( \mathring X_{j} - \sum_{i \in \pa_{D}(i)} f_{j,i}^{D}(  \mathring X_{i}))^{2}] \\
&= \mathbb{E}[   F(\mathring X)_{j}^{2}] \left( \sigma_{j}^{D} \right)^{2}.
\end{split}
\end{align}
Here $\{f_{j,i}^{D}\}_{i \in \pa_{D}(j)}$ and $(\sigma_{j}^{D})^{2}_{j=1,\ldots,p}$ denote the edge functions and the error variances in the PLSEM that corresponds to PLSEM-function $F$ (cf Remark~\ref{S-rem:plsem-function}). $G$ is not only a PLSEM-function that generates $X$ but also a generalized PLSEM function of $\mathring X$. This implies that $G(\mathring X)_{j}$ is centered with variance one, and in particular $ \mathbb{E} [G(\mathring X)_{j}^{2}] = 1$ for $j=1,\ldots,p$. Hence, using equation~\eqref{S-eq:3},
\begin{align*}
\begin{split}
  2 \log \left( \frac{\mathring \sigma_{j}^{D}}{ \sigma_{j}^{D}} \right) &\le \log \mathbb{E}[  F(\mathring X)_{j}^{2}] \\
&= \log \mathbb{E} [( O_{j \bullet}  G(\mathring X) )^{2}] \\
&= \log \left( \sum_{k=1}^{p} O_{jk}^{2} \right) \\
&= 0.
\end{split}
\end{align*}
Hence,
\begin{align}\label{S-eq:misspec1}
  \begin{split}
\sum_{j=1}^{p} \log( \mathring \sigma_{j}^{D}) \le \sum_{j=1}^{p} \log( \sigma_{j}^{D}).
  \end{split}
\end{align}
Under some permutation of the indices, $\mathrm{D} F$ is lower triangular with diagonal $\left(\frac{1}{\sigma_{i}^{D}} \right)_{i=1,\ldots,p}$. Thus,
\begin{equation*}
  \det \mathrm{D} F = \prod_{i=1}^{p} \frac{1}{\sigma_{i}^{D}}.
\end{equation*}
Analogously,
\begin{equation*}
    \det \mathrm{D} G = \prod_{i=1}^{p} \frac{1}{\sigma_{i}^{D^{0}}},
\end{equation*}
where $(\sigma_{j}^{D^{0}})^{2}_{j=1,\ldots,p}$ denote the error variances of the PLSEM corresponding to PLSEM-function $G$. 
% \begin{equation*}
% \begin{pmatrix}
%   \frac{1}{ \sigma_{1}^{D}} & 0 & 0 & \ldots & 0 \\
%  0 &  \frac{1}{ \sigma_{2}^{D}} & 0 & \ldots & 0 \\
% 0 & 0 & \ddots & \ddots & 0 \\
% 0 & 0 & \ldots & \ldots &  0 \\
%  0 & 0 & \ldots &  0 &\frac{1}{ \sigma_{p}^{D}}
% \end{pmatrix} = O \cdot
% \begin{pmatrix}
%   \frac{1}{ \sigma_{1}^{D^{0}}} & 0 & 0 & \ldots & 0 \\
%  0 &  \frac{1}{ \sigma_{2}^{D^{0}}} & 0 & \ldots & 0 \\
% 0 & 0 & \ddots & \ddots & 0 \\
% 0 & 0 & \ldots & \ldots &  0 \\
%  0 & 0 & \ldots &  0 &\frac{1}{ \sigma_{p}^{D^{0}}}
% \end{pmatrix}
% \end{equation*}
As $F = O G$, $\det \mathrm{D} F = \det \mathrm{D} G$. Hence,
\begin{align}\label{S-eq:misspec2}
  \sum_{j=1}^{p} \log( {\sigma}_{j}^{D}) = \sum_{j=1}^{p} \log( {\sigma}_{j}^{D^{0}}).
\end{align}
Furthermore by construction $ \sigma_{j}^{D^{0}} =\mathring \sigma_{j}^{D^{0}}$. Using equation~\eqref{S-eq:misspec1} and equation~\eqref{S-eq:misspec2},
\begin{align*}
\begin{split}
  \sum_{j=1}^{p} \log(\mathring \sigma_{j}^{D}) &\le \sum_{j=1}^{p} \log( {\sigma}_{j}^{D}) \\
&=   \sum_{j=1}^{p} \log(  \sigma_{j}^{D^{0}})  \\
&=    \sum_{j=1}^{p} \log( \mathring \sigma_{j}^{D^{0}}).
\end{split}
\end{align*}
This concludes the proof.
\end{proof}

\subsubsection{Proof of Theorem~\ref{M-thm:close-loglik}}

\begin{proof}
Let  $D$ be Markov equivalent to $ D^{0}$ and $D \not \in \mathscr{D}(\mathbb{P})$. We want to show that $D \not \in \mathring{\mathscr{D}}(\mathring{\mathbb{P}})$. By definition of  $\mathscr{D}(\mathbb{P})$, there exists no (Gaussian) PLSEM with DAG $D$ that generates $X$.  By Lemma~\ref{S-thm:func-char}, there also exists no generalized PLSEM with DAG $D$ that generates $X$. Hence, $ p_{ \theta^{D}}^{D}$ is not the density of $X$. Thus there is a gap in the expected log-likelihood,
\begin{equation*}
\mathbb{E}[  \log  p_{ \theta^{D^{0}}}^{D^{0}}( X)] >  \mathbb{E}[   \log p_{ \theta^{D}}^{D}( X)].
\end{equation*}
Taking the minimum over all $D \not \in \mathscr{D}(\mathbb{P})$, $D \sim D^{0}$,
\begin{equation*}
  \zeta := \frac{1}{2} \min_{D \sim D^{0}, D \not \in \mathscr{D}(\mathbb{P})} \mathbb{E}[  \log  p_{ \theta^{D^{0}}}^{D^{0}}( X)] -  \mathbb{E}[   \log  p_{ \theta^{D}}^{D}(X)] > 0.
\end{equation*}
Using this and  $| \mathbb{E}[ \log p_{\theta^{D}}^{D} (X)] -  \mathbb{E}[  \log \mathring p_{\mathring \theta^{D}}^{D}(\mathring X)] | < \zeta $,
\begin{align*}
\begin{split}
  \mathbb{E}[  \log \mathring p_{ \mathring{\theta}^{D}}^{D}( \mathring X)] &< \mathbb{E}[ \log p_{ \theta^{D}}^{D}(  X)]  + \zeta \\ %+ | \mathbb{E}[ \log \mathring p_{\mathring \theta^{D^{0}}}^{D^{0}}(\mathring X)] - \mathbb{E}[ \log  p_{ \theta^{D}}^{D}( X)]| \\
&<  \mathbb{E}[ \log p_{ \theta^{D^{0}}}^{D^{0}}( X)] - \zeta \\
&<   \mathbb{E}[ \log \mathring p_{\mathring{\theta}^{D^{0}}}^{D^{0}}( \mathring X)]. 
\end{split}
\end{align*}
Hence there is also a gap in the expected log-likelihood of the density $\mathring p_{\mathring \theta^{D^{0}}}^{D^{0}}$ and $  \mathring p_{\mathring \theta^{D}}^{D}$. Hence these densities are not equal and $D \not \in \mathring{\mathscr{D}}(\mathring{\mathbb{P}})$.
  
\end{proof}

%%% Local Variables:
%%% mode: latex
%%% TeX-master: "supplementary_material"
%%% End:

%\begin{supplement}
%\sname{Supplement A}\label{M-suppA}
%\stitle{Proofs}
%\slink[url]{http://www.e-publications.org/ims/support/dowload/imsart-ims.zip}
%\sdescription{Dum esset rex in
%accubitu suo, nardus mea dedit odorem suavitatis. Quoniam confortavit
%seras portarum tuarum, benedixit filiis tuis in te. Qui posuit fines tuos}
%\end{supplement}

%\begin{thebibliography}{9}
\bibliographystyle{apalike} 
\bibliography{reference}

%\bibitem{r1}
%\textsc{Billingsley, P.} (1999). \textit{Convergence of
%Probability Measures}, 2nd ed.
%Wiley, New York.
%\MR{1700749}
%
%
%\bibitem{r2}
%\textsc{Bourbaki, N.}  (1966). \textit{General Topology}  \textbf{1}.
%Addison--Wesley, Reading, MA.
%
%\bibitem{r3}
%\textsc{Ethier, S. N.} and \textsc{Kurtz, T. G.} (1985).
%\textit{Markov Processes: Characterization and Convergence}.
%Wiley, New York.
%\MR{838085}
%
%\bibitem{r4}
%\textsc{Prokhorov, Yu.} (1956).
%Convergence of random processes and limit theorems in probability
%theory. \textit{Theory  Probab.  Appl.}
%\textbf{1} 157--214.
%\MR{84896}

%\end{thebibliography}

\end{document}